\numberwithin{equation}{section}
\theoremstyle{plain}
\newtheorem{thm}{Theorem}[section]
\newtheorem{lem}{Lemma}[section]
\newtheorem{cor}{Corollary}[section]
\theoremstyle{definition}
\newtheorem{notation}{Notation}[section]
\theoremstyle{remark}
\newtheorem{remark}{Remark}[section]
\crefname{section}{Section}{Sections}
\crefname{subsection}{Subsection}{Subsections}
\crefname{notation}{Notation}{Notations}
\crefname{hypothesis}{Hypothesis}{Conditions}
\crefname{assumption}{Assumption}{Assumptions}
\crefname{lemma}{Lemma}{Lemmas}
\crefname{lem}{Lemma}{Lemmas}
\crefname{cor}{Corollary}{Corollaries}
\crefname{thm}{Theorem}{Theorems}
\crefname{claim}{Claim}{Claims}
\Crefname{figure}{Figure}{Figures}
\newcommand{\Ind}{\mathds{1}} % indicator function
\newcommand{\process}[1]{{\{#1(t)\}_{t\ge0}}}
\newcommand{\ttup}[1]{\textup{(}#1\textup{)}}
\newcommand{\Exp}{{\mathbb{E}}} %Expectation
\newcommand{\Prob}{{\mathbb{P}}} %Probability
\newcommand{\RR}{{\mathbb R}} % Real line
\newcommand{\Rd}{{{\mathbb R}^d}}
\newcommand{\Rds}{{{\mathbb R}^d_*}}
\newcommand{\NN}{{\mathbb N}} % Natural numbers
\newcommand{\D}{\mathrm{d}} % Differential
\newcommand{\E}{\mathrm{e}} % Exponent
\newcommand{\df}{\coloneqq} % define
\newcommand{\Id}{\mathbb{I}}
\newcommand{\sB}{{\mathscr{B}}} % Ball in \Rd
\newcommand{\cB}{{\mathcal{B}}} % Borel functions
\newcommand{\cM}{{\mathcal{M}}} % Matrices
\newcommand{\Usm}{\mathfrak{U}_{\mathrm{SM}}}  % stationary Markov controls
\newcommand{\Ag}{{\mathcal{A}}}
\newcommand{\fI}{{\mathfrak{I}}}
\newcommand{\hfI}{{\widehat{\mathfrak{I}}}} %anisotropic kernel
\newcommand{\fJ}{{\mathfrak{J}}}
\newcommand{\cP}{{\mathcal{P}}} % probability measures
\newcommand{\cK}{{\mathcal{K}}} % cone
\newcommand{\tv}{{\rule[-.45\baselineskip]{0pt}{\baselineskip}\mathsf{TV}}}
\newcommand{\tw}{{\Tilde{w}}}
\newcommand{\uuptau}{\Hat{\uptau}}
\newcommand{\abs}[1]{\lvert#1\rvert}
\newcommand{\norm}[1]{\lVert#1\rVert}
\newcommand{\babs}[1]{\bigl\lvert#1\bigr\rvert}
\newcommand{\babss}[1]{\biggl\lvert#1\biggr\rvert}
\newcommand{\bnorm}[1]{\bigl\lVert#1\bigr\rVert}
\newcommand{\Bnorm}[1]{\Bigl\lVert#1\Bigr\rVert}
\DeclareMathOperator*{\diag}{diag}
\DeclareMathOperator{\trace}{Tr}
\DeclareMathOperator{\support}{supp}
\begin{document}
%%%%%%%%%%%%%%%%%%%%%%%%%%%%%%%%%%%%%%%%%%%%%%%%%%%%%%%%%%%%%%%%%%%%%
\begin{frontmatter}

\title{Ergodicity of a L{\'e}vy-driven SDE arising\\
from multiclass many-server queues}
\runtitle{Ergodicity of a L{\'e}vy-driven SDE}

\begin{aug}
\author{\fnms{Ari} \snm{Arapostathis}
\thanksref{m1}\ead[label=e1]{ari@ece.utexas.edu}},
\author{\fnms{Guodong} \snm{Pang}
\thanksref{m2}\ead[label=e2]{gup3@psu.edu}}
\and
\author{\fnms{Nikola} \snm{Sandri{\'c}}\corref{}
\thanksref{m3}\ead[label=e3]{nsandric@math.hr}}

\runauthor{A.~Arapostathis, A.~Pang, and N.~Sandri{\'c}}

\affiliation{University of Texas at Austin\thanksmark{m1},
Pennsylvania State University\thanksmark{m2},\\ and
University of Zagreb\thanksmark{m3}}

\address{Department of Electrical\\ and Computer Engineering\\
University of Texas at Austin\\
2501 Speedway, EER 7.824\\
Austin, TX~~78712\\
\printead{e1}}

\address{The Harold and Inge Marcus Department\\
of Industrial and Manufacturing Engineering\\
College of Engineering\\
Pennsylvania State University\\
University Park, PA~~16802\\
\printead{e2}}

\address{Department of Mathematics\\
 University of Zagreb, Bijeni\v{c}ka cesta 30\\
10000 Zagreb, Croatia\\
\printead{e3}}
\end{aug}

%%%%%%%%%%%%%%%%%%%%%%%%%%%%%%%%%%%%%%%%%%%%%%%%%%%%%%%%%%%%%%%%%%%%%%%%%%%%%%%%
\begin{abstract}
We study the ergodic properties of
a class of multidimensional piecewise Ornstein--Uhlenbeck processes
with jumps, which contains the limit of the queueing processes arising in 
multiclass many-server queues with heavy-tailed arrivals and/or asymptotically
negligible service interruptions in the Halfin-Whitt regime as special cases.
In these queueing models, the It\^o equations have a piecewise linear drift,
and are driven by either (1) a Brownian motion and a pure-jump L{\'e}vy process,
or (2) an anisotropic L{\'e}vy process with independent one-dimensional symmetric
$\alpha$-stable components, or (3) an anisotropic 
L{\'e}vy process as in (2) and a pure-jump L{\'e}vy process. 
We also study the class of models driven by a subordinate Brownian motion, which
contains an isotropic (or rotationally invariant) $\alpha$-stable L{\'e}vy process
as a special case. 
We identify conditions on the parameters in the drift, 
the L{\'e}vy measure and/or covariance function which result in
subexponential and/or exponential ergodicity.
We show that these assumptions are sharp, and we identify some key necessary
conditions for the process to be ergodic.
In addition, we show that for the queueing models described above with no abandonment,
the rate of convergence is polynomial, and we provide
a sharp quantitative characterization of the rate via matching upper
and lower bounds.
\end{abstract}

\begin{keyword}[class=MSC]
\kwd[Primary ]{60J75; 60H10}
\kwd[; secondary ]{60G17; 60J25; 60K25}
\end{keyword}

\begin{keyword}
\kwd{multidimensional piecewise Ornstein--Uhlenbeck processes with jumps}
\kwd{pure-jump L\'evy process}
\kwd{(an)isotropic L\'evy process}
\kwd{(sub)exponential ergodicity}
\kwd{multiclass many-server queues}
\kwd{Halfin--Whitt regime}
\kwd{heavy-tailed arrivals}
\kwd{service interruptions}
\end{keyword}

\end{frontmatter}

%%%%%%%%%%%%%%%%%%%%%%%%%%%%%%%%%%%%%%%%%%%%%%%%%%%%%%%%%%%%%%%%%%%%%%%%%%%%%%%%
\tableofcontents

%%%%%%%%%%%%%%%%%%%%%%%%%%%%%%%%%%%%%%%%%%%%%%%%%%%%%%%%%%%%%%%%%%%%%%%%%%%%%%%%
\section{Introduction}

We consider a $d$-dimensional stochastic differential equation (SDE) of the form
\begin{equation}\label{E-sde}
\D X(t)\,=\,b(X(t))\,\D{t} +\upsigma(X(t))\, \D W(t)+\D L(t),\qquad X(0)=x\in\Rd\,,
\end{equation} 
where 
\begin{enumerate}[wide]
\item [(A1)]
the function $b\colon\Rd\to\Rd$ is given by
\begin{align*}
b(x) &\,=\, \ell-M(x-\langle e,x\rangle^+v)-\langle e,x\rangle^+\varGamma v\\
&\,=\, \begin{cases}
\ell - \bigl(M +(\varGamma-M)v e'\bigr) x\,, & e'x>0\,,\\[2pt]
\ell - Mx\,, &e'x\le0\,,
\end{cases}
\end{align*}
where $\ell \in \Rd$, $v\in\RR^d_+$ satisfies $\langle e,v\rangle=e'v=1$
with $e = (1,\dotsc,1)'\in\Rd$, $M\in\RR^{d\times d}$ is a
nonsingular M-matrix
such that the vector $e'M$ has nonnegative components, 
and $\varGamma=\diag(\gamma_1,\dotsc,\gamma_d)$ with $\gamma_i\in\RR_{+}$,
$i=1,\dotsc,d\,$;

\item [(A2)] 
$\process{W}$ is a standard $n$-dimensional Brownian motion, and
the covariance function $\upsigma\colon \Rd \to \RR^{d\times n}$ is locally
Lipschitz and satisfies, for some constant $\kappa>0$,
\begin{equation*}
\norm{\upsigma(x)}^2\le\kappa(1+\abs{x}^2)\,,\qquad x\in\Rd\,;
\end{equation*}

\item [(A3)]
$\process{L}$ is a $d$-dimensional
pure-jump L\'evy process specified by a drift $\vartheta\in\Rd$ and L\'evy measure
$\nu(\D{y})$. 
\end{enumerate}

In (A1)--(A3), $\norm{M}\df\bigl(\trace\, MM'\bigr)^{\nicefrac{1}{2}}$
denotes the Hilbert-Schmidt norm of a
$d\times n$ matrix $M$, and $\langle\cdot,\cdot\rangle$ stands
for the inner product on $\Rd$.
For a square matrix $M$, $\trace\, M$ stands for the
trace of $M$, and
for a vector $x$ and a matrix $M$, $x'$ and $M'$ stand for their
transposes, respectively.
A $d\times d$ matrix $M$ is called an M-matrix
if it can be expressed as $M=s\mathbb{I}-N$ for some $s>0$ and some
nonnegative $d\times d$ matrix $N$ with the property that $\rho(N)\le s$, where
$\mathbb{I}$ and $\rho(N)$ denote the $d\times d$ identity matrix
and spectral radius of $N$, respectively. 
Clearly, the matrix $M$ is nonsingular if $\rho(N)<s$. 

Such an SDE is often called a
piecewise Ornstein--Uhlenbeck (O--U) process with jumps.
Recall that a L\'evy measure $\nu(\D{y})$ is a $\upsigma$-finite measure on
$\Rds\df\Rd\setminus\{0\}$ satisfying
$\int_{\Rds} (1\wedge\abs{y}^2)\,\nu(\D{y})<\infty$.
It is well-known that the SDE \cref{E-sde} admits a unique nonexplosive strong solution
$\process{X}$ which is a strong Markov process and it satisfies the $C_b$-Feller property
(see \cite[Theorem~3.1, and Propositions 4.2 and 4.3]{Albeverio-Brzezniak-Wu-2010}).
In addition, in the same reference, it is shown that the infinitesimal generator
$(\Ag^X,\mathcal{D}_{\Ag^X})$ of $\process{X}$
(with respect to the Banach space $(\mathcal{B}_b(\RR^{d}),\norm{\,\cdot\,}_\infty)$)
satisfies $C_c^{2}(\Rd)\subseteq\mathcal{D}_{\Ag^X}$ and 
\begin{equation}\label{E1.2}
\Ag^X\bigr|_{C_c^{2}(\Rd)}f(x) \,=\,
\frac{1}{2}\trace\bigl(a(x)\nabla^2f(x)\bigr)
+\bigl\langle b(x)+\vartheta,\nabla f(x)\bigr\rangle
+\int_{\Rds}\mathfrak{d}_1 f(x;y)\nu(\D{y})\,,
\end{equation}
with $\nabla^2f(x)$ denoting the Hessian of $f(x)$.
Here, $\mathcal{D}_{\Ag^X}$, $\mathcal{B}_b(\RR^{d})$ and $C_c^{2}(\Rd)$
denote the domain of $\Ag^X$, the space of bounded Borel measurable functions
and the space of twice continuously differentiable functions with compact support,
respectively.
In \cref{E1.2} we use the notation
$a(x)=\bigl(a^{ij}(x)\bigr)_{1\le i,j\le d}\df \upsigma(x)\upsigma(x)'$, 
and
\begin{equation}\label{frakd1}
\mathfrak{d}_1 f(x;y) \,\df\,
f(x+y)-f(x)-\Ind_{\sB}(y)\langle y,\nabla f(x)\rangle\,,\qquad f\in C^1(\Rd)\,,
\end{equation}
where $\sB$ denotes the unit ball in $\Rd$ centered at $0$,
and $\Ind_{\sB}$ its indicator function.

The goal of this paper is to investigate the ergodic properties of
$\process{X}$. 
This process arises as a limit of the suitably scaled queueing processes of
multiclass many-server queueing networks with heavy-tailed (bursty) arrivals and/or
asymptotically negligible service interruptions. 
In these models, if the scheduling policy is based on a static priority assignment
on the queues, then the vector $v$ in the limiting
diffusion \cref{E-sde} corresponds a constant control, i.e.,
an element of the set
\begin{equation*}
\varDelta \,\df\, \{ v\in\RR^d_+ \,\colon  \langle e,v\rangle = 1\}\,.
\end{equation*}
The process $\process{X}$ also arises in many-server queues with phase-type
service times, where the constant vector $v$ corresponds to the probability
distribution of the phases.

These queueing models are described in detail in \cref{S4}.
It is important to note that for a multiclass queueing network with independent
heavy-tailed arrivals, the process $\process{L}$ in \cref{E-sde}
is an anisotropic L\'evy process consisting of independent 
one-dimensional symmetric $\alpha$-stable components.
For a detailed description see \cref{S4.1}. 
Such processes have a highly singular L\'evy measure and lack the regularity properties
of the standard isotropic (or rotationally invariant) $\alpha$-stable
$d$-dimensional L{\'e}vy processes.
Notably, as shown in \cite{Bass-Chen-10}, the Harnack inequality, an essential
tool in showing regularity of the invariant probability measure for nondegenerate
continuous diffusions, fails for SDEs driven by this anisotropic
L\'evy process.
In \cref{T3.1} we establish the open-set irreducibility of solutions
of \cref{E-sde} driven by an anisotropic $\alpha$ stable process.
This is required in the results which follow. 
Other than the work in \cite{Bass-Chen-06,Bass-Chen-10,Chaker-16}
they have not been studied much.
Under service interruptions, $\process{L}$ is either
a compound Poisson process (under $\sqrt{n}$ scaling),
or an anisotropic L\'evy process described above
together with a compound Poisson component (under $n^{\nicefrac{1}{\alpha}}$
scaling for $\alpha \in (1,2)$). 
In this paper, however, we study the ergodic properties of \cref{E-sde} for
a much broader class of L\'evy processes $\process{L}$.

If the control (scheduling policy) is a function of the state of the system, then
$v(x)$ in the diffusion limit is, in general, a Borel measurable map from
$\Rd$ to $\varDelta$.
We call such a $v(x)$ a stationary Markov control and denote the set of
such controls by $\Usm$.
If \cref{E-sde} is driven by a Wiener process only, 
it follows from the results in \cite{Gyongy-96} that,
under any $v\in\Usm$, the diffusion has a unique strong solution.
On the other hand, as shown in \cite{Skorokhod-89}, 
if the L\'evy measure is finite, the solution of \cref{E-sde} can be constructed in a
piecewise fashion, and thus, in such a case we have a unique
strong solution under any $v\in\Usm$.
There are no such sharp results on existence
of solutions to \cref{E-sde} with a measurable drift,
when this is driven by a general L\'evy process.
However, the well-posedness of the martingale problem for SDEs
with measurable drifts driven by an $\alpha$-stable process
has been studied (see \cite{Zhang-13} and references therein).
We are not concerned with this problem in this paper, especially since the results
involving Markov controls concern only on necessary conditions, and we
clarify that whenever we state a result involving Markov controls it is implied that
the martingale problem is well posed.  Parenthetically we mention here
that for locally Lipschitz Markov controls the problem is always well posed
for the model we consider (see the discussion in the beginning of \cref{S5.4}). 

\subsection{Summary of the results}
Broadly speaking, the results in this paper have two flavors.
On the one hand, we present sufficient conditions under which
$\process{X}$ is ergodic under any constant control $v\in\varDelta$
(Theorems \ref{T3.2}, \ref{T3.4}\,(a), and \ref{T3.5}),
while on the other, we present necessary conditions
for ergodicity under any Markov control (Theorems \ref{T3.3}, \ref{T3.4}\,(b),
\cref{L5.7,C5.1}).

It turns out that these conditions are sharp and they match.
We discuss these results in the context of a many-server queueing network with
heavy-tailed arrivals and/or service interruptions, even though
the results are applicable to a larger class of SDEs.
There are two important parameters involved.
One concerns the heaviness of the tail
of the L\'evy measure, and to describe this we define
\begin{equation}\label{E-thetac}
\Theta_c\,\df\,\Bigl\{\theta>0\, \colon
\int_{\sB^c}\abs{y}^{\theta}\nu(\D{y})\,<\,\infty\Bigl\}\,,\quad\text{and}\quad
\theta_c\,\df\,\sup\,\bigl\{\theta\in\Theta_c\bigl\}\,.
\end{equation}
It follows from its definition that, if bounded, $\Theta_c$ is an open or
left-open interval, i.e., the interval
$(0,\alpha)$ in the case of an $\alpha$-stable process (isotropic or not),
or it could be an interval of the form $(0,\theta_c]$.
When more than one L\'evy components are involved, $\Theta_c$ refers
to the intersection of the individual intervals.
The other parameter is the constant term $\ell$ in the drift which
arises as the limit of the
spare capacity of the network, when driven only by a Wiener process
(see \cref{E4.2}).
It turns out that this constant should be modified to account for the
drift in the L\'evy process.
Recall that $\ell$ is the constant in the drift in (A1) (see \eqref{E-sde})
and that $\vartheta$ and $\nu(\D{y})$ are the drift and the L\'evy measure of the process
$\process{L}$.
We define 
\begin{equation}\label{E-ell}
\Tilde{\ell} \,\df\, \begin{cases}
\ell+\vartheta + \int_{\sB^c}y\nu(\D{y})\,,&\text{if\ \ }
\int_{\sB^c}\abs{y}\nu(\D{y})<\infty\,,\\[5pt]
\ell+\vartheta,& \text{otherwise},
\end{cases}
\end{equation}
and
\begin{equation}\label{E-tvarrho}
\Tilde\varrho\,\df\,  -\bigl\langle e,M^{-1}\Tilde\ell\,\bigr\rangle\,.
\end{equation}
We refer to $\Tilde\varrho$ as the (effective) spare capacity. 

The richest and most interesting set of results concerns networks where
the abandonment rate is $0$, and this corresponds to $\varGamma=0$, or
more generally, when the control gives lowest priority to queues
whose abandonment rate is $0$ (this is equivalent to $\varGamma v=0$).
In this scenario, we establish in \cref{T3.3,L5.7}
that $\Tilde\varrho>0$ and $1\in\Theta_c$ are
both necessary conditions for the state process $\process{X}$ to
have an invariant probability measure under some Markov control $v\in\Usm$
 (see also \cref{C4.1}).
This translates to the requirement that
 $\alpha>1$ if the system has heavy-tailed arrivals, and/or
$\theta_c\ge1$ if there are service interruptions.
If these conditions are met, we show in \cref{T3.2} that the process
is ergodic under any constant control $v\in\varDelta$.
Moreover, we prove in \cref{T3.2,T3.4}  that convergence to the invariant measure
in total variation has a polynomial rate
$r(t)\thickapprox t^{\theta_c-1}$ for any constant control,
and by this we mean that the rate
is $r(t)=t^{\theta_c-\epsilon-1}$ for all $\epsilon\in(0,\theta_c-1)$
for $\theta_c>1$, and $\epsilon=0$ for $\theta_c=1$.
This is accomplished by deriving matching upper and lower bounds
for convergence (see \cref{ET3.4A}).
An interesting related result is that the spare capacity $\Tilde\varrho$
is equal to the average idleness of the system (idle servers) under
any Markov control $v$ satisfying $\varGamma{v}(x)=0$ a.e.,
and such that the state process is ergodic (see \cref{C5.1}). 

In the context of many-server queueing networks,
stability is defined as the finiteness of the average value
of the sum of the queue lengths, and this translates into the
requirement that the map $x\mapsto \langle e,x\rangle^+$ be integrable
under the invariant probability measure of the process. 
In turn, a necessary and sufficient condition for this is that
the invariant probability measure has a finite first absolute moment (see \cref{R5.1}).   
We refer to a control attaining this property as stabilizing.
\cref{L5.7} shows that if there is
no abandonment, then no Markov control is stabilizing unless $2\in\Theta_c$,
while under abandonment it is necessary that $1\in\Theta_c$ (see \cref{C4.1,T3.5}).
This means that for a system with heavy-tailed arrivals
(resulting in an $\alpha$-stable limit with $\alpha \in (1,2)$), 
there are no stabilizing controls, unless some abandonment rate is positive.
On the other hand, for a system under service interruptions and no heavy-tailed
arrivals, a necessary and sufficient condition for the existence of
stabilizing controls, under no abandonment,
is that the L\'evy measure has a finite second moment.
If such is the case, then every constant control is stabilizing
by \cref{T3.2}.
   
Another set of results concern the case $\varGamma v \ne 0$.
Here, we show that $\process{X}$ has an exponential rate of convergence (\cref{T3.5}),
and that every constant control is stabilizing, provided $1\in\Theta_c$.

%%%%%%%%%%%%%%%%%%%%%%%%%%%%%%%%%%%%%%%%%%%%%%%%%%%%%%%%%%%%%%%%%%%%%%%%%%%%%%%%
\subsection{Literature review} 
Our work relates to the active research on L{\'e}vy-driven (generalized)
O--U processes, and the vast literature on SDEs with jumps. 
In \cite{Douc-Fort-Guilin-2009,Fort-Roberts-2005,Kevei16,Masuda-04,
Sato-Yamazato-1984,Wang11,Wang12}, the ergodic properties of a general class
of L{\'e}vy-driven O--U processes are established using Foster-Lyapunov and
coupling methods.
In all these works the process is governed by a linear drift function. 
In \cite{RZ11}, a one-dimensional piecewise O--U process driven by a
spectrally one-sided
L{\'e}vy process is studied. The authors have shown the existence and 
characterization of the invariant distribution, and ergodicity of the process.
In \cite{DG13}, motivated by the many-server 
queuing model with phase-type service times, the authors have established
ergodicity and exponential ergodicity
of a piecewise O--U process driven by Brownian motion only.
See also \cref{R3.5} on the comparison of the models and contributions. 

For general diffusions with jumps, ergodic properties are studied in 
\cite{Kulik-2009, Masuda-2007, Masuda-Erratum-2009,Priola-Shirikyan-Xu-Zabczyk-2012,
Sandric-ESAIM-2016, Wang-2008,Wee-1999}, under suitable
conditions on the drift, covariance function and jump component.
In this paper, we take advantage of the explicit form of the drift and carry out
detailed calculations
which yield important insights on the rates of convergence and ergodic properties. 
Some of the estimates in the proofs may be of independent
interest to future work on the subject. Our results also lay important
foundations for the study of ergodic control problems for L{\'e}vy-driven SDEs
(see a recent development in \cite{AA18-arXiv1}), 
especially those arising from the multiclass many-server queueing systems; 
recent studies on Markovian queueing models are in
\cite{ABP14, Arapostathis-Pang-2016, Arapostathis-Pang-2017, Arapostathis-Pang-2018}. 

A surprising discovery of this study is a class of models in \cref{E-sde}
possessing a ``polynomial'' ergodicity property in the total variation norm. 
Subexponential ergodicity of Markov processes, including diffusions and SDEs with jumps,
has been a very active research area in recent years; see, e.g., \cite{AFV15,
Butkovsky-14, CF09,Douc-Fort-Guilin-2009, DFMS-04,DFM-16,Fort-Roberts-2005,
G-09, Hairer-16, LZZ-10,TT-94}
and references therein. 
Note that in \cite{Douc-Fort-Guilin-2009}, some interesting diffusion models 
and an O--U process (linear drift) driven by a compound Poisson process 
with a heavy-tailed jump is studied as examples for the general theory of
subexponential ergodicity. 
Our work identifies a concrete, yet highly nontrivial, class of SDEs with jumps
that satisfy the conditions for polynomial ergodicity in
\cite{Douc-Fort-Guilin-2009} (see also \cite{Hairer-16}). 
This may be of great interest to a broad audience on the subject of ergodicity of
Markov processes.

The rate of convergence for the limiting queueing process of 
multiclass many-server networks under heavy-tailed arrivals and/or
asymptotically negligible service interruptions has not been studied up to now. 
Finally, it is worth mentioning that there exist very scarce results on
subexponential ergodicity in queueing theory; see, e.g., \cite{HLZ-05}.

%%%%%%%%%%%%%%%%%%%%%%%%%%%%%%%%%%%%%%%%%%%%%%%%%%%%%%%%%%%%%%%%%%%%%%%%%%%%%%%%
\subsection{Organization of the paper}
In the next subsection, we summarize some notation used in this paper.
In \cref{S2}, we review some background material
on the ergodicity of Markov 
processes that is relevant to our study.
The main results are presented in \cref{S3}.
In \cref{S4}, we provide some motivating examples of multiclass
many-server queues which have queueing process limits as in \cref{E-sde},
and state the relevant ergodic properties. 
\cref{S5} is devoted to the proofs of \cref{T3.2,T3.3,T3.4,T3.5}, and
contains some additional results. 
\cref{S-A} contains the proof of \cref{T3.1}.

%%%%%%%%%%%%%%%%%%%%%%%%%%%%%%%%%%%%%%%%%%%%%%%%%%%%%%%%%%%%%%%%%%%%%%%%%%%%%%%%
\subsection{Notation}
We summarize some notation used throughout the paper. 
We use $\Rd$ (and $\mathbb{R}^d_+$), $d\ge 1$, to denote real-valued
$d$-dimensional (nonnegative) vectors, and write $\RR$ ($\RR_+$) for $d=1$.
For $x, y\in \RR$,
$x \vee y = \max\{x,y\}$, $x\wedge y = \min\{x,y\}$, 
$x^+ = \max\{x, 0\}$ and $x^- = \max\{-x,0\}$. 
Let $D^d = D([0,\infty), \Rd)$ denote the $\Rd$-valued function space of all
right-continuous functions on $[0,\infty)$ with left limits everywhere in $(0,\infty)$.
Let $(D^d,M_1)$ denote the space $D^d$ equipped with
the Skorohod $M_1$ topology. Denote $D \equiv D^1$. 
Let $(D_d, M_1) = (D, M_1) \times \dotsb \times (D, M_1)$ be the $d$-fold
product of $(D, M_1)$ with the product topology \cite{WW02}. 
For a set $A\subseteq\Rd$, we use
 $A^{c}$ and $\Ind_{A}$ to denote 
the complement and the indicator function of $A$, respectively.
A ball of radius $r>0$ in $\Rd$ around a point $x$ is denoted by $\sB_{r}(x)$,
or simply as $\sB_{r}$ if $x=0$.
We also let $\sB \equiv \sB_{1}$.
The Euclidean norm on $\Rd$ is denoted by $|\cdot|$.
We let  $\mathfrak{B}(\Rd)$ stand for the Borel $\sigma$-algebra on $\Rd$.
For a Borel probability measure $\uppi(\D{x})$
on $\mathfrak{B}(\Rd)$ and a measurable function $f(x)$,
which is integrable under $\uppi(\D{x})$, we often use the convenient notation
$\uppi(f)=\int_\Rd f(x)\,\uppi(\D{x})$.

%%%%%%%%%%%%%%%%%%%%%%%%%%%%%%%%%%%%%%%%%%%%%%%%%%%%%%%%%%%%%%%%%%%%%%%%%%%%%%%%
\section{Preliminaries} \label{S2}

Let $\bigl(\Omega,\mathcal{F},\mathcal{F}(t),M(t),
\theta(t),\{\Prob^{x}\}_{x\in\Rd} \bigr)$, $t\in[0,\infty)$,
denoted by $\process{M}$ in the sequel,
be a Markov process with c\`adl\`ag sample paths and state space
$(\Rd,\mathfrak{B}(\Rd))$ (see \cite[p.~20]{BG-68}).  
We let $P^M_t(x,\D{y})\df\Prob^x(M(t)\in\D{y})$, $t\ge0$ and $x\in\Rd$,
denote the transition probability of $\process{M}$.
Also, in the sequel we assume that $P^M_t(x,\D{y})$, $t\ge0$ and $x\in\Rd$,
is a probability measure, i.e., $\process{M}$ does not admit a cemetery point
in the sense of \cite{BG-68}. Observe that this is not a restriction since,
as we have already commented, $\process{X}$ is nonexplosive.
The process $\process{M}$ is called
\begin{enumerate}
\item [(i)]
$\varphi$-irreducible if there exists a $\upsigma$-finite measure $\varphi(\D{y})$ on
$\mathfrak{B}(\Rd)$ such that whenever $\varphi(B)>0$ we have
$\int_0^{\infty}P^M_t(x,B)\,\D{t}>0$ for all $x\in\Rd$;

\item [(ii)]
transient if it is $\varphi$-irreducible, and if there exists a countable
covering of $\Rd$ with sets
$\{B_j\}_{j\in\NN}\subseteq\mathfrak{B}(\Rd)$, and for each
$j\in\NN$ there exists a finite constant $c_j\ge0$ such that
$\int_0^{\infty}P^M_t(x,B_j)\,\D{t}\le c_j$ holds for all $x\in\Rd$;

\item [(iii)]
recurrent if it is $\varphi$-irreducible, and $\varphi(B)>0$ implies
$\int_{0}^{\infty}P^M_t(x,B)\,\D{t}=\infty$ for all $x\in\Rd$.
\end{enumerate}

Let us remark that if $\{M(t)\}_{t\ge0}$ is a $\varphi$-irreducible
Markov process, then the irreducibility measure $\varphi(\D{y})$ can be
maximized. This means that there exists a unique ``maximal" irreducibility
measure $\psi$ such that for any measure $\Bar{\varphi}(\D{y})$,
$\{M(t)\}_{t\ge0}$ is $\Bar{\varphi}$-irreducible if, and only if,
$\Bar{\varphi}\ll\psi$ (see \cite[Theorem~2.1]{Tweedie-1994}).
In view to this, when we refer to an irreducibility
measure we actually refer to the maximal irreducibility measure.
It is also well known that every $\psi$-irreducible Markov
process is either transient or recurrent (see \cite[Theorem
2.3]{Tweedie-1994}). 
 
Recall, a Markov process $\process{M}$ is called 
\begin{enumerate}
\item[(1)] open-set irreducible
if its maximal irreducibility measure $\psi(\D{y})$ is fully supported, i.e.,
$\psi(O)>0$ for every open set $O\subseteq\Rd$;
\item [(2)] aperiodic if it admits an irreducible skeleton chain, i.e., 
there exist $t_0>0$ and a $\upsigma$-finite measure $\phi(\D{y})$ on
$\mathfrak{B}(\Rd)$, such that $\phi(B)>0$ implies
$\sum_{n=0}^{\infty} P^M_{nt_0}(x,B) >0$ for all $x\in\Rd$.
\end{enumerate}

Let $\cB(\Rd)$ and $\cP(\Rd)$ denote the classes of Borel
measurable functions
and Borel probability measures on $\Rd$, respectively.
We adopt the usual notation
\begin{equation*}
\uppi\, P^M_t(\D{y})=\int_{\Rd}\uppi(\D{x})P^M_t(x,\D{y})
\end{equation*}
for $\uppi\in\cP(\Rd)$, $t\ge0$, and
$P^M_t f(x) = \int_{\Rd}P^M_t(x,\D{y})f(y)$ for
$t\ge0$, $x\in\Rd$ and $f\in\cB(\Rd)$.
Therefore, with $\delta_x$ denoting the Dirac measure concentrated
at $x\in\Rd$, we have $\delta_x P^M_t(\D{y}) = P^M_t(x,\D{y})$, $t\ge0$.

A
 probability measure $\overline\uppi\in\mathcal{P}(\Rd)$ is called invariant for
$\process{M}$ if
$\int_{\Rd}P^M_t(x,\D{y})\,\overline\uppi(\D{x})
= \overline\uppi(\D{y})$
for all $t>0$. 
It is well known that if $\process{M}$ is
recurrent, then it possesses a unique (up to constant
multiples) invariant measure $\overline\uppi(\D{y})$
(see \cite[Theorem~2.6]{Tweedie-1994}).
If the 
invariant measure is
finite, then it may be normalized to a probability measure. If
$\process{M}$ is recurrent with finite invariant measure, then $\process{M}$ is called 
positive recurrent; otherwise it is called null recurrent. Note that a transient 
Markov process cannot have a finite invariant measure. 
Indeed, assume that
$\process{M}$ is transient and that it admits a
finite invariant measure $\overline\uppi(\D{y})$, and fix some $t>0$.
Then, for each $j\in\NN$, with $c_j$ and $B_j$ as in (ii) above, we have
\begin{equation*}
t\,\overline\uppi(B_j) \,=\,
\int_0^{t}\int_{\Rd}P^M_t(x,B_j)\,\overline\uppi(\D{x})\,\D{s}
\,\le\, c_j\overline\uppi(\Rd)\,.
\end{equation*}
 Now, by
letting $t\rightarrow\infty$ we obtain $\overline\uppi(B_j)=0$ for all
$j\in\NN$, which is impossible.

A Markov process $\process{M}$ is called ergodic
if it possesses an invariant probability 
measure $\overline\uppi(\D{y})$ and there exists a nondecreasing function
$r\colon\RR_+\to[1,\infty)$ such that 
\begin{equation*}
\lim_{t\to\infty}\;r(t)\,\bnorm{P^M_t(x,\D{y})
-\overline\uppi(\D{y})}_\tv \,=\,0\,,\qquad x \in \Rd\,.
\end{equation*}
Here, $\norm{\,\cdot\,}_\tv$ denotes the total variation norm on the space
of signed measures on
$\mathfrak{B}(\Rd)$. 
For a function $f\colon\Rd\to[1,\infty)$ we define the $f$-norm of a signed
measure $\mu$ as
\begin{equation*}
\norm{\,\mu\,}_f\,\df\,\sup_{\substack{g\in\cB(\Rd), \  \abs{g}\le f}}\;
\babss{\int_{\Rd}g(y)\mu(\D{y})}\,.
\end{equation*}
Observe that $\norm{\,\cdot\,}_1=\norm{\,\cdot\,}_\tv$. 
We say that $\process{M}$ is subexponentially ergodic if it is ergodic and 
$\lim_{t\to\infty}\nicefrac{\ln r(t)}{t}=0$, and 
 that it is exponentially ergodic if it is ergodic and
$r(t)=\E^{\kappa t}$ for some $\kappa>0$. 
Let us remark that (under the assumptions of open-set irreducibility
and aperiodicity) ergodicity is equivalent 
to positive recurrence (see \cite[Theorem 13.0.1]{Meyn-Tweedie-Book-2009},
\cite[Theorem 6.1]{Meyn-Tweedie-AdvAP-II-1993},
and \cite[Theorems 4.1, 4.2 and 7.1]{Tweedie-1994}).

Since $\process{M}$ is a Markov process,
$P^M_tf(x)=\int_{\Rd}f(y)P_t^M(x,\D{y})$,  $x\in\Rd$,
defines a semigroup of linear operators $\{P^M_t\}_{t\ge0}$  on the
Banach space $(\mathcal{B}_b(\Rd),\norm{\,\cdot\,}_\infty)$, i.e.,
$P^M_s\circ P^M_t=P^M_{s+t}$ for all $s,t\ge0$, and $P^M_0f=f$.
Here, $\norm{\,\cdot\,}_\infty$ denotes the supremum norm on the space
$\mathcal{B}_b(\Rd)$.
The infinitesimal generator
$(\mathcal{A}^M,\mathcal{D}_{\mathcal{A}^M})$ of the semigroup
$\{P^M_t\}_{t\ge0}$ of a Markov process $\process{M}$ is a linear operator
$\mathcal{A}^M\colon\mathcal{D}_{\mathcal{A}^M}\longrightarrow \mathcal{B}_b(\Rd)$
defined by
\begin{align*}
\mathcal{A}^Mf &\,\df\,
\lim_{t\to 0}\,\frac{P^M_tf-f}{t}\,, \\
 f\in\mathcal{D}_{\mathcal{A}^M} &\,\df\,
\biggl\{f\in \mathcal{B}_b(\Rd)\,\colon
\lim_{t\to0}\frac{P^M_t f-f}{t} \ \text{exists in\ }
\norm{\,\cdot\,}_\infty\biggr\}\,.
\end{align*}
Let $C_b(\Rd)$ denote the space of continuous bounded functions.
A Markov process $\process{M}$ is 
called $C_b$-Feller process if its corresponding semigroup satisfies 
$P^M_t(C_b(\Rd))\subseteq
C_b(\Rd)$ for all $t\ge0$, and it is called a strong Feller
 process if $P^M_t(\mathcal{B}_b(\Rd))\subseteq C_b(\Rd)$ for all
$t>0$.

Recall that the extended domain of $\process{M}$, denoted by
$\mathcal{D}_{\Bar{\mathcal{A}}^M}$,  is defined as the set of
all $f\in\mathcal{B}(\Rd)$ such that
$f(M(t))-f(M(0))-\int_{0}^{t}g(M(s))\D{s}$ is
a local $\{\Prob^{x}\}_{x\in\Rd}$-martingale for some
$g\in\mathcal{B}(\Rd)$.
Let us remark that in general the function $g$ does not have to be
unique (see \cite[Page 24]{Ethier-Kurtz-Book-1986}). For
$f\in\mathcal{D}_{\Bar{\mathcal{A}}^M}$ we define
\begin{align*}
\Bar{\mathcal{A}}^Mf &\,\df\,\biggl\{g\in\mathcal{B}(\Rd)\,\colon
f(M(t))-f(M(0))-\int_{0}^{t}g(M(s))\,\D{s}
\text{\ \ is a local $\{\Prob^{x}\}_{x\in\Rd}$-martingale}\biggr\}.
 \end{align*}
We call $\Bar{\mathcal{A}}^M$ the extended generator of
$\process{M}$. A function $g\in\Bar{\mathcal{A}}^Mf$ is usually
abbreviated by $\Bar{\mathcal{A}}^Mf(x)\df g(x)$.
A well-known fact is that
if $(\mathcal{A}^M,\mathcal{D}_{\mathcal{A}^M})$ is the infinitesimal generator of 
$\process{M}$, then
$\mathcal{D}_{\mathcal{A}^M}\subseteq\mathcal{D}_{\Bar{\mathcal{A}}^M}$
and for $f\in\mathcal{D}_{\mathcal{A}^M}$ the function
$\mathcal{A}^Mf$ is contained in $\Bar{\mathcal{A}}^Mf$ (see
\cite[Proposition IV.1.7]{Ethier-Kurtz-Book-1986}). 
In the case of the process $\process{X}$,
it has been shown in \cite[Lemma~3.7]{Masuda-2007, Masuda-Erratum-2009} that
\begin{equation}\label{ext}
\mathcal{D} \,\df\, \biggl\{f\in C^{2}(\Rd)\,\colon
x\longmapsto \babss{\int_{\sB^c}f(x+y)\,\nu(\D{y})}
\text{\ is locally bounded}\biggr\}
\end{equation}
is a subset of $\mathcal{D}_{\Bar{\mathcal{A}}^X}$,
and on this set, for the function $\Bar{\mathcal{A}}^Xf(x)$ we can take exactly
$\Ag^Xf(x)$, where $\Ag^X$ is given by \cref{E1.2}.

%%%%%%%%%%%%%%%%%%%%%%%%%%%%%%%%%%%%%%%%%%%%%%%%%%%%%%%%%%%%%%%%%%%%%%%%%%%%%%%%
\section{Ergodic Properties}\label{S3}
We start by examining the irreducibility and aperiodicity of the process $\process{X}$
in \cref{E-sde}.
This is the topic of the following theorem whose proof can be found
in \cref{S-A}.

%%%%%%%%%%%%%%%%%%%%%%%%%%%%%%%%%%%%%%%%%%%%%%%%%%%%%%%%%%%%%%%%%%%%%%%%%%%%%%%%
\begin{thm}\label{T3.1}
Suppose that one of the following four conditions holds.
\begin{enumerate}[wide]
\item[\ttup i]
 $\nu(\Rd)<\infty$, 
and for every $R>0$ there exists $c_R>0$ such that
\begin{equation*}
\langle y,a(x)y\rangle \,\ge\, c_R \abs{y}^2,
\qquad x,y\in\Rd,\ \abs{x},\abs{y}\le R\,.
\end{equation*}

\item[\ttup{ii}]
$\nu(O)>0$ for any non-empty open set $O\subseteq\sB$, and
$\upsigma\colon\Rd\to\RR^{d\times d}$ is Lipschitz continuous and
invertible for any $x\in\Rd$, satisfying
\begin{equation*}
\delta\df\sup_{x\in\Rd}\,\bnorm{\upsigma^{-1}(x)}\,>\,0\,.
\end{equation*}

\item[\ttup{iii}] $\upsigma(x)\equiv \upsigma$ and $\process{L}$
is of the form $L(t)=L_1(t)+L_2(t)$, $t\ge0$, where $\process{L_1}$ and $\process{L_2}$
are independent $d$-dimensional pure-jump L\'evy processes,
such that $\process{L_1}$ is 
a subordinate Brownian motion. 

\item[\ttup{iv}] $\upsigma(x)\equiv 0$ and $\process{L}$
is of the form $L(t)=L_1(t)+L_2(t)$, $t\ge0$, where $\process{L_1}$ and $\process{L_2}$
are independent $d$-dimensional pure-jump L\'evy processes, such that
$\process{L_1}$ is an anisotropic L\'evy process
with independent symmetric one-dimensional $\alpha$-stable components
for $\alpha \in (0,2)$, and $\process{L_2}$ is a compound Poisson process. 
\end{enumerate}
Then the process $\process{X}$ is open-set irreducible and aperiodic.
\end{thm}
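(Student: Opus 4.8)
\emph{Overall strategy.} The plan is to reduce both assertions to the single positivity statement that $P^X_t(x,O)>0$ for every $t>0$, every $x\in\Rd$ and every nonempty open set $O\subseteq\Rd$. This yields open-set irreducibility directly: Lebesgue measure is then an irreducibility measure, so the maximal irreducibility measure $\psi$ dominates it and is fully supported. Since the positivity holds for \emph{all} $t>0$, every skeleton chain $\{X(nt_0)\}_{n\ge0}$ inherits it, and in combination with the $C_b$-Feller property of $\process{X}$ together with a strong-Feller-type component established along the way --- a strictly positive transition density in cases \ttup i--\ttup{iii}, and an absolute-continuity statement for the law of $X(t)$ in case \ttup{iv}, sketched below --- this makes each skeleton a $\psi$-irreducible $T$-chain whose irreducibility measure is fully supported, hence aperiodic, so $\process{X}$ is aperiodic by the standard theory of \cite{Tweedie-1994}. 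To prove the positivity I would, in each case, isolate a driving component whose increments or jumps can be steered so as to push $X$ into $O$, and combine this with a support/controllability analysis of the control system associated with \cref{E-sde}; since $b$ in \ttup{A1} is piecewise linear, hence globally Lipschitz, for a fixed (``frozen'') noise path the corresponding solution map of \cref{E-sde} is continuous, and topological supports pull back along it.

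\emph{Cases \ttup i, \ttup{ii}, \ttup{iii}.} In \ttup i the measure $\nu$ is finite, so $\process{L}$ is a compound Poisson process; conditioning on no jumps in $[0,t]$ (an event of probability $\E^{-\nu(\Rd)t}>0$), $\process{X}$ coincides with the diffusion $\D Y=b(Y)\D t+\upsigma(Y)\D W$, which is locally uniformly elliptic with globally Lipschitz drift. The Stroock--Varadhan support theorem, together with controllability of $\dot\phi=b(\phi)+\upsigma(\phi)u$ (full rank of $\upsigma$ lets the control reach, by chaining over short intervals, any point of $\Rd$), shows that $Y(t)$ has a strictly positive transition density, hence so does $P^X_t(x,\cdot)$. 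In \ttup{ii} the measure $\nu$ may be infinite but $\upsigma$ is invertible at every point; I would split $\process{L}$ into its jumps of size $\le\rho$ and a compound Poisson remainder, condition on the latter having no jump in $[0,t]$, and apply a support theorem for SDEs with jumps possessing a nondegenerate diffusion coefficient; the associated control system is again controllable by invertibility of $\upsigma$, the hypothesis $\nu(O)>0$ for open $O\subseteq\sB$ guaranteeing that the jump-controls appearing in that theorem are admissible, and positivity (and a density) coming from the Gaussian part. In \ttup{iii}, $\upsigma(x)\equiv\upsigma$ is constant and $\process{L}=\process{L_1}+\process{L_2}$ with $\process{L_1}$ a subordinate Brownian motion; unless $\process{L_1}$ is itself a Brownian motion, $L_1(t)$ is either absolutely continuous with a strictly positive, rotationally invariant density on $\Rd$ or a compound Poisson process whose jump law is a rotationally invariant mixture of Gaussians, so its L\'evy measure has the form $g(\abs y)\,\D y$ with $g>0$ on $(0,\infty)$ and full support; conditioning suitably on the jump structure of $\process{L_2}$ (splitting off its large jumps) and using the fullness of the density or L\'evy measure of $\process{L_1}$ together with the drift flow then gives $P^X_t(x,O)>0$ with a positive density lower bound, controllability here being immediate since jumps in every direction are available.

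\emph{Case \ttup{iv}.} Now $\upsigma\equiv0$ and $\process{L_1}$ has the highly singular L\'evy measure $\nu_1(\D y)=\sum_{i=1}^d c_i\abs{y_i}^{-1-\alpha}\,\D y_i\otimes\delta_0(\D y_{-i})$, supported on the union $\bigcup_{i=1}^d\RR e_i$ of the coordinate axes, and there is no Gaussian smoothing; this is the case that demands real work (the bulk of \cref{S-A}). Since $\process{L_2}$ is compound Poisson, I would condition on its having no jump in $[0,t]$ and write $X(t)=\Phi_t\bigl(x,L_1|_{[0,t]}\bigr)$ with $\Phi_t$ continuous on $(D^d,M_1)$, then describe the topological support of the law of $L_1|_{[0,t]}$ as the closure of the set of c\`adl\`ag paths built from axis-directed jumps. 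The heart of the matter is a controllability lemma: flowing along $b$ and inserting finitely many axis-jumps reaches a dense subset of $\Rd$ --- even though each individual jump is confined to a single coordinate axis, the off-diagonal structure of $M+(\varGamma-M)ve'$ couples the coordinates through the drift, and, since all $d$ axis-directions are available, a short cluster of successive axis-jumps already spans $\Rd$. Pulling an open target $O$ back through the continuous map $\Phi_t$ then produces an open set of driving paths of positive probability, positivity being supplied by the strictly positive densities of the one-dimensional symmetric $\alpha$-stable components of $L_1(t)$.

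\emph{The main obstacle.} I expect case \ttup{iv} to be where the difficulty concentrates: one must make precise the path-space support description of an anisotropic stable process whose L\'evy measure lives on a lower-dimensional set, prove controllability of the drift-plus-axis-jump system uniformly enough to conclude, and --- for aperiodicity in this case --- very likely upgrade the positivity to absolute continuity of the law of $X(t)$, plausibly through a short-time contraction estimate for $\Phi_t(x,\cdot)$ combined with the Markov property, since the non-smooth drift $b$ and the absence of a Gaussian component preclude the usual parabolic-regularity route. By contrast, \ttup i--\ttup{iii} amount to applications of classical support theorems once the relevant control systems are recognized as controllable.
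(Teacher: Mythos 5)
Your overall skeleton---reduce both properties to the positivity $P^X_t(x,O)>0$ for all $t>0$, $x\in\Rd$ and open $O$, plus a regularizing (strong-Feller-type) ingredient that lets \cite[Theorem~3.2]{Tweedie-1994} upgrade this to full-support $\psi$-irreducibility and aperiodicity---is exactly the paper's. Where you genuinely diverge is in how positivity is obtained: you propose path-space support theorems for the driving noise combined with controllability of the associated control system, whereas the paper argues through the generator and never needs a support theorem. Concretely, for $f\in C_c^2(\Rd)$ supported in a small ball around a target $y_0$ and $x\notin\support f$, all local terms of $\Ag^Xf(x)$ vanish, so $\Ag^Xf(x)=\int_{\Rds}f(x+y)\,\nu(\D y)\ge\nu\bigl(\sB_\rho(y_0-x)\bigr)>0$; the uniform convergence of $\nicefrac{(P^X_tf-f)}{t}$ to $\Ag^Xf$ then gives $P^X_t\bigl(x,\sB_\rho(y_0)\bigr)>0$ for small $t$, and Chapman--Kolmogorov chains such steps (in case \ttup{iv} one coordinate axis at a time, with an inductive shrinking of balls around the origin playing the role of your ``cluster of axis-directed jumps''). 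Your route is viable in principle---the $M_1$-continuity of the relevant integral map is even proved in the paper for other purposes---but it is heavier machinery for the same conclusion, and in case \ttup{ii} you have the roles reversed: the hypothesis $\nu(O)>0$ on $\sB$ is what the paper uses to \emph{move} the process, while the Gaussian part is used only for regularity.

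The genuine gap is the regularity ingredient, which is where the paper's appendix concentrates its effort and which your proposal leaves as a placeholder precisely in the hardest case. In case \ttup{ii} the paper proves the strong Feller property of the process stripped of its large jumps via mollified coefficients, the gradient bound of \cite[Lemma~2.3]{Kwon-Lee-1999}, and a Gronwall argument; in case \ttup{iv} it proves the strong Feller property of $\Hat X$ (driven by $L_1$ alone) via Duhamel's formula $P^{\Hat X}_tf=P^{L_1}_tf+\int_0^tP^{\Hat X}_{t-s}\bigl\langle b,\nabla P^{L_1}_sf\bigr\rangle\,\D s$ combined with the gradient estimates of \cite{Kusuoka-Marinelli-2014} for the product of one-dimensional stable semigroups. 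Your substitute, ``absolute continuity of the law of $X(t)$, plausibly through a short-time contraction estimate for $\Phi_t(x,\cdot)$,'' does not work as stated: with additive pure-jump noise and a merely Lipschitz drift, $\int_0^tb(X(s))\,\D s$ is a functional of the entire noise path, so absolute continuity of $L_1(t)$ does not transfer to $X(t)$ by any contraction or convolution argument; and without some such regularization, positivity on open sets alone cannot produce an irreducible skeleton, since no $\sigma$-finite $\phi$ can have the property that every Borel set of positive $\phi$-measure contains an open set. A smaller inaccuracy: in case \ttup{iii} with $\upsigma$ possibly zero and $L_1$ a compound-Poisson subordinate Brownian motion, $P^X_t(x,\cdot)$ has an atom on the no-jump event, so your claimed ``strictly positive transition density'' fails; what holds, and suffices, is a minorization by a kernel with an absolutely continuous component, which the paper obtains by citing the Harnack-inequality results of Wang--Wang for the strong Feller property.
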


Recall that a L\'evy process $\process{L}$ is a $d$-dimensional subordinate
Brownian motion if it is of the form $L(t)=W(S(t))$, $t\ge0$, where $\process{W}$
is a $d$-dimensional Brownian motion and $\process{S}$ is a subordinator
(a one-dimensional non-negative increasing L\'evy process with $S(0)=0$)
independent of $\process{W}$.
Moreover, any isotropic
$\alpha$-stable L\'evy process can be obtained as a subordinate Brownian motion with
$\nicefrac{\alpha}{2}$-stable subordinator, hence part (iii) of \cref{T3.1} includes 
a $d$-dimensional
isotropic stable L\'evy process as a special case.
We also note that in \cref{T3.1}\,(iii), the component $\process{L_2}$ can be
any pure-jump L\'evy process or vanish, and 
in addition, we require that $\upsigma(x)$ is constant, but it can either be a
$d\times n$ or $d\times d$ singular or non-singular matrix, and it can vanish. 
In the interest of brevity, we often refer to the process $\process{L_1}$ in
\cref{T3.1}~(iv) as the anisotropic $\alpha$-stable process.
Unless otherwise specified, by an $\alpha$-stable process we refer to both the
isotropic and anisotropic models.

We remark that the hypotheses in \cref{T3.1} include a broader class of
processes $\process{X}$ than those encountered in multiclass many-server queues
described in \cref{S4}.

We continue with the main results of the paper concerning the
ergodicity of the process $\process{X}$ in \eqref{E-sde}.
We present four theorems whose proofs can be found in \cref{S5}.
In all these theorems, the hypotheses of \cref{T3.1}
are granted in order to
guarantee that $\process{X}$ is open-set irreducible and aperiodic.
This is important when applying the Foster--Lyapunov drift condition in
\cref{ET3.2C}, \cref{ET3.2F} and \cref{ET3.5B} in order to conclude
\cref{ET3.2D}, \cref{ET3.2G} and \cref{ET3.5C}, respectively 
(see \cite[Theorem~3.2]{Douc-Fort-Guilin-2009}
and \cite[Theorem 5.2]{Down-Meyn-Tweedie-1995}).

We start by introducing the following notation.

%%%%%%%%%%%%%%%%%%%%%%%%%%%%%%%%%%%%%%%%%%%%%%%%%%%%%%%%%%%%%%%%%%%%%%%%%%%%%%%%
\begin{notation}\label{N3.1}
For a vector $z\in\Rd$, we write $z\ge0$ ($z>0$) to indicate that
all the components of $z$ are nonnegative (positive), and analogously
for a matrix in $\RR^{d\times d}$.
The notation $z\lneqq 0$ stands for $-z\ge0$ and $z\ne0$.
For a symmetric matrix $S\in\RR^{d\times d}$, we write $S\succeq 0$ ($S\succ 0$)
to indicate that it is positive semidefinite (positive definite),
and we let $\cM_+$ denote the class of positive definite
symmetric matrices in $\RR^{d\times d}$. 
For $Q\in\cM_+$, we let
$\norm{x}_Q \df \langle x,Qx\rangle^{\nicefrac{1}{2}}$
for $x\in\Rd$.
Let $\Hat\phi(x)$ be some fixed positive, convex smooth function which agrees
with $\norm{x}_Q$ on the complement of the unit ball
centered at $0$ in $\Rd$.
For $\delta>0$, we define $V_{Q,\delta}(x)\df\bigl(\Hat\phi(x)\bigr)^\delta$,
and $\widetilde{V}_{Q,\delta}(x)\df\E^{\delta\Hat\phi(x)}$.
For $r>0$, we let $\uptau_r$ denote the first hitting time
of $\sB_r$,
and $\uuptau_r$ the first hitting time of $\sB_r^c$.
Recall also that a continuous function $V\colon\Rd\to\RR$
is called inf-compact if the set
$\bigl\{x\,\colon V(x)\le r\bigr\}$
is compact (or empty) for all $r\in\RR$.
By $\cP_p(\Rd)$, $p>0$, we denote the subset of $\cP(\Rd)$
containing all probability measures $\mu(\D{x})$ with the property that
$\int_{\Rd}\abs{x}^p\mu(\D{x})<\infty$. 
We let $\cK_\delta\subset\Rd$, $\delta>0$, stand for the cone
\begin{equation}\label{E-cone}
\cK_{\delta} \,\df\, \bigl\{ x\in\Rd\,\colon \langle e,x\rangle
> \delta \abs{x}\bigr\}\,. 
\end{equation}
\end{notation}
 
In the multiclass queueing context, the theorem that follows concerns the
case where the jobs do not abandon the queue, or more generally
when those jobs that abandon the queue are
given higher priority in service than those that do not
(i.e, when not all $\gamma_i$'s are positive, then $v_i$ must be equal to $0$
if $\gamma_i>0$).
The L\'evy process here refers to any, or a combination, of processes
in \cref{T3.1}.
Recall the definitions in \cref{E-thetac,E-ell,E-tvarrho}.

%%%%%%%%%%%%%%%%%%%%%%%%%%%%%%%%%%%%%%%%%%%%%%%%%%%%%%%%%%%%%%%%%%%%%%%%%%%%%%%%
\begin{thm}\label{T3.2}
Assume the hypotheses of \cref{T3.1}, $1\in\Theta_c$, and suppose $\varGamma v=0$,
with $v\in\varDelta$.
Then, provided that $\Tilde\varrho>0$, the following hold.
\begin{enumerate}[wide]
\item[\ttup{i}]
Suppose
\begin{equation}\label{ET3.2A}
\limsup_{\abs{x}\to\infty}\;\frac{\norm{a(x)}}{\abs{x}}\,<\,\infty\,.
\end{equation}
Then, there exist $Q\in\cM_+$, depending on $v$, and positive
constants $c_0=c_0(\theta)$, $c_1$, and $\delta$, such that
for any $\theta\in\Theta_c$, $\theta\ge1$, we have
\begin{equation}\label{ET3.2C}
\Ag^X V_{Q,\theta} (x) \,\le\, c_0(\theta)
- c_1 V_{Q,\theta}(x)\Ind_{\cK_\delta^c}(x)
- c_1 V_{Q,(\theta-1)}(x)\Ind_{\cK_\delta}(x)
\end{equation}
for all $x\in\Rd$.
The process $\process{X}$ admits a unique invariant probability measure
$\overline\uppi\in\cP(\Rd)$, and satisfies 
\begin{equation}\label{ET3.2D}
\lim_{t\to\infty}\;t^{\theta-1}\,\bnorm{\uppi P^X_t(\D{y})
-\overline\uppi(\D{y})}_\tv\,=\,0\,,\qquad \uppi\in\cP_\theta(\Rd)\,.
\end{equation}
In addition, when $\theta=1$, then \cref{ET3.2D} holds for
any $\uppi\in\mathcal{P}(\Rd)$.

\item[\ttup{ii}] 
If $\upsigma(x)$ is bounded and
\begin{equation}\label{ET3.2E}
\int_{\sB^c}\E^{\theta\abs{y}}\nu(\D{y})\,<\,\infty\,
\end{equation}
for some $\theta>0$, then there exist $Q\in\cM_+$ and positive constants
$\Tilde{c}_0$, $\Tilde{c}_1$ such that
\begin{equation}\label{ET3.2F}
\Ag^X \widetilde{V}_{Q,p} (x) \,\le\, \Tilde{c}_0
- \Tilde{c}_1 \widetilde{V}_{Q,p} (x)\,,\qquad x\in\Rd\,,
\end{equation} where $0<p<\theta\norm{Q}^{-\nicefrac{1}{2}}$.
The process $\process{X}$
admits a unique invariant probability measure $\overline\uppi\in\cP(\Rd)$,
and for any $\gamma\in(0, c_1)$ there exists
a positive constant $C_\gamma$ such that
\begin{equation}\label{ET3.2G}
\bnorm{\delta_x P^X_t(\D{y})-\overline\uppi(\D{y})}_{\widetilde{V}_{Q,p}}\,\le\,
C_\gamma \widetilde{V}_{Q,p}(x)\, \E^{-\gamma t}\,,\quad x\in\Rd\,,\ 
t\ge0\,.
\end{equation}
\end{enumerate} 
\end{thm}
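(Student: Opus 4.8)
The plan is to establish the two stated Foster--Lyapunov inequalities, \cref{ET3.2C} and \cref{ET3.2F}, by direct computation with $\Ag^X$ applied to the test functions $V_{Q,\theta}$ and $\widetilde V_{Q,p}$, and then to invoke the abstract subexponential/exponential ergodicity results (\cite[Theorem~3.2]{Douc-Fort-Guilin-2009} for part (i) and \cite[Theorem~5.2]{Down-Meyn-Tweedie-1995} for part (ii)), using that open-set irreducibility and aperiodicity come for free from \cref{T3.1}. The key structural input is the piecewise linear drift: for $e'x>0$ the drift is $\ell-(M+(\varGamma-M)ve')x+\vartheta$, and since $\varGamma v=0$ this collapses to $\Tilde\ell - Mx - (e'x)\,\text{(lower order)}$ after absorbing $\vartheta$ and the truncated L\'evy mean into $\Tilde\ell$ as in \cref{E-ell}. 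I would first choose $Q\in\cM_+$ adapted to $v$: the natural choice is to pick $Q$ so that $\norm{\cdot}_Q$ is a Lyapunov function for the deterministic linear part, i.e. $QM + M'Q \succ 0$ on the relevant region, which is possible because $M$ is a nonsingular M-matrix (so $-M$ is Hurwitz and even has a diagonal Lyapunov solution); one also needs $\langle e, M^{-1}\Tilde\ell\rangle<0$, i.e. $\Tilde\varrho>0$, to control the drift inside the cone $\cK_\delta$.

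For part (i), I would split the analysis into the region $\cK_\delta$ (where $e'x$ is comparable to $\abs{x}$, so the extra term $-(e'x)\varGamma v x$ would matter — but it vanishes here) and $\cK_\delta^c$. Writing $V_{Q,\theta}=\Hat\phi^\theta$ with $\Hat\phi=\norm{\cdot}_Q$ outside the unit ball, the diffusion term contributes $O(\norm{a(x)}\abs{x}^{\theta-2})=O(\abs{x}^{\theta-1})$ by \cref{ET3.2A}; the gradient term $\langle b(x)+\vartheta,\nabla V_{Q,\theta}\rangle$ contributes a negative leading term of order $\abs{x}^\theta$ from $-Mx$ in $\cK_\delta^c$ and a term whose sign on $\cK_\delta$ is governed by $\langle e,\Tilde\ell\rangle$-type quantities — this is where $\Tilde\varrho>0$ enters, forcing it negative of order $\abs{x}^{\theta-1}$ (only linear gain in the cone because the drift there is essentially balanced). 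For the jump term $\int_{\Rds}\mathfrak d_1 V_{Q,\theta}(x;y)\,\nu(\D y)$ I would use convexity of $\Hat\phi^\theta$, a Taylor estimate for small $\abs{y}$ giving an $O(\abs{x}^{\theta-2}\int_\sB\abs{y}^2\nu(\D y))$ contribution, and for large $\abs{y}$ the bound $\Hat\phi(x+y)^\theta\le C(\Hat\phi(x)^\theta + \abs{y}^\theta)$ together with $\theta\in\Theta_c$ to get a bounded-plus-$\epsilon V_{Q,\theta}$ contribution; the cross term from the truncated mean $\int_{\sB^c}y\,\nu(\D y)$ is exactly why $\ell$ must be replaced by $\Tilde\ell$. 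Combining these across the two regions yields \cref{ET3.2C} with the stated $\Ind_{\cK_\delta}$ / $\Ind_{\cK_\delta^c}$ dichotomy. The polynomial rate $t^{\theta-1}$ and the validity for $\uppi\in\cP_\theta(\Rd)$ then follow from the Douc--Fort--Guillin machinery applied with the pair of Lyapunov functions $(V_{Q,\theta},V_{Q,\theta-1})$, the $\theta=1$ case being the degenerate one where \cref{ET3.2C} reduces to a standard drift-to-a-petite-set condition giving a rate function and hence convergence from any initial law.

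For part (ii), the computation is analogous but with $\widetilde V_{Q,p}=\E^{p\Hat\phi}$: here $\nabla\widetilde V_{Q,p}=p\widetilde V_{Q,p}\nabla\Hat\phi$ and $\nabla^2\widetilde V_{Q,p}=p\widetilde V_{Q,p}(\nabla\Hat\phi\otimes\nabla\Hat\phi + p^{-1}\cdots)$, so boundedness of $\upsigma$ keeps the diffusion term $O(p^2\widetilde V_{Q,p})$, the drift gives $-p\langle Mx,\nabla\Hat\phi\rangle\widetilde V_{Q,p}\asymp -p\abs{x}\widetilde V_{Q,p}$, and the jump term is handled by $\mathfrak d_1\widetilde V_{Q,p}(x;y)\le \widetilde V_{Q,p}(x)(\E^{p\abs{y}}-1-\Ind_\sB p\langle y,\nabla\Hat\phi\rangle)$ using $\Hat\phi(x+y)\le\Hat\phi(x)+\norm{Q}^{\nicefrac12}\abs{y}$; the integrability \cref{ET3.2E} with $p\norm{Q}^{\nicefrac12}<\theta$ makes $\int(\E^{p\norm{Q}^{\nicefrac12}\abs{y}}-1-\cdots)\nu(\D y)$ finite, so this contributes a bounded multiple of $\widetilde V_{Q,p}$. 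The dominant $-p\abs{x}\widetilde V_{Q,p}$ term beats everything outside a compact set, giving \cref{ET3.2F}, and \cite[Theorem~5.2]{Down-Meyn-Tweedie-1995} yields the geometric bound \cref{ET3.2G} in $\widetilde V_{Q,p}$-norm.

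I expect the main obstacle to be the estimate inside the cone $\cK_\delta$ in part (i): there the linear part of the drift nearly balances, so one only gets linear (order $\abs{x}^{\theta-1}$) rather than quadratic decay, and one must carefully exploit $\Tilde\varrho=-\langle e,M^{-1}\Tilde\ell\rangle>0$ — equivalently choose the direction defining $\cK_\delta$ and the matrix $Q$ so that $\langle \nabla\Hat\phi(x),\Tilde\ell\rangle<0$ uniformly on $\cK_\delta$ — while simultaneously absorbing the large-jump L\'evy contribution (which is genuinely of order $V_{Q,\theta}$ times $\int_{\sB^c}\abs{y}^\theta\nu/\abs{x}$ and only small because $\abs{x}$ is large). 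Reconciling these region-dependent bounds into a single clean inequality with the indicator structure, and verifying the hypotheses of the Douc--Fort--Guillin theorem (in particular that sublevel sets of $V_{Q,\theta}$ are petite, which follows from open-set irreducibility plus the $C_b$-Feller property), is the technical heart of the argument.
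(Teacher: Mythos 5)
Your overall architecture does match the paper's: the Lyapunov pair $(V_{Q,\theta},\widetilde V_{Q,p})$, the cone decomposition $\cK_\delta$ versus $\cK_\delta^c$, the recentering of the drift by $\vartheta+\int_{\sB^c}y\,\nu(\D y)$, and the appeal to Douc--Fort--Guillin and Down--Meyn--Tweedie. But two steps are genuinely incomplete. The first is the choice of $Q$. Requiring only $QM+M'Q\succ0$ (e.g.\ a diagonal Lyapunov solution for the M-matrix) is not enough: on $\cK_\delta$ the drift matrix is $M(\mathbb{I}-ve')$ (since $\varGamma v=0$), which annihilates $v$, so you need $Q$ to satisfy \emph{simultaneously} $QM(\mathbb{I}-ve')+(\mathbb{I}-ev')M'Q\succeq0$; the existence of such a common $Q$ is the nontrivial content of \cite[Theorem~2]{DG13} (\cref{L5.5}). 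Moreover, the negativity $\langle\Tilde\ell,Qv\rangle<0$ cannot be arranged by tuning the cone or by asking that $\langle\nabla\Hat\phi(x),\Tilde\ell\rangle<0$ uniformly on $\cK_\delta$ (which is false for generic $x$ in the cone): it is \emph{forced} by the identity $v'Q=\langle v,QMv\rangle\, e'M^{-1}$, itself a consequence of the second Lyapunov condition, and this is exactly where $\Tilde\varrho=-\langle e,M^{-1}\Tilde\ell\rangle>0$ enters. The actual cone estimate then combines a quadratic gain in the component of $x$ orthogonal to $v$ with the linear gain $-\zeta\langle e,x\rangle\le-\zeta\delta\abs{x}$.

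The second, more serious, gap is your large-jump estimate. After recentering, the remaining nonlocal term is $\int\mathfrak{d}\,V_{Q,\theta}(x;y)\,\nu(\D y)$, and the bound you propose yields a contribution of order $V_{Q,\theta}(x)\int_{\sB^c}\abs{y}^\theta\nu(\D y)\big/\abs{x}\asymp\Breve{C}_0(1;\theta)\,\abs{x}^{\theta-1}$. This is of \emph{exactly} the same order as the only negative term available on $\cK_\delta$, namely $-\kappa_1'\,V_{Q,\theta-1}(x)\asymp-\kappa_1'\abs{x}^{\theta-1}$, and there is no free parameter to make $\Breve{C}_0(1;\theta)$ smaller than $\kappa_1'$ (the case $\theta=1$, where the cone gain is only a constant, makes the failure most transparent). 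The paper closes this via \cref{L5.1}: split the jump integral at a radius $r(\abs{x})$ growing sublinearly and calibrated so that the tail mass $\Breve{C}_0(r(\abs{x});\theta)\to0$; this shows that $\mathfrak{J}_\nu[V_{Q,\theta}]$ in fact \emph{vanishes at infinity} for $\theta\in[1,2)$ and is $O\bigl(\abs{x}^{\theta-2}\bigr)$ for $\theta\ge2$, i.e.\ it is of strictly lower order than $\abs{x}^{\theta-1}$ and can be absorbed. Without this refinement \cref{ET3.2C} is not established on the cone. Your treatment of part (ii) is essentially the paper's (its \cref{L5.2}).
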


%%%%%%%%%%%%%%%%%%%%%%%%%%%%%%%%%%%%%%%%%%%%%%%%%%%%%%%%%%%%%%%%%%%%%%%%%%%%%%%%
\begin{remark}
Note that $\Tilde\varrho>0$ is always
satisfied if $\Tilde{\ell}\lneqq 0$.
This is because $M^{-1}$ is a positive matrix (see p.~1307 in \cite{DG13}).
The same is true if $M$ is diagonal matrix with positive
diagonal elements.
\end{remark}

The assumption $\Tilde\varrho>0$ in \cref{T3.2} is rather sharp
as the following theorem shows.

%%%%%%%%%%%%%%%%%%%%%%%%%%%%%%%%%%%%%%%%%%%%%%%%%%%%%%%%%%%%%%%%%%%%%%%%%%%%%%%%
\begin{thm}\label{T3.3}
Suppose that \eqref{E-sde} is driven
by any or a combination of \ttup{a}--\ttup{c} below, while conforming to
\ttup{i}--\ttup{iv} of \cref{T3.1}.
\begin{enumerate}[wide]
\item[\ttup a]
A Brownian motion with $\upsigma(x)$ bounded.

\item[\ttup b]
A L\'evy process $L(t)$ which is either an anisotropic process
with independent symmetric one-dimensional $\alpha$-stable components,
or an $\alpha$-stable process, with $\alpha\in(1,2)$.

\item[\ttup c]
A L\'evy process with a finite L\'evy measure $\nu(\D{y})$, supported on a half-line
in $\Rd$ of the form $\{tw\,\colon t\in[0,\infty)\}$, and with $1\in\Theta_c$.
\end{enumerate}
Under these hypotheses, if $\Tilde\varrho<0$ 
\textup{(}$\Tilde\varrho=0$\textup{)}, then
the process $\process{X}$ is transient 
\textup{(}cannot be positive recurrent\textup{)}
under any Markov control $v(x)$ satisfying $\varGamma v(x)=0$ a.e. 
\end{thm}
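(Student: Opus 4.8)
The plan is to pair the state equation \cref{E-sde} with the covector $\xi\df(M')^{-1}e$, that is, to study the scalar $g(x)\df\langle\xi,x\rangle$. Since $M^{-1}$ is entrywise nonnegative and nonsingular (see p.~1307 in \cite{DG13}), the components of $\xi$ are strictly positive, and $M'\xi=e$ by construction; moreover, since $1\in\Theta_c$ (granted in \ttup b--\ttup c, and vacuous when only a Brownian motion is present) and $\nu(\sB^c)<\infty$ for each of the driving processes in \ttup a--\ttup c, the affine map $x\mapsto\babs{\int_{\sB^c}g(x+y)\,\nu(\D y)}$ is locally bounded, so $g$ lies in the extended domain \cref{ext}. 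The first, and central, step is the evaluation of $\Ag^X g$ via \cref{E1.2}: with $\nabla g\equiv\xi$, $\nabla^2 g\equiv0$, $\mathfrak{d}_1 g(x;y)=\Ind_{\sB^c}(y)\langle\xi,y\rangle$ and $M'\xi=e$, and using that $\langle e,v(x)\rangle=1$ for every $x$ (because $v(x)\in\varDelta$) while $\varGamma v(x)=0$ for a.e.\ $x$, the drift term collapses, for a.e.\ $x$, to $\langle b(x),\xi\rangle=\langle\ell,\xi\rangle-\langle e,x\rangle+\langle e,x\rangle^+=\langle\ell,\xi\rangle+\langle e,x\rangle^-$---in particular it no longer depends on the control---and, recalling \cref{E-ell,E-tvarrho} together with $\langle\xi,\Tilde\ell\,\rangle=\langle e,M^{-1}\Tilde\ell\,\rangle=-\Tilde\varrho$, one arrives at
\begin{equation*}
\Ag^X g(x)\,=\,\langle e,x\rangle^- - \Tilde\varrho\,,\qquad\text{for a.e.\ }x\in\Rd\,.
\end{equation*}
The hypothesis $\varGamma v(x)=0$ is decisive here: it renders the right-hand side control-free and bounded below by $-\Tilde\varrho$.

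Next I would pass to the pathwise statement. Pairing \cref{E-sde} with $\xi$ and centering the jumps of $\process{L}$---whose mean $\Exp[L(1)]=\vartheta+\int_{\sB^c}y\,\nu(\D y)$ is finite because $1\in\Theta_c$---yields
\begin{equation*}
g(X(t))\,=\,g(x)+\int_0^t\bigl(\langle e,X(s)\rangle^- - \Tilde\varrho\bigr)\,\D s+N(t)\,,\qquad
N(t)\,\df\,\int_0^t\langle\xi,\upsigma(X(s))\,\D W(s)\rangle+\langle\xi,L(t)-t\,\Exp[L(1)]\rangle\,,
\end{equation*}
the generator identity being used inside the Lebesgue integral, which is licit since the occupation measure of $\process{X}$ is absolutely continuous (each admissible noise carries a nondegenerate component). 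As $\upsigma$ is bounded (or null) under the hypotheses, the continuous martingale in $N$ has quadratic variation at most a constant multiple of $t$ and so is $\sorder(t)$ a.s.; and the strong law of large numbers for the L\'evy process $\process{L}$, valid as $\Exp\abs{L(1)}<\infty$, gives $\langle\xi,L(t)-t\Exp[L(1)]\rangle=\sorder(t)$ a.s. Hence $N(t)=\sorder(t)$, $\Prob^x$-a.s., for every $x$.

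Both assertions then follow. If $\Tilde\varrho<0$, the integrand in the decomposition is $\geq-\Tilde\varrho>0$ for a.e.\ $x$, whence $g(X(t))\geq g(x)+(-\Tilde\varrho)\,t+N(t)\to+\infty$ and therefore $\abs{X(t)}\to\infty$, $\Prob^x$-a.s., from \emph{every} $x\in\Rd$. Thus $\process{X}$ occupies each bounded set only for an a.s.\ finite total time and hence cannot be recurrent; since it is $\psi$-irreducible under the hypotheses (cf.\ \cref{T3.1}, the irreducibility results applying to measurable controls as well), it is transient by the transience/recurrence dichotomy recorded in \cref{S2}. If $\Tilde\varrho=0$, assume for contradiction that $\process{X}$ is positive recurrent, with invariant probability measure $\overline\uppi$. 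By \cref{T3.1} it is open-set irreducible, so its maximal irreducibility measure is fully supported; as $\overline\uppi$ is equivalent to the latter, $\overline\uppi$ is fully supported too, and in particular $\overline\uppi\bigl(\{x\colon\langle e,x\rangle<0\}\bigr)>0$, so $\overline\uppi(\langle e,\cdot\rangle^-)\in(0,+\infty]$. Now the decomposition reads $g(X(t))=g(x)+\int_0^t\langle e,X(s)\rangle^-\,\D s+N(t)$, and by the ergodic theorem for positive recurrent Markov processes $t^{-1}\int_0^t\langle e,X(s)\rangle^-\,\D s$ converges $\Prob^x$-a.s.\ (for $\overline\uppi$-a.e.\ $x$) to $\overline\uppi(\langle e,\cdot\rangle^-)>0$; together with $N(t)=\sorder(t)$ this forces $g(X(t))\to+\infty$, hence $\abs{X(t)}\to\infty$, $\Prob^x$-a.s. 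But a recurrent process visits a fixed bounded set at arbitrarily large times a.s., so $\liminf_{t\to\infty}\abs{X(t)}<\infty$ a.s.---a contradiction. Therefore $\process{X}$ cannot be positive recurrent.

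I expect the main difficulty to lie not in the computation but in the accompanying soft analysis: verifying that the unbounded linear functional $g$ may be used (that it belongs to the extended domain, and that the a.e.\ identity for $\Ag^X g$ may be carried into the stochastic-integral representation under a merely measurable control, which relies on absolute continuity of the occupation measure and hence on the nondegenerate component of each admissible noise); and, in the borderline case $\Tilde\varrho=0$, in assembling the ergodic-theoretic input---extracting from open-set irreducibility that $\overline\uppi$ charges $\{\langle e,x\rangle<0\}$, and accommodating the possibility $\overline\uppi(\langle e,\cdot\rangle^-)=+\infty$. The identity for $\Ag^X g$ and the law-of-large-numbers bound on $N$ are routine once the explicit form of the drift is exploited.
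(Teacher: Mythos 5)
Your argument is sound and reaches both conclusions, but it is a genuinely different route from the paper's. Both proofs hinge on the same covector $\tw=(M^{-1})'e$ and the same computation showing that pairing the drift with $\tw$ eliminates the control whenever $\varGamma v(x)=0$, leaving $-\Tilde\varrho+\langle e,x\rangle^-$. The paper, however, deliberately composes $\Hat{h}(x)=\langle\tw,x\rangle$ with the \emph{bounded} increasing function $G$ and works with $V_\beta(x)=G\bigl(\Hat{h}(\beta x)\bigr)$: the bulk of its proof consists of the estimates \cref{PT3.3B}--\cref{PT3.3H} (and the separate analysis of $H_\beta$ for case (c)) needed to show that the nonlocal part of $\Ag^X V_\beta$ is small relative to the drift term for small $\beta$, after which transience follows from a bounded-submartingale/constancy argument (cf.\ Stramer--Tweedie), and the case $\Tilde\varrho=0$ is handled by integrating the generator identity against the putative invariant measure and letting $\beta\searrow0$. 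You instead keep the unbounded linear functional $g=\Hat{h}$ itself, for which the nonlocal operator contributes only the constant $\int_{\sB^c}\langle\tw,y\rangle\,\nu(\D y)$ already absorbed into $\Tilde\ell$, so all of the singular-integral estimates evaporate; the price is that you must (i) justify using an unbounded function in the semimartingale decomposition under a merely measurable control, (ii) invoke the strong law for the martingale part $N(t)$ (fine, since $\upsigma$ is bounded and $1\in\Theta_c$ gives $\Exp\abs{L(1)}<\infty$), and (iii) pass from ``$\abs{X(t)}\to\infty$ a.s.\ from every $x$'' to transience in the expected-occupation-time sense of \cref{S2}, which is not immediate from a.s.\ finite occupation times and requires the Harris decomposition of a recurrent $\psi$-irreducible process (and likewise, for $\Tilde\varrho=0$, the Harris-type pathwise ergodic theorem, including the truncation needed when $\overline\uppi(\langle e,\cdot\rangle^-)=+\infty$). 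These are standard facts from Tweedie's theory and you flag them, so I regard the proof as complete in outline; it is arguably more elementary than the paper's where the noise is $\alpha$-stable, at the cost of a heavier reliance on Harris-chain machinery in the final step.
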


\Cref{T3.3} should be compared to \cref{L5.7}
which does not assume that $\upsigma(x)$ is bounded.
However, \cref{T3.3} establishes a stronger result when $\Tilde\varrho<0$.

In general, if $\theta_c<\infty$, then under the assumptions of \cref{T3.2}~(i)
we cannot have exponential ergodicity, as the next theorem shows.
This is always true for models where the L\'evy process is a subordinate Brownian motion,
or an anisotropic process
with independent symmetric one-dimensional $\alpha$-stable components.
However, in the case of a compound Poisson process, the asymmetry of the L\'evy
measure may be  beneficial to ergodicity (see \cref{R5.3}).
For the limiting SDEs that arise from stochastic networks under
service interruptions, the L\'evy measure is supported on a half-line in the direction
of some  $w\in\RR_+^d$.
In the theorem that follows we enforce this as a hypothesis in part (ii).

%%%%%%%%%%%%%%%%%%%%%%%%%%%%%%%%%%%%%%%%%%%%%%%%%%%%%%%%%%%%%%%%%%%%%%%%%%%%%%%%
\begin{thm}\label{T3.4}
Grant the structural hypotheses of \cref{T3.1}, assume the growth
condition \cref{ET3.2A}, and either, or both of the following:
\begin{enumerate}[wide]
\item[\ttup i]
The L\'evy process is a subordinate Brownian motion
such that  $1\in\Theta_c$ and $\theta_c<\infty$, or an anisotropic process
with independent symmetric one-dimensional $\alpha$-stable components,
with $\alpha\in(1,2)$.

\item[\ttup{ii}]
The L\'evy process has a finite L\'evy measure such that
$1\in\Theta_c$ and $\theta_c<\infty$, and
$\nu(\D{y})$ is supported on a half-line
of the form $\{tw\,\colon t\in[0,\infty)\}$,
with $\langle e,M^{-1} w\rangle>0$.
\end{enumerate}

Then the following hold.
\begin{enumerate}[wide]
\item[\ttup a]
Suppose $v\in\varDelta$, $\varGamma v=0$, and $\Tilde\varrho>0$.
Then
$\process{X}$ is polynomially ergodic, and its rate
of convergence is $r(t)\thickapprox t^{\theta_c-1}$.
In particular, in the case of an $\alpha$-stable process (isotropic or not), we
obtain the following quantitative bounds.
There exist positive constants $\Tilde{C}_1$, and $\Tilde{C}_2(\epsilon)$
 such that for all $\epsilon\in(0,\alpha-1)$, we have
\begin{align}\label{ET3.4A}
\Tilde{C}_1 \Bigl(\frac{t\vee 1}{\epsilon}
+ \abs{x}^{\alpha-\epsilon}\Bigr)^{\frac{1-\alpha}{1-\epsilon}}
&\,\le\, \bnorm{\delta_{x} P^X_t(\D{y})-\overline\uppi(\D{y})}_\tv \\
&\,\le\,
\Tilde{C}_2(\epsilon) (t\vee1)^{1+\epsilon-\alpha}\abs{x}^{\alpha-\epsilon}
\nonumber
\end{align}
for all $t>0$, and all $x\in\Rd$.

On the other hand, in the case of a L\'evy process in \ttup{ii} we obtain the
following lower bound.
There exists a positive constant $\Tilde{C}_3(\epsilon)$ such that for all
$\epsilon\in(0,\nicefrac{1}{3})$,
and all $x\in\Rd$, we have
\begin{equation}\label{ET3.4B}
\bnorm{\delta_{x} P^X_{t_n}(\D{x}) -\overline\uppi(\D{x})}_\tv
\,\ge\,\Tilde{C}_3(\epsilon)\,
\bigl (t_n+\abs{x}^{\theta_c-\epsilon}\bigr)^{-\frac{\theta_c-1+2\epsilon}{1-3\epsilon}}
\end{equation}
for some sequence $\{t_n\}_{n\in\NN}\subset[0,\infty)$, $t_n\to\infty$, depending on $x$.
The upper bound has the same form as the one in \cref{ET3.4A},
but with $\alpha$ replaced by $\theta_c$.

\item[\ttup b]
If under some control $v\in\Usm$,
such that $\varGamma v(x)=0$ a.e.  the process $\process{X}$ has an invariant
probability measure $\overline\uppi\in\cP_p(\Rd)$, $p\ge0$,
then $p+1\in\Theta_c$, and $\Tilde\varrho>0$.
In addition, $\Tilde\varrho\,=\,\int_\Rd \langle e,x\rangle^-\,\overline\uppi(\D{x})$.
Conversely,  if $v$ is a constant control such that $\varGamma v=0$,
$\Tilde\varrho>0$, and $p\ge0$ is such that $p+1\in\Theta_c$, then
$\process{X}$ admits a unique invariant probability measure
$\overline{\uppi}\in\cP_p(\Rd)$. 
\end{enumerate}
\end{thm}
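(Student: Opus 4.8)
The core of the argument is a family of Foster--Lyapunov inequalities for the extended generator $\Ag^X$ acting on the functions $V_{Q,\theta}$ from \cref{N3.1}, combined with the subexponential-ergodicity machinery of \cite{Douc-Fort-Guilin-2009} and the matching lower bounds of the type developed for polynomially ergodic processes. I would organize it in three blocks. First, the \emph{upper bounds}: I would show that under the growth condition \cref{ET3.2A}, the hypothesis $1\in\Theta_c$, and $\Tilde\varrho>0$, the drift inequality \cref{ET3.2C} of \cref{T3.2}~(i) holds for every $\theta\in\Theta_c$ with $\theta\ge1$. Reading \cref{ET3.2C} in the split form $\Ag^X V_{Q,\theta}\le c_0-c_1 V_{Q,\theta}\Ind_{\cK_\delta^c}-c_1 V_{Q,\theta-1}\Ind_{\cK_\delta}$, one is exactly in the setting where a drift condition of the form $\Ag^X V\le -\phi(V)+c\Ind_C$ holds with $\phi$ sublinear, so \cite[Theorem~3.2 and Theorem~3.4]{Douc-Fort-Guilin-2009} yields subgeometric ergodicity at rate $r(t)\asymp t^{\theta-1}$, with $\uppi$-convergence for $\uppi\in\cP_\theta(\Rd)$. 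Since $\theta_c<\infty$, taking $\theta\uparrow\theta_c$ gives the claimed rate $r(t)\thickapprox t^{\theta_c-1}$; when $\theta_c\in\Theta_c$ (the $\alpha$-stable case has $\theta_c=\alpha\notin\Theta_c$, the finite-measure case may have $\theta_c\in\Theta_c$) one gets the borderline statement. For the explicit constants in \cref{ET3.4A} in the $\alpha$-stable case, I would not appeal to \cite{Douc-Fort-Guilin-2009} abstractly but instead track the constants through the drift inequality: the upper bound $(t\vee1)^{1+\epsilon-\alpha}\abs{x}^{\alpha-\epsilon}$ comes from applying the drift inequality with $\theta=\alpha-\epsilon$ and using the standard estimate $\bnorm{\delta_xP_t-\overline\uppi}_\tv\le C r(t)^{-1}V_{Q,\theta}(x)$ together with $r(t)^{-1}\asymp t^{1-\theta}=t^{1+\epsilon-\alpha}$ and $V_{Q,\theta}(x)\asymp\abs{x}^{\alpha-\epsilon}$, optimizing the exponent shape to the displayed $(\,(t\vee1)/\epsilon+\abs{x}^{\alpha-\epsilon}\,)^{(1-\alpha)/(1-\epsilon)}$ form.

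Second, the \emph{lower bounds}, which I expect to be the main obstacle. For these one needs to show that the process genuinely cannot converge faster than $t^{\theta_c-1}$, and the standard route (as in \cite{Hairer-16} or the lower-bound half of \cite{Douc-Fort-Guilin-2009}) is a ``drift from below'' argument: one produces a function $W$ with $\Ag^X W\ge -\psi(W)$ off a compact set for a suitable $\psi$, which forces the tail of $\overline\uppi$ to be heavy and then transfers to a lower bound on $\bnorm{\delta_xP_t-\overline\uppi}_\tv$. In the $\alpha$-stable case the heavy tail of $\overline\uppi$ is inherited directly from the $\alpha$-stable jumps: large jumps of size $\sim R$ occur at rate $\sim R^{-\alpha}$, so $\overline\uppi(\sB_R^c)\gtrsim R^{-(\alpha-\epsilon)}$, and a first-entrance/last-exit decomposition (or the ``one big jump'' heuristic made rigorous via the Lévy system) gives that starting from $x$ with $\abs{x}$ large it takes time $\sim\abs{x}^{\alpha-\epsilon}$ to relax, producing the lower bound in \cref{ET3.4A}. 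For part (ii) with the half-line-supported finite Lévy measure and $\langle e,M^{-1}w\rangle>0$, the jumps push the state in a direction along which the drift restores slowly (a one-dimensional piecewise-O--U / reflected-type calculation along the coordinate $\langle e,M^{-1}\cdot\rangle$), and one extracts a sequence $t_n\to\infty$ along which \cref{ET3.4B} holds; the asymmetry is why only a subsequence, and why the exponent $(\theta_c-1+2\epsilon)/(1-3\epsilon)$ with $\epsilon\in(0,1/3)$ appears rather than the cleaner $\alpha$-stable exponent. The delicate point throughout is making the ``one big jump'' lower bound rigorous for the anisotropic (and half-line) Lévy measures, where the lack of a Harnack inequality (noted after \cref{T3.1}) means regularity arguments are unavailable; here I would lean on open-set irreducibility and aperiodicity from \cref{T3.1} to get the requisite minorization on compacts, and on the explicit piecewise-linear drift to control return times.

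Third, part~(b), the \emph{moment characterization}. The forward implication—if $\overline\uppi\in\cP_p(\Rd)$ exists under a control with $\varGamma v(x)=0$ a.e., then $p+1\in\Theta_c$ and $\Tilde\varrho>0$—is exactly the content of \cref{L5.7} together with \cref{T3.3} (or \cref{C5.1}), so I would cite those: $\Tilde\varrho>0$ is the necessary spare-capacity condition of \cref{T3.3}, and $p+1\in\Theta_c$ follows by testing the stationary equation $\overline\uppi(\Ag^X f)=0$ against $f(x)=\norm{x}_Q^{p+1}$ (a truncated/localized version in the extended domain \cref{ext}), where the jump term $\int_{\sB^c}\mathfrak{d}_1f(x;y)\nu(\D y)$ is finite for $\overline\uppi$-a.e.\ $x$ only if $\int_{\sB^c}\abs{y}^{p+1}\nu(\D y)<\infty$. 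The identity $\Tilde\varrho=\int_\Rd\langle e,x\rangle^-\,\overline\uppi(\D x)$ comes from integrating $\langle e,\cdot\rangle$ against the stationary equation: applying $\Ag^X$ to a smooth bounded-slope modification of $x\mapsto\langle e,x\rangle$, using $e'M$ has nonnegative components, $\varGamma v=0$, $e'v=1$, and the definitions \cref{E-ell,E-tvarrho}, one gets $0=\langle e,\Tilde\ell\rangle+\langle e,M\rangle\cdot(\text{something})$ that rearranges, via $\langle e,\Tilde\ell\rangle=-\langle e,M(-M^{-1}\Tilde\ell)\rangle$ and the piecewise drift, to $\Tilde\varrho=\int\langle e,x\rangle^-\,\overline\uppi(\D x)$. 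The converse in (b)—for a constant control with $\varGamma v=0$, $\Tilde\varrho>0$, $p+1\in\Theta_c$, existence of a unique $\overline\uppi\in\cP_p(\Rd)$—is immediate from the drift inequality \cref{ET3.2C} of \cref{T3.2} with $\theta=p+1$: \cref{ET3.2C} gives $\Ag^X V_{Q,p+1}\le c_0-c_1 V_{Q,p}$ off a compact set, and since $V_{Q,p}(x)\asymp\abs{x}^p$ is inf-compact, the standard argument (e.g.\ \cite[Theorem~4.2]{Tweedie-1994} applied to the resolvent, using open-set irreducibility from \cref{T3.1}) yields positive recurrence with $\int\abs{x}^p\,\overline\uppi(\D x)<\infty$, i.e.\ $\overline\uppi\in\cP_p(\Rd)$, and uniqueness follows from recurrence.
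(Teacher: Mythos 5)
Your architecture matches the paper's: the upper bounds come from the drift inequality \cref{ET3.2C} via \cite[Theorem~3.2]{Douc-Fort-Guilin-2009}; the lower bounds go through the machinery of \cite[Theorem~5.1 and Corollary~5.2]{Hairer-16}, which requires a moment growth bound $\Exp^x[G(X(t))]\le g(x,t)$ (obtained from \cref{ET3.2C} and It\^o's formula) together with a lower bound on the stationary tail $\overline\uppi(\{G\ge t\})$; and part (b) is assembled from \cref{L5.7}, \cref{C5.1}, \cref{R5.1} and \cref{T3.2}\,(i), exactly as the paper does. Part (b) and the upper-bound block are essentially correct as you describe them.

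The gap is in the stationary-tail lower bound, which you rightly flag as the main obstacle but then only sketch, and the sketch as written would fail. Your ``one big jump'' heuristic yields $\overline\uppi(\sB_R^c)\gtrsim R^{-(\alpha-\epsilon)}$, and that exponent is wrong: it is consistent with $\overline\uppi$ having finite moments of every order below $\alpha-\epsilon$, contradicting \cref{L5.7} (no finite moments of order $\ge\alpha-1$), and, fed into Hairer's theorem with $G=F^{\alpha-\epsilon}$, it gives $f(t)\asymp t^{-1}$, for which $y\mapsto yf(y)$ is constant, the equation $yf(y)=2g(x,t)$ has no solution, and the method produces nothing. The correct tail is $\overline\uppi(\{\langle\tw,x\rangle>t\})\ge C\,t^{1-\alpha}$ (heuristically, jump rate $t^{-\alpha}$ times a relaxation time of order $t$, since with $\varGamma v=0$ the restoring drift in the $\langle e,\cdot\rangle$ direction is only $O(1)$). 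The paper derives this not by a L\'evy-system or first-entrance argument but by a stationarity identity: it integrates $\Ag^X$ applied to the truncation $F_{1,t}$ against $\overline\uppi$, subtracts the analogous identity \cref{PL5.7J} for the untruncated $F$, notes that every resulting term is nonnegative by convexity, and bounds $\overline\uppi(h_1-\chi_t'(F)h_1)$ from below by $\Tilde\varrho^{-1}\,\overline\uppi(\sB)\,\Ag_\circ^X(F-F_{1,t})(0)\gtrsim t^{1-\alpha}$; the analogous computation with $F_{(1-\gamma),R}$-type truncations and \cite[Corollary~5.2]{Hairer-16} handles case (ii) and explains why only a subsequence $t_n$ is obtained there. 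Finally, to reach the precise form of \cref{ET3.4A} you also need two-sided control of the constant $c_0(\alpha-\epsilon)$ in \cref{ET3.2C}, namely $c_0(\alpha-\epsilon)\asymp\epsilon^{-1}$, which requires matching upper and lower estimates on $\fI_\alpha[V_{Q,\alpha-\epsilon}]$ (the upper one via the H\"older continuity of $\nabla V_{Q,\alpha-\epsilon}$ uniformly in $\epsilon$); your proposal does not address this step.
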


%%%%%%%%%%%%%%%%%%%%%%%%%%%%%%%%%%%%%%%%%%%%%%%%%%%%%%%%%%%%%%%%%%%%%%%%%%%%%%%%
\begin{remark}
Roughly speaking, the mechanism that results in polynomial ergodicity can be
described as follows.
In rough terms, exponential ergodicity is related to the existence
of a supersolution $V(x)\ge1$ of
$\Ag^XV(x) \le c_0\Ind_\sB(x) - c_1 V(x)$,
for some positive constants $c_0$ and $c_1$ (see \cite{Down-Meyn-Tweedie-1995}). %
Consider the model where $\process{L}$ is an isotropic $\alpha$-stable process
with $\alpha>1$.
For $V(x)$ to be integrable under the L\'evy measure
inherited from the $\alpha$-stable kernel $\nicefrac{1}{\abs{y}^{d+\alpha}}$,
it cannot grow faster than $\abs{x}^\alpha$.
On the other hand, the nonlocal part of the infinitesimal generator acts like
a derivative of order $\alpha$ (see \cref{L5.3}).
Thus, since $\alpha>1$, such a supersolution must satisfy
$\langle b(x),\nabla V(x)\rangle \le - \epsilon V(x)$ for some
$\epsilon>0$, and all $\abs{x}$ large enough.
Since $V(x)$ has polynomial growth,
this requires the drift $b(x)$ to have at least linear growth in $x$
(see also \cref{C5.3}).
\end{remark}

%%%%%%%%%%%%%%%%%%%%%%%%%%%%%%%%%%%%%%%%%%%%%%%%%%%%%%%%%%%%%%%%%%%%%%%%%%%%%%%%
\begin{remark}
Suppose that the L\'evy process is 
an anisotropic process
with independent symmetric one-dimensional $\alpha$-stable components.
Then, by \cref{T3.4}, the invariant probability measure $\Bar\uppi(\D{x})$ of
$\process{X}$ cannot have a finite first absolute moment.
In the context of queueing networks this means that, under
heavy-tailed arrivals with an $\alpha$-stable limit, $\alpha \in (1,2)$, a constant control $v$
cannot stabilize the network unless $\varGamma v\ne0$.
\end{remark}

The next theorem asserts exponential ergodicity
for models corresponding to queueing problems with reneging (abandonment).
%%%%%%%%%%%%%%%%%%%%%%%%%%%%%%%%%%%%%%%%%%%%%%%%%%%%%%%%%%%%%%%%%%%%%%%%%%%%%%%%
\begin{thm}\label{T3.5}
Grant the hypotheses of \cref{T3.1}.
Suppose that $\theta\in\Theta_c$,
\begin{equation}\label{ET3.5A}
\limsup_{\abs{x}\to\infty}\;\frac{\norm{a(x)}}{\abs{x}^2}\,=\,0\,,
\end{equation}
and that one of the following holds:
\begin{enumerate}[wide]
\item[\ttup i]
$Mv\ge\varGamma v\gneqq0$; 
\item[\ttup{ii}]
$M=\diag(m_1,\dotsc,m_d)$ with $m_i>0$,
$i=1,\dotsc,d$, and $\varGamma v\ne 0$.
\end{enumerate}
Then there exists $Q\in \mathcal{M}_+$ such that
\begin{equation*}
MQ+QM\succ0\,,
\quad\text{and}\quad (M- ev'(M-\varGamma))Q + Q(M- (M-\varGamma)v e')\,\succ\,0\,,
\end{equation*}
and positive constants
$\Bar{c}_0$, $\Bar{c}_1$ satisfying
\begin{equation}\label{ET3.5B}
\Ag^X V_{Q,\theta} (x) \,\le\, \Bar{c}_0 - \Bar{c}_1 V_{Q,\theta} (x)\,,
\qquad x\in\Rd\,.
\end{equation}
The process $\process{X}$ admits a unique invariant probability measure
$\overline\uppi\in\cP(\Rd)$, and for any $\gamma\in(0, \Bar{c}_1)$ there exists
a positive constant $C_\gamma$ such that for $p \in (0, \theta]$,
\begin{equation}\label{ET3.5C}
\bnorm{\delta_x P_t(\D{y})-\overline\uppi(\D{y})}_{V_{Q,p}}\,\le\,
C_\gamma V_{Q,p}(x)\, \E^{-\gamma t}\,,\qquad x\in\Rd\,,\ t\ge0\,.
\end{equation}
In addition, 
$\overline\uppi\in\cP_q(\Rd)$ if, and only
if, $q\in\Theta_c$.
\end{thm}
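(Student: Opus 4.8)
The plan is to prove the Foster--Lyapunov inequality \cref{ET3.5B} directly, and then to extract all remaining assertions from standard geometric ergodicity theory together with the open-set irreducibility and aperiodicity granted by \cref{T3.1}. Write $\Hat M\df M+(\varGamma-M)ve'$ for the drift matrix on $\{e'x>0\}$, so that $b(x)=\ell-Mx$ when $e'x\le0$ and $b(x)=\ell-\Hat Mx$ when $e'x>0$. The first step, which I expect to be the main obstacle, is to construct a single $Q\in\cM_+$ realizing the two displayed positive-definiteness conditions; these are precisely the statements that, along each of the two half-spaces, the quadratic form $x\mapsto\langle b(x),Qx\rangle$ is coercively negative for $\abs x$ large. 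Since $M$ is a nonsingular M-matrix its spectrum lies in the open right half-plane, so $-M$ is Hurwitz and a Lyapunov matrix for $-M$ alone exists; the difficulty is that $\Hat M$ differs from $M$ by the rank-one term $(\varGamma-M)ve'$, which can destroy stability for a poorly chosen $Q$. Under hypothesis \ttup{ii}, with $M$ diagonal, I would look for a diagonal $Q$ and reduce both conditions to scalar inequalities, which can be met because $\varGamma v\ne0$ contributes a strictly negative term in the direction of $v$. Under hypothesis \ttup{i}, where $Mv\ge\varGamma v\gneqq0$, I would build $Q$ from the M-matrix structure of $M$ (using, e.g., that $M^{-1}\ge0$ entrywise and the normalization $e'v=1$), first verifying that $-\Hat M$ is itself Hurwitz and then that $(M-\varGamma)v\ge0$ controls the sign of the rank-one correction appearing in the relevant Lyapunov identity.

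Granting such a $Q$, I would carry out the drift computation with $V_{Q,\theta}$. One first checks that $V_{Q,\theta}\in C^2(\Rd)$ and, since $V_{Q,\theta}(x+y)\le C\bigl(1+\abs x^\theta+\abs y^\theta\bigr)$ with $\int_{\sB^c}\abs y^\theta\nu(\D y)<\infty$ because $\theta\in\Theta_c$, that $V_{Q,\theta}$ belongs to the set $\mathcal D$ of \cref{ext}; hence $\Ag^XV_{Q,\theta}$ is given by the right-hand side of \cref{E1.2} and lies in the extended generator. For $\abs x$ large, $\nabla V_{Q,\theta}(x)=\theta\norm x_Q^{\theta-2}Qx$ and $\norm{\nabla^2V_{Q,\theta}(x)}\le C\abs x^{\theta-2}$, so the three terms of \cref{E1.2} are estimated as follows. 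The diffusion term $\tfrac12\trace\bigl(a(x)\nabla^2V_{Q,\theta}(x)\bigr)$ is $\order\bigl(\norm{a(x)}\abs x^{\theta-2}\bigr)=\sorder\bigl(V_{Q,\theta}(x)\bigr)$ by \cref{ET3.5A}. The drift term $\langle b(x)+\vartheta,\nabla V_{Q,\theta}(x)\rangle$ equals $-\tfrac\theta2\norm x_Q^{\theta-2}\langle x,\Sigma x\rangle+\order\bigl(\abs x^{\theta-1}\bigr)$ on each of the two half-spaces, where in each case $\Sigma\succ0$ is the relevant one of the two matrices in the statement, and since $\langle x,\Sigma x\rangle\asymp\norm x_Q^2$ this term is $\le-c\,V_{Q,\theta}(x)+\order\bigl(\abs x^{\theta-1}\bigr)$ for some $c>0$. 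Finally the nonlocal term splits as $\int_{\sB}\mathfrak d_1V_{Q,\theta}(x;y)\,\nu(\D y)+\int_{\sB^c}\bigl(V_{Q,\theta}(x+y)-V_{Q,\theta}(x)\bigr)\nu(\D y)$; the first piece is $\order\bigl(\abs x^{\theta-2}\int_\sB\abs y^2\nu(\D y)\bigr)=\sorder\bigl(V_{Q,\theta}(x)\bigr)$ by a second-order Taylor bound, and the second, using convexity of $V_{Q,\theta}$ when $\theta\ge1$ (or subadditivity of $t\mapsto t^\theta$ when $\theta<1$) followed by Young's inequality, is $\le\epsilon\,V_{Q,\theta}(x)+C_\epsilon\int_{\sB^c}\norm y_Q^\theta\nu(\D y)$, which is finite since $\theta\in\Theta_c$. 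Choosing $\epsilon$ small relative to $c$, and absorbing the remaining terms — all bounded on a large ball — into a constant, yields \cref{ET3.5B}.

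Once \cref{ET3.5B} holds, with $V_{Q,\theta}$ inf-compact and $\process X$ open-set irreducible and aperiodic by \cref{T3.1}, \cite[Theorem~5.2]{Down-Meyn-Tweedie-1995} (together with \cite[Theorems~4.1, 4.2 and 7.1]{Tweedie-1994} for positive recurrence and uniqueness of $\overline\uppi$) yields the unique invariant probability measure $\overline\uppi$ and the bound \cref{ET3.5C} with $p=\theta$; for $0<p\le\theta$ one has $V_{Q,p}\le C\,V_{Q,\theta}$ on $\Rd$ since $\Hat\phi$ is bounded away from $0$, so $\norm{\,\cdot\,}_{V_{Q,p}}\le C\norm{\,\cdot\,}_{V_{Q,\theta}}$ and \cref{ET3.5C} follows for all such $p$ after adjusting $C_\gamma$. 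For the moment statement, note that the construction of $Q$ did not depend on $\theta$, so for every $q\in\Theta_c$ the same computation gives $\Ag^XV_{Q,q}\le\bar c_0(q)-\bar c_1(q)V_{Q,q}$; integrating against the invariant measure and using $\overline\uppi\bigl(\Ag^XV_{Q,q}\bigr)=0$ — valid after a routine localization, since $V_{Q,q}$ is in the extended generator and $\Ag^XV_{Q,q}$ is bounded above — gives $\overline\uppi(V_{Q,q})\le\bar c_0(q)/\bar c_1(q)<\infty$, hence $\overline\uppi\in\cP_q(\Rd)$ as $V_{Q,q}(x)\asymp\abs x^q$ at infinity. Conversely, if $q\notin\Theta_c$ then $\int_{\sB^c}\abs y^q\nu(\D y)=\infty$, and using stationarity together with a lower bound on the probability that the driving L\'evy process makes a single jump of magnitude in $(r,2r)$ while $X$ starts in a fixed compact set, one gets $\overline\uppi\bigl(\{\abs x>c\,r\}\bigr)\ge c'\,\nu\bigl(\{\abs y>r\}\bigr)$ for all large $r$; integrating $r^{q-1}$ over this divergent tail shows $\overline\uppi(\abs x^q)=\infty$, i.e.\ $\overline\uppi\notin\cP_q(\Rd)$. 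As flagged above, the simultaneous Lyapunov-stabilization of $-M$ and $-\Hat M$, especially under hypothesis \ttup{i}, is the step I expect to demand the most work.
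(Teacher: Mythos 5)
Your overall architecture is the right one and matches the paper's: build a single $Q\in\cM_+$ stabilizing both drift matrices, run the drift computation with $V_{Q,\theta}$ (your estimates of the diffusion, drift and nonlocal terms are essentially the paper's \cref{L5.1,L5.3}), invoke \cite[Theorem~6.1]{Meyn-Tweedie-AdvAP-III-1993}/\cite{Down-Meyn-Tweedie-1995} for \cref{ET3.5C}, and integrate the Lyapunov inequality for the ``if'' half of the moment claim. Your ``only if'' argument is a genuinely different, probabilistic one-big-jump route (transferring the tail of $\nu$ to the tail of $\overline\uppi$ via stationarity), whereas the paper argues analytically through the identity $\overline\uppi(\Ag^XV_{Q,\theta})=0$ and the divergence of $\inf_{\sB}\Ag_\circ^XV_{Q,\theta}$ as $\theta\nearrow\theta_c$; your route would work if fleshed out, and handles the two cases $\Theta_c=(0,\theta_c)$ and $(0,\theta_c]$ uniformly.

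The genuine gap is the step you yourself flag: the existence of $Q$. You never construct it, and the heuristics you offer are insufficient. For case \ttup{ii}, restricting to a \emph{diagonal} $Q$ asks for diagonal stability of $M(\Id+\Hat{v}e')$ with $\Hat v=M^{-1}\varGamma v-v$, which is a strictly stronger property than the existence of some $Q\in\cM_+$, and nothing in your sketch establishes it (in $d\ge3$ it is not at all clear that it holds). For case \ttup{i}, Hurwitz stability of both $-M$ and $-\Hat M$ does not by itself yield a \emph{common} quadratic Lyapunov function, so ``verifying $-\Hat M$ is Hurwitz'' does not close the argument. The paper's \cref{L5.8} is where the real work happens: in case \ttup{i} it observes that $\Hat M=M(\Id-\Bar v e')$ with $\Bar v=M^{-1}(M-\varGamma)v\ge0$ and $e'\Bar v<1$, so the strict form \cref{EL5.5B} of \cref{L5.5} applies directly; in case \ttup{ii} it invokes the King--Nathanson theorem on common quadratic Lyapunov functions for matrices whose difference has rank one, and verifies its hypotheses by explicit resolvent computations showing that $M(\Id+\Hat v e')$ has no spectrum on the imaginary axis and that $M^2(\Id+\Hat v e')$ has no negative real eigenvalues. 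Without this (or an equivalent substitute), the theorem is not proved.
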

%%%%%%%%%%%%%%%%%%%%%%%%%%%%%%%%%%%%%%%%%%%%%%%%%%%%%%%%%%%%%%%%%%%%%%%%%%%%%%%%
\begin{remark}\label{R3.5}
\cref{T3.2,T3.5} generalize \cite[Theorems~2 and~3]{DG13}
for the corresponding diffusion models. 
In \cite[Theorem~2]{DG13}, the model in \cref{E-sde}
is driven by a Brownian motion
$\process{W}$, $\ell=-l v$ for some $l>0$ and $\varGamma=0$, and it is shown
that $\process{X}$ admits a unique invariant probability
measure $\overline\uppi(\D{x})$ and is ergodic.
For the same model, but with
$\ell=-l v$ and $\varGamma=c \mathbb{I}$ for some
$l\in\RR$ and $c>0$, \cite[Theorem~3]{DG13} establishes exponential ergodicity for
$\process{X}$.
\cref{T3.2} improves \cite[Theorem~2]{DG13} to exponential ergodicity
of the process $\process{X}$, under a weaker hypothesis on $\ell$, which
is shown to be also necessary for positive recurrence.
Moreover, in the proof of \cite[Theorem~3]{DG13}, a sophisticated
non-quadratic Lyapunov function is constructed, whereas
we employ a quadratic type Lyapunov function
(e.g., $V_{Q, \theta}(x)$ in \cref{ET3.5B})
in the proof of \cref{T3.5}.

We note that
the hypothesis that $M$ is diagonal in (ii) of \cref{T3.5}
can be waived if we assume that
$\varGamma v=\gamma v$ for some $\gamma>0$ (which is a rather restrictive assumption).
In such a case
a slight modification of the arguments in \cite[Theorem~3]{DG13} and
\cref{T3.2}, shows that the process $\process{X}$ is exponentially ergodic.
\end{remark}

%%%%%%%%%%%%%%%%%%%%%%%%%%%%%%%%%%%%%%%%%%%%%%%%%%%%%%%%%%%%%%%%%%%%%%%%%%%%%%%%
\section{Multiclass Many-Server Queueing Models}\label{S4}

In this section, we present some  examples of many-server queueing systems
for which the
class of piecewise O-U processes with jumps in \cref{E-sde} arises as a limit in the
so-called (modified) Halfin--Whitt (H--W) heavy-traffic regime \cite{HW81}. 

In the queueing context, we identify three classes of processes $\process{X}$: 
\begin{enumerate}
\item[(C1)]
$\upsigma(x) \equiv \upsigma$ is a $d\times d$ nonsingular matrix
and the process $\process{L}$ is a $d$-dimensional pure-jump L\'evy process,
with $\nu(\Rd)<\infty$;

\item[(C2)]
$\upsigma(x)\equiv 0$, and $L(t)=L_1(t)$, with
$\process{L_1}$ the anisotropic L\'evy
process from \cref{T3.1}\,(iv) with $\alpha\in(1,2)$; 

\item[(C3)]
$\upsigma(x) \equiv 0$ and the process $\process{L}$ takes the form in
\cref{T3.1}\,(iv) with $\alpha\in(1,2)$.
\end{enumerate}

Case (C1) corresponds to a multiclass many-server queueing network having
service interruptions
(with the $\sqrt{n}$ scaling), (C2) to heavy-tailed arrivals, and (C3) to a combination
of both (with the $n^{\nicefrac{1}{\alpha}}$ scaling for $\alpha \in (1,2)$). 
Case (C1) is covered by (i) in \cref{T3.1},
and cases (C2) and (C3) are covered by (iv).
We describe how these arise,
and summarize the ergodic properties of the limiting processes
for these queueing models in \cref{S4.1,S4.2}.

%%%%%%%%%%%%%%%%%%%%%%%%%%%%%%%%%%%%%%%%%%%%%%%%%%%%%%%%%%%%%%%%%%%%%%%%%%%%%%%%
\subsection{Multiclass \texorpdfstring{$G/M/n+M$}{G/M/n+M} queues with heavy-tailed arrivals}
\label{S4.1}

In \cite{PW10}, a functional central limit theorem (FCLT) is proved for the
queueing process in the $G/M/n+M$ model with first-come-first-served (FCFS)
service discipline in a modified H-W regime. Customers waiting in queue can
abandon before receiving service (the $+M$ in the notation).
The limit process is a one-dimensional SDE with a piecewise-linear drift,
driven by a symmetric $\alpha$-stable L{\'e}vy process (a special case of the process
$\process{X}$ in \cref{E-sde}). This analysis can be easily extended to multiclass 
$G/M/n+M$ queues under a constant Markov control.

Consider a sequence of $G/M/n+M$ queues with $d$ classes of customers,
indexed by $n$ and let $n\to\infty$.
Customers of each class form their own queue and are served in the order
of their arrival. 
Let $A^n_i$, $i=1,\dotsc,d$, be the arrival process of class-$i$ customers
with arrival rate $\lambda^n_i$. Assume that $A^n_i$'s are mutually independent. 
The service and patience times are exponentially distributed, with
class-dependent rates, $\mu_i$ and $\gamma_i$, respectively, for class-$i$
customers.
The arrival, service and abandonment processes of each class are mutually independent. 
Define the FCLT-scaled arrival processes
$\Hat{A}^n= (\Hat{A}^n_1,\dotsc,\Hat{A}^n_d)'$ by
$\Hat{A}^n_i\df n^{-\nicefrac{1}{\alpha}} (A^n_i- \lambda^n_i \varpi)$,
$i=1,\dotsc,d$,
where $\varpi(t)\equiv t$ for each $t\ge 0$, and $\alpha \in (1, 2]$. 
We assume that 
\begin{equation} \label{E4.1}
\nicefrac{\lambda^n_i}{n} \,\to\, \lambda_i\,>\,0, \quad \text{and}\quad
\Hat{\ell}^n_i\,\df\, n^{-\nicefrac{1}{\alpha}}(\lambda^n_i - n \lambda_i)
\,\to\, \Hat{\ell}_i\in \RR\,,
\end{equation}
for each $i=1,\dotsc,d$, as $n\to\infty$.
It follows from \cref{E4.1} that
\begin{equation} \label{E4.2}
n^{1-\frac{1}{\alpha}} (1-\rho^n)\,\xrightarrow[n\to\infty]{}\, \Hat\rho
\,=\, - \sum_{i=1}^d\frac{\Hat\ell_i}{\mu_i}\,,
\end{equation}
where $\rho^n \df \sum_{i=1}^d \nicefrac{\lambda^n_i}{n \mu_i}$
is the aggregate traffic intensity.
Under \cref{E4.1,E4.2},
the system is critically loaded, i.e., it satisfies
$\sum_{i=1}^d \nicefrac{\lambda_i}{\mu_i} =1$.
Assume that the arrival processes satisfy an FCLT
\begin{equation}\label{E-FCLT}
\Hat{A}^n \;\Rightarrow\; \Hat{A} = (\Hat{A}_1,\dotsc, \Hat{A}_d)'
\qquad\text{in\ } (D_d, M_1), \ \text{as\ } n\to\infty,
\end{equation}
where the limit processes $\Hat{A}_i$, $i=1,\dotsc,d$, are mutually
independent symmetric $\alpha$-stable processes with $\Hat{A}_i(0)\equiv 0$,
and $\Rightarrow$ denotes
weak convergence. 
The processes $\Hat{A}_i$ have the same stability parameter $\alpha$, with possibly
different ``scale'' parameters $\eta_i$.
These determine the characteristic
function of $\Hat{A}$ which takes the form
\begin{equation*}
\varphi_{\Hat A(t)}(\xi)\,=\,\E^{-t\sum_{i=1}^d\eta_i\abs{\xi_i}^{\alpha}}\,,
\qquad \xi=(\xi_1,\dots,\xi_d)\in\Rd,\ t\ge0\,.
\end{equation*}
Note that if the arrival process of each class is renewal with regularly
varying interarrival times of parameter $\alpha$, then we obtain the
above limit process.

Next, we provide a representation of the generator of the process $\Hat{A}$. 
Let $\widetilde{\mathcal{N}} (\D{t},\D{y})$ be a martingale measure in
$\RR_*$, corresponding to a standard
Poisson random measure $\mathcal{N}(t,\D{y})$, and 
$\widetilde{\mathcal{N}}(t,\D{y})=\widehat{N}(t,\D{y})-t\mathcal{N}(\D{y})$,
with $\Exp\,\widetilde{\mathcal{N}}(t,\D{y})=t\mathcal{N}(\D{y})$,
and with $\mathcal{N}$ being a $\sigma$-finite measure on $\RR_*$ given
by $\mathcal{N}(\D{y}) = \nicefrac{\D{y}}{\abs{y}^{1+\alpha}}$.
Let $\widetilde{\mathcal{N}}_1,\dotsc,\widetilde{\mathcal{N}}_d$ be $d$ independent
copies of $\widetilde{\mathcal{N}}$.
We can then write 
\begin{equation*}
\D \Hat{A}_i (t) \,\df\,
\eta_i C(1,\alpha)\int_{\RR_*} y\,\widetilde{\mathcal{N}}_i (\D{t},\D{y})\,,
\end{equation*}
where
\begin{equation}\label{E-cda}
C(d,\alpha)\,\df\,\frac{\alpha 2^{\alpha-1}\varGamma(\nicefrac{\alpha+d}{2})}
{\pi^{\nicefrac{d}{2}}\varGamma(1-\nicefrac{\alpha}{2})}\,.
\end{equation}
Note that for $\alpha$ close to $2$ we have
$C(d,\alpha) \approx (2-\alpha)d$.

Thus, the generator $\mathcal{L}$ of the process $\Hat{A}$ then takes the form
\begin{equation*}
\mathcal{L}f(x) \,=\,C(1,\alpha)\sum_{i=1}^d\int_{\RR_*}
\mathfrak{d}_1 f(x; y_i e_i)\,\frac{\eta_i\,\D{y}_i}{\abs{y_i}^{1+\alpha}}
\,=\,\int_{\RR_*^d}
\mathfrak{d}_1 f(x; y)\,\nu(\D{y})\,,
\end{equation*}
where $\nu(\D{y})$ is of the form $\nu(\D{y}) = \sum_{i=1}^d\nu_i(\D{y}_i)$
with $\nu_i(\D{y}_i)$ supported on the $i^\text{th}$ coordinate axis in $\Rd$.
Recall that the characteristic function of an isotropic $\alpha$-stable process
$\process{L}$ has the form $\varphi_{L(t)}(\xi)=\E^{-t\eta \abs{\xi}^{\alpha}}$
for some $\eta>0$.
Thus, $\Hat A$ is not an isotropic $\alpha$-stable L\'evy process.
According to \cite[Theorem~2.1.5]{Samorodnitsky-Taqqu-Book-1994},
$\Hat{A}$ is a symmetric $d$-dimensional $\alpha$-stable L\'evy process.
Since it is not isotropic, it is not a subordinate Brownian motion with
$\nicefrac{\alpha}{2}$-stable subordinator, although each component $\Hat{A}_i$ is.

Let $X^n=(X^n_1,\dotsc,X^n_d)'$, $Q^n= (Q^n_1,\dotsc,Q^n_d)'$ and
$Z^n= (Z^n_1,\dotsc,Z^n_d)'$ be the processes counting the number of customers
of each class in the system, in queue, and in service, respectively. 
Then, it is evident that $X^n_i = Q^n_i + Z^n_i$ for each $i$ and
$\sum_{i=1}^d Z^n_i \le n$. 
We consider work-conserving scheduling policies that are non-anticipative and
allow preemption. 
Namely, no server will idle if there is any customer waiting in a queue, and
service of a customer can be interrupted at any time to serve some other
class of customers and will be resumed at a later time. 
Scheduling policies determine the allocation of service capacity, i.e.,
the $Z^n$ process, which must satisfy the condition that
$\langle e, Z^n\rangle = \langle e,X^n\rangle \wedge n$ at each time. 
Define the FCLT-scaled processes $\Hat{X}^n=(\Hat{X}^n_1,\dotsc,\Hat{X}^n_d)'$,
$\Hat{Q}^n= (\Hat{Q}^n_1,\dotsc,\Hat{Q}^n_d)'$ and
$\Hat{Z}^n= (\Hat{Z}^n_1,\dotsc,\Hat{Z}^n_d)'$ by
\begin{equation}\label{E-HatX}
\Hat{X}_i^n \,\df\, n^{-\nicefrac{1}{\alpha}} (X^n_i - \rho_i n)\,,
\ \Hat{Q}_i^n \,\df\, n^{-\nicefrac{1}{\alpha}} Q^n_i\,,\ \Hat{Z}_i^n
\,\df\, n^{-\nicefrac{1}{\alpha}} (Z_i^n - \rho_i n) \,.
\end{equation}
Then under the work-conserving preemptive scheduling policies, given the
controls $Z^n$, the processes $\Hat{Q}^n$ and $\Hat{Z}^n$ can be parameterized as 
follows: for adapted $\Hat{V}^n \in \varDelta$, 
\begin{equation*}
\Hat{Q}_i^n \,=\, \langle e,\Hat{X}^n\rangle^+ \Hat{V}^n_i\,,
\qquad \Hat{Z}_i^n \,=\, \Hat{X}^n_i - 
\langle e,\Hat{X}^n\rangle^+ \Hat{V}^n_i\,. 
\end{equation*}
The controls $\Hat{V}^n$ represent the fraction of class-$i$ customers in the queue when 
the total queue size is positive. When $\Hat{Q}^n\equiv 0$, we set $\Hat{V}^n = 
(0,\dotsc,0,1)'$. 
In the limit process, the control takes values in $\varDelta$, and will be regarded as 
a fixed parameter, i.e., this falls into the framework of our study when the control 
is constant. 
We obtain the following FCLT.

%%%%%%%%%%%%%%%%%%%%%%%%%%%%%%%%%%%%%%%%%%%%%%%%%%%%%%%%%%%%%%%%%%%%%%%%%%%%%%%%
\begin{thm}\label{T4.1}
Under a fixed constant scheduling control $V \in \varDelta$,
provided there exists $\Hat{X}(0)$ such that
$\Hat{X}^n(0) \Rightarrow \Hat{X}(0)$ as $n\to\infty$,
 we have
\begin{equation}\label{ET4.1A}
\Hat{X}^n \;\Rightarrow\; \Hat{X} \qquad\text{in} \quad (D^d, M_1)
\quad \text{as\ } n \to\infty\,,
\end{equation}
where the limit process $\Hat{X}$ is a unique strong solution to the SDE
\begin{equation}\label{ET4.1B}
\D \Hat{X}(t) \,=\, \Hat{b}(\Hat{X}(t), V )\, \D{t} + \D \Hat{A}(t)
- \upsigma_{\alpha}\, \D W(t)\,,
\end{equation}
with an initial condition $\Hat{X}(0)$. Here the drift
$\Hat{b}(x,v)\colon \Rd \times \varDelta \to\Rd$ takes the form
\begin{equation}\label{ET4.1C}
\Hat{b}(x,v) \,=\, \Hat{\ell} - R(x - \langle e, x\rangle^+ v)
- \langle e, x\rangle^+ \varGamma v\,,
\end{equation}
with $R = \diag(\mu_1,\dotsc,\mu_d)$,
$\varGamma = \diag(\gamma_1,\dotsc,\gamma_d)$, and
$\Hat{\ell}\df (\Hat{\ell}_1,\dotsc,\Hat{\ell}_d)'$ for
$\Hat{\ell}_i$ in \cref{E4.1}.
In \cref{ET4.1B},
$\Hat{A}$ is the limit of the arrival process, $W$ is a standard $d$-dimensional 
Brownian motion, independent of $\Hat{A}$, and the covariance matrix $\upsigma_{\alpha}$ 
satisfies $\upsigma_{\alpha}\upsigma_{\alpha}' = \diag(\lambda_1,\dotsc,\lambda_d)$ if $
\alpha = 2$ and $\upsigma_{\alpha} = 0$ if $\alpha \in (1,2)$. 
\end{thm}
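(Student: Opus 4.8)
The plan is to follow the single-class FCLT of \cite{PW10}, adding the multiclass bookkeeping required by the constant control $V$. \emph{First}, I would write the exact balance equations: since service and patience times are exponential, there are mutually independent unit-rate Poisson processes $N_{\mathrm{s},i}$ (service completions) and $N_{\mathrm{r},i}$ (abandonments), $i=1,\dotsc,d$, independent of the arrival processes, such that
\begin{equation*}
X^n_i(t)\,=\,X^n_i(0)+A^n_i(t)
-N_{\mathrm{s},i}\Bigl(\mu_i\int_0^tZ^n_i(s)\,\D{s}\Bigr)
-N_{\mathrm{r},i}\Bigl(\gamma_i\int_0^tQ^n_i(s)\,\D{s}\Bigr)\,.
\end{equation*}
Under the preemptive work-conserving policy with constant control $V$ one has $Q^n_i=(\langle e,X^n\rangle-n)^+V_i$ and $Z^n_i=X^n_i-Q^n_i$ (the integrality of $Q^n_i$ contributes only an $\order(1)$ error, negligible after scaling). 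Substituting $X^n_i=\rho_in+n^{\nicefrac{1}{\alpha}}\Hat X^n_i$ with $\rho_i=\nicefrac{\lambda_i}{\mu_i}$, using $\sum_i\rho_i=1$ so that $(\langle e,X^n\rangle-n)^+=n^{\nicefrac{1}{\alpha}}\langle e,\Hat X^n\rangle^+$, and using $\lambda^n_i=n\lambda_i+n^{\nicefrac{1}{\alpha}}\Hat\ell^n_i$ and $\mu_i\rho_i=\lambda_i$ from \cref{E4.1}, the $\order(n)$ deterministic terms cancel. Writing each Poisson process as its compensator plus a martingale and dividing by $n^{\nicefrac{1}{\alpha}}$ gives
\begin{equation*}
\Hat X^n(t)\,=\,\Hat X^n(0)+\int_0^t\Hat b\bigl(\Hat X^n(s),V\bigr)\,\D{s}
+\bigl(\Hat\ell^n-\Hat\ell\bigr)\varpi(t)+\Hat A^n(t)-\Hat M^n_{\mathrm s}(t)-\Hat M^n_{\mathrm r}(t)\,,
\end{equation*}
where $\Hat b(\cdot,V)$ is the globally Lipschitz piecewise-linear drift of \cref{ET4.1C} with $R=\diag(\mu_1,\dotsc,\mu_d)$, and $\Hat M^n_{\mathrm s,i}(t)\df n^{-\nicefrac{1}{\alpha}}\widetilde N_{\mathrm{s},i}\bigl(\mu_i\int_0^tZ^n_i\,\D{s}\bigr)$, $\Hat M^n_{\mathrm r,i}(t)\df n^{-\nicefrac{1}{\alpha}}\widetilde N_{\mathrm{r},i}\bigl(\gamma_i\int_0^tQ^n_i\,\D{s}\bigr)$ are the scaled residual martingales, $\widetilde N_{\cdot}$ denoting the compensated processes.

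\emph{Second}, I would dispose of the residual martingales, which first requires a stochastic-boundedness estimate. Localizing at $\uptau^n_K\df\inf\{t\ge0\colon\abs{\Hat X^n(t)}>K\}$, the linear growth of $\Hat b(\cdot,V)$ and Gronwall's inequality bound $\sup_{s\le T\wedge\uptau^n_K}\abs{\Hat X^n(s)}$ by a constant times $1+\sup_{s\le T}\abs{\Hat A^n(s)}+\sup_{s\le T}\abs{\Hat M^n_{\mathrm s}(s)}+\sup_{s\le T}\abs{\Hat M^n_{\mathrm r}(s)}$; since $\Hat A^n$ converges (hence is stochastically bounded) and the martingale second moments below are controlled, a short bootstrap shows $\{\Hat X^n\}$ is stochastically bounded on $[0,T]$ and $\uptau^n_K\to\infty$ in probability, uniformly in $n$, as $K\to\infty$. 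Granting this, the predictable quadratic variation of $\Hat M^n_{\mathrm s,i}$ equals $n^{-\nicefrac{2}{\alpha}}\mu_i\int_0^tZ^n_i\,\D{s}=n^{1-\nicefrac{2}{\alpha}}\lambda_it+n^{-\nicefrac{1}{\alpha}}\mu_i\int_0^t\Hat Z^n_i\,\D{s}$: for $\alpha\in(1,2)$ it tends to $0$, so $\Hat M^n_{\mathrm s}\Rightarrow0$; for $\alpha=2$ it tends to $\lambda_it$, and the martingale FCLT together with the independence of the components gives $\Hat M^n_{\mathrm s}\Rightarrow\upsigma_\alpha W$ with $\upsigma_\alpha\upsigma_\alpha'=\diag(\lambda_1,\dotsc,\lambda_d)$. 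The abandonment term has quadratic variation $n^{-\nicefrac{2}{\alpha}}\gamma_i\int_0^tQ^n_i\,\D{s}=n^{-\nicefrac{1}{\alpha}}\gamma_iV_i\int_0^t\langle e,\Hat X^n\rangle^+\,\D{s}\to0$ for every $\alpha\in(1,2]$, so $\Hat M^n_{\mathrm r}\Rightarrow0$; in particular the jumps of $\Hat M^n_{\mathrm s}$ and $\Hat M^n_{\mathrm r}$ are asymptotically negligible.

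\emph{Third}, I would pass to the limit by a continuous-mapping argument. By \cref{E-FCLT} and the independence of the arrival, service and abandonment primitives, $(\Hat X^n(0),\Hat A^n)\Rightarrow(\Hat X(0),\Hat A)$ in $\Rd\times(D_d,M_1)$, jointly with and independently of $(\Hat M^n_{\mathrm s},\Hat M^n_{\mathrm r})\Rightarrow(\upsigma_\alpha W,0)$, whose limits are continuous; since these continuous limits share no jump times with $\Hat A$, addition is continuous at the limit and $\Hat L^n\df\Hat X^n(0)+(\Hat\ell^n-\Hat\ell)\varpi+\Hat A^n-\Hat M^n_{\mathrm s}-\Hat M^n_{\mathrm r}\Rightarrow\Hat L\df\Hat X(0)+\Hat A-\upsigma_\alpha W$ in $(D^d,M_1)$. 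Now $\Hat X^n=\Phi(\Hat L^n)$, where $\Phi$ maps a c\`adl\`ag path $y_0$ to the solution $y$ of $y(t)=y_0(t)+\int_0^t\Hat b(y(s),V)\,\D{s}$; since $\Hat b(\cdot,V)$ is globally Lipschitz, $\Phi$ is well defined and $M_1$-continuous --- this is the multidimensional analogue of the continuity of the queueing integral representation in the $M_1$ topology from \cite{PW10} (see also \cite{WW02}), and the extension is routine given the Lipschitz property. The continuous mapping theorem then yields $\Hat X^n\Rightarrow\Hat X\df\Phi(\Hat L)$ in $(D^d,M_1)$, and $\Hat X$ solves \cref{ET4.1B}. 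Finally, \cref{ET4.1B} is the SDE \cref{E-sde} with $\ell=\Hat\ell$, $M=R$ (a nonsingular M-matrix with $e'R\ge0$, as required in (A1)), $v=V$, $\upsigma(x)\equiv-\upsigma_\alpha$, $L=\Hat A$ and $\vartheta=0$, so existence and pathwise uniqueness of the strong solution follow from the well-posedness recalled after \cref{E-sde}.

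The \emph{main obstacle} is the third step. Because the renewal arrival processes converge to the $\alpha$-stable limit only in the $M_1$, and not $J_1$, topology, and because neither addition nor the composition/integration operations underlying $\Phi$ are automatically continuous in $M_1$, one must invoke the delicate continuity results for queueing integral representations in the $M_1$ topology (the technical core of \cite{PW10}) and check that they carry over to the present $d$-dimensional setting with piecewise-linear drift. The secondary technicality is the localization bootstrap of the second step, needed to control the time-changed Poisson martingales before any tightness of $\{\Hat X^n\}$ is available.
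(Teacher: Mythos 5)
Your proposal is correct and follows essentially the same route as the paper: write the balance equations with compensated Poisson martingales, kill (or Gaussianize, when $\alpha=2$) the residual martingales via their quadratic variations as in Theorem~2.1 of \cite{PW10}, and conclude by the continuous mapping theorem applied to the integral representation $y=x+\int_0^\cdot h(y(s))\,\D{s}$, whose $M_1$-continuity in the multidimensional setting is the key technical point. You correctly identify this extension of Theorem~1.1 of \cite{PW10} as the main obstacle, which is precisely the only issue the paper's proof addresses explicitly (via the parametric-representation construction); the extra detail you supply on the martingale estimates and localization is exactly what the paper delegates to the cited argument.
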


\begin{proof}
The FCLT-scaled processes $\Hat{X}^n_i$, $i=1,\dotsc,d$, can be represented as
\begin{align*}
\Hat{X}^n_i(t) \,=\, \Hat{X}^n_i(0) + \Hat{\ell}^n_i t
- \mu_i \int_0^t \Hat{Z}^n_i(s)\,\D{s} - 
\gamma_i \int_0^t \Hat{Q}^n_i(s)\, \D{s}
+ \Hat{A}^n_i(t) - \Hat{M}^n_{S,i}(t) - \Hat{M}
^n_{R,i}(t)\,,
\end{align*}
where $\Hat{\ell}^n_i$ is defined in \cref{E4.1}, 
\begin{equation*}
\Hat{M}^n_{S,i}(t)\,=\, n^{-\frac{1}{\alpha}} \bigg(S_i^n \bigg(\mu_i \int_0^t 
Z^n_i(s)\,\D{s} \bigg) - \mu_i \int_0^t Z^n_i(s) \D{s} \bigg) \,,
\end{equation*}
\begin{equation*}
\Hat{M}^n_{R,i}(t) \,=\, n^{-\frac{1}{\alpha}} \bigg(R_i^n \bigg(\theta_i \int_0^t 
Q^n(s)\,\D{s} \bigg) - \theta_i \int_0^t Q^n_i(s) \D{s} \bigg) \,,
\end{equation*}
and $S^n_i, R^n_i$, $i=1,\dotsc,d$, are mutually independent rate-one Poisson
processes, representing the service and reneging (abandonment), respectively. 
We can then establish an FCLT for the processes $\Hat{X}^n$, by following a similar 
argument as Theorem~2.1 in \cite{PW10}, if we prove the continuity in the Skorohod $M_1$ 
topology of the $d$-dimensional integral mapping $\phi\colon D^d \to D^d$ defined by
\begin{equation*}
y(t) \,=\, x(t) + \int_0^t h(y(s))\, \D{s}\,, \quad t\ge 0\,, 
\end{equation*}
where $h\colon \Rd\to \Rd$ is a Lipschitz function. In Theorem~1.1 of \cite{PW10}, the 
integral mapping is from $D$ to $D$, but a slight modification of the argument of that 
proof can show our claim in the multidimensional setting. Specifically, the parametric 
representations can be constructed in the same way with the spatial component being 
multidimensional, and the time component satisfying the conditions in Theorem~1.2 of 
\cite{PW10}. 
\end{proof}

In analogy to \cref{T3.2,T3.4,T3.5,T3.3}, we obtain \cref{C4.1} which follows.
For multiclass many-server queues, the model in \cref{T3.2} corresponds
to systems without abandonment, i.e., $\varGamma=0$.
In such systems, $M$ is a diagonal matrix, so the results in
\cref{T3.2} are more general than needed for the queueing models.

Recall the quantity $\Hat\rho$ defined in \cref{E4.2}.
As mentioned earlier, this quantity is the spare capacity of the network,
and when it is positive it amounts to the so-called $\sqrt{n}$
safety staffing for the network.
By \cref{T3.3} and \cref{L5.7}\,\ttup{a} both
$\alpha>1$ and $\Hat\rho>0$ are necessary for the process to be ergodic.
However, for the queueing model, the limit process has an interpretation
only if $\alpha>1$, and this is reflected in the statement of the
corollary.

%%%%%%%%%%%%%%%%%%%%%%%%%%%%%%%%%%%%%%%%%%%%%%%%%%%%%%%%%%%%%%%%%%%%%%%%%%%%%%%
\begin{cor}\label{C4.1}
For the multiclass many-server queues with heavy-tailed arrivals with
$\alpha \in (1,2)$, the following hold.
\begin{enumerate}[wide]
\item[\ttup1]
For the process $\process{\Hat{X}}$ in \cref{ET4.1B} to be ergodic
under some Markov control $v\in\Usm$, satisfying
$\varGamma v(x)=0$ a.e., it is necessary
and sufficient that $\Hat\rho>0$.
\item[\ttup2]
Suppose that $\Hat\rho>0$.
\begin{enumerate}[wide]
\item[\ttup{2a}]
The process
$\process{\Hat{X}}$ is polynomially ergodic
under any constant control satisfying $\varGamma v=0$, and its rate
of convergence is $r(t)\thickapprox t^{\alpha-1}$.
In addition, the conclusions of \cref{T3.2}\,\ttup{i} hold for any
$\theta\in[1,\alpha)$.
\item[\ttup{2b}]
For any Markov control $v\in\Usm$ satisfying $\varGamma v(x)=0$ a.e.,
which renders the process $\process{\Hat{X}}$ ergodic,
the associated invariant probability measure $\overline\uppi(\D{x})$ satisfies
\begin{equation*}
\int_\Rd \bigl(\langle e,x\rangle^+\bigr)^{\alpha-1}\,\overline\uppi(\D{x})\,=\,\infty\,,
\end{equation*}
and thus the queue is not stable.
In addition,
$\Hat\rho=\int_\Rd \langle e,x\rangle^-\,\overline\uppi(\D{x})$.
\end{enumerate}
\item[\ttup3]
For any constant control such that $\varGamma v \ne 0$, 
the conclusions of \cref{T3.5} apply for any  $\theta<\alpha$.
In addition, $\overline\uppi\in\cP_p(\Rd)$ for all $p<\alpha$,
and therefore, the queue is stable.
\end{enumerate} 
\end{cor}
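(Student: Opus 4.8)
The plan is to recognize the diffusion limit \cref{ET4.1B} of \cref{T4.1} as a special case of \cref{E-sde} and then read off each assertion of the corollary from \cref{T3.2,T3.3,T3.4,T3.5} and \cref{L5.7}. For $\alpha\in(1,2)$ we have $\upsigma_\alpha=0$, so \cref{ET4.1B} reads $\D\Hat{X}(t)=\Hat{b}(\Hat{X}(t),V)\,\D{t}+\D\Hat{A}(t)$; comparing \cref{ET4.1C} with (A1), this is \cref{E-sde} with $\upsigma\equiv0$, $\ell=\Hat\ell$, $M=\diag(\mu_1,\dotsc,\mu_d)$, $\varGamma=\diag(\gamma_1,\dotsc,\gamma_d)$, and $\process{L}=\process{\Hat{A}}$ the anisotropic symmetric $\alpha$-stable process of \cref{S4.1}. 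Here $M$ is a diagonal M-matrix with positive diagonal (in particular $e'M\ge0$), and the driving process is exactly the one in \cref{T3.1}\,\ttup{iv} (with trivial compound-Poisson part), so $\process{\Hat X}$ is open-set irreducible and aperiodic under every constant control, and---granting well-posedness of the martingale problem---under every Markov control; since $a\equiv0$, the growth hypotheses \cref{ET3.2A} and \cref{ET3.5A} hold trivially and $\upsigma$ is bounded.

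Next I would compute the two threshold quantities. Since each $\nu_i$ has tail density $\asymp\abs{y_i}^{-1-\alpha}$, the integral $\int_{\sB^c}\abs{y}^{\theta}\nu(\D{y})$ is finite precisely when $\theta<\alpha$; hence $\Theta_c=(0,\alpha)$ and $\theta_c=\alpha$, and in particular $1\in\Theta_c$ because $\alpha>1$. Symmetry of $\process{\Hat A}$ gives $\vartheta=0$ and $\int_{\sB^c}y\,\nu(\D{y})=0$, whence by \cref{E-ell,E-tvarrho} we get $\Tilde\ell=\Hat\ell$ and $\Tilde\varrho=-\langle e,M^{-1}\Hat\ell\rangle=-\sum_{i=1}^d\Hat\ell_i/\mu_i=\Hat\rho$ by \cref{E4.2}. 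Thus in this model the abstract condition $\Tilde\varrho>0$ is exactly $\Hat\rho>0$ and the tail exponent is $\theta_c=\alpha$.

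The corollary now follows by transcription (parts \ttup1 and \ttup2 presuppose that $\gamma_i=0$ for at least one $i$, so that controls with $\varGamma v=0$ exist). For \ttup1: if $\Hat\rho>0$ then $\Tilde\varrho>0$ and $1\in\Theta_c$, so \cref{T3.2}\,\ttup{i} (taking a constant $v$ with $\varGamma v=0$ and $\theta=1$) gives ergodicity; conversely, if an invariant probability measure exists under a Markov control with $\varGamma v(x)=0$ a.e., then \cref{T3.4}\,\ttup{b} with $p=0$ forces $\Tilde\varrho=\Hat\rho>0$ (equivalently, \cref{T3.3}\,\ttup{b} with \cref{L5.7}\,\ttup{a} gives transience when $\Hat\rho<0$ and rules out positive recurrence when $\Hat\rho=0$). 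For \ttup{2a}: \cref{T3.4}\,\ttup{a} yields polynomial ergodicity at rate $r(t)\thickapprox t^{\theta_c-1}=t^{\alpha-1}$, together with the matching quantitative bounds \cref{ET3.4A} since the driving process is $\alpha$-stable, and \cref{T3.2}\,\ttup{i} applies for every $\theta\in[1,\alpha)=\{\theta\in\Theta_c\colon\theta\ge1\}$. For \ttup{2b}: \cref{T3.4}\,\ttup{b} with $p=\alpha-1$ would force $\alpha\in\Theta_c$, which fails, so $\overline\uppi\notin\cP_{\alpha-1}(\Rd)$; since the drift is mean-reverting outside the cone $\cK_\delta$ of \cref{E-cone} (there $\langle e,x\rangle\le0$ or is small relative to $\abs{x}$, and $M$ is diagonal positive), the divergence of $\int_\Rd\abs{x}^{\alpha-1}\overline\uppi(\D{x})$ localizes to $\cK_\delta$, where $\langle e,x\rangle^+\ge\delta\abs{x}$, giving $\int_\Rd(\langle e,x\rangle^+)^{\alpha-1}\overline\uppi(\D{x})=\infty$ (I expect the proof of \cref{T3.4}\,\ttup{b} via \cref{L5.7} to deliver this coordinate-free statement directly); as $\alpha-1<1$ this already implies $\int_\Rd\langle e,x\rangle^+\overline\uppi(\D{x})=\infty$, i.e. instability, and $\Hat\rho=\Tilde\varrho=\int_\Rd\langle e,x\rangle^-\overline\uppi(\D{x})$ is the identity recorded in \cref{T3.4}\,\ttup{b} (see also \cref{C5.1}). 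For \ttup3: when $\varGamma v\ne0$, condition \ttup{ii} of \cref{T3.5} holds because $M$ is diagonal with positive diagonal, so \cref{T3.5} applies for every $\theta<\alpha$, yielding exponential ergodicity and the equivalence $\overline\uppi\in\cP_q(\Rd)\iff q<\alpha$; since $1<\alpha$ this gives $\int_\Rd\langle e,x\rangle^+\overline\uppi(\D{x})\le\int_\Rd\abs{x}\,\overline\uppi(\D{x})<\infty$, i.e. stability.

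The only steps beyond bookkeeping are the tail estimate pinning $\Theta_c=(0,\alpha)$ for the coordinate-supported stable measure, together with the symmetry argument giving $\Tilde\ell=\Hat\ell$ (so that the abstract thresholds $\Tilde\varrho>0$ and $\theta_c$ become the queueing quantities $\Hat\rho>0$ and $\alpha$), and the refinement in \ttup{2b} replacing a statement about $\abs{x}$-moments by one about $\langle e,x\rangle^+$. Both are contained in the proofs of \cref{T3.4,L5.7} in \cref{S5}, and here it suffices to invoke them.
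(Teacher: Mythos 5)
Your proposal is correct and follows essentially the same route as the paper, which proves \cref{C4.1} by citing \cref{T3.2}\,\ttup{i}, \cref{T3.3}, \cref{T3.4}, \cref{T3.5}, \cref{L5.7}\,\ttup{b} and \cref{C5.1} after the (implicit) identifications $\Theta_c=(0,\alpha)$, $\Tilde\ell=\Hat\ell$ and $\Tilde\varrho=\Hat\rho$ that you spell out. Your extra remarks — the localization of the divergent moment to the cone $\cK_\delta$ in \ttup{2b} (which is exactly \cref{R5.1}) and the need for some $\gamma_i=0$ in parts \ttup1–\ttup2 — are accurate and consistent with the paper's argument.
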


\begin{proof}
The assertion in (1) follows by \cref{T3.3} and \cref{L5.7}\,\ttup{b}.
Item (2a) is a direct consequence of
\cref{T3.2}\,\ttup{i} and
\cref{T3.4}, while (2b) follows from \cref{L5.7}\,\ttup{b}
and \cref{C5.1}.
The assertion in (3) follows by \cref{T3.5}.
\end{proof}

We also remark that when the arrival limit is a Brownian motion ($\alpha=2$),
the limit is a diffusion with piecewise linear drift. 
In this case, the conclusions in \cref{C4.1}\,(2) hold for any
$\theta\in[1,\infty)$, and those in  \cref{C4.1}\,(3) hold for any
$\theta>0$,  and in both cases, we have exponential ergodicity. 
The basic reason behind this discontinuity at $\alpha=2$ is the
fact that the scaling constant $C(d,\alpha)$ of the fractional Laplacian
given in \cref{E-cda} tends to $0$ as $\alpha\nearrow2$, and thus
the singular integral in the generator $\mathcal{A}$ vanishes.
Comparing the tails of the stationary distributions $\overline\uppi(\D{x})$,
when $\alpha\in(1,2)$, as shown in
\cref{T3.4}, $\overline\uppi(\D{x})$ does not have any absolute moments
of order $\alpha-1$ or larger in case (2), and that this is true
under any Markov control $v=v(x)$.
In case (3), $\overline\uppi(\D{x})$ does not have any absolute moments
of order $\alpha$ or larger.

It is worth noting that the piecewise diffusion model $\Hat{X}$ in \cref{ET4.1B}
is more general than that considered in \cite{DG13}, as noted in \cref{R3.5}, 
and the rate of convergence is not identified there when $\varGamma=0$. 
For the multiclass $M/M/N+M$ queues with abandonment, 
exponential ergodicity of the limiting diffusion under the
constant control $v=(0,\dotsc,0,1)'$ is established in \cite[Theorem~3]{DG13},
and this is used in \cite{ABP14} to prove asymptotic optimality.
\cref{T3.5} extends this result, by asserting
exponential ergodicity under any constant control $v$ such that
$\varGamma v \ne 0$. 
We summarize these findings in the following corollary.

%%%%%%%%%%%%%%%%%%%%%%%%%%%%%%%%%%%%%%%%%%%%%%%%%%%%%%%%%%%%%%%%%%%%%%%%%%%%%%%%
\begin{cor}
Assume $\alpha=2$. 
\begin{enumerate}[wide]
\item[\ttup{a}]
If $\varGamma v=0$, then $\Hat\rho>0$ is both  necessary and sufficient 
for the process to be ergodic, and in such a case,
\cref{ET3.2F} and \cref{ET3.2G} hold for for any $p>0$. 

\item[\ttup{b}]
If $\varGamma v\ne 0$, 
then \cref{ET3.5B,ET3.5C} hold for any $\theta>0$.
\end{enumerate}
In particular, in either case, $\process{\Hat{X}}$ is exponentially ergodic.
\end{cor}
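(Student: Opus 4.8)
The plan is to obtain the corollary by specializing \cref{T3.2,T3.3,T3.5} to the case $\alpha=2$, in which the limit SDE has no jump component and a constant, nondegenerate covariance. First I would record the structural facts: when $\alpha=2$ the arrival limit $\Hat A$ in \cref{ET4.1B} is a Brownian motion, so $\process{\Hat X}$ solves an equation of the form \cref{E-sde} with constant covariance matrix $a\df\upsigma\upsigma'$ which is positive definite (already $\upsigma_2\upsigma_2'=\diag(\lambda_1,\dotsc,\lambda_d)\succ0$), with $M=R=\diag(\mu_1,\dotsc,\mu_d)$, $\varGamma=\diag(\gamma_1,\dotsc,\gamma_d)$, $\ell=\Hat\ell$, and with $\process{L}\equiv0$, i.e.\ $\nu\equiv0$ and $\vartheta=0$. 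Consequently $\Theta_c=(0,\infty)$ (so $1\in\Theta_c$ and $\theta_c=\infty$), $\Tilde\ell=\Hat\ell$, and $\Tilde\varrho=-\langle e,R^{-1}\Hat\ell\rangle=\Hat\rho$ by \cref{E-tvarrho} and \cref{E4.2}. Since $\nu(\Rd)=0<\infty$ and $a$ is constant and positive definite, hypothesis~\ttup{i} of \cref{T3.1} holds, so $\process{\Hat X}$ is open-set irreducible and aperiodic; and the boundedness of $\upsigma$, the growth bounds \cref{ET3.2A} and \cref{ET3.5A}, and the exponential-moment condition \cref{ET3.2E} for \emph{every} $\theta>0$ all hold trivially, as $a$ is constant and $\nu$ vanishes.

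For part~\ttup{a} I would argue necessity of $\Hat\rho>0$ from \cref{T3.3}\,\ttup{a} (a Brownian-driven model with constant, hence bounded, $\upsigma$, conforming to~\ttup{i} of \cref{T3.1}): if $\Tilde\varrho=\Hat\rho<0$ the process is transient, and if $\Hat\rho=0$ it cannot be positive recurrent, so in neither case is it ergodic, since under open-set irreducibility and aperiodicity ergodicity is equivalent to positive recurrence. For sufficiency, assuming $\Hat\rho>0$, all hypotheses of \cref{T3.2} are in force and \cref{T3.2}\,\ttup{ii} applies; because $\nu\equiv0$, condition \cref{ET3.2E} holds for every $\theta>0$, and --- subject to the mild uniformity point discussed below --- one obtains $Q\in\cM_+$, positive constants, the drift inequality \cref{ET3.2F}, a unique invariant $\overline\uppi\in\cP(\Rd)$, and the convergence bound \cref{ET3.2G}, all valid for every $p>0$; the drift inequality moreover forces $\overline\uppi(\widetilde V_{Q,p})<\infty$, so $\overline\uppi$ has moments of all orders.

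For part~\ttup{b}, with $M=R=\diag(\mu_1,\dotsc,\mu_d)$, $\mu_i>0$, and $\varGamma v\ne0$, hypothesis~\ttup{ii} of \cref{T3.5} is satisfied, and since $\Theta_c=(0,\infty)$ I would invoke \cref{T3.5} with an arbitrary $\theta>0$: it yields $Q\in\cM_+$ satisfying the stated matrix inequalities, positive constants $\Bar c_0,\Bar c_1$ for which \cref{ET3.5B} holds, the bound \cref{ET3.5C} for all $p\in(0,\theta]$, and $\overline\uppi\in\cP_q(\Rd)$ for all $q\in\Theta_c$; letting $\theta$ range over $(0,\infty)$ gives \cref{ET3.5B} for every $\theta>0$, \cref{ET3.5C} for every $p>0$, and moments of $\overline\uppi$ of all orders. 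In both parts exponential ergodicity then follows at once: choosing $p>0$ so that the relevant Lyapunov function is $\ge1$ on $\Rd$ (modifying it by an additive constant near the origin if needed), the bound \cref{ET3.2G}, respectively \cref{ET3.5C}, dominates $\bnorm{\delta_x P_t(\D y)-\overline\uppi(\D y)}_\tv$ and decays like $\E^{-\gamma t}$, so $r(t)=\E^{\gamma t}$ serves in the definition of ergodicity.

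The main --- though rather minor --- obstacle is the uniformity in $p$ (and in $\theta$): I would need to verify, by inspecting the constructions in \cref{S5}, that the quadratic form $Q$ and the Foster--Lyapunov constants supplied by \cref{T3.2}\,\ttup{ii} and \cref{T3.5} are determined by $v$, $R$, $\varGamma$ and $a$ alone and do not deteriorate as $p$ (resp.\ $\theta$) grows. This is unproblematic here precisely because $a$ is constant and $\nu\equiv0$, so the exponential-integrability estimates that ordinarily couple the exponent $p$ to the tail index $\theta$ drop out, and a single choice of $Q$ serves for all exponents.
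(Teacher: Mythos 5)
Your proposal is correct and follows essentially the same route as the paper, which states this corollary as a direct specialization of \cref{T3.2}\,(ii), \cref{T3.3}, and \cref{T3.5} to the jump-free case $\nu\equiv 0$, constant nondegenerate $a$, $\Theta_c=(0,\infty)$, and $\Tilde\varrho=\Hat\rho$. Your worry about uniformity in $p$ is a non-issue (the corollary only asserts existence of constants for each fixed $p$, and the matrix $Q$ from \cref{L5.5} depends only on $M$ and $v$), but you resolve it correctly in any case.
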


%%%%%%%%%%%%%%%%%%%%%%%%%%%%%%%%%%%%%%%%%%%%%%%%%%%%%%%%%%%%%%%%%%%%%%%%%%%%%%%%
\subsection{Multiclass \texorpdfstring{$G/M/n+M$}{G/M/n+M} queues with
service interruptions}\label{S4.2}

In \cite{PW09}, $G/M/n+M$ queues with service interruptions are studied in the H--W 
regime. It is shown that the limit queueing process is a one-dimensional
L{\'e}vy-driven SDE if the interruption times are asymptotically negligible.

We consider a sequence of multiclass $G/M/n+M$ queues in the same renewal alternating 
(up-down, or on-off) random environment, where all the classes of customers are affected 
simultaneously.
We make the same assumptions on the arrival, service and abandonment 
processes as well as the control processes as in \cref{S4.1}. 
For the random environment, we assume that the system functions normally during up time 
periods, and a portion of servers stop functioning during down periods, while customers 
continue entering the system and may abandon while waiting in queue and those 
that have started service will wait for the system to resume. Here we focus on the 
special case of all servers stopping functioning during down periods. 
Let $\{(u^n_k, d^n_k)\colon k\in \NN\}$ be a sequence of i.i.d. positive random vectors 
representing the up-down cycles. Assume that
\begin{equation*}
\bigl\{(u^n_k, n^{\frac{1}{\alpha}}d^n_k)\,\colon k\in \NN\bigr\}
\;\Rightarrow\; \bigl\{(u_k, d_k)\,\colon k\in \NN\bigr\} \qquad\text{in\ }
(\RR^2)^{\infty}\ \text{as\ } n\to\infty\,,
\end{equation*}
where $(u_k, d_k)$, $k\in \NN$, are i.i.d. positive random vectors
and $\alpha \in (1, 2]$.
This assumption is referred to as asymptotically negligible service interruptions. 
Define the counting process of down times,
$N^n(t) \df \max\{k\ge 0\colon T^n_k \le t\}$, where 
$T^n_k \df \sum_{i=1}^k (u^n_i+d^n_i)$ for each $k \in \NN$ and $T^n_0\equiv 0$. 
This assumption implies that $N^n \Rightarrow N$ in $(D, J_1)$ as $n\to\infty$, where 
the limit process is defined as $N(t) \df \max\{k\ge 0\colon T_k \le t\}$, $t\ge 0$, 
with $T_k \df \sum_{i=1}^k u_i$ for 
$k \in \NN$, and $T_0\equiv 0$. Here we assume that the process $\process{N}$ is Poisson. 

Let $X^n=(X^n_1,\dotsc,X^n_d)'$ be the processes counting the number of customers of 
each class in the system, and define the FCLT-scaled processes $\Hat{X}^n$ as in 
\cref{E-HatX}. Following a similar argument as in \cite{PW09} and \cite{PW10}, we can 
then show the following FCLT, whose proof is omitted for brevity.

%%%%%%%%%%%%%%%%%%%%%%%%%%%%%%%%%%%%%%%%%%%%%%%%%%%%%%%%%%%%%%%%%%%%%%%%%%%%%%%%
\begin{thm}
Under a fixed constant scheduling control $V \in \varDelta$, 
if there exists $\Hat{X}(0)$ such that $\Hat{X}^n(0) \Rightarrow \Hat{X}(0)$
as $n\to\infty$, then \cref{ET4.1A} holds, 
where the limit process $\Hat{X}$ is a unique strong solution to the L{\'e}vy-driven 
SDE
\begin{equation*}
\D \Hat{X}(t) \,=\, \Hat{b}(\Hat{X}(t), V)\, \D{t}
+ \D \Hat{A}(t) - \upsigma_{\alpha}\, \D W(t) + 
c\, \D \Hat{J}(t)\,,
\end{equation*}
with initial condition $\Hat{X}(0)$.
The drift takes the same form as in \cref{ET4.1C} with 
$\Hat{\ell}_i$ in \cref{E4.1}, the matrices $\upsigma_{\alpha}$ and 
$R$ are as given in \cref{T4.1}, $c = (\lambda_1,\dotsc,\lambda_d)'$,
and the process 
$\Hat{J}$ is a compound Poisson process, defined by
\begin{equation*}
\Hat{J}(t) \,\df\, \sum_{k=1}^{N(t)} d_k\,, \quad t\ge 0\,.
\end{equation*}
\end{thm}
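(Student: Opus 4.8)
The plan is to adapt the proof of \cref{T4.1}, adding to it an analysis of the \emph{lost-service} term created by the interruptions, in the spirit of \cite{PW09}. First I would write down the exact balance equations for the unscaled processes. Let $\mathcal{U}^n\subseteq[0,\infty)$ denote the (random) union of the up periods, and let $S^n_i,R^n_i$, $i=1,\dotsc,d$, be independent rate-one Poisson processes. Since all servers stop during the down periods, service of class $i$ is frozen there, and one obtains
\begin{equation*}
X^n_i(t)\,=\,X^n_i(0)+A^n_i(t)
-S^n_i\Bigl(\mu_i\int_0^t Z^n_i(s)\Ind_{\mathcal{U}^n}(s)\,\D{s}\Bigr)
-R^n_i\Bigl(\gamma_i\int_0^t Q^n_i(s)\,\D{s}\Bigr)\,.
\end{equation*}
Writing $\int_0^tZ^n_i(s)\Ind_{\mathcal{U}^n}(s)\,\D{s}=\int_0^tZ^n_i(s)\,\D{s}-\int_0^tZ^n_i(s)\Ind_{(\mathcal{U}^n)^c}(s)\,\D{s}$ isolates the cumulative lost service as the last integral; because $Z^n_i$ stays near its fluid value $\rho_i n$ throughout each down period, this integral is close to $\rho_i n\sum_{k\le N^n(t)}d^n_k$.

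Next I would center and scale as in \cref{E-HatX}: with $\Hat X^n_i=n^{-\nicefrac1\alpha}(X^n_i-\rho_i n)$ and the parametrization $\Hat Q^n_i=\langle e,\Hat X^n\rangle^+V_i$, $\Hat Z^n_i=\Hat X^n_i-\langle e,\Hat X^n\rangle^+V_i$ dictated by the work-conserving constraint $\langle e,Z^n\rangle=\langle e,X^n\rangle\wedge n$, substitution gives
\begin{equation*}
\Hat X^n(t)\,=\,\Hat X^n(0)+\Hat\ell^n t-R\int_0^t\Hat Z^n(s)\,\D{s}
-\varGamma\int_0^t\Hat Q^n(s)\,\D{s}+\Hat A^n(t)-\Hat M^n_S(t)-\Hat M^n_R(t)+\Hat G^n(t)\,,
\end{equation*}
where $\Hat M^n_S,\Hat M^n_R$ are the scaled service and abandonment martingales, $\Hat\ell^n=(\Hat\ell^n_1,\dotsc,\Hat\ell^n_d)'$, and $\Hat G^n$ is the scaled lost-service term. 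I would then establish, as in the proof of \cref{T4.1} and in \cite{PW09,PW10}: (i) the fluid limits $n^{-1}Z^n_i\to\rho_i$ and $n^{-1}Q^n_i\to0$ uniformly on compacts, whence by the martingale FCLT the service martingale $\Hat M^n_S$ converges to a Brownian motion with covariance $\diag(\lambda_1,\dotsc,\lambda_d)$ (identifiable with $\upsigma_\alpha W$) when $\alpha=2$ and converges to $0$ when $\alpha\in(1,2)$, while $\Hat M^n_R\Rightarrow0$ in both cases since the queue is asymptotically empty at the fluid scale; (ii) $\Hat\ell^n t\to\Hat\ell t$ by \cref{E4.1}; and (iii), the heart of the matter, $\Hat G^n\Rightarrow c\Hat J$ with $c=(\lambda_1,\dotsc,\lambda_d)'$. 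For (iii), the scaled version of the approximation $\rho_i n\sum_{k\le N^n(t)}d^n_k$ above, together with $\mu_i\rho_i=\lambda_i$, the assumed convergence of the scaled up--down cycle sequence, and $N^n\Rightarrow N$, gives $\Hat G^n_i\Rightarrow\lambda_i\sum_{k\le N(t)}d_k=\bigl(c\Hat J(t)\bigr)_i$; this is a renewal--reward type limit theorem, and it must be carried out in the Skorohod $M_1$ topology, since over each vanishingly short down period the path of $\Hat G^n$ climbs continuously to what becomes a jump in the limit --- the classical situation in which $M_1$ convergence holds while $J_1$ convergence fails.

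Finally, I would assemble the pieces. The joint convergence of the driving processes in the product $M_1$ topology, together with the $M_1$-continuity of the $d$-dimensional integral-representation map $x\mapsto y$ defined by $y(t)=x(t)+\int_0^th(y(s))\,\D{s}$ for Lipschitz $h$ (established in the proof of \cref{T4.1}, following Theorems~1.1 and~1.2 of \cite{PW10}), with $h$ chosen to encode the piecewise-linear drift $\Hat b(\cdot,V)$ of \cref{ET4.1C}, yields $\Hat X^n\Rightarrow\Hat X$ in $(D^d,M_1)$, with $\Hat X$ a solution of the stated SDE. Uniqueness of the strong solution is then automatic: $\Hat b(\cdot,V)$ is globally Lipschitz, $\upsigma_\alpha$ is constant, and $\Hat J$ is a compound Poisson process because $N$ is Poisson, so $\Hat A+c\Hat J$ is a L\'evy process and the well-posedness results quoted in the Introduction (cf.\ \cite[Theorem~3.1]{Albeverio-Brzezniak-Wu-2010}) apply. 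I expect the main obstacle to be step~(iii) together with the joint convergence in~(i)--(iii): one must prove that $\Hat A^n$, the martingale terms, and $\Hat G^n$ converge \emph{jointly} to their limits, and in particular justify that the surges $\Hat G^n$ undergoes during the short down periods become genuine jumps of the compound Poisson process in the limit. This forces the use of the $M_1$ (rather than $J_1$) topology throughout, and it requires a fluid estimate for $Z^n_i/n$ that is uniform enough to remain valid across the down periods, so as to produce precisely the coefficient $\lambda_i=\mu_i\rho_i$ in front of $\Hat J$.
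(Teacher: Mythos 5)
The paper omits the proof of this theorem entirely, saying only that it follows by the arguments of \cite{PW09} and \cite{PW10} together with the multidimensional $M_1$-continuity of the integral mapping established in the proof of \cref{T4.1}; your proposal is a faithful reconstruction of exactly that intended argument (balance equations with frozen service, isolation of the lost-service term, fluid limits, martingale FCLT, renewal--reward convergence of the lost service to $c\Hat J$ in $M_1$, continuous-mapping assembly, and uniqueness from the Lipschitz drift and L\'evy noise), and it is correct. The one piece of bookkeeping worth double-checking in your step (iii) is the exponent: the scaled lost service per down period is $n^{-\nicefrac{1}{\alpha}}\mu_i\rho_i n\, d^n_k=\lambda_i n^{1-\nicefrac{1}{\alpha}}d^n_k$, so the nondegenerate limit $\lambda_i d_k$ requires $n^{1-\nicefrac{1}{\alpha}}d^n_k\Rightarrow d_k$, which coincides with the paper's stated normalization $n^{\nicefrac{1}{\alpha}}d^n_k\Rightarrow d_k$ only when $\alpha=2$.
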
 

Observe that the jump component $\process{\Hat{J}}$
is a one-dimensional spectrally positive pure-jump L\'evy process.
Hence, $\process{c\Hat{J}}$ should be regarded as the component
$\process{L_2}$ described in \cref{T3.1}\,(i) and (iv).
Let $\vartheta_d$ be the drift and $\nu_{\Hat J}(\D u)$ be the L\'evy measure
of $\process{\Hat{J}}$.
Clearly, $\vartheta_d \,=\,\eta\int_{\sB}u\delta(\D u)$, and
$\nu_{\Hat J}(\D u)=\eta\delta(\D u)$,
where $\eta>0$ is the rate of $\process{N}$ and $\delta(\D u)$ is the
distribution of $d_1$.
In this case, $\process{L_2}$ 
is determined by a L\'evy measure $\nu_2(\D{y})$ which is supported on
$C\df\{uc\colon u\ge0\}$ 
and satisfies $\nu_2(\D (uc))=\nu_{\Hat J}(\D u)$, and drift 
\begin{equation*}
\Hat{\vartheta}\,\df\,\vartheta_d c+\int_{\Rd}y(\Ind_{\{y\in C\colon \abs{y} \le1\}}(y)
-\Ind_{\{y\in C\colon \abs{y} \le \abs{c}\}}(y))\,\nu_2(\D{y})\,.
\end{equation*}
 Namely, we have 
\begin{align*}
\Exp\Bigl[  \E^{\imath\langle L_2(1), \, \xi \rangle}\Bigr]
&\,=\,
\Exp\Bigl[\E^{\imath\Hat{J}(1)\langle c,\, \xi \rangle}\Bigr]\\
&\,=\,\exp\biggl(\imath\vartheta_d\langle c,\, \xi \rangle+\int_{(0,\infty)}
\Bigl(\E^{\imath\langle c,\,\xi 
\rangle u} -\imath u\langle c,\, \xi \rangle\Ind_{\sB}(u)-1\Bigr)\,
\nu_{\Hat J}(\D u)\biggr) \\
&\,=\,\exp\Biggl(\imath\vartheta_d\langle c,\, \xi \rangle+\imath\int_{\Rd}y
\bigl(\Ind_{\{y\in C\colon \abs{y} \le1\}}(y)
-\Ind_{\{y\in C\colon \abs{y} \le \abs{c}\}}(y)\bigr)\,
\nu_2(\D{y})\\
&\mspace{160mu} +\int_{C}\
\Bigl(\E^{\imath\langle y,\xi \rangle } -\imath\langle y,\xi \rangle
\Ind_{\{y\in C\colon \abs{y}\le1\}}(y)-1\Bigr)\,\nu_2(\D{y})\Biggr)\,,
\end{align*} 
 where $\imath=\sqrt{-1}$.
When $\alpha=2$, the arrival limit is a Brownian motion, and thus,
we obtain a limit process as in case (C1).
When $\alpha \in (1,2)$, the arrival limit is an
anisotropic L\'evy process as in case (C3). 
In analogy to \cref{T3.2,T3.5},
we obtain the following corollary for cases (C1) and (C3).
Here, the spare capacity takes the form
\begin{equation*}
\Tilde\varrho\,\df\,  \Hat\rho - e'R^{-1}
\biggl(\Hat\vartheta+\int_{\sB^c}y\nu_2(\D{y})\biggr)\,.
\end{equation*}

\begin{cor}
Suppose that $\varGamma v=0$, and $\Tilde\varrho>0$.
In order for the process $\process{\Hat{X}}$ to be ergodic, 
it is necessary and sufficient that
\begin{enumerate}[wide]
\item[\ttup a]
$\alpha \in (1,2)$ and $\Exp[d_1^\theta]<\infty$ for some
$1\le\theta<\alpha$, or
\item[\ttup b]
$\alpha=2$ and $\Exp[d_1^\theta]<\infty$ for some $\theta\ge1$\,.
\end{enumerate}
If any of these conditions are met, the conclusions of
\cref{T3.2}\,\ttup{i} and \cref{T3.4} follow, i.e., the process
	$\process{\Hat{X}}$ is polynomially ergodic, and its rate
		of convergence is $r(t)\thickapprox t^{\theta_c-1}$.

On the other hand, if $\varGamma v\ne0$, then under either \ttup{a} with
$0 <\theta< \alpha$, or \ttup{b} with $\theta>0$,
the conclusions of \cref{T3.5} hold. 
\end{cor}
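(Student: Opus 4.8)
The plan is to specialize the general ergodicity results \cref{T3.2,T3.4,T3.5} and \cref{L5.7} to the limit process $\process{\Hat{X}}$ of this interrupted network; the work is essentially bookkeeping that identifies the L\'evy data and verifies the structural conditions for the concrete queueing parameters. First I would record these data. The drift of $\Hat{X}$ has the form (A1) with $\ell=\Hat{\ell}$ and $M=R=\diag(\mu_1,\dotsc,\mu_d)$, which is a nonsingular M-matrix with $e'M=(\mu_1,\dotsc,\mu_d)\ge0$, and $v\in\varDelta$. When $\alpha=2$ the arrival limit is Brownian, so $\process{L}=\process{c\Hat{J}}$ is a compound Poisson process with finite L\'evy measure $\nu_2$ supported on the half-line $C=\{uc\colon u\ge0\}$ with direction $w=c=(\lambda_1,\dotsc,\lambda_d)'>0$, and $\upsigma\equiv\upsigma_\alpha$ with $a=\upsigma_\alpha\upsigma_\alpha'$ a constant nonsingular matrix; this is case (C1), covered by \ttup{i} of \cref{T3.1}. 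When $\alpha\in(1,2)$ we have $\process{L}=\process{L_1+L_2}$ with $L_1$ the anisotropic $\alpha$-stable process, $L_2=c\Hat{J}$ compound Poisson on $C$, and $\upsigma\equiv0$; this is case (C3), covered by \ttup{iv} of \cref{T3.1}. In either case $\process{\Hat{X}}$ is open-set irreducible and aperiodic. Since $\nu_2(\D(uc))=\eta\,\delta(\D u)$ with $\delta$ the law of $d_1$, we have $\int_{\sB^c}\abs{y}^\theta\nu_2(\D y)<\infty\iff\Exp[d_1^\theta]<\infty$; combining with $\int_{\sB^c}\abs{y}^\theta\nu_1(\D y)<\infty\iff\theta<\alpha$, the set $\Theta_c$ equals $(0,\alpha)\cap\{\theta\colon\Exp[d_1^\theta]<\infty\}$ when $\alpha\in(1,2)$ and $\{\theta\colon\Exp[d_1^\theta]<\infty\}$ when $\alpha=2$. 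In particular $1\in\Theta_c$ is exactly condition \ttup{a} (where $1<\alpha$ holds automatically) or \ttup{b}, and $\theta_c=\alpha\wedge\sup\{\theta\colon\Exp[d_1^\theta]<\infty\}$. A short computation from \cref{E-ell,E-tvarrho} using \cref{E4.2} and the symmetry of $\nu_1$ reduces the effective spare capacity to the stated $\Tilde\varrho=\Hat\rho-e'R^{-1}(\Hat\vartheta+\int_{\sB^c}y\,\nu_2(\D y))$.

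Next I would run the reductions. Since $a(\cdot)$ is constant, the growth bounds \cref{ET3.2A} and \cref{ET3.5A} hold trivially. For $\varGamma v=0$: assuming $\Tilde\varrho>0$ and $1\in\Theta_c$ (i.e.\ \ttup{a} or \ttup{b}), \cref{T3.2}\,\ttup{i} provides the unique invariant probability measure $\overline\uppi$ together with the polynomial convergence \cref{ET3.2D} for $\theta\in[1,\theta_c)$ (and $\theta=1$ when $\theta_c=1$). To upgrade to the sharp rate $r(t)\thickapprox t^{\theta_c-1}$ I would invoke \cref{T3.4}: when $\alpha=2$ the measure $\nu$ is finite and supported on $C$ with $\langle e,M^{-1}c\rangle=\sum_i\lambda_i/\mu_i=1>0$ (critical loading), so hypothesis \ttup{ii} of \cref{T3.4} is met (when $\theta_c<\infty$); when $\alpha\in(1,2)$ the noise is an anisotropic $\alpha$-stable process carrying in addition such a finite half-line component, and the conclusion follows from the proof of \cref{T3.4} applied to this combination. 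Conversely, if $\process{\Hat{X}}$ is ergodic it admits an invariant probability measure, hence an element of $\cP_0(\Rd)$, so \cref{T3.4}\,\ttup{b} — or \cref{L5.7} — forces $1\in\Theta_c$, which is the necessity of \ttup{a}/\ttup{b}. For $\varGamma v\neq0$: since $M=R$ is diagonal with positive diagonal and $\varGamma v\ne0$, hypothesis \ttup{ii} of \cref{T3.5} holds; choosing $\theta\in\Theta_c$ — possible precisely under \ttup{a} with $0<\theta<\alpha$, or \ttup{b} with $\theta>0$ — and applying \cref{T3.5} yields exponential ergodicity of $\process{\Hat{X}}$ and the characterization $\overline\uppi\in\cP_q(\Rd)\iff q\in\Theta_c$.

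The \emph{main obstacle} is the matching lower bound in the sharp-rate statement when $\alpha\in(1,2)$: there the full L\'evy measure $\nu_1+\nu_2$ is infinite and not supported on a half-line, whereas the lower-bound mechanism behind \cref{ET3.4B} localizes along $C$ and exploits the heavy tail of $\nu_2$, so one must check that superimposing the anisotropic $\alpha$-stable component $L_1$ does not spoil this estimate (the issue is genuine precisely when $\theta_c<\alpha$, where the $\alpha$-stable contribution is too light to produce the required rate). I expect that, once $1\in\Theta_c$, the contribution of $L_1$ to the Foster--Lyapunov inequality \cref{ET3.2C} and to the associated hitting-time bounds on the cone $\cK_\delta$ and on $\cK_\delta^c$ is an order-one perturbation, so that the binding exponent is indeed $\theta_c=\alpha\wedge\sup\{\theta\colon\Exp[d_1^\theta]<\infty\}$ and the $t^{\theta_c-1}$ rate — both upper and lower — persists. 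The remaining verifications (that $R$ is a nonsingular M-matrix, that $v\in\varDelta$, and the identification of the drift $\Hat\vartheta$ of $\process{c\Hat{J}}$, already carried out before the statement) are routine.
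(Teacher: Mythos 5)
Your proposal is correct and matches the paper's treatment exactly: the corollary is stated there without a separate proof, as a direct specialization of Theorems~3.2, 3.4, 3.5 and Lemma~5.7, and your bookkeeping (identification of $\Theta_c$ as $(0,\alpha)\cap\{\theta\colon\Exp[d_1^\theta]<\infty\}$, the verification of the hypotheses of Theorem~3.1 in cases (C1)/(C3), and the positivity $\langle e,R^{-1}c\rangle=\sum_i\nicefrac{\lambda_i}{\mu_i}=1>0$ needed for Lemma~5.7\,(ii) and Theorem~3.4\,(ii)) is the whole content. The ``main obstacle'' you flag --- the matching lower bound for the superposition of the anisotropic $\alpha$-stable and the half-line compound Poisson components when $\alpha\in(1,2)$ --- is already covered by the statement of Theorem~3.4, whose hypotheses explicitly allow ``either, or both'' of its cases (i) and (ii), so no additional estimate is required beyond citing it.
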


%%%%%%%%%%%%%%%%%%%%%%%%%%%%%%%%%%%%%%%%%%%%%%%%%%%%%%%%%%%%%%%%%%%%%%%%%%%%%%%%
\subsection{Other Queueing Models}

An FCLT is proved 
in \cite{PR00, PR00E} for $GI/Ph/n$ queues with renewal
arrival processes 
and phase-type service-time distributions in the H-W regime, where the limit processes 
tracking the numbers of customers in service at each phase form a multidimensional 
piecewise-linear diffusion. In \cite{DHZ10},
$G/Ph/n+GI$ queues with abandonment are studied 
and a multidimensional piecewise-linear diffusion limit is also proved in the H-W regime. 
When the arrival process is heavy-tailed, satisfying an FCLT as in \cref{E-FCLT},
and/or when 
there are service interruptions, it can be shown that the limit processes are piecewise 
O-U processes with jumps as in \cref{E-sde}, where in the drift function the constant 
coefficient $\ell$ is replaced by $-lv$ for a constant
$l \in \RR$ and $v\in\varDelta$, and the vector $\varGamma v$
equals $c v$ for some constant $c\in\RR$. Our results include this
limiting process as a special case.

%%%%%%%%%%%%%%%%%%%%%%%%%%%%%%%%%%%%%%%%%%%%%%%%%%%%%%%%%%%%%%%%%%%%%%%%%%%%%%%%
\section{Proofs of Main Theorems and Other Results} \label{S5}
In this section we prove the main results, with the exception of \cref{T3.1},
whose proof is in \cref{S-A}.

%%%%%%%%%%%%%%%%%%%%%%%%%%%%%%%%%%%%%%%%%%%%%%%%%%%%%%%%%%%%%%%%%%%%%%%%%%%%%%%%
\subsection{Technical lemmas}

This section concerns some estimates for nonlocal operators
that we use in the proofs to establish Foster--Lyapunov equations.

For a $\upsigma$-finite measure $\nu(\D{y})$
on $\mathfrak{B}(\Rds)$, we let
\begin{equation*}
\mathfrak{J}_{1,\nu}[\Phi](x)\,\df\,\int_{\Rds}\mathfrak{d}_1 \Phi(x;y)\, \nu(\D{y})\,,
\quad\text{and}\quad
\mathfrak{J}_\nu[\Phi](x)\,\df\,\int_{\Rds}\mathfrak{d} \Phi(x;y)\, \nu(\D{y})\,,
\end{equation*}
with $\mathfrak{d}_1 \Phi(x,y)$ as defined in \eqref{frakd1},  and
\begin{equation*}
\mathfrak{d} f(x;y) \,\df\,
f(x+y)-f(x)-\langle y,\nabla f(x)\rangle\,,\qquad f\in C^1(\Rd)\,. 
\end{equation*} 
Also define
\begin{equation*}%\label{E-C0}
\Breve{C}_0(r;\theta) \,\df\, \int_{\sB_r^c}\abs{y}^{\theta}\nu(\D{y})\,,\qquad
\widehat{C}_0 \,\df\, \int_{\sB\setminus\{0\}}\abs{y}^{2}\,\nu(\D{y})\,.
\end{equation*}
Note that $\Breve{C}_0(r;\theta)\to0$ as $r\to\infty$.

%%%%%%%%%%%%%%%%%%%%%%%%%%%%%%%%%%%%%%%%%%%%%%%%%%%%%%%%%%%%%%%%%%%%%%%%%%%%%%%%
\begin{lem}\label{L5.1}
Suppose that $\nu(\D{y})$ is a $\upsigma$-finite measure on $\mathfrak{B}(\Rds)$,
which satisfies  $\widehat{C}_0+\Breve{C}_0(1;\theta)<\infty$ for some $\theta>0$.
We have the following.
\begin{enumerate}[wide]
\item[\ttup a]
If $\Phi\in C^2(\Rd)$ satisfies
\begin{equation}\label{EL5.1A}
\sup_{\abs{x}\ge1}\; \abs{x}^{1-\theta}\,\max\;\bigl(\abs{\nabla\Phi(x)},
\abs{x}\,\norm{\nabla^2\Phi(x)}\bigr)\,<\,\infty\,,
\end{equation}
then $\mathfrak{J}_\nu[\Phi]$
vanishes at infinity when $\theta\in[1,2)$, and the map
$x\mapsto(1+\abs{x})^{2-\theta}\,\mathfrak{J}_\nu[\Phi](x)$
is bounded when $\theta\ge2$.
\item[\ttup b]
If $\theta\in(0,1)$, and $\Phi\in C^2(\Rd)$ satisfies
\begin{equation*}
\sup_{\abs{x}\ge1}\; \abs{x}^{-\theta}\,
\max\;\bigl(\abs{\Phi(x)},\abs{x}\,\abs{\nabla\Phi(x)},
\abs{x}^2\,\norm{\nabla^2\Phi(x)}\bigr)\,<\,\infty\,,
\end{equation*}
then the function
$x\mapsto\mathfrak{J}_{1,\nu}[\Phi](x)$
vanishes at infinity.
\end{enumerate}
\end{lem}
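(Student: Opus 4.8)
The statement concerns estimates on the nonlocal operators $\mathfrak{J}_\nu$ and $\mathfrak{J}_{1,\nu}$ applied to functions with controlled polynomial growth of derivatives. The plan is to split the domain of integration into the ``small jumps'' region $\sB\setminus\{0\}$ and the ``large jumps'' region $\sB^c$, and to estimate the integrand $\mathfrak{d}\Phi(x;y)$ (resp.\ $\mathfrak{d}_1\Phi(x;y)$) on each region using Taylor's theorem, exploiting the integrability assumptions $\widehat{C}_0<\infty$ and $\Breve{C}_0(1;\theta)<\infty$.

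For part (a), on the small-jump region I would write $\mathfrak{d}\Phi(x;y)=\int_0^1(1-t)\langle y,\nabla^2\Phi(x+ty)y\rangle\,\D{t}$, so that $\abs{\mathfrak{d}\Phi(x;y)}\le \tfrac12\abs{y}^2\sup_{t\in[0,1]}\norm{\nabla^2\Phi(x+ty)}$. Since $\abs{y}\le1$, for $\abs{x}$ large the points $x+ty$ stay in the region where \cref{EL5.1A} gives $\norm{\nabla^2\Phi}\lesssim\abs{x}^{\theta-2}$; integrating against $\widehat{C}_0$ yields a bound of order $\abs{x}^{\theta-2}$ on this piece, which vanishes at infinity for $\theta<2$ and is $\order\bigl((1+\abs{x})^{\theta-2}\bigr)$ for $\theta\ge2$. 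On the large-jump region I would instead use the first-order expansion bound $\abs{\mathfrak{d}\Phi(x;y)}\le\abs{y}\bigl(\abs{\nabla\Phi(x)}+\abs{\nabla\Phi(x+\xi y)}\bigr)$ for some $\xi\in[0,1]$, or more simply $\abs{\mathfrak{d}\Phi(x;y)}\le\abs{\Phi(x+y)-\Phi(x)}+\abs{y}\abs{\nabla\Phi(x)}$. Using \cref{EL5.1A} again, $\abs{\nabla\Phi(z)}\lesssim(1+\abs{z})^{\theta-1}$, and since $\theta-1<1$, the elementary inequality $(1+\abs{x+y})^{\theta-1}\le (1+\abs{x})^{\theta-1}+\abs{y}^{\theta-1}$ (subadditivity) lets me bound the integrand by a constant times $\abs{y}\,(1+\abs{x})^{\theta-1}+\abs{y}^{\theta}$; integrating against $\nu$ over $\sB^c$ gives $\lesssim\Breve{C}_0(1;1)(1+\abs{x})^{\theta-1}+\Breve{C}_0(1;\theta)$. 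Here I would need to be careful: $\Breve{C}_0(1;1)$ need not be finite when $\theta<1$, so in that case I would instead estimate $\abs{\Phi(x+y)-\Phi(x)}$ directly via the mean value theorem keeping the gradient at the intermediate point and again use subadditivity on $\abs{y}\,(1+\abs{x}+\abs{y})^{\theta-1}\le (1+\abs{x})^{\theta-1}\abs{y}^{1\wedge?}+\dots$; more cleanly, split $\sB^c=(\sB_R\setminus\sB)\cup\sB_R^c$ and use $\Breve{C}_0(R;\theta)\to0$. Combining the two pieces and dividing by $(1+\abs{x})^{2-\theta}$ (for $\theta\ge2$) or noting decay directly (for $\theta\in[1,2)$) gives the claim.

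For part (b), with $\theta\in(0,1)$, the compensator term carries the indicator $\Ind_\sB(y)$, so $\mathfrak{d}_1\Phi(x;y)$ equals $\mathfrak{d}\Phi(x;y)$ on $\sB$ and equals $\Phi(x+y)-\Phi(x)$ on $\sB^c$. On $\sB$ I reuse the Hessian bound from part (a): here the hypothesis gives $\norm{\nabla^2\Phi(z)}\lesssim\abs{z}^{\theta-2}$, so the small-jump contribution is $\lesssim\abs{x}^{\theta-2}\widehat{C}_0\to0$. On $\sB^c$ I use $\abs{\Phi(x+y)-\Phi(x)}\le\abs{\Phi(x+y)}+\abs{\Phi(x)}\lesssim(1+\abs{x})^{\theta}+(1+\abs{x}+\abs{y})^{\theta}\lesssim (1+\abs{x})^{\theta}+\abs{y}^{\theta}$, again by subadditivity since $\theta<1$; integrating against $\nu$ over $\sB^c$ and then over the far region $\sB_R^c$ where $\Breve C_0(R;\theta)$ is small, plus the bounded annulus $\sB_R\setminus\sB$ where $\abs{y}$ is bounded and the $(1+\abs{x})^{\theta}$ term... wait—that term does not vanish. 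The resolution is that on $\sB^c$ one should \emph{not} bound $\abs{\Phi(x+y)-\Phi(x)}$ by the sum of absolute values, but rather keep it as a difference: $\abs{\Phi(x+y)-\Phi(x)}=\bigl|\int_0^1\langle y,\nabla\Phi(x+ty)\rangle\,\D t\bigr|\le\abs{y}\sup_t\abs{\nabla\Phi(x+ty)}\lesssim\abs{y}(1+\abs{x}+\abs{y})^{\theta-1}$, and since $\theta-1<0$ this is $\le\abs{y}\cdot 1$ for... no, it is $\le\abs{y}(1+\abs{x})^{\theta-1}$ is false for large $\abs{y}$. The correct move: for $\abs{y}\le\abs{x}/2$ one has $1+\abs{x}+\abs{y}\ge 1+\abs{x}/2$, so the bound is $\lesssim\abs{y}\abs{x}^{\theta-1}$, integrable against $\nu$ over $\{1\le\abs{y}\le\abs{x}/2\}$ and giving $\lesssim\abs{x}^{\theta-1}\Breve C_0(1;1)$ — but again $\Breve C_0(1;1)$ may be infinite. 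So instead use $\abs{y}(1+\abs{x}+\abs{y})^{\theta-1}\le\abs{y}^{\theta}$ (valid since $1+\abs{x}+\abs{y}\ge\abs{y}$ and $\theta-1\le 0$), giving a bound $\lesssim\abs{y}^{\theta}$ uniformly, integrable against $\nu$ on $\sB^c$, and then taking the tail $\sB_R^c$ with $\Breve C_0(R;\theta)\to0$ handles the far part; for the annulus $\sB_R\setminus\sB$ with fixed $R$, use $\abs{y}(1+\abs{x}+\abs{y})^{\theta-1}\le R(1+\abs{x})^{\theta-1}\to0$ as $\abs{x}\to\infty$. That closes part (b).

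\textbf{Main obstacle.} The delicate point throughout is the interplay between the finiteness of moments of $\nu$ near infinity and the growth of $\nabla\Phi$: one only has $\Breve C_0(1;\theta)<\infty$ for the specific exponent $\theta$, not for exponent $1$, so every large-jump estimate must be arranged so that the factor of $\abs{y}$ appears \emph{only} with exponent $\theta$ (or with a bounded exponent on a bounded annulus), which forces careful use of the subadditivity/concavity inequalities for exponents in $(0,1]$ and of the decay $\Breve C_0(R;\theta)\to0$. Getting the case division $\abs{y}\le R$ versus $\abs{y}>R$ (with $R\to\infty$ after $\abs{x}\to\infty$, or simultaneously) organized cleanly is the crux; the small-jump estimates are routine Taylor expansion.
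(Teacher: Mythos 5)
Your overall strategy (Taylor expansion plus a small-jump/large-jump split, with the Hessian controlling small jumps and the gradient or $\Phi$ itself controlling large jumps) is the same as the paper's, but your choice of a cutoff radius that does not depend on $x$ creates a genuine gap in part (a) for every $\theta>1$. On the far region $\sB_R^c$ with $R$ fixed, your bound for the integrand is $\abs{y}(1+\abs{x})^{\theta-1}+\abs{y}^\theta$, so after integration you are left with a term of order $(1+\abs{x})^{\theta-1}\int_{\sB_R^c}\abs{y}\,\nu(\D{y})$. For fixed $R$ this is a positive constant times $(1+\abs{x})^{\theta-1}$, which neither vanishes at infinity (as required for $\theta\in(1,2)$) nor is $\order\bigl((1+\abs{x})^{\theta-2}\bigr)$ (as required for $\theta\ge2$); sending $R\to\infty$ \emph{after} $\abs{x}\to\infty$ does not help, because the offending factor grows in $\abs{x}$ first. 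The paper resolves this with an $\abs{x}$-dependent cutoff $r(\abs{x})$, chosen for $\theta\in(1,2)$ so that $(\abs{x}/r(\abs{x}))^{\theta-1}\Breve{C}_0(r(\abs{x});\theta)=\sqrt{\Breve{C}_0(r(\abs{x});\theta)}\to0$ while $r(\abs{x})/\abs{x}\to0$, and for $\theta\ge2$ with $r(\abs{x})=\abs{x}$, so that on the outer region $\abs{y}\gtrsim\abs{x}$ and the factor $\abs{x}^{\theta-1}$ can be absorbed into $\abs{y}^{\theta-1}$. Some such coupling of the cutoff to $\abs{x}$ is unavoidable here; your argument only closes for $\theta=1$.

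Part (b) has a second gap. Your final estimate on $\sB^c$ rests on $\sup_{t\in[0,1]}\abs{\nabla\Phi(x+ty)}\lesssim(1+\abs{x}+\abs{y})^{\theta-1}$, which is false when $\theta<1$: the exponent is negative, so the gradient bound $\abs{z}^{\theta-1}$ is largest where $\abs{z}$ is \emph{smallest}, and the segment from $x$ to $x+y$ can pass arbitrarily close to the origin (take $y\approx-x$). The only uniform mean-value bound is $\abs{\Phi(x+y)-\Phi(x)}\le C\abs{y}$, and $\int_{\sB^c}\abs{y}\,\nu(\D{y})$ may be infinite. This is exactly why the lemma assumes a growth bound on $\abs{\Phi}$ itself in part (b) — a hypothesis your final argument never uses: the far region must be handled via $\abs{\Phi(x+y)}+\abs{\Phi(x)}\lesssim(1+\abs{x})^\theta+\abs{y}^\theta$, and the resulting term $(1+\abs{x})^\theta\,\nu(\sB_R^c)$ again forces an $\abs{x}$-dependent cutoff. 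The paper calibrates it with a de la Vall\'ee-Poussin function $\phi$ satisfying $\int_{\sB^c}\phi(\abs{y}^\theta)\,\nu(\D{y})<\infty$, taking $r(\abs{x})$ so that $\abs{x}^\theta\le\phi(\abs{y}^\theta)$ on the outer region, and treats the compensator on the intermediate annulus separately. Your annulus estimate is salvageable (for $\abs{x}\ge2R$ the segment stays away from the origin), but the outer region is not with the tools you propose.
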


\begin{proof}  We first consider the case $\theta\in(1,2)$.
By \cref{EL5.1A} there exist positive constants $c_0$ ad $c_1$
such that
\begin{equation}\label{PL5.1A}
\begin{split}
\abs{\nabla\Phi(x)} &\,\le\, c_0\,\Ind_{\sB}(x)
+ c_1\,\abs{x}^{\theta-1}\,\Ind_{\sB^c}(x)\,,\\[3pt]
\norm{\nabla^2\Phi(x)} &\,\le\, c_0\,\Ind_{\sB}(x)
+ c_1\,\abs{x}^{\theta-2}\,\Ind_{\sB^c}(x)\,,
\end{split}
\end{equation}
for all $x\in\Rd$.
Let $z\colon[1,\infty)\to\RR_+$ be defined by
$z(r) \df r \bigl(\Breve{C}_0(r;\theta)\bigr)^{\nicefrac{1}{2(1-\theta)}}$.
Then $z(r)$ is a strictly increasing function, whose range is an interval
of the form $[z_0,\infty)$, $z_0>0$.
Let $r(z)$ denote the inverse of this map defined on the range of $z(r)$.
Then of course $r(z)\to\infty$ as $z\to\infty$
and we have
\begin{equation}\label{PL5.1Aa}
\Bigl(\tfrac{z}{r(z)}\Bigr)^{\theta-1}\,
\Breve{C}_0(r(z);\theta)\,=\,\sqrt{\Breve{C}_0(r(z);\theta)}
\,\xrightarrow[z\to\infty]{}\,0\,,
\text{\ and\ }\frac{r(z)}{z}\,\xrightarrow[z\to\infty]{}\,0\,.
\end{equation}
We split the integral as follows: 
\begin{align}\label{PL5.1B}
\int_{\Rds}\mathfrak{d}\Phi(x;y)\, \nu(\D{y})
&\,=\int_{\sB_{r(\abs{x})}\setminus\{0\}}\int_0^1(1-t)
\bigl\langle y,\nabla^2\Phi(x+ty)y\bigr\rangle\, \D{t}\,\nu(\D{y})\\
&\mspace{100mu}+\int_{\sB^c_{r(\abs{x})}}\int_0^1
\bigl\langle y,\nabla \Phi(x+ty)-\nabla \Phi(x)\bigr\rangle\, \D{t}\,\nu(\D{y})\,.
\nonumber
\end{align}
Let $\Bar{r}>0$ be such that $z \ge r(z) + 1$ for
all $z\ge\Bar{r}$.
We estimate the integrals in \cref{PL5.1B} for $x\in\sB_{\Bar{r}}^c$.
For the first integral on the right hand side of \cref{PL5.1B}, we use
the estimate in \cref{PL5.1A} which implies that
\begin{equation*}
\langle y, \nabla^{2}\Phi(z) y\bigr\rangle
\,\le\, c_1\,\abs{z}^{\theta-2}\,\abs{y}^2\,,
\qquad y\in\Rd\,,\ z\in\sB^c\,,
\end{equation*}
to write
\begin{align*}
\bigl\langle y,\nabla^2 \Phi(x+ty)y\bigr\rangle &\,\le\,
c_1\,\abs{x+ty}^{\theta-2}\,\abs{y}^2 \\
&\,\le\,c_1\,\bigl(\abs{x}-r(\abs{x})\bigr)^{\theta-2}\,
\abs{y}^2 \\
&\,=\,c_1\,\Bigl(r(\abs{x})\bigl(\tfrac{\abs{x}}{r(\abs{x})}-1\bigr)\Bigr)^{\theta-2}
\abs{y}^2\\
&\,\le\,c_1\,\Bigl(\tfrac{\abs{x}}{r(\abs{x})}-1\Bigr)^{\theta-2}\,
\abs{y}^{\theta}\,,
\end{align*}
where the last inequality follows since $\abs{y}\le r(\abs{x})$.
So integrating with respect to $\nu(\D{y})$, we deduce that
the first integral is bounded by
\begin{align}\label{PL5.1C}
\tfrac{1}{2}\,c_1\,\Bigl(\tfrac{\abs{x}}{r(\abs{x})}-1\Bigr)^{\theta-2} &
\biggl(r(\abs{x})^{\theta-2}\, 
\int_{\sB\setminus\{0\}}\abs{y}^2\,\nu(\D{y})
+ \int_{\sB_{r(\abs{x})}\setminus\sB}\abs{y}^\theta\,\nu(\D{y})\biggr)\\
&\,\le\,
\tfrac{1}{2}\,c_1\,\widehat{C}_0\,
\bigl(\abs{x}-r(\abs{x})\bigr)^{\theta-2}+
\tfrac{1}{2}\,c_1\,\Breve{C}_0(r(\abs{x});\theta)
\Bigl(\tfrac{\abs{x}}{r(\abs{x})}-1\Bigr)^{\theta-2}\,.\nonumber
\end{align}
We use the inequality
$\abs{y}\le \bigl(r(\abs{x})\bigr)^{1-\theta}\,\abs{y}^\theta$
on $\sB^c_{r(\abs{x})}$, to derive the estimate
\begin{align*}
\babs{\bigl\langle y,\nabla \Phi(x+ty)-\nabla \Phi(x)\bigr\rangle} &\,\le\,
\abs{y}\,\bigl(c_0+c_1\abs{x+ty}^{\theta-1}+c_1\abs{x}^{\theta-1}\bigr)\\
&\,\le\, \abs{y}\, \Bigl(c_0+c_1\,
\bigl(2\abs{x}^{\theta-1}+\abs{y}^{\theta-1}\bigr)\Bigr)\\
&\,\le\, \bigl(r(\abs{x})\bigr)^{1-\theta}\,\abs{y}^\theta\,
\bigl(c_0+2c_1\,
\abs{x}^{\theta-1}\bigr) +\abs{y}^{\theta}\,.
\end{align*}
Integrating this with respect to $\nu(\D{y})$, we obtain a bound
for the absolute value of second integral on the right hand side of \cref{PL5.1B},
which takes the form
\begin{equation}\label{PL5.1D}
\Bigl[c_0(r(\abs{x})\bigr)^{1-\theta}+c_1\,
\Bigr(1+2\Bigl(\tfrac{\abs{x}}{r(\abs{x})}\Bigr)^{\theta-1}\Bigr)\Bigr]\,
\Breve{C}_0(r(\abs{x});\theta)\,.
\end{equation}
Combining \cref{PL5.1Aa,PL5.1C,PL5.1D}, we obtain a bound for
$\abs{\mathfrak{J}_\nu[\Phi](x)}$ that
clearly vanishes as $\abs{x}\to\infty$ when $\theta\in(1,2)$.

For $\theta=1$, we select $r(\abs{x})=\nicefrac{1}{2}\abs{x}$ and follow the same method.
For $\theta\ge2$, we select $r(z)=z$
and we use the bounds (for $t\in[0,1]$, and $\abs{x}\ge2$)
\begin{equation*}
\bigl\langle y,\nabla^2 \Phi(x+ty)y\bigr\rangle
\,\le\, c_1\,2^{\theta-2}\abs{x}^{\theta-2}\,\abs{y}^2\,,
\qquad\text{when\ } \abs{y}\le \abs{x}-1\,,
\end{equation*}
and 
\begin{equation*}
\bigl\langle y,\nabla \Phi(x+ty)\bigr\rangle \,\le\, \abs{y}\,
\bigl(c_0+2^{2\theta-4}+2^{\theta-2}(2^{\theta-2}+1)c_1\,\abs{y}^{\theta-1}\bigr)\,,
\end{equation*}
when $\abs{y}\ge\abs{x}-1$, to obtain the result as stated.
This completes the proof of part (a).

We continue with part (b).
Here, in addition to \cref{PL5.1A}, we have the bound
\begin{equation*}
\abs{\Phi(x)} \,\le\, c_0\,\Ind_{\sB}(x) + c_1\,\abs{x}^{\theta}\,\Ind_{\sB^c}(x)
\qquad\forall\,x\in\Rd\,.
\end{equation*}
Further, since $\int_{\sB^c}\abs{x}^\theta\, \nu(\D{y})<\infty$, by
the de la Vall\'ee-Poussin theorem, there exists a nonnegative
increasing convex function $\phi\colon\RR_+\to\RR_+$ with $\nicefrac{\phi(t)}{t}\to\infty$
as $t\to\infty$, such that
$\int_{\sB^c}\phi\bigl(\abs{y}^\theta\bigr)\, \nu(\D{y})<\infty$.
Without loss of generality, we may assume $\phi(t)>t$ for all $t\in\RR_+$.
Let $r(t) \df \bigl(\phi^{-1}(t^\theta)\bigr)^{\nicefrac{1}{\theta}}$. 
Clearly, $r\colon\RR_+\to\RR_+$ is increasing, $r(t)\to\infty$,
and  $\nicefrac{r(t)}{t}\to0$
as $t\to\infty$.
Now, we have 
\begin{align}\label{PL5.1F}
\int_{\Rds}\mathfrak{d}_1\Phi(x;y)\, \nu(\D{y})
&\,=\,\int_{\sB_{r(\abs{x})}\setminus\{0\}}\int_0^1(1-t)
\bigl\langle y,\nabla^2\Phi(x+ty)y\bigr\rangle\, \D{t}\,\nu(\D{y})\\
&\mspace{40mu}+\int_{\sB^c_{r(\abs{x})}}\bigl(\Phi(x+y)-\Phi(x)\bigr)\,\nu(\D{y})
+\int_{\sB_{r(\abs{x})}\setminus\sB}\langle y,\nabla \Phi(x)\rangle\,\nu(\D{y})\,.
\nonumber
\end{align}
For the first integral on the right hand side of \cref{PL5.1F}, we use
the bound derived in part~(a).
For the last integral on the right hand side of \cref{PL5.1F} (for $\abs{x}\ge1$),
we use the bound
$\bigl\langle y,\nabla \Phi(x)\bigr\rangle \le \abs{y}\,c_1\abs{x}^{\theta-1}$.
Thus,
\begin{align*}
\int_{\sB_{r(\abs{x})}\setminus\sB}\bigl\langle y,\nabla \Phi(x)\bigr\rangle
\,\nu(\D{y}) &\,\le\,
c_1\abs{x}^{\theta-1}\int_{\sB_{r(\abs{x})}\setminus\sB}\abs{y}\,\nu(\D{y})\\
&\,\le\, c_1\abs{x}^{\theta-1}r(\abs{x})^{1-\theta}
\int_{\sB_{r(\abs{x})}\setminus\sB}\abs{y}^\theta\,\nu(\D{y})\\
&\,\le\, c_1\abs{x}^{\theta-1}r(\abs{x})^{1-\theta}\Breve{C}_0(1;\theta)\,, 
\end{align*}
which tends to $0$ as $\abs{x}$ tends to $\infty$.
Lastly, for the second integral on the right hand side of \cref{PL5.1F},
we proceed as follows.
First, in view of the bound of $\Phi(x)$ (for $\abs{x}\ge1$), we have
\begin{equation*}
\int_{\sB^c_{r(\abs{x})}}\Phi(x)\,\nu(\D{y})
\,\le\,c_1\abs{x}^\theta\,\int_{\sB^c_{r(\abs{x})}}\nu(\D{y})\,\le\,
c_1\int_{\sB^c_{r(\abs{x})}}\phi\bigl(\abs{y}^\theta\bigr)\,\nu(\D{y})\,,
\end{equation*}
which tends to $0$ as $\abs{x}$ tends to $\infty$.
Second, since
\begin{equation*}
\Phi(x+y)\,\le\, c_0\,\Ind_{\sB}(x+y)
+ c_1\,\abs{x+y}^{\theta}\,\Ind_{\sB^c}(x+y)
\,\le\, c_0+2c_1\,\phi\bigl(\abs{y}^\theta\bigr)
\end{equation*}
for $y\in\sB_{r(\abs{x})}^c$,
we obtain
\begin{equation*}
\int_{\sB^c_{r(\abs{x})}}\Phi(x+y)\nu(\D{y})
\,\le\, c_0\int_{\sB^c_{r(\abs{x})}}\,\nu(\D{y})+
2c_1\int_{\sB^c_{r(\abs{x})}}\phi\bigl(\abs{y}^\theta\bigr)\,\nu(\D{y})\,,
\end{equation*}
which also tends to  $0$ as $\abs{x}$ tends to $\infty$.
This completes the proof.
\end{proof}

Recall the notation $\widetilde{V}_{Q,\theta}(x)$ from \cref{N3.1}.

%%%%%%%%%%%%%%%%%%%%%%%%%%%%%%%%%%%%%%%%%%%%%%%%%%%%%%%%%%%%%%%%%%%%%%%%%%%%%%%%
\begin{lem}\label{L5.2}
Suppose that $\nu(\D{y})$ satisfies
\begin{equation*}
\int_{\Rds}\bigl(\abs{y}^{2}\,\Ind_{\sB\setminus{\{0\}}}(y)+\E^{\theta\abs{y}}\,
\Ind_{\sB^c}(y)\bigr)\,\nu(\D{y}) \,<\,\infty
\end{equation*}
for some $\theta>0$.
Then
$x\mapsto\bigl(1+\widetilde{V}_{Q,\theta}(x)\bigr)^{-1}\,
\mathfrak{J}_\nu[\widetilde{V}_{Q,\theta}](x)$ is bounded on $\Rd$.
\end{lem}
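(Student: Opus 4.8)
The goal is to prove the bound $\abs{\mathfrak{J}_\nu[\widetilde{V}_{Q,\theta}](x)}\le C\bigl(1+\widetilde{V}_{Q,\theta}(x)\bigr)$ with $C$ independent of $x$; since $\Hat\phi>0$, and hence $\widetilde{V}_{Q,\theta}\ge1$, this is equivalent to $\abs{\mathfrak{J}_\nu[\widetilde{V}_{Q,\theta}](x)}\le C\widetilde{V}_{Q,\theta}(x)$. The plan is to split the integral $\mathfrak{J}_\nu[\widetilde{V}_{Q,\theta}](x)=\int_{\Rds}\mathfrak{d}\widetilde{V}_{Q,\theta}(x;y)\,\nu(\D{y})$ over $\sB\setminus\{0\}$ (small jumps) and over $\sB^c$ (large jumps), and to bound each piece using elementary estimates on the first two derivatives of $\widetilde{V}_{Q,\theta}$ together with, respectively, the finiteness of $\widehat{C}_0$ and the exponential moment hypothesis $\int_{\sB^c}\E^{\theta\abs{y}}\,\nu(\D{y})<\infty$.

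The first step is to record the pointwise estimates on $\widetilde{V}_{Q,\theta}=\E^{\theta\Hat\phi}$ that the argument needs. Since $\Hat\phi$ is a fixed smooth function that coincides with $\norm{\,\cdot\,}_Q$ outside the unit ball, its gradient is globally bounded and $\norm{\nabla^{2}\Hat\phi(z)}\le c$ for all $z$; by the chain rule $\nabla\widetilde{V}_{Q,\theta}=\theta(\nabla\Hat\phi)\widetilde{V}_{Q,\theta}$ and $\nabla^{2}\widetilde{V}_{Q,\theta}=\bigl(\theta\nabla^{2}\Hat\phi+\theta^{2}(\nabla\Hat\phi)\otimes(\nabla\Hat\phi)\bigr)\widetilde{V}_{Q,\theta}$, so that $\abs{\nabla\widetilde{V}_{Q,\theta}(z)}+\norm{\nabla^{2}\widetilde{V}_{Q,\theta}(z)}\le C\widetilde{V}_{Q,\theta}(z)$ for every $z\in\Rd$. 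Two further ``translation'' facts follow from $\abs{\Hat\phi-\norm{\,\cdot\,}_Q}$ being bounded and from $\norm{\,\cdot\,}_Q$ being a norm: there is a constant $c_0$ with $\Hat\phi(x+y)\le\Hat\phi(x)+\Hat\phi(y)+c_0$ for all $x,y$, whence $\widetilde{V}_{Q,\theta}(x+y)\le\E^{\theta c_0}\widetilde{V}_{Q,\theta}(x)\widetilde{V}_{Q,\theta}(y)$; and $\widetilde{V}_{Q,\theta}(z)\le C'\widetilde{V}_{Q,\theta}(x)$ whenever $\abs{z-x}\le1$.

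For the small jumps I would use Taylor's formula with integral remainder, $\mathfrak{d}\widetilde{V}_{Q,\theta}(x;y)=\int_{0}^{1}(1-t)\bigl\langle y,\nabla^{2}\widetilde{V}_{Q,\theta}(x+ty)y\bigr\rangle\,\D{t}$, which together with the Hessian bound and the translation estimate (applicable since $\abs{(x+ty)-x}\le\abs{y}<1$) gives $\abs{\mathfrak{d}\widetilde{V}_{Q,\theta}(x;y)}\le C\abs{y}^{2}\widetilde{V}_{Q,\theta}(x)$ for $\abs{y}<1$; integrating against $\nu$ over $\sB\setminus\{0\}$ contributes at most $C\widehat{C}_{0}\widetilde{V}_{Q,\theta}(x)$. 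For the large jumps I would bound crudely $\abs{\mathfrak{d}\widetilde{V}_{Q,\theta}(x;y)}\le\widetilde{V}_{Q,\theta}(x+y)+\widetilde{V}_{Q,\theta}(x)+\abs{y}\,\abs{\nabla\widetilde{V}_{Q,\theta}(x)}$ and then apply the submultiplicativity and gradient bounds to obtain $\abs{\mathfrak{d}\widetilde{V}_{Q,\theta}(x;y)}\le C\widetilde{V}_{Q,\theta}(x)\bigl(\widetilde{V}_{Q,\theta}(y)+1+\abs{y}\bigr)$. The remaining point is that $\int_{\sB^c}\bigl(\widetilde{V}_{Q,\theta}(y)+1+\abs{y}\bigr)\nu(\D{y})<\infty$: since $\widetilde{V}_{Q,\theta}(y)=\E^{\theta\Hat\phi(y)}\le c\,\E^{\theta\norm{y}_Q}$ and $\norm{y}_Q\le\norm{Q}^{\nicefrac{1}{2}}\abs{y}$, this growth is exponential in $\abs{y}$ and is absorbed by the hypothesis $\int_{\sB^c}\E^{\theta\abs{y}}\nu(\D{y})<\infty$ precisely in the regime in which the lemma is invoked, where the exponent defining $\widetilde{V}_{Q,\cdot}$ is taken commensurate with $\theta$ and $\norm{Q}$ (the condition $p<\theta\norm{Q}^{-\nicefrac{1}{2}}$ of \cref{T3.2}\,\ttup{ii}); the terms $1$ and $\abs{y}$ are dominated trivially and $\nu(\sB^c)<\infty$. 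Summing the two contributions yields the claimed bound.

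The step requiring the most care is the large-jump estimate: $\widetilde{V}_{Q,\theta}(x+y)$ does not split as a function of $x$ times a function of $y$, so one must extract the approximate submultiplicativity of $\widetilde{V}_{Q,\theta}$ from the fact that $\Hat\phi$ is a bounded perturbation of the norm $\norm{\,\cdot\,}_Q$, and then match the resulting exponential-in-$\abs{y}$ growth against the exponential moment of $\nu$. The small-jump part and the derivative identities are routine, and the uniformity of all constants for bounded $\abs{x}$ is automatic because $\Hat\phi$ is smooth on all of $\Rd$.
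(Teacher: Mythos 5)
Your proof is correct and follows essentially the same route as the paper's: a small-jump/large-jump split, with a second-order Taylor bound on $\sB\setminus\{0\}$ and approximate submultiplicativity of $\widetilde{V}_{Q,\theta}$ on $\sB^c$ (the paper packages the latter as an exact identity expressing $\mathfrak{d}\widetilde{V}_{Q,\theta}(x;y)$ through $\widetilde{V}_{Q,\theta}(x+y)$ and a $\norm{\,\cdot\,}_Q$-dependent multiple of $\widetilde{V}_{Q,\theta}(x)$, but the resulting estimates are the same). Your remark that the integrability of $\widetilde{V}_{Q,\theta}(y)$ over $\sB^c$ requires calibrating the Lyapunov exponent to $\theta$ and $\norm{Q}$ --- exactly the condition $p<\theta\norm{Q}^{-\nicefrac{1}{2}}$ of \cref{T3.2}\,\ttup{ii} --- is accurate and reflects how the lemma is actually invoked.
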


\begin{proof}
We estimate $\mathfrak{J}_\nu[\widetilde{V}_{Q,\theta}](x)$
by using the first integral on
the right hand side of \cref{PL5.1B} for $y\in\sB\setminus{\{0\}}$,
while for $y\in\sB^c$, we estimate the integral using the identity
\begin{equation*}
\mathfrak{d} \widetilde{V}_{Q,\theta}(x;y)\,=\,
\widetilde{V}_{Q,\theta}(x+y)
+\Bigl[\bnorm{x}_Q^{-1}\,\Bigl(\bnorm{\tfrac{\theta}{2}y}_Q^2
- \bnorm{x+\tfrac{\theta}{2}y}_Q^2\Bigr)+\bnorm{x}_Q-1\Bigr]
\widetilde{V}_{Q,\theta}(x)
\end{equation*}
for $x\in\sB^c$.
\end{proof}

In the proof of \cref{T3.2,T3.5}, we apply the results in \cref{L5.1,L5.2}. 
It is worth noting that in the special case of SDEs driven by
an isotropic $\alpha$-stable processes alone, sharper estimates than
\cref{L5.1} can be obtained (see the proof of Proposition~5.1 in \cite{ABC-16}).
We state such an estimate in \cref{L5.3} which follows. 
For $\Phi \in C^2(\Rd)$, and a positive vector
$\eta=(\eta_1,\dotsc,\eta_d)$, we define
\begin{align*}
\fI_\alpha[\Phi](x)&\,\df\,
\int_{\Rds}\mathfrak{d}_1 \Phi(x;y)\frac{ \D{y}}{\abs{y}^{\alpha+d}}\,,\\
\hfI_\alpha[\Phi](x)&\,\df\,
\sum_{i=1}^{d}\,\eta_i\,\int_{\RR_*}\mathfrak{d}_1
\Phi(x;y_i e_i)\frac{\D{y_i}}{\abs{y_i}^{\alpha+1}}\,,
\end{align*}
where $\mathfrak{d}_1 \Phi(x;y)$ is defined in \cref{frakd1},
and $e_i$ denotes a vector in $\Rd$ whose elements are all $0$,
except the $i^{\mathrm{th}}$ element which equals $1$.
Recall the notation $V_{Q,\delta}(x)$ from \cref{N3.1}.

%%%%%%%%%%%%%%%%%%%%%%%%%%%%%%%%%%%%%%%%%%%%%%%%%%%%%%%%%%%%%%%%%%%%%%%%%%%%%%%%
\begin{lem}\label{L5.3}
The map $x\mapsto \abs{x}^{\alpha-\theta}\,\fI_\alpha[V_{Q,\theta}](x)$
is bounded on $\Rd$ for any $Q\in\cM_+$ and $\theta\in(0,\alpha)$.
The same holds for the anisotropic operator $\hfI_\alpha$.
\end{lem}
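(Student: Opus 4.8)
The plan is to prove the bound for the operator $\fI_\alpha$ acting on $V_{Q,\theta}$, and then indicate the (essentially identical) modification for the anisotropic operator $\hfI_\alpha$. Since $V_{Q,\theta}(x) = \bigl(\Hat\phi(x)\bigr)^\theta$ agrees with $\norm{x}_Q^\theta = \langle x,Qx\rangle^{\theta/2}$ outside the unit ball, and $\fI_\alpha[V_{Q,\theta}]$ is plainly bounded on any compact set (the function is $C^2$ and the singular kernel is integrated against a second-order increment near $0$), it suffices to obtain the bound $\abs{x}^{\alpha-\theta}\,\babs{\fI_\alpha[V_{Q,\theta}](x)}\le C$ for $\abs{x}$ large. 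The first step is a rescaling argument exploiting the homogeneity of both $\norm{\,\cdot\,}_Q^\theta$ and the $\alpha$-stable kernel: writing $x = r\omega$ with $r=\norm{x}_Q$ and $\omega$ on the $Q$-unit sphere, the substitution $y\mapsto r z$ in $\int_{\Rds}\mathfrak{d}_1 V_{Q,\theta}(x;y)\,\abs{y}^{-\alpha-d}\,\D y$ produces a factor $r^{\theta-\alpha}$ times an integral of the same form evaluated at the unit vector $\omega$ — except that the indicator $\Ind_\sB(y)$ in $\mathfrak{d}_1$ becomes $\Ind_{\sB_{1/r}}(z)$. So the task reduces to showing that
\begin{equation*}
\sup_{r\ge 1}\;\sup_{\norm{\omega}_Q=1}\;\babss{\int_{\Rds}\Bigl(\norm{\omega+z}_Q^\theta-\norm{\omega}_Q^\theta-\Ind_{\sB_{1/r}}(z)\,\theta\norm{\omega}_Q^{\theta-2}\langle\omega,Qz\rangle\Bigr)\frac{\D z}{\abs{z}^{\alpha+d}}}\,<\,\infty\,.
\end{equation*}

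The second step is to control this integral uniformly. Split $\Rds$ into the near region $\{\abs{z}\le 1/2\}$, the intermediate region $\{1/2<\abs{z}\le 2\}$, and the far region $\{\abs{z}>2\}$. On the far region, $\norm{\omega+z}_Q^\theta \le C(1+\abs{z})^\theta$ and $\theta<\alpha$ gives integrability of $\abs{z}^{\theta-\alpha-d}$ at infinity, so that piece is bounded by a constant independent of $\omega$ and $r$ (note that for $r\ge 2$ the correction term is absent there, and for $1\le r<2$ it contributes a bounded amount). On the intermediate region the integrand is bounded and the kernel is integrable. On the near region we use a second-order Taylor expansion of $g(z)\df\norm{\omega+z}_Q^\theta$: since $g$ is smooth with $\nabla^2 g$ bounded on $\abs{z}\le 1/2$ uniformly over $\norm{\omega}_Q=1$ (the only possible singularity of $\norm{\,\cdot\,}_Q^\theta$ is at the origin, which stays at distance $\ge 1/2$), the symmetry $z\mapsto-z$ of the kernel kills the first-order term over the symmetric annulus $\{1/r\le\abs{z}\le 1/2\}$ and the remaining second-order remainder is $\order(\abs{z}^2)$, integrable against $\abs{z}^{-\alpha-d}$ near $0$ since $\alpha<2$; on the innermost ball $\{\abs{z}<1/r\}$ the full second-order increment $\mathfrak{d}_1$ is likewise $\order(\abs{z}^2)$. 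All bounds are uniform in $r\ge 1$ and $\omega$, which is exactly what we need.

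The main obstacle — really the only delicate point — is verifying the claimed \emph{uniformity} in $\omega$ of the second-derivative bound for $\norm{\,\cdot\,}_Q^\theta$ near $\omega$: one must check that $\sup_{\norm{\omega}_Q=1}\sup_{\abs{z}\le 1/2}\norm{\nabla^2_z\,\norm{\omega+z}_Q^\theta}<\infty$, which follows because $\omega+z$ ranges over a compact set bounded away from $0$ and $\norm{\,\cdot\,}_Q^\theta\in C^\infty(\Rds)$, but it is the step that makes the rescaling argument go through cleanly rather than producing an $\omega$-dependent constant. For the anisotropic operator $\hfI_\alpha$, the identical rescaling works because each one-dimensional kernel $\eta_i\,\abs{y_i}^{-\alpha-1}\,\D y_i$ on the $i$th axis is again homogeneous of the right degree (the axis direction $e_i$ is scale-invariant); one applies the same near/intermediate/far decomposition on each axis, using the symmetry $y_i\mapsto-y_i$ to cancel the first-order term, and sums the finitely many uniformly bounded contributions. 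Since the required Taylor bounds are the restriction of the same smooth function $\norm{\,\cdot\,}_Q^\theta$ to lines through $\omega+te_i$, no new estimate is needed.
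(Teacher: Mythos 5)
The paper does not actually supply a proof of this lemma: it is stated as a sharper estimate available in the pure $\alpha$-stable case, and the reader is referred to the proof of Proposition~5.1 in \cite{ABC-16}, where the bound is obtained by splitting the integral at a radius comparable to $\abs{x}$ and Taylor-expanding each piece. Your homogeneity/rescaling argument is a legitimate self-contained alternative, and its core is correct: the substitution $y=rz$ with $r=\norm{x}_Q$ does produce the factor $r^{\theta-\alpha}$ together with the indicator $\Ind_{\sB_{1/r}}$, and the near/intermediate/far decomposition, with the symmetric cancellation of the first-order term on the annulus $\{\nicefrac1r\le\abs{z}\le\rho\}$ and the quadratic Taylor remainder integrable against $\abs{z}^{-\alpha-d}$ since $\alpha<2$, gives the required bound uniformly in $r\ge1$ and in $\omega$ on the compact $Q$-unit sphere. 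What your route buys is transparency about where each power of $\abs{x}$ comes from; what the direct splitting in \cite{ABC-16} buys is that it never has to introduce the $r$-dependent indicator radius.

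Two small points need patching before the argument is airtight. First, $\norm{\omega}_Q=1$ does not imply $\abs{\omega}\ge1$, so the origin need not stay at Euclidean distance $\ge\nicefrac12$ from $\omega$; the near-region radius must be chosen as a constant depending on $Q$ (say $\rho=\tfrac12\norm{Q}^{-\nicefrac12}$) so that $\omega+z$ stays in a compact set bounded away from $0$, on which $\nabla^2\bigl(\norm{\,\cdot\,}_Q^\theta\bigr)$ is indeed uniformly bounded. Second, replacing $V_{Q,\theta}(x+y)$ by $\norm{x+y}_Q^{\theta}$ throughout the rescaled integral is only valid when $x+y\notin\sB$; the discrepancy $\Hat\phi^\theta-\norm{\,\cdot\,}_Q^{\theta}$ is bounded and supported on the unit ball, which sits at distance $\abs{x}-1$ from $x$, so it contributes $\order(\abs{x}^{-d-\alpha})$ to $\fI_\alpha[V_{Q,\theta}](x)$ and is negligible against $\abs{x}^{\theta-\alpha}$ --- but a sentence saying so should be added. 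With these adjustments the isotropic case is complete, and the anisotropic case goes through axis by axis exactly as you describe.
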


The following lemma,
whose proof follows from a similar argument 
to the one used in \cref{L5.1},
is not utilized in the proofs, but may be of independent interest. 
%%%%%%%%%%%%%%%%%%%%%%%%%%%%%%%%%%%%%%%%%%%%%%%%%%%%%%%%%%%%%%%%%%%%%%%%%%%%%%%%
\begin{lem}
Assume the hypotheses of \cref{L5.1}\,\ttup{a}, but replace the
bound of $\nabla^2\Phi(x)$ in \cref{EL5.1A} by
$\sup_{x\in\Rd}\,\nicefrac{\norm{\nabla^2\Phi(x)}}{1+\abs{x}^{\gamma}}<\infty$
for $\gamma\in[0,\theta-1]$.
Then
\begin{equation*}
\limsup_{\abs{x}\to\infty}\; \abs{x}^{(1-\theta)(2-\theta+\gamma)}\,
\int_{\Rds}\mathfrak{d} \Phi(x;y)\, \nu(\D{y})\,<\,\infty\,.
\end{equation*}
\end{lem}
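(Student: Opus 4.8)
The plan is to mimic the proof of \cref{L5.1}\,\ttup a, carrying the modified Hessian growth through the bookkeeping. Write $R\df\abs{x}$, and for $R$ large split $\int_{\Rds}\mathfrak{d}\Phi(x;y)\,\nu(\D{y})$ exactly as in \cref{PL5.1B}, with a splitting radius $r=r(R)$ to be chosen subject to $1\le r(R)\le R/2$ for all large $R$: over $\sB_r\setminus\{0\}$ the integrand is the quadratic Taylor remainder, over $\sB_r^c$ it is written through the increment $\nabla\Phi(x+ty)-\nabla\Phi(x)$. On $\sB_r\setminus\{0\}$ one has $\abs{x+ty}\ge R/2$, so the new hypothesis $\norm{\nabla^2\Phi(z)}\lesssim 1+\abs{z}^{\gamma}$ gives $\norm{\nabla^2\Phi(x+ty)}\lesssim R^{\gamma}$; combined with $\abs{y}^2\le(1+r^{2-\theta})\abs{y}^{\theta}$ for $1\le\abs{y}\le r$ and with $\widehat{C}_0+\Breve{C}_0(1;\theta)<\infty$, the first term is $\lesssim R^{\gamma}(1+r^{2-\theta})$. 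On $\sB_r^c$ the unchanged gradient bound $\abs{\nabla\Phi(z)}\lesssim 1+\abs{z}^{\theta-1}$ of \cref{EL5.1A} yields $\abs{\mathfrak{d}\Phi(x;y)}\lesssim\abs{y}\bigl(R^{\theta-1}+\abs{y}^{\theta-1}\bigr)\lesssim\bigl(1+R^{\theta-1}r^{1-\theta}\bigr)\abs{y}^{\theta}$, using $\abs{y}\le r^{1-\theta}\abs{y}^{\theta}$ when $\abs{y}>r$; hence the second term is $\lesssim\bigl(1+R^{\theta-1}r^{1-\theta}\bigr)\Breve{C}_0(r;\theta)$.

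It then remains to choose $r(R)$ so that both bounds are $\order\bigl(R^{(\theta-1)(2-\theta+\gamma)}\bigr)$. The natural choice (for $\theta\in[1,2)$) is $r(R)\df\max\bigl(1,R^{\theta-1-\gamma}\bigr)$, which is admissible for large $R$ since $0\le\theta-1-\gamma<1$. With it, $R^{\gamma}r^{2-\theta}=R^{\gamma+(2-\theta)(\theta-1-\gamma)}=R^{(\theta-1)(2-\theta+\gamma)}$; the standalone term $R^{\gamma}$ is dominated by the target power exactly because $\gamma\le\theta-1$ (equivalently $\gamma(2-\theta)\le(\theta-1)(2-\theta)$); the term $\Breve{C}_0(r;\theta)$ is bounded; and $R^{\theta-1}r^{1-\theta}\Breve{C}_0(r;\theta)=R^{(\theta-1)(2-\theta+\gamma)}\,\Breve{C}_0\bigl(\max(1,R^{\theta-1-\gamma});\theta\bigr)$, i.e.\ the target power times a bounded factor, which moreover tends to $0$ when $\gamma<\theta-1$. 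Multiplying the combined bound by $R^{(1-\theta)(2-\theta+\gamma)}$ then gives a quantity bounded as $R\to\infty$, which is the assertion.

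The decisive point — as in \cref{L5.1} — is that $\Breve{C}_0(r;\theta)$ comes with no quantitative decay rate; the choice $r=R^{\theta-1-\gamma}$ is engineered precisely so that the coefficient of $\Breve{C}_0$ in the large-jump estimate is exactly $R^{(\theta-1)(2-\theta+\gamma)}$, after which only the qualitative fact that $\Breve{C}_0(\,\cdot\,;\theta)$ is finite (and vanishing at infinity) is used. I expect the matching of the splitting radius to the target exponent to be the only delicate step; the rest is routine bookkeeping parallel to \cref{L5.1}. The remaining case $\theta\ge2$ (in particular $\gamma=0$, $\theta=2$) is handled, as in \cref{L5.1}\,\ttup a, through the global estimate $\abs{\mathfrak{d}\Phi(x;y)}\le\tfrac12\bigl(\sup_{z\in[x,x+y]}\norm{\nabla^2\Phi(z)}\bigr)\abs{y}^2$ on all of $\Rds$, together with the finiteness of $\widehat{C}_0+\Breve{C}_0(1;2)$.
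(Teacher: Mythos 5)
Your proof is correct and coincides with the paper's: the paper's entire argument is to rerun the proof of \cref{L5.1}\,\ttup{a} with the splitting radius $r(z)=z^{\theta-1-\gamma}$, which is exactly your choice, and your exponent bookkeeping ($R^{\gamma}r^{2-\theta}=R^{\theta-1}r^{1-\theta}=R^{(\theta-1)(2-\theta+\gamma)}$, with the stray $R^{\gamma}$ absorbed precisely because $\gamma\le\theta-1$) supplies the details the paper leaves implicit. One cosmetic point: on $\sB_{r}\setminus\{0\}$ the bound $\norm{\nabla^2\Phi(x+ty)}\lesssim R^{\gamma}$ follows from the \emph{upper} bound $\abs{x+ty}\le R+r\lesssim R$ together with $\gamma\ge0$, not from the lower bound $\abs{x+ty}\ge R/2$ that you cite (the lower bound was what \cref{L5.1}\,\ttup{a} needed there, since its Hessian exponent $\theta-2$ is negative).
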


\begin{proof}
In the proof of \cref{L5.1}\,(a) we set
$r(z) = z^{\theta-1-\gamma}$.
The rest of the proof is the same.
\end{proof}

%%%%%%%%%%%%%%%%%%%%%%%%%%%%%%%%%%%%%%%%%%%%%%%%%%%%%%%%%%%%%%%%%%%%%%%%%%%%%%%%
\subsection{Proofs of \texorpdfstring{\cref{T3.2,T3.3,T3.4}}{}}
We first state two lemmas needed for the proof. 
The first part of the lemma that follows is in \cite[Theorem~2]{DG13}.

%%%%%%%%%%%%%%%%%%%%%%%%%%%%%%%%%%%%%%%%%%%%%%%%%%%%%%%%%%%%%%%%%%%%%%%%%%%%%%%%
\begin{lem}\label{L5.5}
Let $M$ be a nonsingular M-matrix such that $M'e\ge0$, and $v\in\varDelta$. 
There exists a positive definite matrix
$Q$ such that
\begin{equation}\label{EL5.5A} 
Q M + M' Q\;\succ\;0\,,\quad\text{and}\quad Q M(\mathbb{I} - ve')
+ (\mathbb{I}-ev')M'Q\,\succeq\,0\,.
\end{equation}
In addition,
\begin{equation} \label{EL5.5B}
(\Id-tev')M'Q + QM (\Id - tve')\,\succ\,0\,,\qquad t\in[0,1)\,.
\end{equation}
\end{lem}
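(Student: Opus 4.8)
The plan is to reduce the construction of $Q$ to a standard Lyapunov-type argument exploiting the M-matrix structure. First I would recall the key fact that a nonsingular M-matrix $M$ can be written $M = sI - N$ with $N \ge 0$ and $\rho(N) < s$, and that such an $M$ has the property that $D M + M' D$ is positive definite for a suitable positive \emph{diagonal} matrix $D$; this is a classical result (e.g.\ the existence of a diagonal Lyapunov solution for M-matrices, see Berman--Plemmons). So a natural first candidate is $Q$ diagonal. Then the first inequality in \cref{EL5.5A}, namely $QM + M'Q \succ 0$, holds by this classical fact. The point of the lemma — and the place where the hypotheses $M'e \ge 0$ and $v \in \varDelta$ enter — is to show that the \emph{same} $Q$ (or at least some $Q$ meeting the first condition) can be chosen so that the rank-one perturbed version $QM(\Id - ve') + (\Id - ev')M'Q$ is positive semidefinite.

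The second step is to analyze the perturbation term directly. Write
\begin{equation*}
QM(\Id - ve') + (\Id - ev')M'Q
\,=\, \bigl(QM + M'Q\bigr) - QMve' - ev'M'Q\,.
\end{equation*}
Since the matrix is symmetric, it suffices to show that for every $x \in \Rd$,
\begin{equation*}
\langle x, (QM + M'Q)x\rangle \,\ge\, 2\langle x, QMv\rangle\,\langle e, x\rangle\,.
\end{equation*}
Here is where I would use the structure: with $Q = D$ diagonal and $M'e \ge 0$, one computes $e'MD$-type quantities; the condition $v \in \varDelta$ (so $e'v = 1$, $v \ge 0$) controls the sign of $\langle x, QMv\rangle\langle e,x\rangle$. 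I expect the cleanest route is to observe that $QM(\Id - ve')$ is itself (conjugate to) an M-matrix-like object: indeed $\Id - ve'$ has the single nonzero eigenvalue direction killed (it is a projection-type matrix, with $e'(\Id - ve') = 0$ since $e'v = 1$), so $QM(\Id - ve')$ is singular with left null vector $e'$ — consistent with only asking for $\succeq$ rather than $\succ$. I would then verify semidefiniteness by checking the quadratic form on the hyperplane $\{e'x = 0\}$, where the rank-one correction vanishes and positivity is inherited from $QM + M'Q \succ 0$, and separately along the $e$-direction, where both sides vanish.

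The third step, \cref{EL5.5B}, follows by a convexity/interpolation argument: for $t \in [0,1)$,
\begin{equation*}
(\Id - tev')M'Q + QM(\Id - tve')
\,=\, (1-t)\bigl(QM + M'Q\bigr) + t\bigl(QM(\Id - ve') + (\Id - ev')M'Q\bigr)\,,
\end{equation*}
which is a strict convex combination of a positive definite matrix (by the first part of \cref{EL5.5A}) and a positive semidefinite matrix (by the second part), hence positive definite. Thus \cref{EL5.5B} is essentially free once \cref{EL5.5A} is established.

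The main obstacle is the middle step: proving that a \emph{single} $Q$ simultaneously gives $QM + M'Q \succ 0$ \emph{and} the semidefinite perturbed inequality. The first part of the lemma is quoted from \cite[Theorem~2]{DG13}, so I would lean on the construction there and the only genuine work is tracking how the rank-one term $QMve'$ interacts with the M-matrix Lyapunov inequality; the hypotheses $M'e \ge 0$ (nonnegativity of the column sums of $M$, equivalently that $e$ is a subeigenvector-type vector) and $v \ge 0$, $e'v = 1$ are exactly what is needed to pin the sign. I would also double-check the degenerate cases ($M$ diagonal with positive entries, or $v = e_d$) as sanity checks, since those are the ones that actually arise in the queueing applications.
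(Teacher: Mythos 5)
Your argument for \cref{EL5.5B} --- writing $(\Id-tev')M'Q+QM(\Id-tve')$ as the convex combination $(1-t)\bigl(QM+M'Q\bigr)+t\bigl(QM(\Id-ve')+(\Id-ev')M'Q\bigr)$ of a positive definite and a positive semidefinite matrix --- is correct and is exactly the content of the paper's own (contrapositive) proof, which likewise takes all of \cref{EL5.5A} from \cite[Theorem~2]{DG13} and only proves \cref{EL5.5B}. The extra material you sketch for re-deriving the semidefinite inequality in \cref{EL5.5A} is therefore not needed (and, as stated, checking the quadratic form on $\{x\colon e'x=0\}$ and along $e$ separately would not by itself yield semidefiniteness on all of $\Rd$), but this does not affect the validity of the proof.
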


\begin{proof}
We only need to prove \cref{EL5.5B}.
We argue by contradiction.
Let $S\df M'Q+QM$ and $T\df ev'M'Q+QMve'$.
Suppose that $x'(S-tT)x\le 0$ for some $t\in(0,1)$ and $x\in\Rd$, $x\ne0$.
Then, since $S\succ0$, we must have $x'(S-T)x<0$ which contradicts the hypothesis.
\end{proof}

Recall the constant $\Tilde\ell$ in \cref{E-ell},
and the cone $\cK_\delta$ in \cref{E-cone}.
Define
\begin{equation*}
\Tilde{b}(x)\,\df\, b(x) + \Tilde\ell -\ell\,,\qquad x \in \Rd\,.
\end{equation*}

%%%%%%%%%%%%%%%%%%%%%%%%%%%%%%%%%%%%%%%%%%%%%%%%%%%%%%%%%%%%%%%%%%%%%%%%%%%%%%%%
\begin{lem}
Let $\Bar{\kappa}_1>0$ be such that
$\langle x,(QM+M'Q)x\rangle\ge 2\Bar{\kappa}_1\abs{x}^2$ for all $x\in\Rd$.
Set $\delta=\nicefrac{1}{4}\Bar{\kappa}_1\,\abs{QMv}^{-1}$ and 
$\zeta = - \langle \Tilde\ell, Q v\rangle$.
Then, $\zeta>0$, and for $Q$ given in \cref{EL5.5A}, we have
\begin{equation*}
\bigl\langle \Tilde{b}(x), \nabla V_{Q,2}(x)\bigr\rangle
\,\le\, \begin{cases}
\Bar{\kappa}_0 - \Bar{\kappa}_1\,\abs{x}^2\,,&\text{if\ } x\in\cK_\delta^c\,,\\[3pt]
\Bar{\kappa}_0- \delta\zeta\abs{x}\,,&\text{if\ } x\,\in\cK_\delta\,,
\end{cases}
\end{equation*}
for some constant $\Bar{\kappa}_0>0$.
Consequently, there are positive constants $\kappa_0$ and $\kappa_1$, such that
\begin{equation}\label{EL5.6A}
\bigl\langle \Tilde{b}(x), \nabla V_{Q,2}(x)\bigr\rangle
\,\le\, \kappa_0
- \kappa_1\,V_{Q,2}(x)\,\Ind_{\cK_\delta^c}(x)
- \kappa_1\,V_{Q,1}(x)\,\Ind_{\cK_\delta}(x)
\end{equation}
for all $x\in\Rd$.
\end{lem}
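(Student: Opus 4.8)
The plan is to work with the action of the generator's drift part on $V_{Q,2}$ away from the origin and absorb the compact part into $\Bar\kappa_0$. Recall that in this subsection $\varGamma v=0$ and $\Tilde\varrho>0$ are in force, so $\Tilde b(x)=b(x)+\Tilde\ell-\ell=\Tilde\ell-M\bigl(x-\langle e,x\rangle^+v\bigr)$, and that by \cref{N3.1} $V_{Q,2}(x)=\langle x,Qx\rangle=\norm{x}_Q^2$ with $\nabla V_{Q,2}(x)=2Qx$ for $\abs{x}\ge1$. Since $x\mapsto\langle\Tilde b(x),\nabla V_{Q,2}(x)\rangle$ is continuous, it is bounded on $\sB$ and that bound is folded into $\Bar\kappa_0$; hence it suffices to prove the two-branch estimate for $\abs{x}\ge1$, where $\langle\Tilde b(x),\nabla V_{Q,2}(x)\rangle=2\langle\Tilde b(x),Qx\rangle$.

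The \emph{crucial structural input}, which I would isolate first, is that
\[
M'Qv\,=\,\lambda e\,,\qquad \lambda\,\df\,v'M'Qv\,=\,\tfrac12\,v'(QM+M'Q)v\,\ge\,\Bar\kappa_1\abs{v}^2\,>\,0\,.
\]
Indeed, by \cref{EL5.5A} the symmetric matrix $S\df QM(\Id-ve')+(\Id-ev')M'Q$ is positive semidefinite, and $(\Id-ve')v=v-v(e'v)=0$ gives $v'Sv=0$; for a positive semidefinite matrix this forces $Sv=0$, i.e.\ $(\Id-ev')M'Qv=0$, which is exactly $M'Qv=(v'M'Qv)e$, while $QM+M'Q\succ0$ and $v\ne0$ give $\lambda>0$. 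This has two payoffs. First, $Qv=\lambda(M^{-1})'e$, so $\langle\Tilde\ell,Qv\rangle=\lambda\langle e,M^{-1}\Tilde\ell\rangle=-\lambda\Tilde\varrho$ by \cref{E-tvarrho}, whence $\zeta=-\langle\Tilde\ell,Qv\rangle=\lambda\Tilde\varrho>0$. Second, whenever $e'u=0$ one has $\langle Mu,Qv\rangle=u'M'Qv=\lambda\,e'u=0$, which annihilates the cross term that otherwise obstructs the cone estimate.

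With these in hand I would split according to $x\in\cK_\delta^c$ or $x\in\cK_\delta$. On $\cK_\delta^c$ we have $\langle e,x\rangle^+\le\delta\abs{x}$, so using $\langle Mx,Qx\rangle=\tfrac12\langle x,(QM+M'Q)x\rangle\ge\Bar\kappa_1\abs{x}^2$, the bound $\langle Mv,Qx\rangle=\langle QMv,x\rangle\le\abs{QMv}\abs{x}$ together with $\delta\abs{QMv}=\tfrac14\Bar\kappa_1$, and Young's inequality on $\langle\Tilde\ell,Qx\rangle\le\abs{Q\Tilde\ell}\abs{x}$, we obtain $\langle\Tilde b(x),Qx\rangle\le\abs{Q\Tilde\ell}\abs{x}-\tfrac34\Bar\kappa_1\abs{x}^2\le\abs{Q\Tilde\ell}^2/\Bar\kappa_1-\tfrac12\Bar\kappa_1\abs{x}^2$, yielding the first branch. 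On $\cK_\delta$ we have $\langle e,x\rangle>\delta\abs{x}>0$; writing $x=\langle e,x\rangle v+u$ with $u=(\Id-ve')x$, $e'u=0$, a direct expansion using $\langle\Tilde\ell,Qv\rangle=-\zeta$ and the vanishing cross term gives $\langle\Tilde b(x),Qx\rangle=-\zeta\langle e,x\rangle+\langle\Tilde\ell,Qu\rangle-\langle Mu,Qu\rangle$; since $\langle Mu,Qu\rangle\ge\Bar\kappa_1\abs{u}^2$, Young's inequality bounds $\langle\Tilde\ell,Qu\rangle-\langle Mu,Qu\rangle\le\abs{Q\Tilde\ell}^2/(4\Bar\kappa_1)$, and $\langle e,x\rangle>\delta\abs{x}$ with $\zeta>0$ yields the second branch. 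Choosing $\Bar\kappa_0$ large enough to dominate also the contribution on $\sB$ gives the displayed two-branch estimate.

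Finally, \cref{EL5.6A} follows by comparison: for $\abs{x}\ge1$ one has $V_{Q,2}(x)=\norm{x}_Q^2$ and $V_{Q,1}(x)=\norm{x}_Q$, comparable to $\abs{x}^2$ and $\abs{x}$ respectively up to the extreme eigenvalues of $Q$, so $-\Bar\kappa_1\abs{x}^2\le-\kappa_1 V_{Q,2}(x)$ on $\cK_\delta^c$ and $-\delta\zeta\abs{x}\le-\kappa_1 V_{Q,1}(x)$ on $\cK_\delta$ for a suitable $\kappa_1>0$, and enlarging the additive constant to $\kappa_0$ to absorb the bounded region where $\Hat\phi$ differs from $\norm{\cdot}_Q$ completes the argument. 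The main obstacle is the estimate on $\cK_\delta$: the crude bound gives only $\langle\Tilde b(x),Qx\rangle\le\langle\Tilde\ell,Qx\rangle$, which does not decay, and the component of $x$ transverse to $v$ is not quadratically controlled on this cone; the resolution is precisely the observation that the positive semidefinite matrix in \cref{EL5.5A} annihilates $v$, which simultaneously makes $\zeta$ positive and removes the cross term, converting the estimate into honest linear decay in $\abs{x}$ along $\cK_\delta$.
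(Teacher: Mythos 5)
Your proof is correct, and on the cone $\cK_\delta$ it takes a genuinely different (and more self-contained) route than the paper. The skeleton is the same: you split by $\cK_\delta$ versus $\cK_\delta^c$, handle the complement exactly as the paper does using $\delta\abs{QMv}=\tfrac14\Bar\kappa_1$, and obtain $\zeta>0$ from the kernel identity $M'Qv=\lambda e$ (the paper extracts the same identity, somewhat tersely, from $v'QM(\Id-ve')=0$; your derivation via $v'Sv=0$ and positive semidefiniteness of $S=QM(\Id-ve')+(\Id-ev')M'Q$ is the right justification). The difference is on $\cK_\delta$: the paper uses the Euclidean-orthogonal decomposition $x=\eta v+z$, $\langle z,v\rangle=0$, and must invoke the quantitative coercivity $\langle x,Sx\rangle\ge2\Hat\kappa_1\abs{z}^2$ imported from \cite[(5.19)]{DG13} — a separate fact about the semidefinite matrix $S$ being definite transverse to its kernel. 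You instead use the oblique decomposition $x=\langle e,x\rangle v+u$ with $u=(\Id-ve')x$, $e'u=0$, which makes $\Tilde{b}(x)=\Tilde\ell-Mu$ exactly, kills the cross term $\langle Mu,Qv\rangle=\lambda e'u=0$ via the same kernel identity, and then only needs $QM+M'Q\succ0$ applied to $u$. This buys you independence from the DG13 coercivity estimate at the cost of nothing; the constants work out (the factor $2$ from $\nabla V_{Q,2}=2Qx$ and the modification of $\Hat\phi$ on $\sB$ are absorbed correctly), and you rightly flag that $\varGamma v=0$ and $\Tilde\varrho>0$ are the standing hypotheses that make $\zeta=\lambda\Tilde\varrho>0$.
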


\begin{proof}
Assume first that $x\in\cK_\delta^c$. We have 
\begin{align*}
\bigl\langle \Tilde{b}(x), \nabla V_{Q,2}(x)\bigr\rangle 
&\,=\, 2 \langle\Tilde{\ell}, Qx\rangle - \langle x,(QM+M'Q)x\rangle
+2\langle x, QM v\rangle \langle e, x\rangle^+\\
&\,\le\, -2\Bar{\kappa}_1\abs{x}^2 + 2\abs{Q\Tilde\ell}\abs{x}
+2 \delta \abs{QMv}\abs{x}^2\,.
\end{align*}
Thus, by the definition of $\delta$, we obtain
\begin{equation*}
\bigl\langle \Tilde{b}(x), \nabla V_{Q,2}(x)\bigr\rangle
\,\le\, \Bar{\kappa}_0 - \Bar{\kappa}_1\,\abs{x}^2\,,\qquad \forall\,x\in\cK_\delta^c\,,
\end{equation*}
for some constant $\Bar{\kappa}_0>0$.

Now, assume that $x\in\cK_\delta$. We have 
\begin{equation*}
\bigl\langle \Tilde{b}(x), \nabla V_{Q,2}(x)\bigr\rangle 
\,=\, 2 \langle\Tilde{\ell}, Qx\rangle - \bigl\langle x,(Q M(\mathbb{I} - ve')
+ (\mathbb{I}-ev')M'Q)x\bigr\rangle \,.
\end{equation*}
We follow the technique in \cite{DG13} by using the unique
orthogonal decomposition
$x=\eta v + z$, with $z\in\Rd$ such that $\langle z,v\rangle=0$.
In other words, $z= x-\langle x,v\rangle \nicefrac{v}{\abs{v}^2}$.
As shown in (5.19) of \cite{DG13}, we have
\begin{equation}\label{EL5.6B}
\bigl\langle x,(Q M(\mathbb{I} - ve')+ (\mathbb{I}-ev')M'Q)x\bigr\rangle\,\ge\,
2\Hat\kappa_1\abs{z}^2
\end{equation}
for some $\Hat\kappa_1>0$.
Solving $v'Q M(\mathbb{I} - ve')=0$, which follows from \cref{EL5.6B}, we obtain
$$v'Q \,=\, \langle v,QMv\rangle e'M^{-1}\,.$$
Thus
$\langle\Tilde\ell, Q v\rangle =
 \langle v,QMv\rangle \langle e, M^{-1} \Tilde\ell\rangle$.
Since $\langle v,(QM+M'Q)v\rangle>0$, we have $\langle v,QMv\rangle>0$.
This implies that $\langle \Tilde\ell, Q v\rangle <0$.
Note that $\langle e,x\rangle =\eta \langle e,v\rangle + \langle e,z\rangle$,
and therefore
\begin{equation}\label{EL5.6C}
\eta\,=\, \langle e,x\rangle - \langle e,z\rangle\,.
\end{equation}
Using \cref{EL5.6B,EL5.6C}, and the
orthogonal decomposition of $x$, we obtain
\begin{align*}
\bigl\langle \Tilde{b}(x), \nabla V_{Q,2}(x)\bigr\rangle
&\,\le\, - 2\Hat\kappa_1 \abs{z}^2 + \langle\Tilde\ell, Q z\rangle
+ \eta \langle\Tilde\ell, Q v\rangle\\
&\,=\, - 2\Hat\kappa_1 \abs{z}^2 + \langle\Tilde\ell, Q z\rangle
+\zeta\langle e,z\rangle - \zeta \langle e,x\rangle\\
&\,\le\, \Hat\kappa_0 - \Hat\kappa_1\abs{z}^2 - \delta\zeta\abs{x}\\
&\,\le\, \Hat\kappa_0 - \delta\zeta\abs{x}\,,
\qquad x\,\in\cK_\delta\,,
\end{align*}
for some constant $\Hat\kappa_0>0$.
It is clear that \cref{EL5.6A} follows from these estimates.
\end{proof}

%%%%%%%%%%%%%%%%%%%%%%%%%%%%%%%%%%%%%%%%%%%%%%%%%%%%%%%%%%%%%%%%%%%%%%%%%%%%%%%%
\begin{proof}[Proof of \cref{T3.2}]
We start with part (i).
Consider $V_{Q,\theta}(x)$, with $Q$ is given in \cref{EL5.5A}. 
Clearly, $V_{Q,\theta}(x)$ is an inf-compact function contained in $\mathcal{D}$.
By \cref{EL5.6A}, for any given $\theta>0$, there exist positive constants
$\kappa'_0$ and $\kappa'_1$, such that
\begin{align}\label{PT3.2A}
\bigl\langle \Tilde{b}(x), \nabla V_{Q,\theta}(x)\bigr\rangle
&\,=\, \bigl\langle \Tilde{b}(x), \nabla V_{Q,\theta}(x)\bigr\rangle\Ind_{\sB}(x)
+ \frac{\theta}{2}\,\frac{V_{Q,\theta-1}(x)}{V_{Q,1}(x)}\,
	\bigl\langle \Tilde{b}(x), \nabla V_{Q,2}(x)\bigr\rangle\Ind_{\sB^c}(x)\\
&\,\le\,\kappa'_0\Ind_{\sB}(x) - \kappa'_1\,V_{Q,\theta}(x)\,\Ind_{\cK_\delta^c}(x)
- \kappa'_1\,V_{Q,\theta-1}(x)\,\Ind_{\cK_\delta}(x)\nonumber
\end{align}
for all $x\in\Rd$.
By \cref{ET3.2A}, there exists some compact set $K\supset\sB$,
independent of $\theta$, such that
\begin{equation}\label{PT3.2B}
\trace\bigl(a(x)\nabla^2 V_{Q,\theta}(x)\bigr)
\,\le\,
\bigl\langle \Tilde{b}(x), \nabla V_{Q,\theta}(x)\bigr\rangle\,,
\qquad x\in K^c\,.
\end{equation}
First suppose $\theta\in[1,2]$. Then $\fJ[V_{Q,\theta}](x)$ is bounded
by \cref{L5.1}.
Thus, \cref{ET3.2C} holds with $c_1=\nicefrac{\kappa_1'}{2}$,
and for $c_0(\theta)$ we can use the sum of $\kappa_0'$, the supremum of
the left hand side of \cref{PT3.2B} on $K$, and a bound of $\fJ[V_{Q,\theta}](x)$.
When $\theta>2$, $(1+\abs{x})^{2-\theta}\,\mathfrak{J}_\nu[V_{Q,\theta}](x)$
is bounded
by \cref{L5.1}, and the result follows by comparing 
$\mathfrak{J}_\nu[V_{Q,\theta}](x)$ to
$\bigl\langle \Tilde{b}(x), \nabla V_{Q,\theta}(x)\bigr\rangle$ in \cref{PT3.2A}.

\Cref{ET3.2D} follows from
\cite[Theorems~3.2 and 3.4]{Douc-Fort-Guilin-2009}
and \cite[Theorem~5.1]{Meyn-Tweedie-AdvAP-III-1993} (for the case when $\theta=1$).

We now turn to part (ii). 
Consider $\widetilde{V}_{Q,p}(x)$, where $Q$
is given in \cref{EL5.5A}, and $p>0$ such that
$p\norm{Q}^{\nicefrac{1}{2}}<\theta$. 
We have 
\begin{equation*}
\bigl\langle \Tilde{b}(x), \nabla \widetilde{V}_{Q,p}(x)\bigr\rangle \,=\,
\bigl\langle \Tilde{b}(x), \nabla \widetilde{V}_{Q,p}(x)\bigr\rangle
\Ind_{\sB}(x) + p\,\E^{p\langle x,Q x\rangle^{\nicefrac{1}{2}}}
\frac{\bigl\langle \Tilde{b}(x),Qx\bigr\rangle}{\langle x,Qx\rangle^{\nicefrac{1}{2}}}\,
\Ind_{\sB^c}(x)
\end{equation*}
for all $x \in \Rd$. 
By \cref{L5.2},
it is clear that there exist constants $\Tilde{\kappa}_0>0$ and $\Tilde{\kappa}_1>0$,
such that
\begin{equation*}
\bigl\langle \Tilde{b}(x), \nabla \widetilde{V}_{Q,p}(x)\bigr\rangle
\,\le\, \Tilde{\kappa}_0\Ind_{\sB}(x)
- \Tilde{\kappa}_1 \widetilde{V}_{Q,p}(x)\,,
\qquad x\,\in\Rd\,. 
\end{equation*}
Thus we obtain \cref{ET3.2F}.
Finally, according to \cite[Theorem~6.1]{Meyn-Tweedie-AdvAP-III-1993}
(see also \cite[Theorem~5.2]{Down-Meyn-Tweedie-1995}) we conclude that
$\process{X}$ admits a unique invariant probability measure
$\Bar\uppi(\D{y})$ such that for any $\delta>0$ and $0<\gamma<\delta\Tilde c_1$, 
\begin{equation*}
\norm{\delta_x P^X_t(\D{y})-\overline\uppi(\D{y})}_{\widetilde{V}_{Q,p}}\,\le\,
C \widetilde{V}_{Q,p}(x)\, \E^{-\gamma \frac{t-\delta}{\delta}}\,,
\qquad x\in\Rd\,,\ t\ge0\,,
\end{equation*}
for some $C>0$.
\end{proof}

%%%%%%%%%%%%%%%%%%%%%%%%%%%%%%%%%%%%%%%%%%%%%%%%%%%%%%%%%%%%%%%%%%%%%%%%%%%%%%%%
\begin{proof}[Proof of \cref{T3.3}]
We first consider the case $\Tilde\varrho<0$
(note that $\Tilde\ell$ depends on the noise present.
If the noise is only a Brownian motion,  then $\Tilde\ell=\ell$).
We use a common test function for all three cases.
In this manner, the result is established for any combination of
the driving processes (a)--(c).
We let
\begin{equation*}
G(t) \,\df\,\int_{-\infty}^t \frac{1}{\abs{s}^\gamma + 1}\,\D{s}\,,\qquad t\in\RR\,,
\end{equation*}
for an appropriately chosen constant $\gamma>1$,
and define $\tw\df (M^{-1})'e$, $\Hat{h}(x) \df \langle \tw,x\rangle$,
and $V(x)\df G\bigl(\Hat{h}(x)\bigr)$.
Then $\Tilde\varrho<0$ is equivalent to $\bigl\langle \tw, \Tilde\ell\bigr\rangle>0$. 
Note that the second derivative of $G(t)$ takes the form
\begin{equation*}
G''(t) \,=\,\pm \gamma \frac{\abs{t}^{\gamma-1}}{\bigl(\abs{t}^\gamma + 1\bigr)^2}\,,
\end{equation*}
where we use the positive sign for $t\le0$, and the negative sign for $t\ge0$.

Suppose \eqref{E-sde} is driven by a Brownian motion.
We select a constant $\beta>0$ such that
$\beta^{-1} >\gamma \langle \tw,  \tilde\ell\rangle^{-1}
\sup_{x\in\Rd}\babs{\upsigma'(x) \tw}^2$.
Let $V_\beta(x)=V(\beta x)$ for $\beta>0$.
An easy calculation shows that 
\begin{align}\label{PT3.3A}
\Ag^X V_\beta(x) &\,=\, \frac{1}{2}\trace\bigl(a(x)\nabla^2V_\beta(x)\bigr)
+ \langle \tilde b(x),\nabla V_\beta(x)\rangle \\
& \,\ge\, - \beta^2
\gamma \frac{\abs{\Hat{h}(\beta x)}^{\gamma-1}}
{\bigl(\abs{\Hat{h}(\beta x)}^\gamma + 1\bigr)^2}
\,\babs{\upsigma'(x) \tw}^2
+ \frac{\beta\,\bigl(\bigl\langle \tw, \Tilde\ell\bigr\rangle
+ \langle e,x\rangle^-\bigr)}{\abs{\Hat{h}(\beta x)}^\gamma + 1} \nonumber\\
&\,>\,0\qquad \forall\,x\in\Rd\,.\nonumber
\end{align} 
Thus, $\{V\bigl(\beta X(t)\bigr)\}_{t\ge0}$ is a bounded submartingale,
so it converges almost surely.
Since $\process{X}$ is irreducible, it can be either recurrent or transient. 
If it is recurrent, then $V(x)$ should
be constant a.e.\ in $\Rd$, which is not the case.
Thus $\process{X}$ is transient (for a different argument, also based on the
above calculation, see  \cite[Theorem 3.3]{Strame-Tweedie-1994}).

We next turn to the case that $L(t)$ is an $\alpha$-stable process
(isotropic or not).
Here, we select constants  $0<\delta<1$, and $1<\gamma<\delta\alpha$
(for example we can let $\delta=\nicefrac{1+\alpha}{2\alpha}$,
and $\gamma=\nicefrac{3+\alpha}{4}$).
We claim that, there exists a constant $C$ such that 
\begin{equation}\label{PT3.3B}
\babs{\fI_\alpha[V](x)} \,\le\,
 \frac{C}{\babs{\Hat{h}(x)}^\gamma + 1}\,,\qquad x\in\Rd\,,
\end{equation}
and that the same is true for the anisotropic kernel $\hfI_\alpha$.
Since $\alpha>1$
and
$\fI_\alpha[V_\beta](x)=\beta^\alpha\fI_\alpha[V](\beta x)$,
then given a bound as in \cref{PT3.3B} we may select $\beta>0$ and
sufficiently small, so that $\Ag^X V_\beta(x)>0$, and the rest follows
by the same argument used for the Brownian motion.

To obtain \cref{PT3.3B}, we proceed as follows.
First, note that the anisotropic case follows from the one-dimensional isotropic.
This is because the generator of the anisotropic process is a sum of generators of
one-dimensional isotropic processes.
Second, observe that it suffices to prove \cref{PT3.3B} in the one-dimensional
situation only. Namely, since $V\in C^{1,1}(\Rd)$ (recall that $\gamma>1$), we have 
\begin{equation*}
\fI_\alpha[V](x)\,=\,\frac{1}{2}
\int_{\Rds} \bigl(V(x+y) + V(x-y) - 2 V(x)\bigr)\,\frac{\D{y}}
{\abs{y}^{d+\alpha}}\,,\qquad x\in\Rd\,.
\end{equation*}
Here, $C^{1,1}(\Rd)$ denotes the class of $C^{1}$-functions whose partial
derivatives are Lipschitz continuous.
Since $V(x) = G(\langle\tw,x\rangle)$, it is constant
on each set  $\{x\in\Rd\,\colon \langle\tw,x\rangle=\text{constant}\}$.
Thus, without loss of generality we may
choose $x=\zeta\tw$, for $\zeta\in\RR$, and $\tw$ to have unit length.
Consider an orthonormal transformation of the coordinates via a unitary matrix $S$
so that the first coordinate of $\Hat{y} = Sy$ is along $\tw$.
Due to the invariance of the kernel under orthonormal transformations,
without loss of generality, we may choose $\tw=e_1'$.
Then $\langle\tw,x\rangle=x_1$, and
\begin{equation*}
\fI_\alpha[V](x)\,=\,\frac{1}{2}
\int_{\Rds} \bigl(G(x_1+y_1) + G(x_1-y_1) - 2 G(x_1)\bigr)\,
\frac{\D{y}}{\abs{y}^{d+\alpha}}\,,\quad x\in\Rd\,.
\end{equation*}
Finally, since 
\begin{align*}
\int_{\RR^{d-1}_*}\frac{\D{y}_2\dotsb\D{y}_d}
{\bigl(y_1^2+\dotsb+y_d^2\bigr)^{\nicefrac{(d+\alpha)}{2}}}
&\,=\,\abs{y_1}^{-d-\alpha}\int_{\RR^{d-1}_*}\frac{\D{y}_2\dotsb\D{y}_d}
{\bigl(1+\nicefrac{y_2^2}{y_1^2}
+\dotsb+\nicefrac{y_d^2}{y_1^2}\bigr)^{\nicefrac{(d+\alpha)}{2}}}\\
&\,=\,\frac{C}{\abs{y_1}^{1+\alpha}}\,,
\end{align*}
we conclude that
\begin{equation*}
\fI_\alpha[V](x)\,=\,\frac{C}{2}
\int_{\Rds} \bigl(G(x_1+y_1) + G(x_1-y_1) + 2 G(x_1)\bigr)\,
\frac{\D{y_1}}{\abs{y_1}^{1+\alpha}}\,,\quad x\in\Rd\,.
\end{equation*}
Now, let us prove \cref{PT3.3B} in the one-dimensional case.
We decompose the integral
as in \cref{L5.1}, choosing a cutoff radius $r(t)= t^\delta\vee 1$ for this purpose.
First, we write
\begin{align}\label{PT3.3C}
\fI_\alpha[V](x) &\,=\,\int_{\sB\setminus\{0\}}
\biggl(\int_0^1(1-t)  y^2 V''(x+ty)\, \D{t}\biggr)\,
\frac{\D{y}}{\abs{y}^{1+\alpha}}\\
&\mspace{100mu}+\int_{\sB_{r(\abs{x})}\setminus\sB}
\biggl(\int_0^1y V'(x+ty)\, \D{t}\biggr)\,
\frac{\D{y}}{\abs{y}^{1+\alpha}}\nonumber\\
&\mspace{200mu}+\int_{\sB^c_{r(\abs{x})}}
\biggl(\int_0^1y V'(x+ty)\, \D{t}\biggr)\,
\frac{\D{y}}{\abs{y}^{1+\alpha}}\,.\nonumber
\end{align}
We first bound the third integral in \cref{PT3.3C}.
Provided $y\ne0$ (and recall that without loss of generality we may assume
that $\tw=1$), we have
\begin{align}\label{PT3.3D}
\babss{\int_0^1yV'(x+ty)\, \D{t}}&\,\le\,
\int_{0}^1 \frac{\babs{ y}}
{\babs{ x+ty}^\gamma + 1}\,\D{t}\\
&\,\le\, \int_{-\infty}^\infty
\frac{1}{\babs{ x+s}^\gamma + 1}\,\D{s}
\,=\, \norm{G}_\infty\,.\nonumber
\end{align}
Using \cref{PT3.3D}, the absolute value of
the third integral in \cref{PT3.3C} has the bound 
\begin{equation}\label{PT3.3E}
\norm{G}_\infty\,
\int_{\sB^c_{r(\abs{x})}}\,\frac{\D{y}}{\abs{y}^{1+\alpha}}
\,=\, \frac{\kappa_0}{\bigl(\abs{x}^\delta\vee1\bigr)^{\alpha}}
\,\le\, \frac{\kappa_1}{\babs{ x}^\gamma + 1}
\,=\,\frac{\kappa_1}{\babs{\Hat{h}(x)}^\gamma + 1}
\end{equation}
for some positive constants $\kappa_0$ and $\kappa_1$.
Next, we bound the second integral in \cref{PT3.3C},
which we denote by $\fI_{\alpha,2}[V](x)$.
For  $\abs{y}\le \abs{x}^\delta$ and $
\abs{x}\ge 2^{\nicefrac{1}{1-\delta}}$, it holds that $2\abs{x}^\delta\,\le\,\abs{x}$, and
$ 2\abs{y}\,\le\,\abs{x}$. Thus, 
$\abs{x}\,\le\,
2\abs{x+ty}$ for all $t\in[0,1]$.
So  we have
\begin{align}\label{PT3.3F}
\babs{\fI_{\alpha,2}[V](x)}&\,\le\,\int_0^1
\biggl(\int_{\sB_{r(\abs{x})}\setminus\sB}
\frac{\abs{ y}} {\abs{x+ty}^{\gamma}+1}\,
\frac{\D{y}}{\abs{y}^{1+\alpha}}\biggr)\,\D{t} \\
&\,\le\,\int_0^1
\biggl(\int_{\sB_{r(\abs{x})}\setminus\sB}
\frac{2\abs{ y}}
{\abs{ x}^\gamma + 1}\,
\frac{\D{y}}{\abs{y}^{1+\alpha}}\biggr)\,\D{t}
\nonumber\\
&\,\le\,\frac{2}{\abs{\Hat{h}(x)}^\gamma + 1}
\,\int_{\sB^c}\frac{\D{y}}{\abs{y}^{\alpha}}
\nonumber\\
&\,=\, \frac{4}{(\alpha-1)\bigl(\abs{\Hat{h}(x)}^\gamma + 1\bigr)}\,.\nonumber
\end{align}
For $\abs{x}\le 2^{\nicefrac{1}{1-\delta}}$ we use the following bound 
\begin{equation}\label{PT3.3G}
\babs{\fI_{\alpha,2}[V](x)}\,\le\, \bnorm{\fI_{\alpha,2}[V]}_\infty\,\le\,
\frac{\bigl(2^{\nicefrac{\gamma}{1-\delta}}+1\bigr)\,
\bnorm{\fI_{\alpha,2}[V]}_\infty}{\abs{\Hat{h}(x)}^{\gamma}+1}\,.
\end{equation}
Finally, we bound the first integral in \cref{PT3.3C}.
We use the second derivative of $G(t)$, and the inequality
\begin{equation*}
\frac{\abs{z}^{\gamma-1}}{\bigl(\abs{z}^\gamma+1\bigr)^2}\,\le\,
\frac{1}{\abs{z}^{\gamma}+1}\,,\qquad z\in\RR\,,
\end{equation*}
to obtain
\begin{align*}
\babss{\int_0^1(1-t) y^2 V''(x+ty)\, \D{t}}
&\,\le\,\gamma\int_0^1(1-t)\frac{\abs{x+ty}^{\gamma-1}}
{\bigl(\abs{x+ty}^\gamma+1\bigr)^2}\abs{y}^2\D{t}\\
&\,\le\,\gamma\int_0^1(1-t)\frac{\abs{y}^2}{\abs{x+ty}^{\gamma}+1}\D{t}\,.
\end{align*}
For $x\in\RR$, $y\in\sB\setminus\{0\}$ and $t\in[0,1]$, we have
\begin{align*}
\abs{x}^\gamma+1&\,=\,\abs{x+ty-ty}^\gamma+1\\
&\,\le\,\kappa_0\abs{ x+ty}^\gamma+\kappa_0\abs{t y}^\gamma+1\\
&\,\le\, \kappa_0\abs{x+ty}^\gamma+\kappa_1+1\\
&\,\le\,
\kappa_2(\abs{x+ty}^\gamma+1)
\end{align*}
for some positive constants $\kappa_0$, $\kappa_1$ and $\kappa_2$.
Thus,
\begin{align}\label{PT3.3H}
\babss{\int_{\sB\setminus\{0\}}
\biggl(\int_0^1(1-t) y^2V''(x+ty)\, \D{t}\biggr)\,
\frac{\D{y}}{\abs{y}^{1+\alpha}}}
&\,\le\, \frac{\gamma\kappa_2}{\abs{\Hat{h}(x)}^\gamma+1}
\int_{\sB\setminus\{0\}}\abs{y}^2\,\frac{\D{y}}{\abs{y}^{1+\alpha}}\\
&\,=\,\frac{\kappa_3}{\abs{\Hat{h}(x)}^\gamma+1}\nonumber
\end{align}
for some positive constant $\kappa_3$.
The inequality in \cref{PT3.3B} now follows by combining 
\cref{PT3.3E,PT3.3F,PT3.3G,PT3.3H}.

We now consider case (c).
We follow the same approach, but here scaling with $\beta$ has to be
argued differently.
Here, we can choose any constant $\gamma>1$.
We need to establish that
\begin{equation}\label{PT3.3I}
\Ag^X V_\beta(x) \,=\, \fJ_\nu[V_\beta](x)
+\langle \Tilde{b}(x),\nabla V_\beta(x)\rangle \,>\,0\,,\qquad x\in\Rd\,.
\end{equation}
Recall that $\nu(\D{y})$ is supported on $\{tw\,\colon t\in[0,\infty)\}$.
Let $\tilde\nu(\D{t})=\nu(\D(tw))$,
and define
\begin{equation*}
H_\beta(t,x)\,\df\,\int_{0}^1
\biggl( \frac{\abs{\beta\langle\tw, x\rangle}^\gamma + 1}
{\abs{\langle\tw,\beta (x+stw)\rangle}^\gamma+1}-1\biggr)\,\D{s}\,.
\end{equation*}
We have
\begin{align*}
\fJ_\nu[V_\beta](x)&\,=\,\int_{[0,\infty)}\bigl(V_\beta(x+tw)-V_\beta(x)\bigr)\,
\tilde\nu(\D{t})
- \int_{[0,\infty)} \bigl\langle tw,\nabla V_\beta(x)\bigr\rangle \tilde\nu(\D{t})\\
&\,=\,\int_{[0,\infty)}\int_0^ 1\bigl\langle tw,\nabla V_\beta(x+stw)
-\nabla V_\beta(x)\bigr\rangle\,\D{s}\,\tilde\nu(\D{t})\\
&\,=\,\int_{[0,\infty)}\frac{\beta t\langle \tw,w\rangle}
{\babs{\beta\langle \tw,x\rangle}^\gamma+1}\,
H_\beta(t,x)\,\tilde\nu(\D{t})\,.
\end{align*}
Clearly, if $\langle \tw,w\rangle=0$, then \cref{PT3.3I} trivially holds.
Assume now that $\langle \tw,w\rangle>0$.
According to \cref{PT3.3A}, a sufficient condition for
\cref{PT3.3I} is
\begin{equation}\label{G_beta}
\limsup_{\beta\to0}\;\sup_{x\in\Rd}\;
\int_{[0,\infty)} t\,H_\beta(t,x)\,\tilde\nu(\D{t})\,=\,0\,.
\end{equation}
Clearly, $-1\le H_\beta(t,x)\le 0$ for $t\ge0$ and $x\in\Rd$, 
$\langle \tw,x \rangle\ge0$.
Also,
\begin{align*}
\int_{0}^1 \frac{-\abs{\beta st\langle\tw,w\rangle}^\gamma}
{\abs{\beta st\langle\tw,w\rangle}^\gamma+1}\,\D{s}
&\,\le\,
\int_{0}^1 \biggl(\inf_{x\in\Rd,\,\langle\tw,x\rangle\ge0}
\frac{\abs{\beta\langle\tw, x\rangle}^\gamma + 1}
{\abs{\langle\tw,\beta (x+stw)\rangle}^\gamma+1}-1\biggr)\,\D{s}\\
&\,\le\,
\inf_{x\in\Rd,\,\langle\tw,x\rangle\ge0}H_\beta(t,x)\\
&\,\le\,
\sup_{x\in\Rd,\,\langle\tw,x\rangle\ge0}H_\beta(t,x)\\
&\,\le\, \int_{0}^1
\biggl( \sup_{x\in\Rd,\,\langle\tw,x\rangle\ge0}
\frac{\abs{\beta\langle\tw, x\rangle}^\gamma + 1}
{\abs{\langle\tw,\beta (x+stw)\rangle}^\gamma+1}-1\biggr)\,\D{s}\,=\,0\,.
\end{align*}
 This, together with reverse Fatou lemma, gives \cref{G_beta}
 on the set $\{x\in\Rd\,\colon\langle \tw,x\rangle\ge0\}$. In particular, this means
 that there exists some $\beta_0>0$ such that
\begin{equation}\label{PT3.3J}
\int_{[0,\infty)} t\,H_{\beta_0}(t,x)\,\nu(\D{t})\,\ge\,
-\frac{\bigl\langle \tw, \Tilde\ell\bigr\rangle}{2\langle\tw,w\rangle}\,,
\qquad x\in\Rd,\ \langle\tw,x\rangle\ge0\,.
\end{equation}
On the other hand, if $\langle\tw,x\rangle\le0$, then
$H_{\beta_0}(s,x)\ge H_{\beta_0}(s,-x)$, so that \cref{PT3.3J} holds
for all $x\in\Rd$.
In turn, \cref{PT3.3J} implies that
\begin{equation*}
\fJ_\nu[V_{\beta_0}](x)
+\langle \Tilde{b}(x),\nabla V_{\beta_0}(x)\rangle\,\ge\,
\frac{1}{2}\,\frac{\beta_0\,\bigl(\bigl\langle \tw, \Tilde\ell\bigr\rangle
+ 2\langle e,x\rangle^-\bigr)}{\abs{\Hat{h}(\beta_0 x)}^\gamma + 1}>0\,,\qquad 
x\in\Rd\,.
\end{equation*}
Finally, if $\langle\tw,w\rangle<0$, we proceed analogously.
This finishes the proof of case (c).

We now turn to the case $\Tilde\varrho=0$.
Suppose that the process $\process{X}$ has an invariant probability measure
$\uppi(\D{x})$.
Let $h_{1,\beta}(x)$ and $h_{2,\beta}(x)$ denote the two terms on the right hand side of
\cref{PT3.3A}, in the order they appear.
Applying It\^o's formula to \cref{PT3.3A} we have
\begin{equation}\label{PT3.3K}
\Exp^\uppi\bigl[V\bigl(\beta X(t\wedge\uuptau_r)\bigr)\bigr] - V(\beta x)
\,\ge\, \sum_{i=1,2}\Exp^\uppi\biggl[\int_0^{t\wedge\uuptau_r}
h_{i,\beta}\bigl(X(s)\bigr)\,\D{s}\biggr]\,,
\end{equation}
where $\uuptau_r$ denotes the first exit time from $\sB_r$, $r>0$.
Note that $h_{1,\beta}(x)$ is bounded and $h_{2,\beta}(x)$ is nonnegative.
Thus we can  take limits in \cref{PT3.3K} as $r\to\infty$,
using  dominated and monotone
convergence for the terms on the right hand side, and obtain 
\begin{equation*}
\Exp^\uppi\bigl[V\bigl(\beta X(t)\bigr)\bigr] - V(\beta x)
\,\ge\, t \sum_{i=1,2}
h_{i,\beta}(x)\uppi(\D{x})\,,\qquad t\ge0\,.
\end{equation*}
Now,  divide both the terms
by $t$ and $\beta$, and take limits  as $t\to\infty$.
Since $V(x)$ is bounded, we have
$$\int_{\Rd}\beta^{-1} h_{1,\beta}(x)\uppi(\D{x})
+\int_{\Rd}\beta^{-1} h_{2,\beta}(x)\uppi(\D{x})\,\le\,0\,.$$
Since $\beta^{-1} h_{1,\beta}(x)$ tends to $0$ uniformly in $x$
 as $\beta\searrow0$, and is bounded, we must have
$$\int_{\Rd}\beta^{-1} h_{2,\beta}(x)\uppi(\D{x})\to0$$ as $\beta\searrow0$.
However, since $\beta^{-1} h_{2,\beta}(x)$ is bounded
away from $0$ is the open set $\{x\in\Rd\,\colon\langle e,x\rangle^->1\}$,
this is a contradiction in view of the fact that $\uppi(\D{x})$ has
full support (due to open-set irreducibility of $\process{X}$).
It is clear that the proof for the $\alpha$-stable (isotropic or not) and the L\'evy
are exactly the same, since $\Ag^X V(\beta x)$ shares the
same structural property in all these cases. This completes the proof of the theorem.
\end{proof}

%%%%%%%%%%%%%%%%%%%%%%%%%%%%%%%%%%%%%%%%%%%%%%%%%%%%%%%%%%%%%%%%%%%%%%%%%%%%%%%%
\begin{remark}\label{R5.1}
Since $M$ is a  nonsingular $M$-matrix,
its eigenvalues have positive real part.
According to this, it is well known that the so-called Lyapunov
equation $SM+M'S=\Id$ admits a unique positive definite symmetric solution $S$
(which is given by $S=\int_0^\infty\E^{-M't}\E^{-Mt}\D{t}$).
Further, recall the definition in \cref{E-cone}, and  assume that $a(x)$
satisfies \cref{ET3.5A}. 
It is straightforward to show that for any $\theta\in\Theta_c$
there exist positive constants
$\Tilde{c}_i$, $i=0,1,2$, and $\Tilde{\delta}$, such that
	\begin{equation*}
	\Ag^X V_{S,\theta}(x)\,\le\,
	\Tilde{c}_0 - \Tilde{c}_1 \abs{x}^\theta\Ind_{\cK_{\Tilde\delta}^c}(x)
	+ \Tilde{c}_2 \abs{x}^\theta\Ind_{\cK_{\Tilde\delta}}(x)\,,
	\qquad \,x\in\Rd\,,
	\end{equation*}
and over all Markov controls $v\in\Usm$.
This implies that any invariant probability measure $\overline\uppi(\D{x})$ of
$\process{X}$ (if it exists)
	satisfies
	\begin{equation}\label{ER5.1}
	\int_\Rd \abs{x}^\theta\,\overline\uppi(\D{x})
	\,\le\, \frac{\Tilde{c}_0}{\Tilde{c}_1}
	+ \frac{\Tilde{c}_2}{\Tilde\delta\Tilde{c}_1}
	\int_\Rd \bigl(\langle e,x\rangle^+\bigr)^\theta\,\overline\uppi(\D{x})\,.
	\end{equation} 
	Thus, if the integral on the right hand side of \cref{ER5.1} is finite, 
	then $\overline\uppi\in\cP_\theta(\Rd)$.
\end{remark}

We need to introduce some notation, which we fix throughout the
rest of the paper.

%%%%%%%%%%%%%%%%%%%%%%%%%%%%%%%%%%%%%%%%%%%%%%%%%%%%%%%%%%%%%%%%%%%%%%%%%%%%%%%%
\begin{notation}\label{N5.1}
We let $\chi(t)$ be a smooth concave function such that
$\chi(t)=t$ for $t\le-1$, and $\chi(t)=-\nicefrac{1}{2}$ for $t\ge0$.
Also $\Breve\chi(t) \df -\chi(-t)$.
Thus this is a convex function with $\Breve\chi(t)=t$ for $t\ge1$
and $\Breve\chi(t) =\nicefrac{1}{2}$ for $t\le 0$.
We scale $\chi(t)$ to $\chi_R(t)\df R+\chi(t-R)$, $R\in\RR$.
So $\chi_R(t)= t$ for $t\le R-1$ and
$\chi_R(t) = R-\nicefrac{1}{2}$ for $t\ge R$.

We recall the definitions of $\tw= (M^{-1})'e$,
and $\Hat{h}(x) = \langle \tw,x\rangle$, from the proof
of \cref{T3.3}, and additionally define
\begin{equation*}
 F(x) \,\df\, \Breve\chi\bigl(\Hat{h}(x)\bigr)\,,\quad
\text{and\ \ } F_{\kappa,R}(x) \,\df\, \chi_R\circ F^\kappa(x)\,,\quad
x\in\Rd\,,\ \kappa>0\,,\ R>0\,,
\end{equation*}
where $F^\kappa(x)$ denotes the $\kappa^{\mathrm{th}}$ power of $F(x)$.
\end{notation}

Recall that \cref{T3.3}, under the assumption that $1\in\Theta_c$,
shows that if $\Tilde\varrho\le0$, then the process $\process{X}$
does not admit an invariant probability measure 
under any Markov control satisfying $\varGamma v(x)=0$ a.e.
In \cref{L5.7} which follows, we show that the same is the case if $1\notin\Theta_c$.
Therefore, $\Tilde\varrho>0$ and $1\in\Theta_c$ are both necessary
conditions for the existence of an invariant probability measure.

%%%%%%%%%%%%%%%%%%%%%%%%%%%%%%%%%%%%%%%%%%%%%%%%%%%%%%%%%%%%%%%%%%%%%%%%%%%%%%%%
\begin{lem}\label{L5.7}
Suppose that \eqref{E-sde} is driven by either or both of
\begin{enumerate}[wide]
\item[\ttup i]
An $\alpha$-stable process (isotropic or not).

\item[\ttup{ii}]
A L\'evy process with finite L\'evy measure
$\nu(\D{y})$ which is supported on a half-line in $\Rd$
of the form $\{tw\colon t\in[0,\infty)\}$,
with $\langle e,M^{-1} w\rangle>0$.

\end{enumerate}
A diffusion component may be present in the noise, 
in which case we assume the growth condition \cref{ET3.2A}.
Under these assumptions, the following hold. 
\begin{enumerate}[wide]
\item[\ttup a]
If $1\notin\Theta_c$, then
the process $\process{X}$ is not ergodic under any Markov control $v\in\Usm$
satisfying $\varGamma v(x)=0$ a.e.
\item[\ttup b]
Suppose $1\in\Theta_c$, and that under a control $v\in\Usm$
such that $\varGamma v(x)=0$ a.e., the process $\process{X}$ has an invariant
probability measure $\overline\uppi(\D{x})$ satisfying
$\int_\Rd\bigl(\langle \tw,x\rangle^+\bigr)^{p-1}\,\overline\uppi(\D{x})<\infty$
for some $p>1$.
Then, necessarily $p\in\Theta_c$ and $\Tilde\varrho>0$.
\item[\ttup c]
In general, if an invariant probability measure
$\overline\uppi(\D{x})$ (under some Markov control) satisfies
$\int_\Rd\bigl(\langle \tw,x\rangle^+\bigr)^{p}\,\overline\uppi(\D{x})<\infty$
for some $p\ge1$, then necessarily $p\in\Theta_c$.
\end{enumerate}
\end{lem}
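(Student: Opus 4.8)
The plan is to read all three statements off a single mechanism, the \emph{stationarity identity}: if $\overline\uppi$ is an invariant probability measure for $\process{X}$ and $V$ is a bounded function in the set $\mathcal{D}$ defined in \cref{ext}, then, applying the martingale property of $V(X(t))-V(X(0))-\int_0^t\Ag^XV(X(s))\,\D{s}$ stopped at $\uuptau_r$, integrating against $\overline\uppi$, using stationarity to cancel the endpoint terms, and letting first $r\to\infty$ and then $t\to\infty$, one gets $\int_{\Rd}\Ag^XV\,\D\overline\uppi\ge0$ whenever $\Ag^XV$ is bounded above by an $\overline\uppi$-integrable function, with equality if it is also bounded below by one. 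The test functions will be those of \cref{N5.1}: writing $\tw=(M^{-1})'e$ and $\Hat{h}(x)=\langle\tw,x\rangle=\langle e,M^{-1}x\rangle$, the function $F=\Breve\chi\circ\Hat{h}$ is smooth, bounded below by $\nicefrac{1}{2}$, equal to $\nicefrac{1}{2}$ on $\{\Hat{h}\le0\}$ and to $\Hat{h}$ on $\{\Hat{h}\ge1\}$, and $F_{\kappa,R}=\chi_R\circ F^\kappa$ is bounded, lies in $\mathcal{D}$ (its jumps are bounded, and $\nu$ has finite mass off $\sB$ in every case covered by \ttup{i} and \ttup{ii}), is constant on $\{\Hat{h}\le0\}$, and agrees with $(\Hat{h})^\kappa$ on the slab $\sX_R\df\{x\colon1\le\Hat{h}(x)\le R-1\}$.

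The engine of the argument is the drift identity that holds precisely when $\varGamma v(x)=0$: one computes $\langle\tw,b(x)\rangle=\langle e,M^{-1}\ell\rangle+\langle e,x\rangle^-$, which is the \emph{constant} $\langle e,M^{-1}\ell\rangle$ on $\{e'x>0\}$ and is $\ge\langle e,M^{-1}\ell\rangle$ everywhere. Since $\mathfrak{d}_1\Hat{h}(x;y)=\Ind_{\sB^c}(y)\langle\tw,y\rangle$, this gives, when $1\in\Theta_c$, $\Ag^X\Hat{h}(x)=-\Tilde\varrho+\langle e,x\rangle^-$; and on $\sX_R$ one obtains, schematically,
\begin{multline*}
\Ag^XF_{\kappa,R}(x)\,=\,-\kappa\Tilde\varrho\,(\Hat{h}(x))^{\kappa-1}+\kappa\,(\Hat{h}(x))^{\kappa-1}\langle e,x\rangle^-\\
+\tfrac{\kappa(\kappa-1)}{2}(\Hat{h}(x))^{\kappa-2}\langle\tw,a(x)\tw\rangle+\fJ_{1,\nu}[F_{\kappa,R}](x)+(\text{boundary terms}),
\end{multline*}
where the nonlocal term $\fJ_{1,\nu}[F_{\kappa,R}](x)$, for $x$ in or near $\sX_R$, is governed by $\int_{\{\langle\tw,y\rangle\ge2\}}\bigl((\langle\tw,y\rangle)^\kappa\wedge R\bigr)\,\nu(\D{y})$---the quantity that ``feels'' the $\kappa$-th moment of $\nu$ in the direction $\tw$. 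I would first record a preliminary Foster--Lyapunov estimate in the spirit of \cref{R5.1}, exploiting the strong mean reversion of $b$ on $\{e'x<0\}\subseteq\cK_\delta^c$, to conclude that every invariant $\overline\uppi$ satisfies $\int_{\Rd}(\langle e,x\rangle^-)^\theta\,\overline\uppi(\D{x})<\infty$ for $\theta\in\Theta_c$, so that the second and third terms above are $\overline\uppi$-integrable uniformly in $R$.

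For part \ttup{c} I would take $\kappa=p$: the hypothesis $\int(\langle\tw,x\rangle^+)^p\,\overline\uppi(\D{x})<\infty$ makes $F^p\in L^1(\overline\uppi)$ and, with the a priori estimate, keeps $\overline\uppi\bigl(\fJ_{1,\nu}[F_{p,R}]\bigr)=-\overline\uppi(\text{local part of }\Ag^XF_{p,R})$ bounded uniformly in $R$; since the nonlocal term restricted to $\{0\le\Hat{h}(x)\le1\}$ (of positive $\overline\uppi$-mass by open-set irreducibility) increases as $R\uparrow\infty$ to a multiple of $\int_{\{\langle\tw,y\rangle\ge2\}}(\langle\tw,y\rangle)^p\,\nu(\D{y})$, monotone convergence forces the latter to be finite, which---because the noise charges the direction $\tw$ (automatically for an $\alpha$-stable process, and by the hypothesis $\langle e,M^{-1}w\rangle>0$ for the half-line measure)---amounts to $p\in\Theta_c$. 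Part \ttup{b} refines this under $1\in\Theta_c$: taking $\kappa=1$, on $\sX_R\cap\{e'x>0\}$ the local part of $\Ag^XF_{1,R}$ is the constant $-\Tilde\varrho$ up to integrable lower-order terms, so $\overline\uppi(\Ag^XF_{1,R})=0$, together with the fact that $\overline\uppi$ charges arbitrarily large slabs, forces $\Tilde\varrho>0$; then one reruns the \ttup{c}-argument with $\kappa=p$, the extra term $-p\Tilde\varrho(\Hat{h})^{p-1}$ now being $\overline\uppi$-integrable exactly because $\int(\langle\tw,x\rangle^+)^{p-1}\,\overline\uppi(\D{x})<\infty$ (the ``gain of one''). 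For part \ttup{a} I would argue by contradiction: assuming $\overline\uppi$ exists and taking $\kappa=1$, on $\{-1\le\Hat{h}(x)\le0\}$---where $F_{1,R}$ is constant---one has $\Ag^XF_{1,R}(x)=\int_{\Rds}\bigl(F_{1,R}(x+y)-\nicefrac{1}{2}\bigr)\,\nu(\D{y})\ge0$, and this increases, by monotone convergence, to a multiple of $\int_{\{\langle\tw,y\rangle\ge2\}}\langle\tw,y\rangle\,\nu(\D{y})=+\infty$ (here $1\notin\Theta_c$ and the noise charges $\tw$); since this set has positive $\overline\uppi$-mass while the contributions from $\{\Hat{h}(x)>1\}$ stay controlled uniformly in $R$ via $\varGamma v=0$ and the a priori moment bound, the identity $0=\overline\uppi(\Ag^XF_{1,R})$ cannot survive $R\to\infty$.

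The hard part will be the rigorous limit $R\to\infty$ in $\overline\uppi(\Ag^XF_{\kappa,R})=0$: one must produce, before passing to the limit, an $\overline\uppi$-integrable majorant (and, for the equality, a minorant) of $\Ag^XF_{\kappa,R}$ that is uniform in $R$, and, in part \ttup{a}, verify that the divergent positive contribution from the slab $\{\Hat{h}\approx0\}$ genuinely dominates the (potentially large) negative nonlocal contribution coming from states with $\Hat{h}$ large---a quantitative matching of growth rates. This is exactly where the identity $\langle\tw,b(x)\rangle=\langle e,M^{-1}\ell\rangle+\langle e,x\rangle^-$, which confines all the ``badness'' of the drift to the nonnegative term $\langle e,x\rangle^-$ supported on $\{e'x<0\}$---a region on which both the diffusion and the drift are tame---and the auxiliary moment estimate will be indispensable.
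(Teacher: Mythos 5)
Your proposal is, for parts \ttup{b} and \ttup{c}, essentially the paper's own proof: the same test functions $F_{\kappa,R}=\chi_R\circ F^\kappa$ from \cref{N5.1}, the same drift identity $\langle\Tilde b(x),\nabla F^\kappa(x)\rangle=h_\kappa(x)(-\Tilde\varrho+\langle e,x\rangle^-)$ under $\varGamma v=0$ (this is exactly \cref{PL5.7C}), and the same stationarity relation $\overline\uppi(\Ag^XF_{\kappa,R})=0$ followed by $R\to\infty$; your ``gain of one'' in \ttup{b} is precisely the mechanism of \cref{PL5.7J}, and your monotone-convergence argument on a slab of positive $\overline\uppi$-mass for \ttup{c} is the content of \cref{PL5.7L}.

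Part \ttup{a} is where you genuinely diverge, and where the one substantive issue lies. You fix $\kappa=1$ and let $R\to\infty$, whereas the paper first sends $R\to\infty$ for $\kappa\in\Theta_c$, $\kappa\le1$ (where $\mathscr{J}_R[F^\kappa]$ and $\widetilde F_{\kappa,R}$ are uniformly bounded) and then lets $\kappa\nearrow\theta_c$. Your variant is fine for the half-line measure of \ttup{ii}, because there $F_{1,R}(x+tw)\ge F_{1,R}(x)$ on the support of $\nu$, so the nonlocal term is bounded below uniformly in $R$ and only the divergent positive contribution on $\{\Hat h\le 0\}$ survives. For the $\alpha$-stable case, however, the negative part of $\fI_\alpha[F_{1,R}](x)$ is \emph{not} uniformly small: on the ramp $\{1\le\Hat h(x)\le R\}$ it is genuinely of order $(\Hat h(x)\wedge R)^{1-\alpha}$, i.e.\ of the \emph{same order} $R^{1-\alpha}$ as the divergent positive contribution on the slab. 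You correctly flag this as the hard step, but the tool you propose for it --- an a priori moment bound in the spirit of \cref{R5.1} --- cannot close it: with $1\notin\Theta_c$ one only controls moments of order $\theta<\alpha\le1$, which dominates $\Hat h^{1-\alpha}$ only when $\alpha>\nicefrac12$ (and in any case \cref{R5.1} bounds moments on $\cK_\delta^c$ only in terms of the possibly infinite moment of $\langle e,x\rangle^+$). What actually closes the gap is a \emph{relative-mass} comparison using only that $\overline\uppi$ is a probability measure: the negative contribution per unit mass is $\order(R^{1-\alpha})$ but is supported where $\Hat h$ is comparable to $R$, a region whose $\overline\uppi$-mass tends to $0$, while the positive contribution is $\gtrsim R^{1-\alpha}$ times the fixed positive mass of $\{\Hat h\le0\}$. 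This is exactly the role of the sets $D_{r_0}$ and $\widetilde D_{r_0}$ and the choice of $r_0$ in the paper's proof (there phrased with $\nicefrac{1}{\alpha-\kappa}$ in place of $R^{1-\alpha}$). Without that step your contradiction in \ttup{a} does not go through for the $\alpha$-stable driver.
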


\begin{proof}
Recall \cref{N5.1}.
Note that $F_{\kappa,R}(x)$ is smooth, bounded, and has bounded derivatives.
Thus, if $\process{X}$ is positive recurrent with invariant probability
measure $\overline\uppi(\D{x})$, we must have $\overline\uppi(\Ag^X F_{\kappa,R})=0$.
Note also that $F(x)$ is positive and bounded away from $0$.
For $f\in C^2(\Rd)$, let
\begin{equation}\label{PL5.7A}
\Ag_\circ^X f(x) \,\df\, \frac{1}{2}\trace\bigl(a(x)\nabla^2f(x)\bigr)
+\begin{cases}
\mathfrak{J}_{\nu}[f](x)&\text{if\ \ }1\in\Theta_c\,,\\[3pt]
\mathfrak{J}_{1,\nu}[f](x)&\text{otherwise\,.}
\end{cases}
\end{equation}
We have
\begin{equation}\label{PL5.7B}
\Ag^X F_{\kappa,R}(x) \,=\, \Ag_\circ^X F_{\kappa,R}(x) + 
\chi_R'\bigl(F^\kappa(x)\bigr)
\bigl\langle \Tilde{b}(x),\nabla F^\kappa(x)\bigr\rangle\,.
\end{equation}
Let
\begin{equation*}
\begin{aligned}
h_{\kappa}(x)&\,\df\,\kappa\,
\Breve\chi'\bigl(\Hat{h}(x)\bigr)\,F^{\kappa-1}(x)\,,\\
\Tilde{h}_{\kappa}(x)&\,\df\,h_{\kappa}(x)\, \langle e,x\rangle^- \,,\\
\widetilde{F}_{\kappa,R}(x)&\,\df\, 
 \frac{1}{2}\chi_R''\bigl(F^\kappa(x)\bigr)\, (h_{\kappa}(x))^2\,
 \babs{\upsigma'(x)\tw}^2\,.
\end{aligned}
\end{equation*}
A simple calculation shows that
for any control $v(x)$ satisfying $\varGamma v(x)=0$ a.e., it holds that
\begin{equation}\label{PL5.7C}
\bigl\langle \Tilde{b}(x), \nabla F^\kappa(x)\bigr\rangle 
\,=\,  h_{\kappa}(x) \,
\bigl(-\Tilde\varrho + \langle e,x\rangle^-\bigr)\,.
\end{equation} 
We also have
\begin{equation*}
\Ag_\circ^X F_{\kappa,R}(x) \,=\, 
\frac{1}{2}\chi_R'\bigl(F^\kappa(x)\bigr) \trace\bigl(a(x)\nabla^2F^\kappa(x)\bigr)
+ \widetilde{F}_{\kappa,R}(x)
+\begin{cases}
\mathfrak{J}_{\nu}[F_{\kappa,R}](x)&\text{if\ \ }1\in\Theta_c\,,\\[3pt]
\mathfrak{J}_{1,\nu}[F_{\kappa,R}](x)&\text{otherwise\,.}
\end{cases}
\end{equation*}

Integrating \cref{PL5.7B} with respect to $\overline\uppi(\D{x})$, and using
\cref{PL5.7C} to rearrange terms, we obtain
\begin{equation}\label{PL5.7D}
\Tilde\varrho\,\overline\uppi\bigl(\chi_R'(F^\kappa)h_{\kappa}\bigr) \,=\,
\overline\uppi\bigl(\chi_R'\bigl(F^\kappa\bigr)\Ag_\circ^XF^\kappa\bigr)
+ \overline\uppi\bigl(\chi_R'(F^\kappa)\Tilde{h}_{\kappa}\bigr)
+\overline\uppi\bigl(\mathscr{J}_R[F^\kappa]\bigr)
+\overline\uppi\bigl(\widetilde{F}_{\kappa,R}\bigr)\,,
\end{equation}
with
\begin{equation*}
\mathscr{J}_{R}[F^\kappa](x)\,\df\, \begin{cases}
\mathfrak{J}_{\nu}[F_{\kappa,R}](x) - \chi_R'\bigl(F^\kappa(x)\bigr)
\mathfrak{J}_{\nu}[F^\kappa](x)&\text{if\ \ }1\in\Theta_c\,,\\[5pt]
\mathfrak{J}_{1,\nu}[F_{\kappa,R}](x) - \chi_R'\bigl(F^\kappa(x)\bigr)
\mathfrak{J}_{1,\nu}[F^\kappa](x)&\text{otherwise\,.}
\end{cases}
\end{equation*}
Note that we can always select $\chi_R(t)$
so that $\chi''_R(t)\ge - \nicefrac{1}{t+1}$, for $t\ge 0$.
Thus, in view of \cref{ET3.2A}, there exists some positive constant $\Tilde{C}$
such that
\begin{equation}\label{PL5.7E}
\babs{\widetilde{F}_{\kappa,R}(x)}\,\le\, \Tilde{C}\bigl(1 + F^{\kappa-1}(x)\bigr)\,,
\qquad x\in\Rd\,,\  R>0\,.
\end{equation}

Let $\kappa\in\Theta_c$, $\kappa\le1$.
Then the functions
$\mathscr{J}_{R}[F^\kappa](x)$ and $\widetilde{F}_{\kappa,R}(x)$ are
bounded, uniformly in $R$, and
converge to $0$, on compact sets as $R\to\infty$.
Thus, we can take limits in \cref{PL5.7D}
as $R\to\infty$, to obtain
\begin{equation}\label{PL5.7G}
\overline\uppi\bigl([\Ag_\circ^X F^\kappa]^-\bigr)
+ \Tilde\varrho\, \overline\uppi(h_{\kappa})
\,=\, \overline\uppi\bigl([\Ag_\circ^X F^\kappa]^+\bigr)
+ \overline\uppi\bigl(\Tilde{h}_{\kappa}\bigr)\,.
\end{equation}

We next prove part (a) of the lemma.
First consider the process in (ii).
It is clear that the map $x\mapsto\trace\bigl(a(x)\nabla^2F^\kappa(x)\bigr)$ is
bounded on $\Rd$, uniformly in $\kappa\in(0,1)$, and
\begin{equation}\label{PL5.7H}
\mathfrak{J}_{1,\nu}[F^\kappa](x)\,\ge\,\int_{\sB}\int_0^1(1-t)
\bigl\langle y,\nabla^2F^\kappa(x+ty)y\bigr\rangle\, \D{t}\,\nu(\D{y})\,,
\end{equation}
since $\langle \tw, w\rangle>0$.
Thus $[\Ag_\circ^X F^\kappa]^-(x)$ is bounded uniformly in $\kappa\in(0,1)$.
If $\Theta_c=(0,\theta_c)$,
then $\inf_{x\in\sB}\,\Ag_\circ^X F^\kappa(x)\to\infty$
as $\kappa\nearrow\theta_c$, and contradicts \cref{PL5.7G}, which
is valid for all $\kappa\in\Theta_c$. 
In the event that $\Theta_c=(0,\theta_c]$,  we express the integral of
\cref{PL5.7B} as
\begin{equation}\label{PL5.7I}
\overline\uppi\bigl(\bigl[\Ag_\circ^X F_{\kappa,R}\bigr]^-\bigr)
+\Tilde\varrho\,\overline\uppi\bigl(\chi_R'(F^\kappa)h_{\kappa}\bigr)\,=\,
\overline\uppi\bigl(\bigl[\Ag_\circ^X F_{\kappa,R}\bigr]^+\bigr)
+ \overline\uppi\bigl(\chi_R'(F^\kappa)\Tilde{h}_{\kappa}\bigr)\,,
\end{equation}
and evaluate \cref{PL5.7I} at any $\kappa\in(\theta_c,1]$.
Again, $\mathfrak{J}_{1,\nu}[F_{\kappa,R}](x)$ has the bound in \cref{PL5.7H}, implying
that $\bigl[\Ag_\circ^X F_{\kappa,R}(x)\bigr]^-$  is
uniformly bounded over $R\in(0,\infty)$. 
Thus, we can take limits in \cref{PL5.7I}
as $R\to\infty$, to reach the same contradiction.

In the case of the process in (i), a straightforward calculation,
using the estimates in the proof of \cref{L5.1}, and the one-dimensional
character of the singular integral as exhibited in
the proof of \cref{T3.2}, shows that there exists a positive constant $C_1$ such that
$\bigl[\fI_\alpha[F^\kappa]\bigr]^-(x)\le \nicefrac{C_1}{\alpha-\kappa}$ for all $x\in\Rd$.
This bound can be easily obtained by
using the first integral in \cref{PT3.3C} over $\Rd$ instead of $\sB$,
to compute $\fI_\alpha[F^\kappa](x)$.
Let
\begin{equation*}
D_{r_0}\df\{x\in\Rd\colon\, 0\le\langle\tw,x\rangle\le r_0\}\,,
\qquad
\widetilde{D}_{r_0}\df\{x\in\Rd\colon\, r_0<\langle\tw,x\rangle\}\,,
\end{equation*}
We fix any $r_0>1$, and estimate $\fI_\alpha[F^\kappa](x)$ for
$x\in D_{r_0}$.
The part from the first integral in \cref{PT3.3C} over $\sB$ is
bounded below uniformly in $\kappa\in(0,\alpha)$ by some constant $-C_2$.
We evaluate the remaining part by using the third integral in \cref{PT3.3C}
with $r(\abs{x})=1$.
Taking advantage of the one-dimensional character of this integration,
and assuming without loss of generality that
$\langle\tw,e_1\rangle=1$, in order to simplify the notation,
we obtain
\begin{equation*}
\int_{-\infty}^{-1}
\bigl(F^\kappa(x+te_1)-F^\kappa(x)\bigr)\,\frac{\D{t}}{\abs{t}^{1+\alpha}}\,\ge\,
-F^\kappa(x)\, \int_{-\infty}^{-1}\frac{\D{t}}{\abs{t}^{1+\alpha}}
\,\ge\, -C_3\,  r_0\,,
\end{equation*}
for a positive constant $C_3$ independent of $r_0$ or $\kappa$.
Using the inequality $\abs{s+t}^\kappa - \abs{s}^\kappa
\ge 2^{-\nicefrac{1}{\kappa}} \abs{t}^\kappa - \bigl(1- 2^{-\nicefrac{1}{\kappa}}\bigr)
\abs{s}^\kappa$, which is valid for any $0\le s\le t$ in $\RR$, we have
\begin{align*}
\int_{1}^{\infty} \bigl(F^\kappa(x+te_1)-F^\kappa(x)\bigr)\,
\frac{\D{t}}{\abs{t}^{1+\alpha}}&\,\ge\, 
2^{-\frac{1}{\kappa}}\,\int_{1}^{\infty}
\abs{t}^\kappa\,\frac{\D{t}}{\abs{t}^{1+\alpha}}
- \bigl(1- 2^{-\frac{1}{\kappa}}\bigr)\, r_0^\kappa
\int_{1}^{\infty}\,\frac{\D{t}}{\abs{t}^{1+\alpha}}\\
&\,\ge\, \frac{2^{-\nicefrac{1}{\alpha}}}{\alpha-\kappa}
- C_3\, r_0\,,
\end{align*}
where $C_3$ is the same constant used earlier.
Let $r_0$ be large enough so that
$\overline\uppi(\widetilde{D}_{r_0})\le\nicefrac{1}{2^{\nicefrac{\alpha+1}{\alpha}}C_1}
\overline\uppi( D_{r_0})$.
It is clear that $\fI_\alpha[F^\kappa](x)$ is nonnegative
if $\langle\tw,x\rangle\le0$, for all $\kappa$ sufficiently close to $\alpha$.
Therefore, combining the above, we have
\begin{align*}
\int_{\Rd}\fI_\alpha[F^\kappa](x)\,\overline\uppi(\D{x})
&\,\ge\,\int_{D_{r_0}}\fI_\alpha[F^\kappa](x)\,\overline\uppi(\D{x})
+\int_{\widetilde{D}_{r_0}}\fI_\alpha[F^\kappa](x)\,\overline\uppi(\D{x})\\
&\,\ge\, \overline\uppi( D_{r_0})\,
\biggl(\frac{2^{-\nicefrac{1}{\alpha}}}{\alpha-\kappa}- C_2-2C_3\,r_0\biggr)
- \overline\uppi(\widetilde{D}_{r_0})\,\frac{C_1}{\alpha-\kappa} \\
&\,\ge\,
\overline\uppi( D_{r_0})\,
\biggl(\frac{2^{-1-\nicefrac{1}{\alpha}}}{\alpha-\kappa}- C_2-2C_3\,r_0\biggr)\,,
\end{align*}
and we obtain a contradiction in \cref{PL5.7G} by letting
$\kappa\nearrow\alpha$.
Note that since $\tw$ has nonnegative components,
the preceding argument applies to both the isotropic and anisotropic
L\'evy kernels.
This completes the proof of part (a).

We now turn to part (b).
Suppose first that $\int_\Rd\abs{x}^{\theta_c-1}\,\overline\uppi(\D{x})<\infty$,
and $\Theta_c=(0,\theta_c)$.
We apply $\Ag^X$ to $F_{\kappa,R}(x)$, for $\kappa\in(1,\theta_c)$,
and note again that the function $\mathscr{J}_R[F^\kappa](x)$  is
bounded, uniformly in $R$, and converges to $0$, on compact sets as $R\to\infty$.
Moreover, since $\kappa\ge1$, and $\tw$ has nonnegative components
we have $\Ag_\circ^X F^\kappa\ge0$ by convexity.
Thus, taking limits as $R\to\infty$, \cref{PL5.7G} takes the form
\begin{equation}\label{PL5.7J}
\Tilde\varrho\, \overline\uppi(h_{\kappa})
\,=\, \overline\uppi\bigl(\Ag_\circ^X F^\kappa\bigr)
+ \overline\uppi\bigl(\Tilde{h}_{\kappa}\bigr)\,.
\end{equation}
It is thus immediately clear that $\Tilde\varrho>0$.
It also follows from \cref{PL5.7J} that
$\overline\uppi(h_{\kappa})\to\infty$ as $\kappa\nearrow\theta_c$, which contradicts
the original hypothesis that
$\int_\Rd\abs{x}^{\theta_c-1}\,\overline\uppi(\D{x})<\infty$.

Next, we consider the case $\Theta_c=(0,\theta_c]$.
If the $\alpha$-stable component is present, then necessarily $\theta_c<\alpha$,
and we can follow the technique used for part (a).
If a diffusion part is present we argue as follows.
We suppose that $\int_\Rd\abs{x}^{\theta_c-1+\epsilon}\,\overline\uppi(\D{x})<\infty$,
for some $\epsilon>0$,
and select $\kappa=\theta_c+\epsilon$.
Then $\overline\uppi\bigl(\widetilde{F}_{\kappa,R}\bigr)$ vanishes
as $R\to\infty$ by \cref{PL5.7E} and the hypothesis.
Thus, again we obtain from \cref{PL5.7J} that
$\overline\uppi(h_{\kappa})\to\infty$ as $R\to\infty$, which contradicts
the original hypothesis that
$\int_\Rd\abs{x}^{\theta_c-1+\epsilon}\,\overline\uppi(\D{x})<\infty$.
This, together with \cref{R5.1} proves (b).

Concerning part (c), in general, under a Markov control
$v\in\Usm$, \cref{PL5.7C} takes the form
\begin{equation*}
\bigl\langle \Tilde{b}(x), \nabla F^p(x)\bigr\rangle 
\,=\, p \Breve\chi'\bigl(\Hat{h}(x)\bigr)\,F^{p-1}(x)\,
\bigl(-\Tilde\varrho + \langle e,x\rangle^-
-\langle\tw,\varGamma{v}(x)\rangle\,\langle e,x\rangle^+\bigr)\,,
\end{equation*}
and following through the  earlier calculations for $p\ge1$,
we see that \cref{PL5.7I} now takes the form
\begin{align}\label{PL5.7L}
\overline\uppi\bigl(\Ag_\circ^X F_{p,R}\bigr)
+ \overline\uppi\bigl(\chi_R'(F^p)\Tilde{h}_{p}\bigr)
&\,\le\, 
\Tilde\varrho\, \overline\uppi\bigl(\chi_R'(F^p)h_{p}\bigr)\\
&\quad+\int_\Rd \chi_R'(F^p(x))p \Breve\chi'\bigl(\Hat{h}(x)\bigr)\,F^{p-1}(x)
\langle\tw,\varGamma{v}(x)\rangle\,\langle e,x\rangle^+\,
\overline\uppi(\D{x})\nonumber\,.
\end{align}
Taking limits in \cref{PL5.7L} as $R\to\infty$, the right hand side reaches
a finite value by the hypothesis and \cref{R5.1}.
In view of the decomposition of $\Ag_\circ^X F_{p,R}(x)$
in \cref{PL5.7D}, and the growth estimate in \cref{PL5.7E},
this implies that $\overline\uppi\bigl(\Ag_\circ^X F^p\bigr)<\infty$, from which
we deduce that $p\in\Theta_c$.
\end{proof}

%%%%%%%%%%%%%%%%%%%%%%%%%%%%%%%%%%%%%%%%%%%%%%%%%%%%%%%%%%%%%%%%%%%%%%%%%%%%%%%%
\begin{remark}
It can be seen from the proof of \cref{L5.7},
that the conclusions of parts (b) and (c) are still valid if we replace the
$\alpha$-stable process in (i) with a subordinate Brownian motion,
such that $\theta_c<\infty$.
\end{remark}

%%%%%%%%%%%%%%%%%%%%%%%%%%%%%%%%%%%%%%%%%%%%%%%%%%%%%%%%%%%%%%%%%%%%%%%%%%%%%%%%
\begin{remark}\label{R5.3}
The hypothesis $\langle \tw,w\rangle=\langle e,M^{-1} w\rangle>0$
in \cref{L5.7} (ii) cannot, in general, be relaxed.
The following example demonstrates this.
Let $M=\Id$, any constant control $v\in\varDelta$,
and $\langle \tw, w\rangle\le0$.
Consider the function  $V(x) = V_{Q,\theta}(x) +
\Tilde{V}(x)$, with  $\Tilde{V}(x)=\phi(\beta\langle e,x\rangle^+)$,
$\beta>0$,
and $\phi(t)=\E^t$ for $t\ge1$, and $\phi(t)=1$ for $t\le 0$, and smooth.
It is straightforward to verify that
$\Ag^X \Tilde{V}(x)=0$ if $\langle e,x\rangle\le0$,
and that $\Ag^X \Tilde{V}(x)\le \kappa_0' - \kappa_1'\Tilde{V}(x)$
if $\langle e,x\rangle\ge0$, where $\kappa_0'$ and $\kappa_1'$
are positive constants.
Adding this inequality to \cref{ET3.2C}, and since for some positive constants
$C$ and $R$ we have $\Tilde{V}(x)\ge C V(x)$ for $x\in\cK_\delta\cap\sB_R^c$,
we obtain
$\Ag^X V(x) \le \kappa_0'' - \kappa_1''V(x)$ for some positive
constants $\kappa_0''$ and $\kappa_1''$, and all $x\in\Rd$.
This shows that the process is exponentially ergodic.
This is an example where the direction of jumps is beneficial to
ergodicity.
\end{remark}

%%%%%%%%%%%%%%%%%%%%%%%%%%%%%%%%%%%%%%%%%%%%%%%%%%%%%%%%%%%%%%%%%%%%%%%%%%%%%%%%
\begin{cor}\label{C5.1}
Assume the hypotheses of \cref{T3.1}, and \cref{ET3.2A}.
Then, every invariant probability measure $\overline\uppi(\D{x})$,
corresponding to a Markov control $v\in\Usm$ such that  $\varGamma{v}(x)=0$ a.e.,
under which the process $\process{X}$ is ergodic, satisfies
\begin{equation}\label{EC5.1A}
\Tilde\varrho\,=\,\int_\Rd \langle e,x\rangle^-\,\overline\uppi(\D{x})\,.
\end{equation}
\end{cor}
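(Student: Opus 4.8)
The heart of the matter is the identity, valid under $\varGamma v(x)=0$ a.e., that the linear functional $\Hat h(x)=\langle\tw,x\rangle$ with $\tw=(M^{-1})'e$ (introduced in the proof of \cref{T3.3} and in \cref{N5.1}) has generator $\Ag^X\Hat h(x)=-\Tilde\varrho+\langle e,x\rangle^-$. Indeed, ergodicity of $\process{X}$ under $v$ forces positive recurrence, so by the necessity results in \cref{T3.3,L5.7} we have $\Tilde\varrho>0$ and $1\in\Theta_c$; in particular $\int_{\sB^c}\abs{y}\,\nu(\D y)<\infty$, so $\Tilde\ell=\ell+\vartheta+\int_{\sB^c}y\,\nu(\D y)$ and $\langle\tw,\Tilde\ell\rangle=-\langle e,M^{-1}\Tilde\ell\rangle=-\Tilde\varrho$. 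Using $\langle\tw,Mx\rangle=\langle M^{-1}Mx,e\rangle=\langle e,x\rangle$ and $\langle\tw,Mv\rangle=\langle e,v\rangle=1$, and recalling $\Tilde b(x)=b(x)+\Tilde\ell-\ell$, a direct computation from \cref{E1.2} gives, for $\varGamma v(x)=0$ a.e.,
\begin{equation*}
\Ag^X\Hat h(x)\,=\,\langle b(x)+\vartheta,\tw\rangle+\int_{\sB^c}\langle\tw,y\rangle\,\nu(\D y)\,=\,\langle\Tilde b(x),\tw\rangle\,=\,-\Tilde\varrho+\langle e,x\rangle^-\,,\qquad x\in\Rd\,.
\end{equation*}
Moreover $\Hat h\in\mathcal D\subseteq\mathcal D_{\Bar{\mathcal A}^X}$ (the map $x\mapsto\int_{\sB^c}\abs{\Hat h(x+y)}\,\nu(\D y)$ is affinely bounded), so that $M_t\df\Hat h(X(t))-\Hat h(X(0))-\int_0^t\bigl(-\Tilde\varrho+\langle e,X(s)\rangle^-\bigr)\,\D s$ is a local martingale under every $\Prob^x$.

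The plan is then to average this identity along the stationary process. Under $\Prob^{\overline\uppi}$ the process $\process{X}$ is stationary and, since $\overline\uppi$ is the unique invariant measure, ergodic; rearranging the definition of $M_t$ and dividing by $t$,
\begin{equation*}
\frac{\Hat h(X(t))}{t}\,=\,\frac{\Hat h(X(0))}{t}+\frac{M_t}{t}+\frac1t\int_0^t\langle e,X(s)\rangle^-\,\D s-\Tilde\varrho\,.
\end{equation*}
By Birkhoff's theorem the time average $\tfrac1t\int_0^t\langle e,X(s)\rangle^-\,\D s$ converges a.s.\ to $\int_\Rd\langle e,x\rangle^-\,\overline\uppi(\D x)\in[0,\infty]$, while, $\overline\uppi$ being a probability measure, $\Prob^{\overline\uppi}\bigl(\abs{\Hat h(X(t))}>\epsilon t\bigr)=\overline\uppi\bigl(\{\abs{\Hat h}>\epsilon t\}\bigr)\to0$ for every $\epsilon>0$, i.e.\ $\Hat h(X(t))/t\to0$ in probability. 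Hence, granted $M_t/t\to0$ a.s., the displayed equality forces the a.s.\ limit of $\tfrac1t\int_0^t\langle e,X(s)\rangle^-\,\D s-\Tilde\varrho$ to vanish; in particular $\int_\Rd\langle e,x\rangle^-\,\overline\uppi(\D x)$ is finite and equals $\Tilde\varrho$, which is \cref{EC5.1A}.

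It remains to argue $M_t/t\to0$ a.s., and this is where the work lies. The purely discontinuous part of $M_t$ equals $\langle\tw,L(t)\rangle$ minus a term linear in $t$, so it converges after dividing by $t$ by the strong law of large numbers for the one-dimensional L\'evy process $\langle\tw,L(\cdot)\rangle$, whose mean is finite exactly because $1\in\Theta_c$. The continuous part $M_t^c$ has $[M^c]_t=\int_0^t\langle\tw,a(X(s))\tw\rangle\,\D s$, and controlling this is the main obstacle. When $\upsigma$ is bounded — which covers every queueing model of \cref{S4} as well as cases \ttup{iii} and \ttup{iv} of \cref{T3.1} — one has $[M^c]_t=\order(t)$ and the martingale SLLN yields $M^c_t/t\to0$ immediately. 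In general, under \cref{ET3.2A} one first needs the a priori bound $\int_0^t\abs{X(s)}\,\D s=\sorder(t^2)$ a.s., which should follow from a Foster--Lyapunov moment estimate in the spirit of \cref{R5.1} (which remains valid under the weaker hypothesis \cref{ET3.2A}), together with the observation that a large value of $\abs{X}$ can only be produced by an excursion of $\langle e,X(\cdot)\rangle^+$ to some level $R$, and that since $\varGamma v=0$ the drift in the direction of the queue is essentially constant, so such an excursion returns to moderate values in time $\order(R)$. An alternative route that avoids integrating against $\overline\uppi$ on unbounded sets runs the same computation over successive excursions of $\process{X}$ away from a fixed ball and applies Kac's formula. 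I expect the control of $[M^c]_t$ in the borderline situation $\theta_c=1$ with a genuinely unbounded $\upsigma$ to be the most delicate point, where a refined martingale SLLN (or the Foster--Lyapunov/truncation route) is needed.
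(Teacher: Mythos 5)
Your generator identity $\Ag^X\Hat h(x)=-\Tilde\varrho+\langle e,x\rangle^-$ is correct and is exactly \cref{PL5.7C} with $\kappa=1$; but your route from it to \cref{EC5.1A} is pathwise (Dynkin decomposition along the stationary process, Birkhoff, martingale SLLN), whereas the paper stays entirely at the level of the invariant measure: it applies the identity \cref{PL5.7D} (with $\kappa=1$ and $F$ replaced by the bounded truncations $\Breve\chi_r\circ\Hat h$), for which $\overline\uppi(\Ag^XF_{1,R})=0$ holds because the test functions are bounded with bounded derivatives, and then lets $r\to\infty$ by monotone convergence. The crucial point is that in that formulation the only contribution of the diffusion coefficient is through $\widetilde{F}_{1,R}$, which contains the factor $\chi_R''$; since $\chi_R''$ can be chosen $O(1/(t+1))$ on its support, \cref{PL5.7E} makes this term uniformly bounded and vanishing in the limit, with \emph{no} moment assumption on $\overline\uppi$. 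Your approach has no such self-cancellation, and this is where it breaks.

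Concretely, the gap is the control of $M^c_t/t$ when $a(x)$ genuinely grows like $\abs{x}$, and your proposed repair does not work. You want $[M^c]_t=\int_0^t\langle\tw,a(X(s))\tw\rangle\,\D{s}\lesssim t+\int_0^t\abs{X(s)}\,\D{s}=\sorder(t^2)$, to be obtained ``from a Foster--Lyapunov moment estimate in the spirit of \cref{R5.1}.'' But \cref{R5.1} only yields $\overline\uppi\in\cP_\theta(\Rd)$ \emph{conditionally} on $\int(\langle e,x\rangle^+)^\theta\,\overline\uppi(\D{x})<\infty$, and under $\varGamma v=0$ this fails already for $\theta=1$: by \cref{T3.4}\,(b) (equivalently \cref{L5.7}) the invariant measure has a finite first absolute moment only if $2\in\Theta_c$, and \cref{PT3.4A} shows its tail index is $\theta_c-1$, which in case (i) of \cref{T3.1} (compound Poisson with $\theta_c$ barely above $1$, plus a diffusion with $\norm{a(x)}\asymp\abs{x}$, which is admissible under \cref{ET3.2A}) can be arbitrarily close to $0$. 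For such heavy tails $\tfrac1t\int_0^t\abs{X(s)}\,\D{s}\to\infty$ a.s.\ and there is no reason for $\int_0^t\abs{X(s)}\,\D{s}=\sorder(t^2)$; for comparable i.i.d.\ tails the partial sums grow like $t^{\nicefrac{1}{(\theta_c-1)}}\gg t^2$. The same lack of moments means $\Exp^{\overline\uppi}[[M^c]_t]=t\,\overline\uppi(\langle\tw,a(\cdot)\tw\rangle)$ may be infinite, so $M^c$ need not even be a square-integrable martingale under the stationary law, and the Lepingle-type SLLN $M^c_t=\sorder([M^c]_t^{\nicefrac12+\epsilon})$ does not give $M^c_t/t\to0$. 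Your argument is complete in the bounded-$\upsigma$ cases (which do cover the queueing models), and the jump part is handled correctly via the L\'evy SLLN using $1\in\Theta_c$; but as a proof of the corollary under its stated hypotheses it is not closed. The fix is essentially to abandon the pathwise formulation and truncate the test function as the paper does.
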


\begin{proof}
We scale $\Breve\chi(t)$ by defining $\Breve\chi_r(t)\df \Breve\chi(r+t)-r$
for $r\in\RR$.
Then, we observe that \cref{PL5.7D} holds with $\kappa=1$ and
$F^\kappa(x)$ replaced by $\Breve\chi_r\circ\Hat{h}(x)$ for any $r\in\RR$. 
Note that when $\kappa=1$,
all the integrands in \cref{PL5.7D} are uniformly bounded in $r\in(0,\infty)$,
and we have $\mathcal{A}_\circ^X\Breve\chi_r\circ\Hat{h}(x)\ge0$ due to convexity.
Moreover,
$\lim_{r\to\infty}\overline{\uppi}(\mathcal{A}_\circ^X\Breve\chi_r\circ\Hat{h})=0$,
$\lim_{r\to\infty}\overline\uppi(\Breve\chi_r\circ\Hat{h})=1$,
and $\lim_{r\to\infty}\overline\uppi(\Breve\chi_r(\cdot)
\langle e,\cdot\,\rangle^-)=\overline\uppi(\langle e,\cdot\,\rangle^-)$.
Thus, taking limits as $r\to\infty$, we obtain \cref{EC5.1A}. 
\end{proof}

%%%%%%%%%%%%%%%%%%%%%%%%%%%%%%%%%%%%%%%%%%%%%%%%%%%%%%%%%%%%%%%%%%%%%%%%%%%%%%%%
\begin{remark}
\Cref{C5.1} has important implications for queueing systems in
the Halfin--Whitt regime.
Suppose that $\varGamma=0$, so that jobs do not abandon the queues.
Let $v\in\Usm$ be a scheduling control under which the process
is ergodic.
Then \cref{EC5.1A} asserts that the mean idleness in the
servers equals the spare capacity.
Note that there is a certain stiffness implied by this.
The mean idleness does not depend on the particular Markov control.
\end{remark}

%%%%%%%%%%%%%%%%%%%%%%%%%%%%%%%%%%%%%%%%%%%%%%%%%%%%%%%%%%%%%%%%%%%%%%%%%%%%%%%%
\begin{proof}[Proof of \cref{T3.4}] Part (b) follows from
\cref{T3.2}\,(i), \cref{L5.7}, \cref{R5.1}, and \cref{C5.1}.

We continue with part (a).
The upper bounds follow from \cite[Theorem~3.2]{Douc-Fort-Guilin-2009}.
We next exhibit a lower bound for the rate of convergence.
Consider first the case when the SDE is driven by an $\alpha$-stable process.
We apply \cite[Theorem~5.1]{Hairer-16}, and use the same notation to help the reader.
We choose $G(x) = F^{\alpha-\epsilon}(x)$, for arbitrary $\epsilon\in(0,\alpha-1)$.
We have shown in \cref{L5.7} that
$\overline\uppi(F^{\alpha-\epsilon})=\infty$.
Further, from the Lyapunov equation in \cref{ET3.2C} by It\^o's formula,
and setting $\theta=\alpha-\epsilon$, we have
\begin{equation*}
\Exp^{x}\bigl[V_{Q,\alpha-\epsilon}\bigl(X(t)\bigr)\bigr]
- V_{Q,\alpha-\epsilon}(x) \,\le\, c_0(\alpha-\epsilon) t\,,\qquad x\in\Rd\,.
\end{equation*} 
Dominating $F^{\alpha-\epsilon}(x)$ with $V_{Q,\alpha-\epsilon}(x)$ and write
\begin{equation*}
\Exp^{x}\bigl[F^{\alpha-\epsilon}\bigl(X(t)\bigr)\bigr]\,\le\,
C_1\bigl(c_0(\alpha-\epsilon) t+ V_{Q,\alpha-\epsilon}(x)\bigr)\;=:\;g(x,t)
\end{equation*}
for some positive constant $C_1$.
Next, we compute a lower bound for
$\overline\uppi\bigl(\{x\colon G(x)\ge t\}\bigr)$.
From \cref{PL5.7I}, with $\kappa=1$, we have
\begin{equation*}
\Tilde\varrho\,\overline\uppi\bigl(\chi_t'(F)h_1\bigr)\,=\,
\overline\uppi\bigl(\Ag_\circ^X F_{1,t}\bigr)
+ \overline\uppi\bigl(\chi_t'(F)\Tilde{h}_1\bigr)\,.
\end{equation*}
Subtracting this equation from \cref{PL5.7J}, we obtain
\begin{equation*}
\Tilde\varrho\, \overline\uppi(h_1-\chi_t'(F)h_1)
\,=\, \overline\uppi\bigl(\Ag_\circ^X (F-F_{1,t})\bigr)
+ \overline\uppi\bigl(\Tilde{h}_1-\chi_t'(F)\Tilde{h}_1\bigr)\,.
\end{equation*}
Note that all the terms are nonnegative.
Moreover, $\Ag_\circ^X (F-F_{1,t})(x)$ is nonnegative by convexity,
and thus
\begin{align*}
\overline\uppi\bigl(\Ag_\circ^X (F-F_{1,t})\bigr)
&\,\ge\,\inf_{x\in\sB}\,\bigl(\Ag_\circ^X (F-F_{1,t})(x)\bigr)
\,\overline\uppi(\sB)\\
&\,\ge\,\Ag_\circ^X (F-F_{1,t})(0)\,\overline\uppi(\sB)\,.
\end{align*}
Recalling the definitions of the functions in \cref{N5.1}, 
it is then evident that
\begin{align}\label{PT3.4A}
\overline\uppi\bigl(\{x\colon \langle\tw,x\rangle > t\}\bigr)
&\,\ge\,\overline\uppi(h_1-\chi_t'(F)h_1)\\
&\,\ge\,\Tilde\varrho^{-1}\,\overline\uppi(\sB)
\Ag_\circ^X (F-F_{1,t})(0)\nonumber\\
&\,\ge\, C_2 t^{1-\alpha}\,.\nonumber
\end{align}
Therefore, by \cref{PT3.4A}, we have
\begin{align*}
\overline\uppi\bigl(\{x\colon G(x)\ge t\}\bigr)&\,=\,
\overline\uppi\bigl(\{x\colon (\langle\tw,x\rangle)^{\alpha-\epsilon} > t\}\bigr)\\
&\,=\, \overline\uppi\bigl(\{x\colon \langle\tw,x\rangle
> t^{\frac{1}{\alpha-\epsilon}}\}\bigr)  \\
&\,\ge\, C_2 t^{\frac{1-\alpha}{\alpha-\epsilon}}\;=:\;f(t)\,.
\end{align*}
We solve $y f(y) = 2g(x,t)$ for $y=y(t)$, to obtain
$y=\bigl(C_2^{-1}2g(x,t)\bigr)^{\nicefrac{\alpha-\epsilon}{1-\epsilon}}$, and
\begin{equation*}
f(y)\,=\, C_2\bigl(C_2^{-1}2g(x,t)\bigr)^{\frac{1-\alpha}{1-\epsilon}}\,=\,
C_3 \bigl(c_0(\alpha-\epsilon) t
+ V_{Q,\alpha-\epsilon}(x)\bigr)^{\frac{1-\alpha}{1-\epsilon}}\,,
\end{equation*}
with
\begin{equation*}
C_3\,\df\,  \bigl(2C_1\bigr)^{\frac{1-\alpha}{1-\epsilon}}
C_2^{\frac{\alpha-\epsilon}{1-\epsilon}}\,.
\end{equation*}
Therefore, by \cite[Theorem~5.1]{Hairer-16}, and since $\epsilon$ is arbitrary, we have
\begin{align}\label{PT3.4B}
\bnorm{\delta_{x} P^X_t(\D{x}) - \overline\uppi(\D{x})}_\tv
&\,\ge\, f(y) -\frac{g(x,t)}{y}\\
&\,=\,\frac{C_3}{2} \bigl(c_0(\alpha-\epsilon) t
+ V_{Q,\alpha-\epsilon}(x)\bigr)^{\frac{1-\alpha}{1-\epsilon}}\nonumber
\end{align}
for all $t\ge0$ and $\epsilon\in(0,\alpha-1)$.

We next derive a suitable estimate for the constant $c_0(\alpha-\epsilon)$
as a function of $\epsilon$, for a fixed choice of $Q$. 
First, evaluating at $x=0$, it follows from \cref{ET3.2C} that
$c_0(\alpha-\epsilon)\ge \fI_\alpha[V_{Q,\alpha-\epsilon}](0)$.
Thus, for some positive constants $\kappa_0$   and $\kappa_1$ independent
of $\epsilon$, we have
\begin{equation*}
c_0(\alpha-\epsilon)\,\ge\,
\fI_\alpha[V_{Q,\alpha-\epsilon}](0)\,\ge\,
\int_{\sB^c} \bigl(V_{Q,\alpha-\epsilon}(y)-V_{Q,\alpha-\epsilon}(0)\bigr)\,
\frac{\D{y}}{\abs{y}^{d+\alpha}}
\,\ge\, \frac{\kappa_0}{\epsilon} +\kappa_1\,.
\end{equation*}
On the other hand, as shown in the proof of \cref{T3.2}, $c_0(\theta)$
can be selected as the sum of the supremum of $\fJ_\nu[V_{Q,\theta}](x)$ on $\Rd$,
and a constant that does not depend on $\theta$.
An upper bound for $\fI_\alpha[V_{Q,\alpha-\epsilon}](x)$ can be obtained
from the proof of \cref{T3.2} by using \cref{PL5.1Aa,PL5.1C,PL5.1D},
together with the fact that the radius $\Bar{r}$ defined in the proof
is bounded over the range of $\epsilon$.
However, we follow a more direct approach.
Note that $\nabla V_{Q,\theta}(x)$ is H\"older continuous with exponent
$\theta-1$ on $\Rd$,
i.e., it satisfies
$\abs{\nabla V_{Q,\theta}(x)-\nabla V_{Q,\theta}(y)}\le \kappa\abs{x-y}^{\theta-1}$
for a positive constant $\kappa$ which is independent of $\theta\in(1,2]$.
Using this property, and the fact that the second derivatives
of $V_{Q,\theta}(x)$ are uniformly bounded for $\theta\in(1,2]$,
decomposing the integral as in \cref{PL5.1B}, we obtain
\begin{align*}
\babs{\fI_\alpha  [V_{Q,\alpha-\epsilon}](x)}
&\,\le\, \tilde\kappa_0 +
\babss{\int_{\sB^c}\int_0^1\bigl\langle y,\nabla V_{Q,\alpha-\epsilon}(x+ty)
-\nabla V_{Q,\alpha-\epsilon}(x)\bigr\rangle\, \D{t}\,
\frac{\D{y}}{\abs{y}^{d+\alpha}}}\\
&\,\le\, \tilde\kappa_0 + \frac{\tilde\kappa_1}{\epsilon}
\end{align*}
for some positive constants $\tilde\kappa_0$ and $\tilde{\kappa}_1$
which do not depend on $\epsilon$.
Using this estimate in \cref{PT3.4B}, we obtain the lower bound in \cref{ET3.4A}.

For a L\'evy  process in (ii), following \cref{PT3.4A},
we obtain
\begin{equation}\label{PT3.4E}
\overline\uppi\bigl(\{x\colon \langle\tw,x\rangle > t\}\bigr)
\,\ge\, C_2 \int_{\{\langle\tw,x\rangle\ge t\}} \abs{x}\,\nu(\D{x})\,.
\end{equation}
Since \cref{PT3.4E} does not give rise to an explicit estimate
as in \cref{PT3.4A}, we apply \cite[Corollary~5.2]{Hairer-16}.
For $\epsilon\in(0,\nicefrac{1}{3})$, we define
\begin{equation*}
W(x)\,\df\, F^{\theta_c-1+\epsilon}(x)\,,
\qquad \widehat{F}(t) \,\df\, t^{\frac{\theta_c-\epsilon}{\theta_c-1+\epsilon}}\,,
\qquad h(t) \,\df\, t^{-1-\frac{\epsilon}{\theta_c-1+\epsilon}}\,,
\end{equation*}
and
\begin{equation*}
g(x,t)\,\df\,
C_1\bigl(c_0(\theta_c-\epsilon) t+ V_{Q,\theta_c-\epsilon}(x)\bigr)\,.
\end{equation*}
Then the hypotheses in \cite[Corollary~5.2]{Hairer-16} are satisfied.
By the preceding definitions, we have
$\widehat{F}(t)h(t) = t^{\nicefrac{1-3\epsilon}{\theta_c-1+\epsilon}}$.
Thus
\begin{equation*}
(\widehat{F}\cdot h)^{-1}(y)
\,=\, y^{\frac{\theta_c-1+\epsilon}{1-3\epsilon}}\,,
\qquad \text{and}\qquad  h\bigl((\widehat{F}\cdot h)^{-1}(y)\bigr) \,=\,
y^{-\frac{\theta_c-1+2\epsilon}{1-3\epsilon}}\,.
\end{equation*}
Therefore, by \cite[Corollary~5.2]{Hairer-16}, for every $x\in\Rd$, there exists
a sequence $\{t_n\}_{n\in\NN}\subset[0,\infty)$, $t_n\to\infty$,
such that
\begin{equation*}
\norm{\delta_{x} P^X_{t_n}(\D{x})
-\overline\uppi(\D{x})}_\tv
\;\ge\; g(x,t_n)^{-\frac{\theta_c-1+2\epsilon}{1-3\epsilon}}\,,
\end{equation*}
which establishes \cref{ET3.4B}.
This completes the proof.
\end{proof}

%%%%%%%%%%%%%%%%%%%%%%%%%%%%%%%%%%%%%%%%%%%%%%%%%%%%%%%%%%%%%%%%%%%%%%%%%%%%%%%%
\subsection{Proof of \texorpdfstring{\cref{T3.5}}{}}

We start with the following lemma.

%%%%%%%%%%%%%%%%%%%%%%%%%%%%%%%%%%%%%%%%%%%%%%%%%%%%%%%%%%%%%%%%%%%%%%%%%%%%%%%%
\begin{lem}\label{L5.8}
Under the assumptions \ttup{i} or \ttup{ii} of \cref{T3.5},
there exists $Q\in\cM_+$ such that
\begin{equation} \label{EL5.8A}
MQ+QM\,\succ\,0\,,
\ \text{and}\ \  (M- ev'(M-\varGamma))Q + Q(M- (M-\varGamma)v e')\,\succ\,0\,.
\end{equation}
\end{lem}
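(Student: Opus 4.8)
Writing the matrix $\widetilde{M}\df M-(M-\varGamma)ve'=M(\Id-ve')+\varGamma ve'$ (so that $\widetilde{M}v=\varGamma v$), the two inequalities in \cref{EL5.8A} are exactly the Lyapunov conditions needed to make $V_{Q,2}(x)=\langle x,Qx\rangle$ a supersolution for the piecewise-linear drift $b$ in the two regions $\{e'x\le0\}$ and $\{e'x>0\}$: one has $\langle\nabla V_{Q,2}(x),b(x)\rangle=-\langle x,(QM+M'Q)x\rangle+\text{(lower order)}$ on $\{e'x\le0\}$, and $\langle\nabla V_{Q,2}(x),b(x)\rangle=-\langle x,(Q\widetilde{M}+\widetilde{M}'Q)x\rangle+\text{(lower order)}$ on $\{e'x>0\}$. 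The plan is to take $Q$ to be the matrix produced by \cref{L5.5}. Then the first inequality in \cref{EL5.8A} is $QM+M'Q\succ0$, which is immediate (and in case \ttup{ii}, where $M=M'$, it literally reads $MQ+QM\succ0$), so everything reduces to proving $Q\widetilde{M}+\widetilde{M}'Q\succ0$.

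For the second inequality I would start from the identity $Q\widetilde{M}+\widetilde{M}'Q=A_1+\bigl[(Q\varGamma v)e'+e(Q\varGamma v)'\bigr]$, where $A_1\df QM(\Id-ve')+(\Id-ev')M'Q$. By \cref{EL5.5A} we have $A_1\succeq0$; moreover $A_1$ is symmetric, $A_1v=0$ (because $(\Id-ve')v=0$ and, solving as in the proof of \cref{L5.5}, $M'Qv=\langle v,M'Qv\rangle e$), and by the transverse coercivity estimate behind \cref{EL5.6A} (cf.\ (5.19) of \cite{DG13}), $\langle x,A_1x\rangle\ge2\Hat\kappa_1\abs{z}^2$ with $z\df x-\langle x,v\rangle v/\abs{v}^2$, so $\ker A_1=\operatorname{span}(v)$. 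The next step is to show $\langle v,Q\varGamma v\rangle>0$: from $M'Qv=\langle v,M'Qv\rangle e$ one gets $Qv=\langle v,QMv\rangle\,(M^{-1})'e$ with $\langle v,QMv\rangle=\tfrac12\langle v,(QM+M'Q)v\rangle>0$, hence $\langle v,Q\varGamma v\rangle=\langle v,QMv\rangle\,e'M^{-1}\varGamma v$, which is positive since $M^{-1}$ is a nonnegative matrix and $\varGamma v\gneqq0$ in case \ttup{i} (resp.\ $\varGamma v\ne0$ in case \ttup{ii}). In particular $\langle x,(Q\widetilde{M}+\widetilde{M}'Q)x\rangle\big|_{x=v}=2\langle v,Q\varGamma v\rangle>0$.

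It then remains to upgrade ``$\succeq0$, with the correct sign on the one‑dimensional degenerate direction'' to ``$\succ0$''. Decomposing $x=\eta v+z$ with $z\perp v$, one has $\langle x,A_1x\rangle=\langle z,A_1z\rangle\ge2\Hat\kappa_1\abs{z}^2$ and $\langle x,[(Q\varGamma v)e'+e(Q\varGamma v)']x\rangle=2(e'x)\bigl(\eta\langle v,Q\varGamma v\rangle+\langle Q\varGamma v,z\rangle\bigr)$, and a compactness argument on the unit sphere — a neighborhood of $\pm v$ on which the $\varGamma v$‑term is close to its positive value $2\langle v,Q\varGamma v\rangle$, and its complement on which $\langle x,A_1x\rangle\gtrsim\abs{z}^2$ stays bounded away from $0$ — yields positive definiteness, \emph{provided} the transverse coercivity $\Hat\kappa_1$ is large enough relative to the indefinite rank‑two correction $(Q\varGamma v)e'+e(Q\varGamma v)'$. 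This is the point where the two hypotheses of \cref{T3.5} genuinely enter. In case \ttup{i}, with $p\df(M-\varGamma)v\ge0$, I would exploit that $\widetilde{M}$ — indeed every interpolant $M_s\df M-s\,pe'$, $s\in[0,1]$ — is again a nonsingular M‑matrix: $M_s=M(\Id-s(M^{-1}p)e')$ with $M^{-1}p\ge0$ and $\det(\Id-s(M^{-1}p)e')=(1-s)+s\,e'M^{-1}\varGamma v>0$, so by Sherman--Morrison $M_s^{-1}=(\Id-s(M^{-1}p)e')^{-1}M^{-1}\ge0$; this M‑matrix structure along the whole segment lets one select $Q$ with the required coercivity. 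In case \ttup{ii} I would instead take $Q$ diagonal, so that $Q\widetilde{M}+\widetilde{M}'Q=2QM-(Qu)e'-e(Qu)'$ with $u=(M-\varGamma)v$ (using $QM=MQ$), and choose the diagonal entries of $Q$ so that, by Cauchy--Schwarz, $2QM$ dominates the rank‑two correction on $\operatorname{span}\{Qu,e\}$; here $MQ+QM=2MQ\succ0$ is automatic.

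The main obstacle is precisely this common-Lyapunov-certificate requirement: a \emph{single} $Q$ must simultaneously Lyapunov-stabilize $M$ (on $\{e'x\le0\}$) and $\widetilde{M}$ (on $\{e'x>0\}$), and since the ``gluing'' matrix $A_1$ is degenerate exactly along $v$, the only mechanism producing strict positivity of $Q\widetilde{M}+\widetilde{M}'Q$ at $v$ is the scalar $\langle v,Q\varGamma v\rangle$. Controlling the transverse coercivity of $A_1$ against the size of that rank-two correction so that the positive-definiteness estimate closes globally on the sphere — which is \emph{not} a consequence of \cref{L5.5} alone, and truly uses either $Mv\ge\varGamma v\gneqq0$ or the diagonality of $M$ (barring the special case $\varGamma v=\gamma v$ noted in \cref{R3.5}) — is the delicate step I expect to require the most care.
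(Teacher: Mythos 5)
Your decomposition $Q\widetilde{M}+\widetilde{M}'Q=A_1+\bigl[(Q\varGamma v)e'+e(Q\varGamma v)'\bigr]$ and your diagnosis are accurate, but the argument does not close: as you yourself note, the compactness argument on the sphere only works \emph{provided} the transverse coercivity of $A_1$ dominates the indefinite rank--two correction, and this is never established. Away from $\pm v$ the correction term can be negative of order one while $A_1$ is only bounded below by $2\Hat\kappa_1\abs{z}^2$ with $\Hat\kappa_1$ fixed by \cref{L5.5}, so pointwise positivity on the sphere is exactly the open question, not something compactness supplies. Neither of your two case-specific sketches fills this in. For case \ttup{i}, the Sherman--Morrison computation shows each interpolant $M_s$ is a nonsingular M-matrix but never produces the single $Q$; the paper's actual route bypasses the rank-two correction entirely by writing $M-(M-\varGamma)ve'=M(\Id-\Bar{v}e')$ with $\Bar{v}\df M^{-1}(Mv-\varGamma v)\ge0$ and $\langle e,\Bar{v}\rangle=1-\langle e,M^{-1}\varGamma v\rangle<1$, and then invoking the \emph{strict} inequality \cref{EL5.5B} with the normalized vector $\Bar{v}/\langle e,\Bar{v}\rangle\in\varDelta$ and $t=\langle e,\Bar{v}\rangle<1$ — a one-line reduction to \cref{L5.5}.

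For case \ttup{ii} the gap is more serious. A necessary condition for \emph{any} $Q\succ0$ (diagonal or not) to satisfy $QA_1+A_1'Q\succ0$ with $A_1=M(\mathbb{I}+\Hat{v}e')$, $\Hat{v}=M^{-1}\varGamma v-v$, is that $A_1$ be Hurwitz; under the hypothesis $\varGamma v\ne0$ alone there is no sign condition on $(M-\varGamma)v$, and stability of $A_1$ genuinely requires proof. Your proposal never checks it, and the ``choose the diagonal entries of $Q$ so that $2QM$ dominates the rank-two correction'' step cannot work by scaling alone (scaling $Q$ scales both sides) and is unsubstantiated as a tuning argument. The paper devotes essentially all of its case \ttup{ii} argument to verifying precisely the two spectral conditions — that the spectrum of $A_1$ lies in the open right half-plane (via a homotopy in $t\mapsto M(\Id+(t\tilde v-v)e')$ combined with \cref{L5.5}), and that $A_2=M^2(\mathbb{I}+\Hat{v}e')$ has no negative real eigenvalues — which are the hypotheses of the King--Nathanson theorem on common quadratic Lyapunov functions for rank-one differences; that theorem is what actually delivers $Q$. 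Without those verifications, or a substitute for them, your case \ttup{ii} remains unproved.
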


\begin{proof}
First consider \ttup{i} of \cref{T3.5}.
Since $M$ is a nonsingular M-matrix, $\Bar{v} \df M^{-1}(M v - \varGamma v)$
is a nonnegative vector which satisfies $e'\Bar v<1$.
The result then follows by \cref{EL5.5B}.

Next suppose that $M$ is a diagonal matrix and $\varGamma v\ne0$.
If $d=1$, the assertion is trivially satisfied.
Assume $d\ge2$ and define $\Tilde v\df M^{-1}\varGamma v$,
$\Hat v\df\Tilde v-v$ and $A_k\df M^k(\mathbb{I}+ \Hat ve')$ for $k=1,2$.
By assumption $\Tilde v\ne0$. Further, observe that $M-A_1=(\varGamma-M)ve'$
has rank one.
Thus, according to \cite[Theorem~1]{King},
in order to assert the existence of a positive definite matrix $Q$ satisfying
\cref{EL5.8A}, it suffices to show that the spectrum of $A_1$ lies in
the open right half of the complex plane and that $A_2$ does not have real
negative eigenvalues.

We first show that $A_2$ does not have real negative eigenvalues.
Suppose that $-\lambda$, with $\lambda\ge0$, is such an eigenvalue.
Then
\begin{align*}
0 \,=\,\det \bigl((\lambda\mathbb{I}+M^2)^{-1}\bigr)
\det \bigl(\lambda\mathbb{I}+A_2\bigr)
&\,=\, \det \bigl(\Id + (\lambda\mathbb{I}+M^2)^{-1}M^2\Hat{v}e'\bigr)\\
&\,=\, 1 + e' (\lambda\mathbb{I}+M^2)^{-1}M^2\Hat{v}\,,
\end{align*}
which implies that $e' (\lambda\mathbb{I}+M^2)^{-1}M^2\Hat{v}=-1$.
But 
\begin{align*}
e' (\lambda\mathbb{I}+M^2)^{-1}M^2\Hat{v}&\,\ge\,
e' (\lambda\mathbb{I}+M^2)^{-1}M^2\Tilde{v} -
e' (\lambda\mathbb{I}+M^2)^{-1}M^2 v\\
&\,>\, \Bigl(\min_i\,\tfrac{m_i^2}{\lambda+m_i^2}\Bigr) e'\Tilde{v}-
\Bigl(\max_i\,\tfrac{m_i^2}{\lambda+m_i^2}\Bigr)\,>\,-1\,,
\end{align*}
which is a contradiction. 

Next, we show that the spectrum of $A_1$ lies in the open right half
of the complex plane. Suppose that $\imath\lambda$, $\lambda\in\RR$,
is an eigenvalue of
$A_1$.
Then, since
\begin{equation*}
0 \,=\, \det\bigl( A_1-\imath\lambda\Id\bigr)
\,=\, \det\bigl( M-\imath\lambda \Id\bigr)
\det \bigl(\Id + (M-\imath\lambda\mathbb{I})^{-1}M\Hat{v}e'\bigr)\,,
\end{equation*}
 we have that
\begin{align*}
0 \,=\, \det \bigl(\Id + (M-\imath\lambda\mathbb{I})^{-1}M\Hat{v}e'\bigr)
&\,=\,1 + e' (M-\imath\lambda\mathbb{I})^{-1}M\Hat{v}\\
&\,=\, 1+ \sum_{k=1}^d \frac{ m_k^2 \Hat{v}_k}{m^2_k + \lambda^2}
+ \imath \sum_{k=1}^d \frac{ \lambda m_k\Hat{v}_k}{m^2_k + \lambda^2}\,.
\end{align*}
However, it holds that
\begin{equation*}
1+ \sum_{k=1}^d \frac{ m_k^2 \Hat{v}_k}{m^2_k + \lambda^2}\,>\,
1- \sum_{k=1}^d \frac{ m_k^2 v_k}{m^2_k + \lambda^2}\,\ge\,1-e'v\,=\,0\,.
\end{equation*}
Thus we reach a contradiction.
This shows that the matrix $A_1(t)\df M(\Id + (t\Tilde{v}-v)e')$ cannot
have any imaginary eigenvalues for any $t>0$, nor does it have
a zero eigenvalue.
Moreover, for all small enough $t>0$ the spectrum of
$A_1(t)$ is in the open right half of the complex plane by \cref{L5.5}.
Hence, by the continuity of the spectrum of $A_1(t)$ as a function of $t$, it follows
that the eigenvalues of $A_1(t)$ are in the open right
half complex plane for all $t>0$, which concludes the proof.
\end{proof}

\begin{proof}[Proof of \cref{T3.5}]
Consider the function $V_{Q,\theta}(x)$, $\theta >0$, with $Q$ as in \cref{EL5.8A}. 
Let $\Breve{b}(x) \df b(x) + \vartheta$, $x \in \RR^d$. 
Thus $V_{Q,\theta}(x)$ is an inf-compact function, and satisfies
\begin{equation}\label{PT3.5A}
\langle \Breve{b}(x),\nabla V_{Q,\theta}(x)\rangle
\,\le\,\kappa'_0-\kappa'_1\,V_{Q,\theta}(x)\,,\qquad x\in\Rd\,,
\end{equation}
for some constants $\kappa'_0>0$ and $\kappa'_1>0$ by \cref{L5.8}.
Then by \cref{L5.1,L5.3}, and
mimicking the proof of \cref{T3.2},
we obtain \cref{ET3.5B}, while \cref{ET3.5C}
follows from \cite[Theorem~6.1]{Meyn-Tweedie-AdvAP-III-1993}.

 We now turn to the last statement of the theorem.
It is well known that \cref{ET3.5B} implies that
$\int_\Rd V_{Q,\theta}(x)\,\overline\uppi(\D{x})\le\nicefrac{\Bar c_0}{\Bar c_1}$,
see \cite[Theorem 4.3]{Meyn-Tweedie-AdvAP-III-1993}. 
Thus $\int_\Rd \abs{x}^\theta\,\overline\uppi(\D{x})<\infty$.
It remains to show that if $q>0$ and $q\notin\Theta_c$, then
$\int_\Rd \abs{x}^q\,\overline\uppi(\D{x})=\infty$.
Recall the definition in \cref{PL5.7A}. We write
\begin{equation}\label{PT3.5B}
\Ag^XV_{Q,\theta}(x) \;=\; \bigl(\Ag_\circ^X V_{Q,\theta}(x)\bigr)^+
- \bigl(\Ag_\circ^X V_{Q,\theta}(x)\bigr)^-
+ \bigl\langle \Breve{b}(x),\nabla V_{Q,\theta}(x)\bigr\rangle\,.
\end{equation}
It is standard to show by using \cref{PT3.5A,PT3.5B}, together with
\cref{L5.1}, which holds for all $\theta\in\Theta_c$, and
the arguments in the proof of \cite[Lemma~3.7.2]{book}, that
\begin{equation}\label{PT3.5C}
- \int_\Rd \langle \Breve{b}(x),\nabla V_{Q,\theta}(x)\rangle\,\overline\uppi(\D{x})
+\int_\Rd \bigl(\Ag_\circ^X V_{Q,\theta}(x)\bigr)^-\,\overline\uppi(\D{x})
\,=\, \int_\Rd \bigl(\Ag_\circ^X V_{Q,\theta}(x)\bigr)^+\,\overline\uppi(\D{x})
\end{equation}
for all $x\in\Rd$ and $\theta\in\Theta_c$.
Note that there exist positive constants $C_0$ and $C_1$ such that
\begin{equation}\label{PT3.5D}
-C_0 - C_1^{-1} \bigl\langle \Breve{b}(x), \nabla V_{Q,\theta}(x)\bigr\rangle \,\le\,
\abs{x}^\theta \,\le\, C_0 - C_1 \bigl\langle \Breve{b}(x),
\nabla V_{Q,\theta}(x)\bigr\rangle
\end{equation}
for all $x\in\Rd$ and $\theta\in\Theta_c$.

First we consider the case $\Theta_c=(0,\theta_c)$.
A standard calculation shows that
\begin{equation}\label{PT3.5E}
\inf_{x\in\sB}\; \Ag_\circ^X V_{Q,\theta}(x)
\,\xrightarrow[\theta\nearrow\theta_c]{}\,\infty\,.
\end{equation}
Thus, combining \cref{PT3.5C,PT3.5D,PT3.5E}, and since
$\bigl(\Ag_\circ^X V_{Q,\theta}(x)\bigr)^-\le \kappa(1+\abs{x}^\theta)$,
we obtain $\int_\Rd \abs{x}^\theta\,\overline\uppi(\D{x})\to\infty$
as $\theta\nearrow\theta_c$.
This implies the result by \cref{R5.1}.

It remains to consider the case $\Theta_c=(0,\theta_c]$.
Suppose that $\int_\Rd \abs{x}^\theta\,\overline\uppi(\D{x})<\infty$ for
some $\theta>\theta_c$.
Recall the function $\chi_R(t)$ from \cref{N5.1}, and let
$V_R(x)\df \chi_R\circ V_{Q,\theta}(x)$.
Since $V_R(x)-R-2$ is compactly supported,
we have $\overline\uppi(\Ag^XV_R)=0$.
Thus,
\begin{equation*}
\Ag^X V_R(x) = \Ag_\circ^X V_R(x) + 
\chi_R'\bigl(V_{Q,\theta}(x)\bigr)
\bigl\langle \breve{b}(x), \nabla V_{Q,\theta}(x)\bigr\rangle\,,
\end{equation*}
and integrating this with respect to $\overline\uppi(\D{x})$, and using
\cref{PT3.5D}, we obtain
\begin{equation}\label{PT3.5F}
C_1\int_{\Rd}
\chi_R'\bigl(V_{Q,\theta}(x)\bigr)\,\bigl(C_0+\abs{x}^\theta\bigr)\,\overline\uppi(\D{x})
+ \int_{\Rd} \bigl[\Ag_\circ^X V_R\bigr]^-(x)\,\overline\uppi(\D{x})
\,\ge\, \int_{\Rd} \bigl[\Ag_\circ^X V_R\bigr]^+(x)\,\overline\uppi(\D{x})\,.
\end{equation}
It is important to note that since $a(x)$ satisfies \cref{ET3.5A},
the estimate in \cref{PL5.7E} here takes the form
$\babs{\widetilde{F}_{\kappa,R}(x)}\,\le\, \Tilde{C}\bigl(1 + F^{\theta}(x)\bigr)$,
and thus $\overline\uppi(\widetilde{F}_{\kappa,R})\to0$ as
$R\to\infty$, by hypothesis.
Since have a similar bound for $\bigl(\mathfrak{J}_{\nu}[V_R]\bigr)^-$,
and $\bigl(\mathfrak{J}_{1,\nu}[V_R]\bigr)^-$,
the left hand side of \cref{PT3.5F} is bounded uniformly in $R$, whereas the right
hand side diverges as $R\to\infty$, since $\theta>\theta_c$.
Thus we reach a contradiction.
This completes the proof.
\end{proof}

%%%%%%%%%%%%%%%%%%%%%%%%%%%%%%%%%%%%%%%%%%%%%%%%%%%%%%%%%%%%%%%%%%%%%%%%%%%%%%%%
\subsection{Some results on general drifts}\label{S5.4}
In this section, we discuss ergodic properties of the solution to
\cref{E-sde} in the case when it is governed by a more general drift function.
In this section we assume that $b(x)$ is locally Lipschitz continuous,
and there exists $\kappa_0>0$ such that
$\langle x,b(x)\rangle\le\kappa_0(1+\abs{x}^{2})$ for all $x\in\Rd$.
Then, \cref{E-sde} again admits a unique nonexplosive strong solution
$\process{X}$ which is a strong Markov process and it satisfies the $C_b$-Feller property
(see \cite[Theorem~3.1, and Propositions 4.2 and 4.3]{Albeverio-Brzezniak-Wu-2010}).
Furthermore, its infinitesimal generator $(\Ag^X,\mathcal{D}_{\Ag^X})$
satisfies $C_c^{2}(\Rd)\subseteq\mathcal{D}_{\Ag^X}$, and 
$\Ag^X\bigl|_{C_c^{2}(\Rd)}$ takes the form in \cref{E1.2}.
Therefore, the corresponding extended domain contains the set $\mathcal{D}$
(defined in \cref{ext}), and on this set for $\Bar{\mathcal{A}}^Xf(x)$ we can
take exactly $\Ag^Xf(x)$.
Irreducibility and aperiodicity of $\process{X}$ with this general
drift can be established as in \cref{T3.1}.
The following corollary provides sufficient conditions on the drift function such
that the process $\process{X}$ exhibits subexponential or exponential ergodicity
properties analogous to \cref{T3.2,T3.5}.
The proof is similar to the proofs of those two theorems. 

%%%%%%%%%%%%%%%%%%%%%%%%%%%%%%%%%%%%%%%%%%%%%%%%%%%%%%%%%%%%%%%%%%%%%%%%%%%%%%%%
\begin{cor}\label{C5.2}
Suppose that $\process{X}$ satisfies the assumptions of
\cref{T3.1}, and $\theta\in\Theta_c$. 
\begin{enumerate}[wide]
\item[\ttup{i}]
If $\theta\ge1$,
$a(x)$ satisfies \cref{ET3.2A},
and there exists $Q\in\cM_+$ such that
\begin{equation*}
\limsup_{\abs{x}\to\infty}\;\frac{\bigl\langle b(x)+r+\int_{\sB^c}y\nu(\D{y}),
Qx\bigr\rangle}{\abs{x}} \,<\, 0\,,
\end{equation*}
then the conclusion of \cref{T3.2}~\ttup{i}
holds with rate $r(t)\approx t^{\theta_c-1}$. 

\item[\ttup{ii}]

If $\theta\in(0,1)$,  
\begin{equation*}
\limsup_{\abs{x}\to\infty}\;\frac{\norm{a(x)}}{\abs{x}^{1+\theta}}\,=\,0\,,
\quad\text{and}\quad
\limsup_{\abs{x}\to\infty}\;\frac{\langle b(x), Qx\rangle}{\abs{x}^{1+\theta}} < 0
\end{equation*}
for some $Q\in\cM_+$,
then \cref{ET3.2D}
holds with rate $r(t)=t^{\nicefrac{\theta_c+\theta-1-\epsilon}{1-\theta}}$
for $\epsilon\in(0,\theta_c+\theta-1)$.

\item[\ttup{iii}] 
If $a(x)$ satisfies \cref{ET3.5A}, and 
$\limsup_{\abs{x}\to\infty}\;\nicefrac{\langle b(x), Qx\rangle}{\abs{x}^{2}} < 0$
for some $Q\in\cM_+$,
then there exist positive constants
$c_0$, $c_1$, such that
\begin{equation*}
\Ag^X V_{Q,\theta} (x) \,\le\, c_0 - c_1 V_{Q,\theta} (x)\,,\qquad x\in\Rd\,.
\end{equation*}
The process $\process{X}$ admits a unique invariant probability measure
$\overline\uppi\in\cP(\Rd)$, and for any $\gamma\in(0,c_1)$,
\begin{equation*}
\lim_{t\to\infty}\;\E^{\gamma t}\,\norm{\uppi P_t(\D{y})
-\overline\uppi(\D{y})}_\tv\,=\,0\,,\qquad \uppi\in\cP_\theta(\Rd)\,.
\end{equation*}
\item[\ttup{iv}]
Suppose that $\upsigma(x)$ is bounded, and there exist $\theta>0$
and $Q\in\cM_+$ such that
\cref{ET3.2E} holds and
\begin{equation*}
\limsup_{\abs{x}\to\infty}\;\frac{\bigl\langle b(x)+r+\int_{\sB^c}y\nu(\D{y}),
Qx\bigr\rangle}{\abs{x}} \,<\, 0\,.
\end{equation*}
Then the conclusion of \cref{T3.2}~\ttup{ii} follows.
\end{enumerate}
\end{cor}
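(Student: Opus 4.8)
The plan is to derive, in each of the four cases, a Foster--Lyapunov drift inequality of exactly the type established in the proofs of \cref{T3.2,T3.5}, and then to invoke the same convergence machinery: \cite[Theorems~3.2 and~3.4]{Douc-Fort-Guilin-2009} for the subexponential statements \ttup{i} and \ttup{ii} (with \cite[Theorem~5.1]{Meyn-Tweedie-AdvAP-III-1993} for the $\theta=1$ sub-case of \ttup{i}), and \cite[Theorem~6.1]{Meyn-Tweedie-AdvAP-III-1993} together with \cite[Theorem~5.2]{Down-Meyn-Tweedie-1995} for the geometric statements \ttup{iii} and \ttup{iv}. Open-set irreducibility and aperiodicity --- needed to pass from a drift inequality to a convergence statement --- are granted by \cref{T3.1}. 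The test function is $V_{Q,s}(x)$ with $Q$ the matrix provided by the hypothesis and $s$ ranging over $\Theta_c$ in cases \ttup{i}--\ttup{iii}, and $\widetilde{V}_{Q,p}(x)$ with $0<p<\theta\norm{Q}^{-\nicefrac12}$ in case \ttup{iv}, exactly as in \cref{T3.2}; here $V_{Q,s}$ belongs to the extended domain $\mathcal{D}$ of \cref{ext} precisely because $s\in\Theta_c$, and $\widetilde{V}_{Q,p}$ does so by \cref{ET3.2E}.

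In each case one writes
\[
\Ag^X V(x) \,=\, \tfrac12\trace\bigl(a(x)\nabla^2V(x)\bigr)
+ \bigl\langle b(x)+\vartheta,\nabla V(x)\bigr\rangle + \mathfrak{J}_{1,\nu}[V](x)\,,
\]
rewriting $\langle b+\vartheta,\nabla V\rangle+\mathfrak{J}_{1,\nu}[V]=\langle b+\vartheta+\int_{\sB^c}y\,\nu(\D{y}),\nabla V\rangle+\mathfrak{J}_\nu[V]$ when $1\in\Theta_c$, and estimates the three pieces separately: the nonlocal term by \cref{L5.1} --- and by \cref{L5.3} in the presence of an $\alpha$-stable component --- in cases \ttup{i}--\ttup{iii}, and by \cref{L5.2} in case \ttup{iv}; the second-order term by whichever growth hypothesis on $a(x)$ is in force; and the first-order term by the stated drift hypothesis. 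That drift hypothesis plays here exactly the role that the explicit cone estimate for the piecewise-linear drift plays in the proof of \cref{T3.2} --- the lemma bounding $\langle\Tilde{b}(x),\nabla V_{Q,2}(x)\rangle$ separately on $\cK_\delta$ and on $\cK_\delta^c$. Indeed, since $\nabla V_{Q,s}(x)=s\norm{x}_Q^{s-2}Qx$ for $\abs{x}$ large, the hypotheses of \ttup{i}, \ttup{ii} and \ttup{iii} give, for $\abs{x}$ large, upper bounds on $\langle b(x)+\cdots,\nabla V_{Q,s}(x)\rangle$ of the form $-c_1\norm{x}_Q^{s-2}\abs{x}$, $-c_1\norm{x}_Q^{s-2}\abs{x}^{1+\theta}$ and $-c_1\norm{x}_Q^{s-2}\abs{x}^{2}$ respectively, i.e.\ negative terms comparable to $-\abs{x}^{s-1}$, $-\abs{x}^{s+\theta-1}$ and $-\abs{x}^{s}$.

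Assembling the three estimates: in \ttup{i} the nonlocal term is of lower order by \cref{L5.1} and the second-order term, which is $\order(\abs{x}^{s-1})$ under \cref{ET3.2A}, is absorbed outside a compact set exactly as in the proof of \cref{T3.2}\,\ttup{i}, so $\Ag^X V_{Q,s}(x)\le c_0(s)-c_1V_{Q,s-1}(x)$ for all $s\in\Theta_c$ with $s\ge1$; this is a drift condition as in \cite{Douc-Fort-Guilin-2009} with concave rate $\phi(v)\asymp v^{(s-1)/s}$, yielding \cref{ET3.2D} at rate $t^{s-1}$, and letting $s\nearrow\theta_c$ (with equality permitted when $\Theta_c=(0,\theta_c]$) gives the rate $t^{\theta_c-1}$. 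In \ttup{ii} the term $\mathfrak{J}_{1,\nu}[V_{Q,s}]$ vanishes at infinity for $s<1$ by \cref{L5.1}\,\ttup{b} and the second-order term is $\sorder(\abs{x}^{s+\theta-1})$, so $\Ag^X V_{Q,s}(x)\le c_0(s)-c_1\bigl(V_{Q,s}(x)\bigr)^{(s+\theta-1)/s}$ for $s\in\Theta_c\cap(1-\theta,1)$; the associated rate is $t^{(s+\theta-1)/(1-\theta)}$, and $s\nearrow\theta_c$ produces $t^{(\theta_c+\theta-1-\epsilon)/(1-\theta)}$ for arbitrarily small $\epsilon$, which is the source of the range $\epsilon\in(0,\theta_c+\theta-1)$. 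In \ttup{iii}, under \cref{ET3.5A} both the second-order term and (by \cref{L5.1,L5.3}) the nonlocal term are strictly of lower order than $-\abs{x}^{\theta}\asymp-V_{Q,\theta}(x)$, so $\Ag^X V_{Q,\theta}(x)\le c_0-c_1V_{Q,\theta}(x)$, and geometric ergodicity (\cite{Down-Meyn-Tweedie-1995}) gives the stated limit for $\uppi\in\cP_\theta(\Rd)$ and $\gamma<c_1$. Case \ttup{iv} is the direct analogue of \cref{T3.2}\,\ttup{ii}: boundedness of $\upsigma(x)$, \cref{ET3.2E} and \cref{L5.2} yield $\Ag^X\widetilde{V}_{Q,p}(x)\le\Tilde{c}_0-\Tilde{c}_1\widetilde{V}_{Q,p}(x)$ once $p$ is chosen small enough that the strictly negative first-order term dominates the bounded second-order term (both comparable to $\widetilde{V}_{Q,p}$), and \cref{ET3.2G} follows from \cite[Theorem~6.1]{Meyn-Tweedie-AdvAP-III-1993}.

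The main obstacle is the exponent bookkeeping in \ttup{i} and \ttup{ii}, and in particular the fact that in \ttup{i}, under only \cref{ET3.2A}, the second-order term has the \emph{same} order $\abs{x}^{s-1}$ as the cone-direction part of the first-order term; showing that it is nonetheless absorbed outside a sufficiently large ball is where the argument genuinely reuses, rather than merely cites, the computation in the proof of \cref{T3.2}\,\ttup{i}. A secondary point is tracking which normalization of the nonlocal operator is in force --- $\mathfrak{J}_\nu$ (hence the need for $1\in\Theta_c$) when $s\ge1$, and $\mathfrak{J}_{1,\nu}$ when $s<1$ --- so that part \ttup{a} or \ttup{b} of \cref{L5.1} applies as appropriate.
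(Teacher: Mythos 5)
Your proposal is correct and takes essentially the same route as the paper, whose proof simply declares the test functions $V_{Q,\theta}$ for cases \ttup{i}--\ttup{iii} and $\widetilde V_{Q,p}$ for case \ttup{iv} and invokes \cref{L5.1,L5.2} together with the arguments of \cref{T3.2,T3.5}. Your more explicit exponent bookkeeping --- in particular taking $s\in\Theta_c$ with $s\nearrow\theta_c$ to extract the stated rates, and switching between $\mathfrak{J}_\nu$ and $\mathfrak{J}_{1,\nu}$ according to whether $s\ge1$ --- is exactly what the paper's citation of those proofs implicitly relies on.
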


\begin{proof}
In cases (i)--(iii) we use $V_{Q,\theta}(x)$, while in case (iv) we use
$\Tilde V_{Q,p}(x)$ with $0<p<\theta\norm{Q}^{-\nicefrac{1}{2}}$.
The assertions now follow from \cref{L5.1,L5.2}.
\end{proof}

We next present some general criteria for the convergence rate to
be no better than polynomial.
These extend \cref{T3.4}.

\begin{cor}\label{C5.3}
We assume that $\process{X}$ satisfies the assumptions of
\cref{T3.1}, $\theta_c\in[1,\infty)$,
and the drift satisfies, for some constant
$\gamma\in(0,1)$, one of the following.
\begin{enumerate}[wide]
\item[\ttup i]
There exists some $x_0\in\Rd$ and a positive constant $C$, such that
\begin{equation*}
\langle x_0, b(x)\rangle \,\ge\, -C\bigl(1 + \langle x_0,x\rangle^\gamma\bigr)\,,
\qquad  \langle x_0,x\rangle\ge0\,.
\end{equation*}
\item[\ttup{ii}]
There exists a positive definite symmetric matrix $Q$ 
and a positive constant $C$, such that
\begin{equation*}
\langle Qx, b(x)\rangle\,\ge\, -C\bigl(1 + \abs{x}^{1+\gamma}\bigr)\,,
\qquad x\in\Rd\,.
\end{equation*}
\end{enumerate}
In addition, suppose that there exists an inf-compact function
$\Bar{V}\in C^2(\Rd)$ having
strict polynomial growth of order $\abs{x}^\beta$ for some $\beta>\theta_c+\gamma-1$,
such that $\Ag^X \Bar{V}$ is bounded from above in $\Rd$.
Then, if the process is ergodic, we have the following lower bounds.
In the case of the $\alpha$-stable process (isotropic or not) there exists a positive
 constant $\Tilde{C}_1$ such that for all $\epsilon\in(0,1-\gamma)$
we have
\begin{equation}\label{EC5.3A}
\Tilde{C}_1 \Bigl(\frac{t}{\epsilon}
+ \Bar{V}(x)\Bigr)^{\frac{1-\gamma-\alpha}{1-\gamma-\epsilon}}
\,\le\, \norm{\delta_{x} P^X_t(\D{y})-\overline\uppi(\D{y})}_\tv
\end{equation}
for all $t>0$, and all $x\in\Rd$.

In the case of a L\'evy process in \cref{T3.4}\,\ttup{ii} we obtain 
a lower bound of the same type as in \cref{ET3.4B}.
There exists a positive constant $\Tilde{C}_3(\epsilon)$ such that for all
$0<\epsilon<\nicefrac{1}{2}\bigl(\beta-\theta_c-\gamma+1\bigr)$,
and all $x\in\Rd$, we have
\begin{equation}\label{EC5.3B}
\norm{\delta_{x} P^X_{t_n}(\D{y}) -\overline\uppi(\D{y})}_\tv
\,\ge\,\Tilde{C}_3(\epsilon)\,
\bigl (t_n+\Bar{V}(x)\bigr)^{-\frac{\theta_c+\gamma-1+2\epsilon}
{\beta-(\theta_c+\gamma-1+2\epsilon)}}
\end{equation}
for some sequence $\{t_n\}_{n\in\NN}\subset[0,\infty)$, $t_n\to\infty$, depending on $x$.
\end{cor}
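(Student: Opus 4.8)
The plan is to follow the proof of \cref{T3.4}\,(a). Two ingredients are needed: a tail lower bound for the invariant probability measure $\overline\uppi$ (whose existence is granted since the process is assumed ergodic), now carrying an extra exponent $\gamma$ coming from the weaker drift hypothesis, and an upper bound on $\Exp^{x}\bigl[G(X(t))\bigr]$ for a suitable function $G$, supplied by the assumed function $\Bar V$. These are then fed into \cite[Theorem~5.1]{Hairer-16} for the $\alpha$-stable case, and into \cite[Corollary~5.2]{Hairer-16} for the half-line L\'evy case, exactly as in the proof of \cref{T3.4}\,(a).

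For the tail estimate, following \cref{N5.1} and the proof of \cref{L5.7}, I would set $\Hat h(x)=\langle x_0,x\rangle$ under hypothesis~\ttup{i}, and under hypothesis~\ttup{ii} take $\Hat h$ to be a smooth positive function agreeing with $\norm{x}_Q$ outside $\sB$, and then define $F=\Breve\chi\circ\Hat h$ and the truncations $F_{1,t}=\chi_t\circ F$ as in \cref{N5.1}. Since $F_{1,t}$ is bounded with bounded derivatives and lies in $\mathcal D$, invariance gives $\overline\uppi(\Ag^X F_{1,t})=0$. Writing $\Ag^X F_{1,t}=\Ag_\circ^X F_{1,t}+\chi_t'(F)\bigl\langle b,\nabla F\bigr\rangle$ and using that hypothesis~\ttup{i} or~\ttup{ii} yields the one-sided bound $\bigl\langle b(x),\nabla F(x)\bigr\rangle\ge -C'\bigl(1+F^\gamma(x)\bigr)$, the drift contribution in this identity is controlled, on the support $\{F\le t\}$ of $\chi_t'(F)$, by a term of order $t^\gamma$. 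On the other hand, $\Ag_\circ^X(F-F_{1,t})\ge0$ by convexity, and $\Ag_\circ^X(F-F_{1,t})(0)$ is of order $t^{1-\alpha}$ for the (isotropic or anisotropic) $\alpha$-stable kernel --- via the one-dimensional reduction of the singular integral used in the proof of \cref{T3.3}, together with \cref{L5.1,L5.3} --- and of order $\int_{\{\Hat h(y)\ge t\}}\abs{y}\,\nu(\D{y})$ for the half-line L\'evy measure. Rearranging the invariance identity as in \cref{PT3.4A}, now without the exact drift identity \cref{PL5.7C} but retaining the nonnegativity of the relevant terms, leads to $\overline\uppi\bigl(\{x\colon\Hat h(x)>t\}\bigr)\ge C_2\,t^{1-\alpha-\gamma}$ in the $\alpha$-stable case, and to the analogue of \cref{PT3.4E} with $\langle\tw,x\rangle$ replaced by $\Hat h(x)$ in the L\'evy case. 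In particular $\overline\uppi\bigl(F^{\alpha-\epsilon}\bigr)=\infty$ for every $\epsilon\in(0,1-\gamma)$, while $\overline\uppi\bigl(F^{\theta_c+\gamma-1+\epsilon}\bigr)=\infty$ for all small $\epsilon>0$ in the L\'evy case.

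For the upper bound, since $\Ag^X\Bar V\le K$ on $\Rd$ for some constant $K$ and $\Bar V\in\mathcal D$, Dynkin's formula with localization at $\uuptau_r$ followed by Fatou's lemma gives $\Exp^{x}\bigl[\Bar V(X(t))\bigr]\le\Bar V(x)+Kt$. Taking $G(x)=F^{\alpha-\epsilon}(x)$, with $\epsilon\in(0,1-\gamma)$, in the $\alpha$-stable case, and $G(x)=F^{\theta_c+\gamma-1+\epsilon}(x)$, with $\epsilon$ small, in the half-line L\'evy case, the growth hypothesis $\beta>\theta_c+\gamma-1$ ensures $G\le C_\epsilon\Bar V$, hence $\Exp^{x}\bigl[G(X(t))\bigr]\le g(x,t)\df C_\epsilon\bigl(\Bar V(x)+Kt\bigr)$; the factor $\nicefrac1\epsilon$ appearing in \cref{EC5.3A} is merely an upper bound for $C_\epsilon$ obtained by tracking growth constants as in the proof of \cref{T3.4}, and in any case the displayed bound with $\nicefrac{t}{\epsilon}$ is weaker than the one with $t$. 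Inserting the tail function $f$ and the bound $g$ into \cite[Theorem~5.1]{Hairer-16}, solving $y\,f(y)=2g(x,t)$ and simplifying reproduces the exponent $\nicefrac{(1-\gamma-\alpha)}{(1-\gamma-\epsilon)}$ of \cref{EC5.3A}; for the L\'evy case, running the computation with auxiliary functions $W,\widehat F,h,g$ exactly as in the proof of \cref{T3.4}\,(a), suitably adapted so that the role of $\theta_c-1$ is played by $\theta_c+\gamma-1$ and the linear growth of the Lyapunov function is replaced by the growth order $\beta$ of $\Bar V$, yields \cref{EC5.3B} via \cite[Corollary~5.2]{Hairer-16}.

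The delicate point is the tail estimate for $\overline\uppi$ under the one-sided drift hypotheses: unlike in \cref{L5.7}, where the explicit form \cref{PL5.7C} of $\bigl\langle b(x),\nabla F^\kappa(x)\bigr\rangle$ is available, here one must verify that the balance $\overline\uppi(\Ag^X F_{1,t})=0$ still produces a lower bound of the correct order $t^{1-\alpha-\gamma}$; in particular, that the condition $\gamma<1$ leaves a strict gap between the drift-induced moment threshold $\theta_c+\gamma-1$ and the critical exponent $\theta_c$, so that $\overline\uppi\bigl(F^{\alpha-\epsilon}\bigr)=\infty$ for $\epsilon$ ranging over a nonempty interval. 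A secondary nuisance, present only in case~\ttup{ii}, is that $F$ is no longer built from a linear functional, so $\trace\bigl(a(x)\nabla^2F^\kappa(x)\bigr)$ and $\mathfrak J_\nu[F^\kappa](x)$ must be estimated directly; \cref{L5.1,L5.3}, the at most quadratic growth of $a(x)$, and the one-dimensional reduction of the $\alpha$-stable singular integral handle this, but the bookkeeping of constants needed to keep the $\epsilon$-dependence explicit in \cref{EC5.3A,EC5.3B} is where most of the effort lies.
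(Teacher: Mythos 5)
Your overall strategy coincides with the paper's: obtain a tail lower bound for $\overline\uppi$ by the technique of \cref{L5.7} with a test function whose exponent is adjusted by $\gamma$, bound $\Exp^x[G(X(t))]$ through $\Bar V$ via Dynkin's formula, and feed both into \cite[Theorem~5.1]{Hairer-16} (resp.\ \cite[Corollary~5.2]{Hairer-16}), with the exponent arithmetic you record matching \cref{EC5.3A,EC5.3B}. The choice of $\Hat h$ in cases (i) and (ii), the use of $\Bar V$, and the final optimization $yf(y)=2g(x,t)$ are all as in the paper.

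There is, however, one step where your implementation would not close as written: the tail estimate. You truncate $F$ itself, i.e.\ use $F_{1,t}=\chi_t\circ F$, and argue that the drift contribution is "of order $t^\gamma$ on the support of $\chi_t'(F)$". But the quantity that produces the tail in the subtraction argument of \cref{PT3.4A} is $\overline\uppi\bigl(\Ag_\circ^X(F-F_{1,t})\bigr)=-\overline\uppi\bigl((1-\chi_t'(F))\langle \Tilde b,\nabla F\rangle\bigr)$, and the factor $1-\chi_t'(F)$ is supported on $\{F\ge t-1\}$, where your one-sided bound only gives $\langle b,\nabla F\rangle\ge -C'(1+F^\gamma)$ with $F^\gamma$ \emph{unbounded}. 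You are then left with the weighted tail $\overline\uppi\bigl(F^\gamma\Ind_{\{F>t\}}\bigr)$, which you have no a priori control over, so the balance does not directly yield $\overline\uppi(\{F>t\})\gtrsim t^{1-\gamma-\alpha}$. The paper avoids this precisely by truncating the \emph{power} $F^{1-\gamma}$, i.e.\ working with $F_{(1-\gamma),R}=\chi_R\circ F^{1-\gamma}$ in place of $F_{1,R}$: then $\langle b,\nabla F^{1-\gamma}\rangle\ge -(1-\gamma)C'(1+F^\gamma)F^{-\gamma}\Breve\chi'(\Hat h)$ is bounded below by a constant, the term multiplying $1-\chi_R'(F^{1-\gamma})$ is uniformly bounded on the tail set, and the argument of \cref{PT3.4A} goes through verbatim, giving $\overline\uppi(\{F^{1-\gamma}>t\})\gtrsim \Ag_\circ^X(F^{1-\gamma}-F_{(1-\gamma),t})(0)\gtrsim t^{(1-\gamma-\alpha)/(1-\gamma)}$, which is the stated $\overline\uppi(\{F>s\})\gtrsim s^{1-\gamma-\alpha}$. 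You correctly identify the tail estimate as the delicate point, but the resolution is this specific change of truncation, not a refinement of the $t^\gamma$ bookkeeping; with that substitution your proof matches the paper's.
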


\begin{proof}
We use a test function of the form
$F^\delta(x)$, $0<\delta<1-\gamma$,
with $F(x) = \chi\bigl(\langle x_0,x\rangle\bigr)$
($\chi(t)$ is as in \cref{N5.1})  for case (i), or
$F(x) = V_{Q,\delta}(x)$, with $V_{Q,\delta}(x)$ as in \cref{N3.1}, for case (ii).
We proceed with the technique in \cref{L5.7}, and show
that $F^{\kappa-1+\gamma}(x)$ cannot be integrable under $\overline\uppi(\D{x})$,
unless $\kappa\in\Theta_c$.
We continue by mimicking the proof of \cref{T3.4}.
Using $\Bar{V}$, we have
\begin{equation*}
\Exp^{x}[F^\beta\bigl(X(t)\bigr)]\,\le\,
\bigl(C_1 t+ \Bar{V}(x)\bigr)\;=:\;g(x,t)\,.
\end{equation*}
Note that necessarily $\beta\in\Theta_c$.

We estimate the tail of $\overline\uppi(\D{x})$ using
$F_{(1-\gamma),R}(x)$ (instead of $F_{1,R}(x)$) as
\begin{equation*}
\overline\uppi\bigl(\{y\colon \langle x,y\rangle > t\}\bigr)
\,\ge\, C_2 t^{1-\gamma-\alpha}\,.
\end{equation*}
With $G(x)= F^{\alpha-\epsilon}(x)$, $0<\epsilon<1-\gamma$, we have
\begin{align*}
\overline\uppi\bigl(\{y\colon G(y)\ge t\}\bigr)&\,=\,
\overline\uppi\bigl(\{y\colon (\langle x,y\rangle)^{\alpha-\epsilon} > t\}\bigr)\\
&\,=\, \overline\uppi\bigl(\{y\colon \langle x,y\rangle
> t^{\frac{1}{\alpha-\epsilon}}\}\bigr) \\
&\,\ge\, C_2 t^{\frac{1-\gamma-\alpha}{\alpha-\epsilon}}\;=:\;f(t)\,,
\end{align*}
and solving $yf(y) = 2 g(x,t)$ we obtain
$f(y)= C_3 \bigl(t+ \Bar{V}(x)\bigr)^{\nicefrac{1-\gamma-\alpha}{1-\gamma-\epsilon}}$,
and thus we obtain \cref{EC5.3A}.

In the case of the L\'evy, we define
\begin{equation*}
W(x)\,\df\, F^{\theta_c+\gamma-1+\epsilon}(x)\,,
\qquad \widehat{F}(t) \,\df\, t^{\frac{\beta}{\theta_c+\gamma-1+\epsilon}}\,,
\qquad h(t) \,\df\, t^{-1-\epsilon_1}\,,
\end{equation*}
with $\epsilon_1=\nicefrac{\epsilon}{\theta_c+\gamma-1+\epsilon}$,
and proceed as in the proof of \cref{T3.4}, to establish \cref{EC5.3B}.
This completes the proof.
\end{proof}

\begin{remark}
If we combine \cref{C5.2}\,(ii) and \cref{C5.3}\,(ii), in the
case of an $\alpha$-stable process (isotropic or not),
we obtain the following.
First note that the hypothesis in \cref{C5.2}\,(ii) allows us
to use the Lyapunov function
$V_{Q,\alpha-\epsilon}$ for any $\epsilon>0$, so that
the assumption $\beta>\alpha+\gamma-1$ in \cref{C5.3} comes for free.
Using this Lyapunov function,
and applying  \cite[Theorem~3.2]{Douc-Fort-Guilin-2009} for the
upper bound, then in combination with \cref{EC5.3A}, we obtain
\begin{equation*}
\Tilde{C}_1 \Bigl(\frac{t}{\epsilon}
+ \abs{x}^{\alpha-\epsilon}\Bigr)^{\frac{1-\gamma-\alpha}{1-\gamma-\epsilon}}
\,\le\, \bnorm{\delta_{x} P^X_t(\cdot)-\overline\uppi(\cdot)}_\tv
\,\le\, \Tilde{C}_2(\epsilon)\,
(t\vee1)^\frac{1+\epsilon-\alpha-\theta}{1-\theta}\,
\abs{x}^{\alpha-\epsilon}\,.
\end{equation*}
Note that necessarily $\gamma\ge\theta$ by hypothesis.
\end{remark}

%%%%%%%%%%%%%%%%%%%%%%%%%%%%%%%%%%%%%%%%%%%%%%%%%%%%%%%%%%%%%%%%%%%%%%%%%%%%%%%%
\appendix
%%%%%%%%%%%%%%%%%%%%%%%%%%%%%%%%%%%%%%%%%%%%%%%%%%%%%%%%%%%%%%%%%%%%%%%%%%%%%%%%
\section{Proof of Theorem~\ref{T3.1}} \label{S-A}
In this section we prove \cref{T3.1}. 
The assertion for case (i) is shown in \cite[proof of Proposition 3.1]{Masuda-2007}
(see also \cite[Theorem~3.1]{Kwon-Lee-1999}).
We prove the assertions in cases (ii), (iii) and (iv).

\begin{proof}[Proof of Case $(ii)$]

First, observe that $P^X_t(x,O)>0$ for any $t>0$, $x\in\Rd$ and open set $O\subseteq\Rd$.
Indeed, fix $0<\rho\le\nicefrac{1}{4}$ and $0<\varepsilon<\rho$,
and let $x_0,y_0\in\Rd$ be such
that $\abs{x_0-y_0}=2\rho$.
Let $f\in C_c^{2}(\Rd)$ be such that $0\le f(x)\le 1$,
$\support f\subset \sB_{\rho-\nicefrac{\varepsilon}{2}}(y_0)$,
and $f|_{\Bar{\sB}_{\rho-\varepsilon}(y_0)}=1$.
Recall that $\process{X}$ is a $C_b$-Feller process with generator
$(\Ag^X,\mathcal{D}_{\Ag^X})$ given in \cref{E1.2}.
Now, since $\lim_{t\to 0}\;\bnorm{\nicefrac{P^X_t f-f}{t}-\Ag^Xf}_\infty = 0$,
we conclude that 
\begin{align*}
\liminf_{t\searrow0}\;\inf_{x\in \sB_{\rho-\nicefrac{\varepsilon}{2}}(x_0)}\;
\frac{P^X_t\bigl(x,\sB_{\rho-\nicefrac{\varepsilon}{2}}(y_0)\bigr)}{t}
& \,\ge\,
\liminf_{t\searrow0}\;\inf_{x\in \sB_{\rho-\nicefrac{\varepsilon}{2}}(x_0)}\;
\frac{P^X_tf(x)}{t}\\
&\,=\,\liminf_{t\searrow0}\;\inf_{x\in \sB_{\rho-\nicefrac{\varepsilon}{2}}(x_0)}
\;\babss{\frac{P^X_tf(x)}{t}-\Ag^Xf(x)+\Ag^Xf(x)}\\
&\,=\,\inf_{x\in \sB_{\rho-\nicefrac{\varepsilon}{2}}(x_0)}\;\babs{\Ag^Xf(x)}\\
&\,\ge\, \inf_{x\in \sB_{\rho-\nicefrac{\varepsilon}{2}}(x_0)}\;
\int_{\Rds}f(y+x)\,\nu(\D{y})\\
&\,\ge\, \inf_{x\in \sB_{\rho-\nicefrac{\varepsilon}{2}}(x_0)}\;
\nu\bigl(\sB_{\rho-\varepsilon}(y_0-x)\bigr)\,.
\end{align*} 
Observe that
\begin{equation*}
\bigcup_{x\in \sB_{\rho-\nicefrac{\varepsilon}{2}}(x_0)}
\sB_{\rho-\varepsilon}(y_0-x)\,\subseteq\, \sB\setminus\sB_\varepsilon(0)\,.
\end{equation*}
We claim that
\begin{equation*}
\inf_{x\in \sB_{\rho-\nicefrac{\varepsilon}{2}}(x_0)}\;
\nu\bigl(\sB_{\rho-\varepsilon}(y_0-x)\bigr)\,>\,0\,.
\end{equation*}
Suppose not. Then there exists a sequence
$\{x_n\}_{n\in\NN}\subseteq \sB_{\rho-\nicefrac{\varepsilon}{2}}(x_0)$
converging to some $x_\infty\in\Bar{\sB}_{\rho-\nicefrac{\varepsilon}{2}}(x_0)$,
such that
$\lim_{n\to\infty}\,\nu\bigl(\sB_{\rho-\varepsilon}(y_0-x_n)\bigr)=0$.
On the other hand, by the dominated convergence theorem
(observe that $\bigcup_{n\in\NN}\sB_{\rho-\varepsilon}(y_0-x_n)
\subseteq \sB\setminus\sB_\varepsilon(0)$ and
$\nu(\sB\setminus\sB_\varepsilon(0))<\infty$), we have that
\begin{equation*}
\lim_{n\to\infty}\;\nu\bigl(\sB_{\rho-\varepsilon}(y_0-x_n)\bigr)\,=\,
\nu(\sB_{\rho-\varepsilon}\bigl(y_0-x_\infty)\bigr)\,,
\end{equation*}
which is strictly positive by hypothesis.
Thus, we conclude that there exists $t_0>0$ such that
$P^X_t\bigl(x,\sB_{\rho-\varepsilon}(y_0)\bigr)>0$
for all $t\in(0,t_0]$, and all $x\in \sB_{\rho-\varepsilon}(x_0)$.
The claim now follows by employing the Chapman--Kolmogorov equality.

Next, define $\nu_1(\D{y})\df \nu(\D{y}\cap \sB)$,
and $\nu_2(\D{y})\df \nu(\D{y}\cap \sB^c)$.
Let $\process{W}$, $\process{L_1}$, and $\process{L_2}$ be a mutually independent
standard Brownian motion, a L\'evy process with drift $\vartheta$ and L\'evy measure
$\nu_1(\D{y})$ and a L\'evy process with zero drift and L\'evy measure $\nu_2(\D{y})$,
respectively.
Observe that $\process{L}$ and $\{L_1(t)+L_2(t)\}_{t\ge0}$ have the same
(finite-dimensional) distribution.
Now, define
\begin{equation*}
\D \Bar{X}(t)\,\df\,b(\Bar{X}(t))\,\D{t} +\upsigma(\Bar{X}(t))\, \D W(t)+\D L_1(t)
+\D L_2(t)\,,\quad \Bar{X}(0)=x\in\Rd\,,
\end{equation*}
and 
\begin{equation*}
\D \Hat X(t)\,\df\,b(\Hat X(t))\,\D{t} +\upsigma(\Hat X(t))\, \D W(t)
+\D L_1(t)\,,\quad \Hat X(0)=x\in\Rd\,.
\end{equation*}
It is clear that the processes $\process{X}$ and $\process{\Bar{X}}$ have the same
(finite-dimensional) distribution, and by the same reasoning as above,
$P^{\Bar X}_t(x,O)>0$
and $P^{\Hat X}_t(x,O)>0$ for any $t>0$,
$x\in\Rd$ and open set $O\subseteq\Rd$.
Next, define
\begin{equation*}
\uptau\,\df\,\inf\,\bigl\{t\ge0\,\colon \abs{\Bar{X}(t)-\Bar{X}(t-)}\ge1\bigr\}
\,=\,\inf\,\bigl\{t\ge0\,\colon \abs{L_2(t)-L_2(t-)}\ne0\bigr\}\,.
\end{equation*}
Now, by construction, we conclude that
$\Prob^x(\uptau>t)=\E^{-\nu_2(\Rd)t}=\E^{-\nu(\sB^c)t}$,
and $\process{\Bar{X}}$ and $\process{\Hat{X}}$ coincide on $[0,\uptau)$. 
Consequently, for any $t>0$, $x\in\Rd$ and $B\in\mathfrak{B}(\Rd)$,
we have
\begin{align*}
P^{\Bar X}_t(x,B)&\,\ge\,
\Prob^x(\Bar{X}(t)\in B,\, \uptau>t)\\
&\,=\,\Prob^x(\Hat{X}(t)\in B,\, \uptau>t)\\
&\,=\,\Exp^x\Bigl[\Exp^x\bigl[\Ind_{\{\Hat{X}(t)\in B\}}
\Ind_{\{\uptau>t\}}\bigm| \sigma\{L_2(t)\,,\;t\ge0\}\bigr]\Bigr]\\
&\,=\,P^{\Hat X}_t(x,B)\,\Prob^x(\uptau>t)\,.
\end{align*}
Thus, according to \cite[Theorem 3.2]{Tweedie-1994}, in order to conclude
open-set irreducibility and aperiodicity of $\process{X}$, it suffices to prove
that $\process{\Hat{X}}$ is a strong Feller process.
Further, by \cite[Lemma 2.2]{Peszat-Zabczyk-1995} 
for $\process{\Hat{X}}$ to have he strong Feller property
it is sufficient that that for any $t>0$ there
exists $c(t)>0$ such that
\begin{equation}\label{PT3.1A}
\babs{P^{\Hat X}_t f(x)-P^{\Hat X}_t f(y)}
\,\le\, c(t,\kappa,\delta)\norm{f}_\infty\,\abs{x-y}
\end{equation}
for all $f\in C_b^2(\Rd)$ and $x,y\in\Rd$.
This is what we show in the rest of the proof.

Let $\chi\in C_c^{\infty}(\Rd)$
satisfying $0\le\chi(x)\le1$ for all $x\in \Rd$,
$\support\chi\subseteq \Bar{\sB}$
and $\int_{\Rd}\chi(x)\,\D{x}=1$. For $\varepsilon>0$, define
$\chi_\varepsilon(x)\,\df\,\varepsilon^{-d}\chi(\nicefrac{x}{\varepsilon})$,
$x\in\Rd$.
By definition, $\chi_\varepsilon\in C_c^{\infty}(\Rd)$,
$\support\chi_\varepsilon\subseteq \Bar{\sB}_\varepsilon(0)$
and $\int_{\Rd}\chi_\varepsilon(x)\,\D{x}=1$. 
The Friedrich's mollifiers $b_n(x)$ ad $\upsigma_n(x)$
of $b(x)$ and $\upsigma(x)$, respectively, are defined as
\begin{equation*}
b_n(x)\,\df\,n^{d}\int_{\Rd}\chi_{\nicefrac{1}{n}}(x-y)b(y)\,\D{y}
\,=\,\int_{\Bar{B}_1(0)}\chi(y)b\bigl(x-\frac{y}{n}\bigr)\,\D{y}\,,
\end{equation*}
and analogously for $\upsigma_n$.
Let $\kappa>0$ be larger than the Lipschitz constants of
$b(x)$ and $\upsigma(x)$.
Since $b_n\in C^{\infty}(\Rd,\Rd)$ and
$\upsigma_n\in C^{\infty}(\Rd,\RR^{d\times d})$, we have 
\begin{equation*}
\babs{b_n^i(x)-b^i(x)}\,\le\,\frac{1}{n}\,,\qquad
\qquad \babs{\upsigma_n^{ij}(x)-\upsigma^{ij}}\,\le\,\frac{1}{n}\,,
\end{equation*}
\begin{equation*}
\babs{\partial_ib_n^j(x)}\,\le\,\kappa\,,
\quad\babs{\partial_i\upsigma_n^{jk}(x)}\,\le\,\kappa\,,\quad
\babs{\partial_{ij}b_n^k(x)}\,\le\,\kappa_n\,,
\quad\text{and\ } \babs{\partial_{ij}\upsigma_n^{kl}(x)}\le\kappa_n\,,
\end{equation*}
with $i,j,k,l\in\{1,\dotsc,d\}$,
$b_n(x)=(b_n^i(x))_{i=1,\dotsc,d}$, and
$\upsigma_n(x)=(\upsigma^{ij}_n(x))_{i,j=1,\dotsc,d}$. 
Now, define
\begin{equation*}
\D \Hat X_n(t)\,=\,b_n(\Hat X(t))\,\D{t} +\upsigma_n(\Hat X_n(t))\, \D W(t)
+\D L_1(t)\,,\qquad \Hat X_n(0)=x\in\Rd\,.
\end{equation*}
In \cite[Lemma 2.3]{Kwon-Lee-1999} it has been shown that 
for each fixed $t>0$ there is a constant $c(t,\kappa,\delta)>0$ such that
\begin{equation}\label{PT3.1B}
\babs{P^{\Hat{X}_n}_t f(x)-P^{\Hat{X}_n}_t f(y)}
\,\le\, c(t,\kappa,\delta)\norm{f}_\infty\,\abs{x-y}
\end{equation}
for all $f\in C_b^2(\Rd)$, $x,y\in\Rd$ and $n\in\NN$.
Recall that $\delta=\sup_{x\in\Rd}\,\bnorm{\upsigma^{-1}(x)}>0$.
As we have already commented, this automatically implies strong Feller property
of $\process{\Hat X_n}$.
For any $t>0$ and $x\in\Rd$, by employing It\^o's formula, we have
\begin{align*}
\frac{\D}{\D{t}}\Exp^x\babs{\Hat X_n(t)-\Hat X(t)}^2
&\,=\,\Exp^x \bnorm{\upsigma_n(\Hat X_n(t))
-\upsigma(\Hat X(t))}^2+2\Exp^x\bigl\langle b_n(\Hat X_n(t))
-b(\Hat X(t)),\Hat X_n(t)-\Hat X(t)\bigr\rangle\\
&\,\le\, 2\Exp^x \bnorm{\upsigma_n(\Hat X_n(t))-\upsigma_n(\Hat X(t))}^2
+2\Exp^x \bnorm{\upsigma_n(\Hat X(t))-\upsigma(\Hat X(t))}^2\\
&\mspace{100mu}
+2\Exp^x\bigl\langle b_n(\Hat X_n(t))-b_n(\Hat X(t)),\Hat X_n(t)-\Hat X(t)\bigr\rangle\\
&\mspace{200mu}
+2\Exp^x\bigl\langle b_n(\Hat X(t))-b(\Hat X(t)),\Hat X_n(t)-\Hat X(t)\bigr\rangle\\
&\,\le\, 2\kappa^2\,\Exp^x \babs{\Hat X_n(t)-\Hat X(t)}^2
+2\Exp^x \bnorm{\upsigma_n(\Hat X(t))-\upsigma(\Hat X(t))}^2\\
&\mspace{100mu} +2\kappa\,\Exp^x \babs{\Hat X_n(t)-\Hat X(t)}^2
+2\Exp^x \babs{b_n(\Hat X(t))-b(\Hat X(t))}^2\\
&\,\le\, 2(\kappa+\kappa^2)\,\Exp^x \babs{\Hat X_n(t)-\Hat X(t)}^2+\frac{4}{n^2}\,.
\end{align*}
By Gronwall's lemma we obtain
\begin{equation*}
\Exp^x\babs{\Hat X_n(t)-\Hat X(t)}^2\,\le\,\frac{4}{n^2}t\E^{2(\kappa+\kappa^2)t}\,.
\end{equation*}
Hence, for each fixed $t>0$ and $x\in\Rd$, $\Hat X_n(t)$ converges to
$\Hat X(t)$ in $L^2(\Omega,\Prob^x)$.
Now, for fixed $t>0$ and $x,y\in\Rd$, using \cref{PT3.1B}, we have
\begin{align*}
\babs{P^{\Hat X}_t f(x)-P^{\Hat X}_t f(y)}\,&\le\,
\babs{P^{\Hat X}_t f(x)-P^{\Hat{X}_n}_t f(x)}
+\babs{P^{\Hat{X}_n}_t f(x)-P^{\Hat{X}_n}_t f(y)}
+\babs{P^{\Hat{X}_n}_t f(y)-P^{\Hat X}_t f(y)}\\
&\le\,\babs{P^{\Hat X}_t f(x)-P^{\Hat{X}_n}_t f(x)}
+\babs{P^{\Hat{X}_n}_t f(y)-P^{\Hat X}_t f(y)}
+c(t,\kappa,\delta)\norm{f}_\infty\abs{x-y}\,.
\end{align*}
By letting $n\to\infty$, \cref{PT3.1A} follows, and the proof is complete.
\end{proof}

We next prove the assertions in Case (iii). 
%%%%%%%%%%%%%%%%%%%%%%%%%%%%%%%%%%%%%%%%%%%%%%%%%%%%%%%%%%%%%%%%%%%%%%%%%%%%%%%%
\begin{proof}[Proof of Case $(iii)$]
By \cite[Theorem~3.2]{Tweedie-1994}, in order to prove open-set
irreducibility and aperiodicity, it 
suffices to show that $\process{X}$ satisfies the strong Feller property and
$P^X_t(x, O)>0$ for all $t>0$, all $x\in\Rd$, and all open sets $O\subseteq\Rd$. 
The strong Feller property of $\process{X}$ follows from 
\cite[Theorem~2.1]{Wang-Wang-2014} and \cite[Proposition 2.3]{Wang-2010}. 
Recall that $\process{X}$ is a $C_b$-Feller process with generator
$(\Ag^X,\mathcal{D}_{\Ag^X})$ given in \cref{E1.2}. 
Now, let
$O\subseteq\Rd$ be an arbitrary open set and let $f\in C_c^{2}(\Rd)$ be such 
that $\support f\subset O$ and $0\le f(x)\le 1$.
Since
$$\lim_{t\to 0}\;\Bnorm{\tfrac{P^X_t f-f}{t}-\Ag^Xf}_\infty \,=\, 0\,,$$
we conclude that for any bounded set $B\subseteq O^c$, 
\begin{align*}
\liminf_{t\searrow0}\;\inf_{x\in B}\;\frac{P^X_t(x,O)}{t}&\,\ge\,
\liminf_{t\searrow0}\;\inf_{x\in B}\;\frac{P^X_tf(x)}{t}\\
&\,=\,\liminf_{t\searrow0}\;\inf_{x\in B}\;
\babss{\frac{P^X_tf(x)}{t}-\Ag^Xf(x)+\Ag^Xf(x)}\\
&\,=\,\inf_{x\in B}\;\babs{\Ag^Xf(x)}
\,\ge\, \inf_{x\in B}\;\int_{\Rds}f(y+x)\,\nu(\D{y})\,.
\end{align*}
Now, since $\process{L}$ has a subordinate Brownian motion component, we conclude that
$\nu(\D{y})$ has full support (see \cite[Theorem~30.1]{Sato-Book-1999}).
This automatically implies that the right hand side in the above relation is
strictly positive.
Namely, if this was not the case then there would exist a sequence
$\{x_n\}_{n\in\NN}\subseteq B$ converging to some $x_\infty\in\Rd$
(recall that $B$ is bounded), such that
\begin{equation*}
\lim_{n\to\infty}\int_{\Rds}f(y+x_n)\,\nu(\D{y})\,=\,0\,.
\end{equation*}
Now, by employing Fatou's lemma we conclude that
$\int_{\Rds}f(y+x_\infty)\,\nu(\D{y})=0$,
which is impossible.
Hence, there exists $t_0>0$ such that $P^X_t(x,O)>0$
for all $t\in(0,t_0]$, and all $x\in B$.
The assertion now follows by employing the Chapman-Kolmogorov equality.
\end{proof}

Lastly, we prove the assertions in Case (iv).
%%%%%%%%%%%%%%%%%%%%%%%%%%%%%%%%%%%%%%%%%%%%%%%%%%%%%%%%%%%%%%%%%%%%%%%%%%%%%%%%
\begin{proof}[Proof of Case $(iv)$]

Let us first show that the solution to
\begin{equation*}
\D \Hat X(t)=b(\Hat X(t))\,\D{t}+\D L_1(t),\qquad \Hat X(0)=x\in\Rd\,,
\end{equation*}
is a strong Feller process.
 Clearly, $\process{L_1}$ admits a transition density function
\begin{equation*}
P_t^{L_1}(x, \D{y})\,=\,p^t(x,y)\D{y}\,=\,p^t(y-x)\D{y}\,,\qquad x,y\in\Rd\,,\ \ t>0\,,
\end{equation*}
satisfying
\begin{equation*}
p^t(x)\,=\,p_1^t(x_1)\dotsb p_d^t(x_d)\,,\qquad x=(x_1,\dotsc,x_d)\in\Rd,\ \ t>0\,,
\end{equation*}
since the corresponding components are independent,
where $\{p_i^t(u)\}_{u\in\RR,\, t>0}$ is a transition density of $\process{ A_1^i}$,
$i=1,\dotsc,d$, (a one-dimensional symmetric $\alpha$-stable L\'evy process
with scale parameter $\eta_i>0$).
Observe that $\process{L_1^i}$ is a subordinate Brownian motion with
$\nicefrac{\alpha}{2}$-stable subordinator $\process{S_i}$ with scale parameter
$\eta_i>0$, $i=1,\dotsc,d$.
According to \cite[Corollary 3.5 and Example 4.4]{Kusuoka-Marinelli-2014},
$P_t^{L_1^i}f\in C_b^1(\RR)$,
\begin{equation*}
\babss{\frac{\partial}{\partial u} P_t^{L_1^i}f(u)}
\,\le\, c_1 t^{-\frac{\alpha}{4}}\,\norm{f}_\infty \,
\Exp\bigl[S_i(1)^{-\frac{1}{2}}\bigr]\,,
\end{equation*}
and
\begin{equation*}
\abs{u} \babss{\frac{\partial}{\partial u} P_t^{L_1^i}f(u)}
\,\le\, c_2 t^{-\frac{\alpha}{4}}\,\norm{f}_\infty \,
\Exp\bigl[S_i(1)^{-\frac{1}{2}}\bigr]
\end{equation*}
for all $u\in\RR$, $t>0$ and $f\in\mathcal{B}(\RR)$ with compact support.
Here, the constant $c_1>0$ does not depend on $u$, $t$ and $f(u)$, and $c_2>0$ depends
on $\support f$ only.
According to \cite[Proposition 3.11]{Kusuoka-Marinelli-2014}, we have
$\Exp[S_i(1)^{-\nicefrac{1}{2}}]<\infty$.
Further, using the scaling property and asymptotic behavior (at infinity)
of one-dimensional symmetric stable densities
(see \cite[page 87]{Sato-Book-1999}),
we deduce that for any fixed $t_0>0$,
there exist positive constants $c_3$ and $c_4$,
which depend only on $t_0$ and $\text{supp}\, f$,
such that
\begin{align*}
\abs{u} \babs{ P_t^{L_1^i}f(u)} &\,\le\, t^{-\frac{1}{\alpha}} \abs{u}
\norm{f}_\infty
\int_{\support f}p_i^1\bigl(t^{-\frac{1}{\alpha}}(v-u)\bigr)\,\D v\\
 &\,\le\, c_3\norm{f}_\infty\abs{u}^{-\alpha}t\,
 \Ind_{\bigl\{\abs{u}t_0^{-\nicefrac{1}{\alpha}}\ge c_3\bigr\}}(u)
 +c_4\norm{f}_\infty
\Ind_{\bigl\{\abs{u}t_0^{-\nicefrac{1}{\alpha}}\le c_3\bigr\}}(u)
\end{align*}
for all $u\in\RR$, and $t\in(0,t_0]$.

According to \cite[Lemma 2.4]{Kusuoka-Marinelli-2014} a $C_b$-Feller semigroup
$\{P_t\}_{t\ge0}$ enjoys the strong Feller property if, and only if,
$P_t f\in C_b(\Rd)$ for any $t>0$ and $f\in\mathcal{B}_b(\Rd)$ with compact support. 
Finally, fix $t>0$ and $f\in\mathcal{B}_b(\Rd)$ with compact support.
By Duhamel's formula we have
\begin{equation*}
P_t^{\Hat X}f(x)\,=\,
P_t^{L_1}f(x)+\int_0^t P_{t-s}^{\Hat X}
\bigl\langle b(x),\nabla P_s^{L_1}f(x)\bigr\rangle\, \D{s}\,.
\end{equation*}
Hence, it remains to prove that the second term in the above relation is continuous.

We have
\begin{align*}
\bigl\langle b(x),\nabla P_s^{L_1}f(x)\bigr\rangle&
\,=\,\bigl\langle b(x)(1+\abs{x})^{-1},(1+\abs{x})\nabla P_s^{L_1}f(x)\bigr\rangle\\
&\,\le\,(1+\abs{x})^{-1}\,\abs{b(x)}+(1+\abs{x})\,\babs{\nabla P_s^{L_1}f(x)}\,.
\end{align*}
Clearly, $(1+\abs{x})^{-1}\,\abs{b(x)}<c$ for some $c>0$.
Since
\begin{align*}
\partial_{x_i} P_s^{L_1}f(x)&\,=\,\partial_{x_i}\int_{\Rd}f(y_1,\dotsc,y_d)
\prod_{i=1}^dp_i^s(y_i-x_i)\,\D{y}_i\\
&\,=\,\int_{\RR} \partial p_i^s(y_i-x_i)\,\D{y}_i
\int_{\RR^{d-1}}f(y_1,\dotsc,y_d)\prod_{j\ne i}p_j^s(y_j-x_j)\,\D{y}_j\,,
\end{align*}
the map
\begin{equation*}
y_i\mapsto\int_{\RR^{d-1}}f(y_1,\dotsc,y_d)\prod_{j\ne i}p_j^s(y_j-x_j)\,\D{y}_j
\end{equation*}
is bounded and has compact support.
We use the estimates 
\begin{align*}
\abs{x_i}\,\babs{\partial_{x_j} P_s^{L_1}f(x)}\,
\bigl(\Exp\bigl[S_j(1)^{-\frac{1}{2}}\bigr]\bigr)^{-1}
&\,\le\,
\Bar c_2\norm{f}_\infty\, s^{-\frac{\alpha}{4}}\,,\quad\text{if\ }i=j\,,
\intertext{and}
\abs{x_i}\,\babs{\partial_{x_j} P_s^{L_1}f(x)}\,
\bigl(\Exp\bigl[S_j(1)^{-\nicefrac{1}{2}}\bigr]\bigr)^{-1}
&\,\le\,
\Bar c_3\norm{f}_\infty^2\abs{x_i}^{-\alpha}s^{1-\frac{\alpha}{4}}
\Ind_{\bigl\{\abs{x_i}t^{-\nicefrac{1}{\alpha}}\ge \Bar c_3\bigr\}}(x_i)\\[3pt]
&\mspace{100mu}
+\Bar c_4\norm{f}^2_\infty\, s^{-\frac{\alpha}{4}}
\Ind_{\bigl\{\abs{x_i}t^{-\nicefrac{1}{\alpha}}\le \Bar c_3\bigr\}}(x_i)\\[3pt]
&\,=\,\bigl(\Bar c_3^{1-\alpha}+\Bar c_4\bigr)
\norm{f}_\infty^2\,s^{-\frac{\alpha}{4}}
\,,\quad\text{if\ }i\ne j\,,
\end{align*}
to obtain
\begin{equation}\label{PT3.1C}
\bigl\langle b(x),\nabla P_s^{L_1}f(x)\bigr\rangle
\,\le\, 2c+M(f)\, s^{-\frac{\alpha}{4}}
\max_{i=1,\dotsc,d}\,\Exp\bigl[S_i(1)^{-\frac{1}{2}}\bigr]\,.
\end{equation}
with
\begin{equation*}
M(f) \,\df\, \Bar c_1\norm{f}_\infty+
\norm{f}_\infty\sqrt{d\Bar c_2^2+d(d-1)(\Bar c_3^{1-\alpha}+\Bar c_4)^2
\norm{f}_\infty^2}\,.
\end{equation*}
In \cref{PT3.1C}, the constants
$\Bar c_2$, $\Bar c_3$ and $\Bar c_4$ depend on $t$ and $\support f$ only.
Therefore, the map
$x\mapsto\bigl\langle b(x),\nabla P_s^{L_1}f(x)\bigr\rangle$
is continuous and bounded for any $s\in(0,t]$.
In particular, due to the $C_b$-Feller property, the map
$x\mapsto P_{t-s}^{\Hat X}\bigl\langle b(x),\nabla P_s^{L_1}f(x)\bigr\rangle$
is also continuous and bounded for any $s\in(0,t]$.
Finally, according to dominated convergence theorem and \cref{PT3.1C} we conclude that
\begin{equation*}
x\,\mapsto\,
\int_0^tP_{t-s}^{\Hat X}\bigl\langle b(x),\nabla P_s^{L_1}f(x)\bigr\rangle\,\D{s}
\end{equation*}
is continuous, which concludes the proof.

Now, let us show that $\process{\Hat X}$ and the solution to 
\begin{equation*}
\D \Bar{X}(t)\,=\,b(\Bar{X}(t))\,\D{t} +\D L_1(t)+\D L_2(t),\qquad \Bar{X}(0)=x\in\Rd\,,
\end{equation*}
are irreducible and aperiodic. The one-dimensional case is covered by (iii).
Assume $d\ge2$. Define
$\uptau\df\inf\bigl\{t\ge0\,\colon \abs{L_2(t)-L_2(t-)}\ne0\bigr\}$.
By construction we conclude that
$\Prob^x(\tau>t)=\E^{-\nu_{L_2}(\Rd)t}$,
and $\process{\Hat{X}}$ and $\process{\Bar{X}}$ coincide on $[0,\uptau)$.
Consequently, 
\begin{align*}
P_t^{\Bar X}(x,B)&\,\ge\, \Prob^x(\Bar{X}(t)\in B,\, \uptau>t)\\
&\,=\,\Prob^x(\Hat{X}(t)\in B,\, \uptau>t)\\
&\,=\,\Exp^x\Bigl[\Exp^x\bigl[\Ind_{\{\Hat{X}(t)\in B\}}
\Ind_{\{\uptau>t\}}\bigm|\sigma\{L_2(t)\,,\;t\ge0\}\bigr]\Bigr]\\
&\,=\,P_t^{\Hat X}(x,B)\,\Prob^x(\uptau>t)
\end{align*}
for any $t>0$, $x\in\Rd$ and $B\in\mathfrak{B}(\Rd)$.
Thus, due to the previous observation, the fact that $\process{\Hat X}$ is a
strong Feller process and \cite[Theorem 3.2]{Tweedie-1994}, it suffices to show that
$P_t^{\Hat X}(x,B)>0$ for any $t>0$, $x\in\Rd$
and open set $O\subseteq\Rd$ containing $0$.
First, assume that $x_0,y_0\in\Rd$ lie on the same coordinate axis.
Fix $\rho>0$ and $\varepsilon>0$, and let $f\in C_c^{2}(\Rd)$ be such that
$0\le f(x)\le 1$,
$\support f\subset \sB_{\rho+\varepsilon}(y_0)$
and $f|_{\bar\sB_{\rho+\nicefrac{\varepsilon}{2}}(y_0)}=1$.
Now, since $\lim_{t\to 0}\;\bnorm{\nicefrac{P^{\Hat X}_t f-f}{t}
-\mathcal{A}^{\Hat X}f}_\infty = 0$
($(\mathcal{A}^{\Hat X},\mathcal{D}_{\mathcal{A}^{\Hat X}})$ is the generator of
$\process{\Hat X}$ given in \cref{E1.2}),
we conclude that 
\begin{align*}
\liminf_{t\searrow0}\;\inf_{x\in \sB_{\rho}(x_0)}\;
\frac{P_t^{\Hat X}\bigl(x,\sB_{\rho+\varepsilon}(y_0)\bigr)}{t}
&\,\ge\,
\liminf_{t\searrow0}\;\inf_{x\in \sB_{\rho}(x_0)}\;
\frac{P^{\Hat X}_tf(x)}{t}\\
&\,=\,\liminf_{t\searrow0}\;\inf_{x\in \sB_{\rho}(x_0)}
\;\babss{\frac{P^{\Hat X}_tf(x)}{t}-\mathcal{A}^{\Hat X}f(x)
+\mathcal{A}^{\Hat X}f(x)}\\
&\,=\,\inf_{x\in \sB_{\rho}(x_0)}\;\babs{\mathcal{A}^{\Hat X}f(x)}\\
&\,\ge\, \inf_{x\in \sB_{\rho}(x_0)}\;
\int_{\Rds}f(y+x)\,\nu(\D{y})\\
&\,\ge\, \inf_{x\in \sB_{\rho}(x_0)}\;
\nu\bigl(\sB_{\rho+\nicefrac{\varepsilon}{2}}(y_0-x)\bigr)\,,
\end{align*} which is strictly positive.
Hence, there is $t_0>0$ such that
$P_t^{\Hat X}\bigl(x,\sB_{\rho+\varepsilon}(y_0)\bigr)>0$
for all $t\in(0,t_0]$ and $x\in \sB_{\rho}(x_0)$.
Further, let $x\in\Rd$ be such that it does not lie on any coordinate axis,
and let $\rho>0$.
Define $r_1\df \abs{\langle x,e_1\rangle}+\varepsilon_1$, where $\varepsilon_1>0$ is such
that $r_1<\rvert x\rvert$.
Then, as above, we conclude that there is $t_1>0$ such that
$P_t^{\Hat X}\bigl(x,\sB_{r_1}(0)\bigr)>0$ for all $t\in(0,t_1]$.
Next, inductively, define $r_n\df\nicefrac{r_{n-1}}{\sqrt{d}}+\varepsilon_n$, $n\ge2$,
where $\varepsilon_n>0$ is such that
$\varepsilon_n<r_{n-1}\sqrt{d-\nicefrac{1}{2}}$.
Clearly, $r_n\to0$ as $n\to\infty$, and there is $t_n>0$ such that
$P_t^{\Hat X}\bigl(x,\sB_{r_n}(0)\bigr)>0$ for all $t\in(0,t_n]$
and $x\in\sB_{r_{n-1}}(0)$.
The claim now follows by employing the Chapman-Kolmogorov equality.
\end{proof}

%%%%%%%%%%%%%%%%%%%%%%%%%%%%%%%%%%%%%%%%%%%%%%%%%%%%%%%%%%%%%%%%%%%%%%%%%%%%%%%%
\section*{Acknowledgements}
This research was supported in part by 
the Army Research Office through grant W911NF-17-1-001,
in part by the National Science Foundation through grants DMS-1715210,
CMMI-1538149 and DMS-1715875,
and in part by Office of Naval Research through grant N00014-16-1-2956.
Financial support through Croatian Science Foundation under the project 3526
(for N. Sandri\'c) is gratefully acknowledged.
We also thank the anonymous referee for the helpful comments that have led to
significant improvements of the results in the paper.

%%%%%%%%%%%%%%%%%%%%%%%%%%%%%%%%%%%%%%%%%%%%%%%%%%%%%%%%%%%%%%%%%%%%%%%%%%%%%%%%
\def\cprime{$'$}

\end{document}